
\documentclass{amsart}

\usepackage{amsmath,amsthm, txfonts, 
eucal, amscd}
\usepackage{graphicx, color}
\usepackage{overpic}

\newtheorem{Theorem}{Theorem}[section]
\newtheorem{Proposition}[Theorem]{Proposition}
\newtheorem{Lemma}[Theorem]{Lemma}

\newtheorem{Corollary}[Theorem]{Corollary}


\theoremstyle{definition}
\newtheorem{Definition}[Theorem]{Definition}
\newtheorem{Remark}[Theorem]{Remark}
\begin{document}
\def\revisionColor{black}
\def\real{\mathbb{R}}
\def\integer{\mathbb{Z}}
\def\supp{\mathrm{supp}\,}
\newcommand{\pomega}{\langle\omega\rangle}
\newcommand{\torsion}{\mathrm{Tor}}
\newcommand{\Bargmann}{{\mathcal{B}}}
\newcommand{\BargmannP}{\mathcal{P}}
\newcommand{\pBargmann}{{\mathfrak{B}}}  
\newcommand{\pBargmannP}{{\mathfrak{P}}}
\newcommand{\complex}{{\mathbb{C}}}
\newcommand{\vol}{m}
\newcommand{\konst}{\varkappa_\sharp}
\newcommand{\lift}{{\mathrm{lift}}}
\newcommand{\bj}{\mathbf{j}}
\newcommand{\bi}{\mathbf{i}}
\newcommand{\trho}{\tilde{\rho}}
\newcommand{\cJ}{\mathcal{J}}
\newcommand{\cL}{\mathcal{L}}
\newcommand{\bbL}{\mathbb{L}}
\newcommand{\bL}{\mathbf{L}}
\newcommand{\bbH}{\mathbb{H}}
\newcommand{\bH}{\mathbf{H}}
\newcommand{\bbI}{\mathbb{I}}
\newcommand{\bI}{\mathbf{I}}
\newcommand{\f}{\bar{f}}
\newcommand{\fF}{\mathfrak{F}}
\newcommand{\bt}{\mathbf{t}}
\newcommand{\bDelta}{\mathbf{\Delta}}
\newcommand{\per}{\mathrm{per}}
\newcommand{\template}{\mathcal{T}}

\title[Exponential mixing for Anosov flows] {Exponential mixing for\\ generic volume-preserving Anosov flows\\ in dimension three }
\author[M.~Tsujii]{Masato TSUJII}
\address{Department of Mathematics, Kyushu University, Fukuoka, 819-0395}
\email{tsujii@math.kyushu-u.ac.jp}
\keywords{Anosov flow, mixing, exponential decay of correlations}
\subjclass[2010]{Primary: 37A25, Secondary: 37D20 }
\thanks{The author would like to thank Michihiro Hirayama, Viviane Baladi and Damien Thomine for comments on the manuscript of this paper in earlier stages. This work is partially supported by KAKENHI 15H03627.}
\begin{abstract}
Let $M$ be a closed $3$-dimensional Riemann manifold and let $3\le r\le \infty$. We prove that there exists an open dense subset in the space of $C^r$ volume-preserving Anosov flows on $M$ such that all the flows in it are exponentially mixing. 
\end{abstract}
\date{\today}
\maketitle

\section{Introduction}
In this paper, we study mixing properties of volume-preserving Anosov flows on a \hbox{$3$-dimensional} closed $C^{\infty}$ Riemann manifold $M$. 
The main result is as follows. 
Let $\fF^r_A$ be the space of $C^r$ Anosov flows on $M$ preserving the Riemann volume~$\vol$, equipped with the $C^r$ compact-open topology as a subspace of $C^r(M\times \real, M)$. 
A flow $f^t\in \fF^r_A$ is said to be \emph{exponentially mixing} with respect to the volume $m$ if, for some\footnote{Once the decay estimate \eqref{eq:cor} holds for some $\alpha>0$, we can prove it for any $\alpha>0$ by approximation, possibly with different constants $c_\alpha$ and $C_\alpha$. See \cite[p.1046]{MR1779392}. It is therefore enough to consider  \eqref{eq:cor} for some fixed $\alpha>0$. }  $\alpha>0$, we have 
\begin{equation}\label{eq:cor}
\left|\int \varphi\cdot  (\psi\circ f^t) \,d\vol\right| \le C_{\alpha}\,\|\varphi\|_{C^\alpha}\,\|\psi\|_{C^\alpha}\,\exp(-c_{\alpha}t)
\end{equation}
for any $t>0$ and any $\varphi,\psi\in C^{\alpha}(M)$ satisfying $\int \varphi \, d\vol=0$,
where $c_\alpha$ and $C_\alpha>0$ are constants independent of $\varphi$, $\psi$ and $t$. 
 
\begin{Theorem}\label{th:main} For $3\le r\le \infty$, there exists a $C^3$-open and $C^r$-dense subset\/ $\mathcal{U}\subset \fF^r_A$ such that 
all the flows in $\mathcal{U}$ are exponentially mixing. 
Further there is a $C^3$-open neighborhood of each $f^t\in \mathcal{U}$ such that the decay estimate \eqref{eq:cor} holds  for all the flows in it with uniform constants $c_\alpha$ and $C_{\alpha}$.
\end{Theorem}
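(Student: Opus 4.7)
The plan is to prove Theorem~\ref{th:main} by establishing a spectral gap for the transfer operator of the flow acting on a suitable anisotropic Hilbert space, combined with a quantitative Dolgopyat-style cancellation estimate whose hypothesis is $C^3$-open and $C^r$-dense. The underlying methodology is the semiclassical Bargmann-transform approach suggested by the preamble macros $\Bargmann$ and $\BargmannP$, refining the author's earlier works on hyperbolic and partially hyperbolic flows.

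First I would construct an anisotropic function space $\Bargmann$ on $M$ adapted to the weak-stable/weak-unstable decomposition of $TM$, by composing a Bargmann-type windowed Fourier transform with weights concentrating on cotangent directions conormal to the stable/unstable leaves. The transfer operator $\cL^t\varphi=\varphi\circ f^t$ then lifts to an operator $\bL^t$ on a fattening of $T^*M$; hyperbolicity forces lifted wave packets transverse to the flow to be contracted under $\bL^t$, yielding an essential spectral radius of $\cL^t$ on $\Bargmann$ bounded by $e^{-\lambda t}$ for some $\lambda>0$. The problem thereby reduces to excluding discrete eigenvalues of $\cL^t$ in $\{|z|>e^{-\lambda t}\}$ other than the simple eigenvalue $1$.

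Second, I would set up the Dolgopyat-type cancellation step. Decomposing a high iterate $(\cL^t)^N$ as a sum over branches of inverse stable/unstable holonomies, the matrix coefficients of $\cL^t$ between lifted wave packets become oscillatory integrals whose phase involves a \emph{temporal distance function}: the non-closing area functional measuring the failure of a loop built from a stable segment, an unstable segment, and a flow arc to close up. A quantitative non-integrability (UNI) condition on the transverse variation of this temporal function then yields, via stationary-phase/oscillatory-integral estimates, a strict spectral bound inside $\{|z|>e^{-\lambda t}\}$ and hence the decay \eqref{eq:cor}; continuous dependence of all ingredients on the flow in the $C^3$ topology gives the claimed uniformity on a $C^3$-open neighborhood.

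The main obstacle, and the technical core of the proof, is to establish that the UNI condition is verified on a $C^3$-open $C^r$-dense subset of $\fF^r_A$. Openness will follow from structural stability of the Anosov splitting together with the $C^2$ dependence of the temporal distance function on the flow. For density, given $f^t\in\fF^r_A$ and $\varepsilon>0$, I would construct a volume-preserving Anosov perturbation of size $\varepsilon$ in $C^r$ that produces a prescribed non-trivial transverse variation of the temporal function at some scale: a localized perturbation supported in a flow-box near a chosen periodic orbit, together with a Moser-type correction to restore volume preservation. The first variation of the temporal function along this deformation can be computed explicitly, and a jet-transversality argument at the periodic orbit produces a perturbation with the required effect while smallness preserves the Anosov property.
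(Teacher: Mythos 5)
Your plan reproduces the paper's broad architecture — a partial Bargmann/wave-packet anisotropic space, an essential spectral radius estimate for $\cL^t$ from hyperbolicity, a Dolgopyat-type cancellation step driven by a quantitative non-integrability hypothesis, and a perturbation argument showing the hypothesis is $C^3$-open and $C^r$-dense. The transfer-operator and cancellation machinery (your first two steps) is essentially what the paper does in Sections 4–6. The genuine gap is in your third step, and it is exactly where the paper's main novelty lies.

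First, the non-integrability hypothesis cannot simply be ``prescribed non-trivial transverse variation of the temporal function at some scale.'' The cancellation estimate requires the oscillatory bound to hold at \emph{all} small scales simultaneously (equivalently, for all large $b$ in \eqref{eq:tv}), and the approximate non-integrability quantity $\Delta(q,\delta)$ depends on $\delta$ in a way that is very hard to track under perturbation. The paper's key move is to replace scale-dependent quantities by the \emph{$s$-templates} $\psi^s_{p,(-1,1)}$, which are intrinsically renormalized (via the intrinsic metric \eqref{eq:intrinsice_metric}) so that twist at microscopic scale is literally an affine rescaling of a unit-scale template (Lemma \ref{lm:psis}). Only after this renormalization does a condition like $(NI)_\rho$ become a single scale-free object that one can hope to verify by perturbation. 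Without this (or some equivalent device), ``transversality of a jet at a periodic orbit'' gives you control at one scale and one location, not the uniform-in-$b$, uniform-over-$M$ bound you need.

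Second, a single localized perturbation plus jet-transversality near a periodic orbit does not suffice for density, and for structural reasons: the $s$-template at $p$ is an integral of contributions along the entire backward orbit of $W^u_{(-1,1)}(p)$, and a compactly supported perturbation affects only a sparse set of ``passages'' through its support. The paper instead builds a deformation family with $\approx b^{1/R}$ nearly independent parameters localized in consecutive slabs along a single unstable curve, shows that they change the template almost independently on disjoint subintervals (Proposition \ref{prop:Psi}), and then uses a large-deviation/moderate-deviation estimate in the parameter space (Proposition \ref{Prop:mesure}) together with Borel--Cantelli to kill the exceptional set. This probabilistic multi-parameter mechanism, not transversality of a finite jet, is what produces the stretched-exponentially small failure probability needed to beat the polynomially many $(p,\alpha,b)$ one must control. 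There is also a genuinely delicate recurrence issue (interference when the backward orbit re-enters the perturbation region), handled via the exceptional set $\mathcal{E}(b)$ and Anosov closing; your proposal does not address it.

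Third, openness and uniformity are more delicate than ``continuous $C^2$ dependence of the temporal distance.'' The stable/unstable distributions (hence any temporal-distance-type function) are generically only Hölder, so $C^2$ dependence is not available, and the condition one needs open involves estimates over an unbounded range of $b$. The paper's openness comes from a structural observation inside the spectral estimate (Remark \ref{rem:perturb}): the non-integrability bound is actually invoked only for $b$ in a \emph{bounded} window $[b_0,\konst]$, and a bounded window of oscillatory-integral estimates is an open condition. Likewise, uniformity of the constants in \eqref{eq:cor} is not automatic from continuity — the dependence of the local charts and of the components $\bbL^t_{\bj\to\bj'}$ on the flow is not uniform as $|\omega(\bj)|\to\infty$, and the paper has to resort to a compactness/contradiction argument (Subsection \ref{ss:localuniformity}).

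In short: your outline is right about which machine to build, but the parts you treat as routine — making the non-integrability condition scale-free, achieving density, and getting openness/uniformity — are the hard part, and the specific mechanisms you propose (single localized perturbation, jet transversality at a periodic orbit, continuity of a $C^2$ temporal distance) would not carry them.
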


It is well known as Anosov alternative\footnote{This is proved in the paper \cite[Theorem 14]{MR0242194}. Actually, in \cite[Theorem 14]{MR0242194}, the first alternative is that the stable and unstable foliations are metrically transitive. 
As explained in \cite[\S 5 in the appendix]{MR0242194}, the last condition implies that $f^t$ is Kolmogorov and hence mixing, whereas $f^t$ is obviously not mixing in the first alternative.} that a volume-preserving Anosov flow is either mixing or $C^1$ conjugate to a suspension flow of an Anosov diffeomorphism by a constant roof function. Also it is not difficult to see that the former alternate holds for almost all cases, say,  for an open dense subset in the space of volume-preserving Anosov flows.
But, for the questions how often the flows are \emph{exponentially} mixing, our knowledge is rather incomplete. 
An ultimate conjecture, known as Bowen-Ruelle conjecture, states that mixing Anosov flows will always be exponentially mixing. But this conjecture is widely open at present. 
In this paper, we investigate a related problem: whether exponential mixing is an open dense property for volume-preserving Anosov flows. 

A few important progresses on the rate of mixing for Anosov flows were made by Chernov\cite{MR1626741} and Dolgopyat\cite{MR1626749,MR1653299,MR1779392} in late 1990's.
In \cite{MR1626749}, Dolgopyat proved that a volume-preserving Anosov flow is exponentially mixing if the stable and unstable foliations are $C^1$ and are not jointly integrable. In particular, it is proved in \cite{MR1626749} that the geodesic flows on negatively curved surfaces are exponentially mixing. (Later this result is extended to higher dimensional contact Anosov flows \cite{MR2113022, MR2652469, MR2995886} and also to those with singularities such as Sinai billiard flows \cite{1506.02836, MR2964773}.)
In \cite{MR1653299} and \cite{MR1779392}, he also studied exponential and rapid ({\it i.e.} super-polynomial)  mixing for suspension flows of subshifts of finite type, which abstracts Axiom A flow, and gave several criteria for such flows to be rapid or exponential mixing. 
Based on the argument in \cite{MR1653299}, Field, Melbourne and T\"or\"ok  proved more recently  in \cite{MR2342697} that  rapid mixing is an open dense property for Axiom~A flows and, in particular, for volume-preserving Anosov flows.    

However, to the author's knowledge, the problem on exponential mixing mentioned above remains open.
The aim of this paper is to study the problem in the simplest possible setting of dimension $3$ and present an affirmative answer in Theorem~\ref{th:main}. This also provides an example of a non-empty open set of volume-preserving Anosov flows which stably exhibit exponential mixing. (Rather surprisingly, no such example has been known.)

In the following sections, 
we first investigate the geometry of the stable and unstable foliations and introduce the notion of \emph{$s$-template} which describes how the stable subbundle twists along unstable manifolds. 
In Definition \ref{def:ni}, we formulate the non-integrability condition $(NI)_\rho$ for $\rho>0$ in terms of $s$-templates.
In Theorem \ref{th:od}, we show that the condition $(NI)_\rho$ for sufficiently small $\rho>0$ holds for a dense subset in $\fF^r_A$ for any $r\ge 3$. 
Then we prove Theorem \ref{th:main} by showing, in Theorem~\ref{th:exp}, that, if $f_0^t\in \fF^3_A$ satisfies  $(NI)_\rho$ for some $\rho>0$, there is a $C^3$ open neighborhood of $f^t_0$ in $\fF^3_A$ in which all the flows are exponentially mixing with uniform constants $c_\alpha$ and $C_\alpha$ in the decay estimate \eqref{eq:cor}. 

The main novelty in our argument consists in the argument related to $s$-templates in Section~\ref{sec:ni}. 
The idea is quite simple and explained in the following few pages. 
Also the perturbation argument in the proof of Theorem \ref{th:od} in Section \ref{sec:pr_od} may be of some interest, where we consider deformation families of a flow with huge number of parameters and apply large deviation argument in the parameter spaces. The proof of Theorem \ref{th:exp} is obtained by modifying the argument in the author's previous papers \cite{MR2652469, MR2995886}. 
Unfortunately this part is rather long and occupies the remaining two-thirds of this paper, though the core of the argument is presented in a few pages.  
This is because some objects we consider are not smooth and require careful treatment. Still our argument is basically elementary and straightforward. If the reader is familiar with  estimates on non-linearity of hyperbolic flows and/or Fourier analysis, one will be able to  skip good part of the argument and/or find better ways to prove the propositions in this part.   

\begin{Remark}
The argument presented in this paper depends crucially on the assumptions that $M$ is three dimensional and that $f^t$ preserves a smooth volume. So it will not extend to more general cases directly. But the author would like to emphasize that the following idea behind the argument will be useful in much  more general cases of partially hyperbolic dynamical systems: {\it 
Twist of the stable subbundle along pieces of unstable manifolds viewed in the unit scale will be ``random" and ``rough" in generic cases and such  ``random" twist will not be cancelled completely in the process where the flow $f^t$ contracts the piece of unstable manifold to microscopic scale as $t\to -\infty$ (if we view things in an appropriate scaling), because the contraction is exponential and therefore  only Taylor approximation of $f^t$ up to some finite order will be effective.}
See also Remark~\ref{Rem:contact}. 
\end{Remark}

\section{The non-integrability condition}
\label{sec:ni}

Let $3\le r\le \infty$ and consider a $C^r$ Anosov flow $f^t:M\to M$. We suppose that the flow $f^t$ preserves a $C^{r}$ volume $\mu$ on $M$. Here, by a technical reason, we do \emph{not} assume that $\mu$ is the Riemann volume $\vol$. 
Let $v$ be the $C^{r-1}$ vector field generating the flow $f^t$. 
We suppose that $\|v\|\equiv 1$ for the Riemann metric $\|\cdot\|$ on $M$.  
Since the argument below does not depend on the Riemann metric essentially, this does not cause any  loss of generality. 

In some points in the argument below, we will need to check that some constants can be taken uniformly for the flows in a sufficiently small $C^r$ neighborhood of $f^t$ that preserve $C^r$ volume forms sufficiently close to $\mu$. 
In order to distinguish such constants, we put the subscript $*$ to the symbols of them. 
We use $C_*$ as a generic symbol for such class of constants and write $C_*(\cdot)$ when we emphasize their dependence on some quantity in the parentheses. Also we write $\mathcal{O}_*(\cdot)$ for a term which is bounded in absolute value by the quantity inside the parenthesis multiplied by some constant $C_*$. 
\subsection{Anosov flows}
From the definition of Anosov flow, there is an $f^t$-invariant continuous decomposition of the tangent bundle 
\begin{equation}\label{eq:hyperbolic_splitting}
TM=E_0\oplus E_s\oplus E_u \quad\text{with}\quad \dim E_0=\dim E_s=\dim E_u=1
\end{equation}
such that $E_0=\langle v\rangle$ and that, for some  positive constants $C_*>0$ and $\chi_*>0$, 
\begin{equation}\label{eq:hyperbolicity}
\|Df_x^t|_{E_s}\|\le C_*e^{-\chi_* t},\quad \|Df_x^{t}|_{E_u}\|\ge C_*^{-1} e^{\chi_* t}\quad \mbox{for all }t\ge 0.
\end{equation}
The decomposition  dual to \eqref{eq:hyperbolic_splitting} is $
T^*M=E_0^*\oplus E_s^*\oplus E_u^*$ where
\[
E_0^*=(E_s\oplus E_u)^\perp,\quad E_s^*=(E_u\oplus E_0)^\perp,\quad E_u^*=(E_s\oplus E_0)^\perp.
\]
The subbundle $E_0$ is $C^{r-1}$, but $E_s$ and $E_u$ are not even $C^1$ in general. 
However we have  
\begin{equation}\label{eq:irregularity_distribution}
\angle(E_s(p),E_s(q))\le C_* \|p-q\|\cdot \langle \log \|p-q\|\rangle
\end{equation}
in local charts\footnote{It is of course possible to formulate \eqref{eq:irregularity_distribution} without using local charts by introducing a parallel transport. }, where (and henceforth) $\langle s\rangle$ denotes\footnote{The notation $\langle s\rangle$ is used often in analysis and sometimes called Japanese bracket. By a technical reason, our definition is slightly different from the usual one $\langle s\rangle=\sqrt{1+s^2}$. But the difference is not essential at all. }  some fixed $C^\infty$ function of $s$ such that
\[
\langle s\rangle=
|s|\quad\text{if $|s|\ge 2$}\qquad \text{and} \qquad \langle s\rangle\ge 1 \quad \text{for any $s$}.
\] 
The estimate \eqref{eq:irregularity_distribution} holds also for the subbundles $E_u$ and  $E_0^*$. 
\begin{Remark}\label{rem:eseu} The non-smoothness of $E_s$ and $E_u$ mentioned above is caused mainly by their variation in the flow direction. In fact, 
the sums $E_u\oplus E_0$ and $E_s\oplus E_0$ are $C^1$ and so are  their normals $E_s^*$ and  $E_u^*$.  Especially we have 
\begin{equation}\label{eq:irregularity_distribution2}
\angle(E_s^*(p),E_s^*(q))\le C_* \|p-q\|,\qquad
\angle(E_u^*(p),E_u^*(q))\le C_* \|p-q\|
\end{equation}
in local charts.  
\end{Remark}

\subsection{The intrinsic metric on stable and unstable manifolds}\label{ss:intrinsic_metric}
Let $W^s(p)$ and $W^u(p)$ be the stable and unstable manifolds passing through a point $p\in M$. 
Below we discuss about twist of the stable subbundle $E_s$ along  $W^u(p)$. But note that, by considering the time-reversed flow ~$f^{-t}$, we can (and will) argue  about twist of the unstable subbundle $E_u$ along $W^s(p)$ in parallel. 
To begin with, let us introduce a $C^{r-1}$ metric on $W^u(p)$ by
\begin{equation}\label{eq:intrinsice_metric}
|v|_{W^u(p)}
=
\lim_{t\to -\infty} \frac{\|Df_q^t(v)\|}{\|Df_p^t|_{E_u}\|}\qquad \text{for $v\in T_qW^u(p)$ at $q\in W^u(p)$. }
\end{equation}
The following lemma is an immediate consequence of this definition.
\begin{Lemma}\label{eq:intrinsic_parameter}
 If $f^t$ sends $W^u(p)$ to $W^u(p')$, it brings the  metric $|\cdot|_{W^u(p)}$ to  $|\cdot|_{W^u(p')}$ up to multiplication by a positive constant. If $f^t(p)=p'$, the multiplier is just $\|Df^t_p|_{E^u}\|$. 
\end{Lemma}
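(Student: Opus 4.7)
The statement is essentially a naturality/equivariance property of the intrinsic metric, so I expect it to fall out from unwinding the limit definition of $|\cdot|_{W^u(p)}$ together with the cocycle property of the expansion rate along $E_u$. Since $\dim E_u=1$, everything reduces to scalars: for each $x\in M$ and $t\in\real$, define the expansion factor
\[
\ell(x,t)=\|Df_x^t|_{E_u}\|,
\]
which satisfies the cocycle identity $\ell(x,t+s)=\ell(f^s(x),t)\cdot\ell(x,s)$. For $v\in T_qW^u(p)=E_u(q)$, the norm $\|Df_q^t(v)\|=\ell(q,t)\|v\|$, so the defining limit becomes
\[
|v|_{W^u(p)}=\|v\|\cdot J(p,q),\qquad J(p,q):=\lim_{t\to-\infty}\frac{\ell(q,t)}{\ell(p,t)}.
\]
Convergence of $J(p,q)$ for $q\in W^u(p)$ is the standard Jacobian-type estimate in hyperbolic dynamics (it comes from the fact that $f^{-t}$ contracts $q,p$ together exponentially, and the expansion rate varies Hölder-continuously on each unstable leaf).

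The first step in my plan is to record two formal properties of $J$. First, the cocycle identity $J(p,q)\cdot J(q,r)=J(p,r)$ for $p,q,r$ on a common unstable manifold, which is obvious from the definition. Second, an equivariance under the flow: if $q'=f^s(q)$ and $p_1=f^s(p)$, then using the $\ell$-cocycle,
\[
J(p_1,q')=\lim_{t\to-\infty}\frac{\ell(q',t)}{\ell(p_1,t)}=\lim_{t\to-\infty}\frac{\ell(q,t+s)/\ell(q,s)}{\ell(p,t+s)/\ell(p,s)}=\frac{\ell(p,s)}{\ell(q,s)}\cdot J(p,q).
\]

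With this in hand, the proof is direct. Given $v\in T_qW^u(p)$, let $q'=f^s(q)$ and $p_1=f^s(p)\in W^u(p')$. Then
\[
|Df^s(v)|_{W^u(p')}=\|Df^s(v)\|\cdot J(p',q')=\ell(q,s)\|v\|\cdot J(p',p_1)\cdot J(p_1,q'),
\]
and plugging in the equivariance formula above gives
\[
|Df^s(v)|_{W^u(p')}=J(p',p_1)\cdot\ell(p,s)\cdot\|v\|\cdot J(p,q)=\bigl(J(p',f^s(p))\cdot\ell(p,s)\bigr)\cdot|v|_{W^u(p)}.
\]
The multiplier depends only on $p$, $p'$ and $s$ — not on $q$ — which proves that $f^s$ scales the intrinsic metric by a constant. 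When $p'=f^s(p)$, the factor $J(p',f^s(p))=J(p',p')=1$ and the multiplier is exactly $\ell(p,s)=\|Df^s_p|_{E_u}\|$, as claimed.

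The only non-routine point is the convergence of the defining limit, which is needed to make $J(p,q)$ well-defined; once convergence is granted, the rest is a one-line manipulation of cocycles. I would either cite this convergence as a standard fact about uniformly hyperbolic flows, or sketch it briefly using the Hölder regularity of $\log\ell(\cdot,1)$ along unstable leaves combined with exponential contraction under $f^{-t}$.
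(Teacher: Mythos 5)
Your proof is correct and follows the same line the paper intends: the paper gives no explicit proof (calling the lemma an ``immediate consequence'' of the limit definition \eqref{eq:intrinsice_metric}), and your argument is exactly what that immediacy cashes out to --- unwind the limit, factor out the cocycle ratio $\ell(p,s)/\ell(q,s)$, and observe that the remaining factor $J(p',f^s(p))\,\ell(p,s)$ depends only on $p,p',s$ and collapses to $\|Df^s_p|_{E_u}\|$ when $p'=f^s(p)$. Introducing the auxiliary function $J$ and its cocycle/equivariance properties is a clean way to package the bookkeeping, but it is the same computation.
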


Let $w^u_p:\real\to M$ be the $C^r$ parametrization of $W^u(p)$ by the arc length with respect to the metric $|\cdot|_{W^u(p)}$ such that $w^u_p(0)=p$. (We do not care about the direction of parametrization.) For an interval $J\subset \real$, we set $W^u_{J}(p):=w^u_p(J)\subset W^u(p)$.

 \subsection{Some sections of the normal bundles of unstable manifolds}\label{ss:cotan}
For a point $p\in M$ and an interval $J\subset \real$,  
let $\Gamma^{u}(p,J)$ be the space of continuous sections $
\gamma:W^u_{J}(p)\to T^*M$ such that $\gamma(q)\in T_q^*M$  at each $q\in W^u_J(p)$ is normal to the tangent space $T_qW^u(p)=E_u(q)$.
Let $\Gamma_1^{u}(p,J)\subset \Gamma^{u}(p,J)$ be the subset that consists  $\gamma\in \Gamma^{u}(p,J)$ satisfying  $
\langle \gamma(q),v(q)\rangle\equiv 1$ where $v(\cdot)$ denotes the generating vector field of the flow $f^t$.

We write $\gamma^\perp_{p,J}\in \Gamma^{u}(p,J)$ for either of the two $C^{r-1}$ sections satisfying 
\[
\langle \gamma^\perp_{p,J}(q), u\rangle
=\pm \mu(v(q),(w^u_p)'(\tau),u)
\quad \text{for any $u\in T_q M$ at $q=w^u_p(\tau)$ and for any $\tau \in J$,}
\]
where $\mu$ is the volume preserved by $f^t$. 
We tentatively fix a $C^{r-1}$ section $\gamma^{0}_{p,J}$ in $ \Gamma_1^{u}(p,J)$. 
For the moment  we assume only that the sections $\gamma^{0}_{p,J}$ are bounded in $C^{r-1}$ sense uniformly for $p\in M$ and $J\subset (-1,1)$.
We may then express each section $\gamma\in \Gamma^u_1(p,J)$ as 
\begin{equation}\label{eq:expression0}
\gamma(q)=\gamma^0_{p,J}(q)+\psi_\gamma(\tau)\cdot \gamma^\perp_{p,J}(q)\quad\text{for $q=w^u_p(\tau)$ with $\tau\in J$,}
\end{equation}
where $\psi_\gamma:J\to \real$ is a continuous function and called the  representation function of~$\gamma$. For a $C^{r-1}$ section $\gamma\in \Gamma^u_1(p,J)$, we define its (maximum) curvature $\kappa(\gamma)$  by 
\[
\kappa(\gamma)=\sup\{\,|\psi''_{\gamma}(\tau)|\mid \tau\in J\,\}.
\]

For a $C^{r-1}$ section $\gamma\in \Gamma^u_1(p,J)$ and $t\in \real$, 
there is a unique section 
$\gamma_t\in \Gamma^u_1(f^t(p),J(t))$ with $J(t)=\pm \|Df^t|_{E_u}(p)\|\cdot J$ so that   
\begin{equation}\label{eq:gammat}
\gamma(q)=(Df^t)^*\gamma_t(f^t(q)).
\end{equation} 
Observe that the curvature $\kappa(\gamma_t)$ of $\gamma_t$ tends to infinity as $t\to -\infty$ in most cases, but may be bounded for some $\gamma$. This motivates  the following definition.
\begin{Definition}
A $C^{r-1}$ section $\gamma\in \Gamma^u_{1}(p,J)$ is said to be \emph{straight} if $\kappa(\gamma_t)$ is bounded uniformly for all $t\le 0$.
\end{Definition}
Note that this definition does not depend on the choice of $\gamma^{0}_{p,J}$ provided that they are uniformly bounded in  $C^{r-1}$ sense (as we are assuming) and is therefore intrinsic to the flow~$f^t$. In the next lemma,  we describe the space of straight sections.
\begin{Definition}
Two functions $\psi_0,\psi_1:J\to \real$ are said to be equivalent modulo affine functions if $\psi_0(\tau)=\psi_1(\tau)+\alpha \tau+\beta$  for $\tau\in J$ with some $\alpha,\beta\in \real$. Two sections $\gamma_0,\gamma_1\in \Gamma^u_1(p,J)$ are said to be 
\emph{affine equivalent} if their representation functions $\psi_{\gamma_0}$ and $\psi_{\gamma_1}$ (defined  in \eqref{eq:expression0}) are equivalent modulo affine functions.   
\end{Definition}  
\begin{Lemma}\label{lm:straight} For any point $p\in M$ and any interval $J\subset \real$, 
there exists a straight section $\gamma_0\in \Gamma^{u}_1(p,J)$. A $C^{r-1}$ section $\gamma \in \Gamma_1^{u}(p,J)$ is  straight  if and only if it is affine equivalent to $\gamma_0$. 
If a $C^{r-1}$ section $\gamma\in \Gamma^u_1(p,J)$ is straight, then $\gamma_t\in \Gamma^u_1(f^t(p), J(t))$ for $t\in \real$ is again straight. 
\end{Lemma}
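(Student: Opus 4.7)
My plan is to derive an explicit transformation formula for the representation function, then extract the straight section as a $C^{r-1}$ limit of coefficient functions whose second derivatives are exponentially Cauchy.

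Using the $f^t$-invariance of $\mu$, a direct calculation gives
\[
(Df^t)^*\gamma^\perp_{f^t(p),J(t)}(q') = \pm c_t^{-1}\,\gamma^\perp_{p,J}(q)\quad\text{at }q'=f^t(q),\qquad c_t:=\pm\|Df_p^t|_{E_u}\|.
\]
Let $a_t\in C^{r-1}(J)$ be defined by $(Df^t)^*\gamma^0_{f^t(p),J(t)}-\gamma^0_{p,J} = a_t\cdot\gamma^\perp_{p,J}$: both terms on the left belong to $\Gamma^u_1$, so their difference vanishes on $E_u\oplus E_0$ and hence lies in the $C^{r-1}$ line bundle $E_s^*|_{W^u_J(p)}$ (whose regularity comes from $E_u|_{W^u}$ and $E_0$ being $C^{r-1}$). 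Expanding \eqref{eq:gammat} in $\{\gamma^0_{p,J},\gamma^\perp_{p,J}\}$ yields
\[
\psi_{\gamma_t}(c_t\tau)=\pm c_t\bigl(\psi_\gamma(\tau)-a_t(\tau)\bigr),\qquad \kappa(\gamma_t)=|c_t|^{-1}\sup_{\tau\in J}|\psi''_\gamma(\tau)-a''_t(\tau)|.
\]

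Composing $(Df^s)^*=(Df^t)^*\circ(Df^{s-t})^*$ gives the cocycle identity $a_s(\tau)-a_t(\tau)=\pm c_t^{-1}\,a^{(f^t(p))}_{s-t}(c_t\tau)$ (the superscript indicating the base point); differentiating $k$ times in $\tau$,
\[
\partial^k_\tau(a_s-a_t)(\tau)=\pm c_t^{k-1}\bigl(a^{(f^t(p))}_{s-t}\bigr)^{(k)}(c_t\tau).
\]
For $k\ge 2$ the factor $|c_t|^{k-1}\le C_*e^{-\chi_*|t|}$ decays exponentially, so, granting the uniform bound $\sup_\tau|a_t^{(k)}|\le C_*$ for $2\le k\le r-1$, uniformly in the base point and in $t\le 0$, the family $\{a_t''\}_{t\to-\infty}$ is exponentially Cauchy in $C^{r-3}$, converging to some $A_2\in C^{r-3}(J)$. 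Sending $s\to-\infty$ in the same identity also produces $|A_2(\tau)-a_t''(\tau)|\le C_*|c_t|$ and $A_2(\tau)-a_t''(\tau)=\pm c_t\,A_2^{(f^t(p))}(c_t\tau)$.

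Choose $\psi_{\gamma_0}\in C^{r-1}(J)$ with $\psi''_{\gamma_0}=A_2$ and set $\gamma_0:=\gamma^0_{p,J}+\psi_{\gamma_0}\gamma^\perp_{p,J}\in\Gamma^u_1(p,J)$. The formula for $\kappa$ immediately gives $\kappa((\gamma_0)_t)\le C_*$, so $\gamma_0$ is straight. Any other straight $\gamma$ must satisfy $\psi''_\gamma=\lim_{t\to-\infty}a''_t=A_2$, hence $\psi_\gamma-\psi_{\gamma_0}$ is affine, establishing both existence and the affine-equivalence characterization. For flow-invariance, substituting $\psi''_\gamma=A_2$ into the expression for $\psi''_{\gamma_t}$ and using $A_2-a_t''=\pm c_tA_2^{(f^t(p))}(c_t\cdot)$ yields $\psi''_{\gamma_t}(c_t\tau)=A_2^{(f^t(p))}(c_t\tau)$, so $\gamma_t$ is straight at $f^t(p)$ for every $t\in\real$.

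The principal obstacle is the uniform bound $\sup_\tau|a_t^{(k)}|\le C_*$ invoked above. Note that $\|a_t\|_{C^0}$ typically grows like $1/|c_t|$ as $t\to-\infty$, since $(Df^t)^*$ expands the $E_s^*$ direction by this factor. What rescues the argument is that each derivative of $(Df^t)^*\gamma^0_{f^t(p),J(t)}$ along $W^u_J(p)$ in the intrinsic parameter $\tau$ picks up a compensating factor $c_t$ from the chain rule, because by Lemma \ref{eq:intrinsic_parameter}, $f^t|_{W^u(p)}$ in intrinsic arc length is the linear rescaling $\tau\mapsto c_t\tau$. Executing this bookkeeping rigorously requires working in a $C^{r-1}$ adapted frame along $W^u_J(p)$—which is available because $E_u|_{W^u}$ and $E_u\oplus E_0|_{W^u}$ are $C^{r-1}$ (Remark \ref{rem:eseu})—and produces the required uniform bound on $a_t^{(k)}$ for $k\ge 2$.
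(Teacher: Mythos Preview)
Your proof is essentially the same as the paper's, just organized around the continuous-time cocycle for $a_t$ rather than the paper's discrete recursion for $\psi_{\gamma_t}$; the two are equivalent reformulations of the same transformation law. The existence, uniqueness-up-to-affine, and flow-invariance arguments all go through once the uniform bound on $a_t''$ is in place.

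The one genuine gap is in your final paragraph. Your chain-rule heuristic (``each $\tau$-derivative picks up a compensating factor $c_t$ because $f^t|_{W^u}$ is $\tau\mapsto c_t\tau$ in intrinsic arc length'') only accounts for the $\tau$-dependence of $\gamma^0_{f^t(p),J(t)}$ through its argument $w^u_{f^t(p)}(c_t\tau)$. It does \emph{not} account for the $\tau$-dependence of $(Df^t)^*_{q(\tau)}$ through the base point $q(\tau)=w^u_p(\tau)$, and there is no factor of $c_t$ arising from that. Bounding $\partial_\tau^k(Df^t)^*_{q(\tau)}$ uniformly in $t\le 0$ is not a triviality of the chain rule; done directly it requires an inductive argument on the number of time-steps.

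The clean fix is already in your hands: use your own differentiated cocycle identity with \emph{bounded} increments. Fix $t_0>0$ with $|c_{-t_0}|\le 1/2$. For $-t_0\le t\le 0$ the bound $|a_t^{(k)}|\le C_*$ is immediate from $C^{r-1}$ regularity of $f^{t}$ and the uniform $C^{r-1}$ bound on $\gamma^0_{p,J}$. Then for $k\ge 2$ and $t\le 0$,
\[
a_t^{(k)}(\tau)-a_{t+t_0}^{(k)}(\tau)=\pm c_{t+t_0}^{\,k-1}\bigl(a^{(f^{t+t_0}(p))}_{-t_0}\bigr)^{(k)}(c_{t+t_0}\tau),
\]
so $\sup_p\|a_t^{(k)}\|_\infty\le \sup_p\|a_{t+t_0}^{(k)}\|_\infty+C_*|c_{t+t_0}|^{k-1}$, and telescoping over $t,\,t+t_0,\,t+2t_0,\dots,0$ gives $\sup_p\|a_t^{(k)}\|_\infty\le C_*\sum_{j\ge 0}|c_{-jt_0}|^{k-1}\le C_*$. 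This is exactly the paper's bounded-step iteration, recast in your variables; once you do it this way the ``adapted frame'' discussion is unnecessary.
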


\begin{proof}
If $f^t$ sends a $C^{r-1}$ section $\gamma\in \Gamma_1^u(p,J)$ to $\gamma_t\in \Gamma^u_1(p(t), J(t))$, the representation functions of $\gamma_t$ is related to that of $\gamma$ by the formula
\begin{equation}\label{eq:recur}
\psi_{\gamma_t}(\tau)=a(t)\cdot \psi_{\gamma}(a(t)^{-1} \tau)+\varphi_{p,t}(\tau),\qquad a(t)=\pm \|Df^t_p|_{E_u}\|
\end{equation}
where $\varphi_{p,t}$ is a $C^{r-1}$ function that stems from the difference between $\gamma^0_{f^t(p),J(t)}$ and the push-forward of $\gamma^0_{p,J}$ by $f^t$. 
Note that, for any $t_0>0$, the $C^{r-1}$ norm of the function $\varphi_{p,t}$ is bounded uniformly for $p\in M$ and $t\in \real$ with $|t|\le t_0$. 
\begin{Remark}\label{rm:sign}
The indefiniteness of the sign of $a(t)$ in \eqref{eq:recur} is due to that in the direction of the parametrization $w^u_p$. 
In most part of the argument below, we will ignore the related indefiniteness of sings because they are not essential at all and easy to fix if one likes. 
\end{Remark}
Differentiating the both sides of \eqref{eq:recur} with respect to $\tau$ twice and changing the variable $\tau$ to $a(t)^{-1}\tau$, we obtain the relation
\begin{equation}\label{eq:recurdd}
\psi''_{\gamma}(\tau)=a(t)\, \psi''_{\gamma_t}(a(t)\tau)-a(t)\,\varphi''_{p,t}(a(t)\tau).
\end{equation}
The claims of the lemma are consequences of this relation. 
Let us consider  
 a sequence 
\[
t(0)=0>t(1)>t(2)>\dots \to -\infty\quad \text{ such that }t_0/2\le t(i)-t(i+1)\le t_0.
\]  
By letting $t_0$ larger if necessary, we may and do assume
\[
a_i:= \|Df^{t(i+1)-t(i)}_{p(i)}|_{E_u}\|=a(t(i+1))/a(t(i))\, \in [-1/2,1/2].
\]
If we set $t=t(i+1)-t(i)$ and replace $p$ with $p(i)=f^{t(i)}(p)$ in \eqref{eq:recurdd}, we find the formula
\begin{equation}\label{eq:recursive_rel2}
\psi''_{\gamma_{t(i)}}(\tau)=a_i\, \psi''_{\gamma_{t(i+1)}}(a_i\cdot\tau)-a_i\,\varphi''_{p(i),t(i+1)-t(i)}(a_i\cdot \tau)\quad \text{for $\tau\in [-1,1]$.}
\end{equation}
Recursive application of this formula yields, for any integer $N>0$ and $\tau\in [-1,1]$,  
\[
\psi''_{\gamma}(\tau)=a(t(N))\cdot  \psi''_{\gamma_{t(N)}}(a(t(N))\cdot \tau)-\sum_{i=0}^{N-1} a(t(i+1))\,\varphi''_{p(i),t(i+1)-t(i)}(a(t(i+1))\cdot \tau).
\]
Since $|a(t)|\le C_* e^{\chi_* t}$ for $t\le 0$ by \eqref{eq:hyperbolicity}, the right-hand side converges to a unique continuous function as $N\to \infty$ in $C^{0}$ sense, provided that $\kappa(\gamma_t)=\|\psi''_{\gamma_t}\|_\infty$ is uniformly bounded for $t\le 0$. That is to say, if $\gamma\in \Gamma_1^u(p,J)$ is straight, its representation function satisfies 
\begin{equation}\label{eq:rep_exp}
\psi''_{\gamma}(\tau)=-\sum_{i=0}^{\infty} a(t(i+1))\,\varphi''_{p(i),t(i+1)-t(i)}(a(t(i+1))\cdot \tau)\quad \text{for $\tau\in [-1,1]$.}
\end{equation}

Conversely, suppose that $\gamma\in \Gamma_1^u(p,J)$ satisfies the last condition \eqref{eq:rep_exp}. Then $\psi''_\gamma$ is of class $C^{r-3}$ and  $\gamma$ is of class $C^{r-1}$. The relation \eqref{eq:recursive_rel2} gives
\[
\psi''_{\gamma_{t(j)}}(\tau)=-\sum_{i=j}^{\infty} 
\frac{a(t(i+1))}{a(t(j))}\,\varphi''_{p(i),t(i+1)-t(i)}\left(\frac{a(t(i+1))}{a(t(j))}\cdot \tau\right) \quad \text{for $\tau\in [-a(t(j)),a(t(j))]$}
\]
and the right-hand side is bounded uniformly in $j\ge 0$, so that $\gamma$ must be straight. 

Clearly the former two statements of the lemma follow from the argument above. The last statement is an immediate consequence of the definition of straight section. 
\end{proof}
Since the choice of the sections $\gamma^0_{p,J}$ was rather arbitrary, we henceforth assume without loss of generality that the sections $\gamma^0_{p,J}$  are straight sections. (This is just for avoiding a new notation.) Further, in the case $J=(-1,1)$, we specify $\gamma^0_{p,J}$ as the unique straight section satisfying the following conditions at the end points:
\begin{equation}\label{eq:specyfy_gamma0}
\lim_{\tau\to \sigma} \gamma^0_{p,(-1,1)}(w^u_p(\tau)) \in E_0^*(w^u_p(\sigma))\quad \text{for $\sigma=\pm 1$.} 
\end{equation}

\subsection{The definition of $s$-templates} 
We next consider how the direction of the stable subspace $E_s$ twists along the local unstable manifold $W^u_J(p)$ when we observe it in the frame given in the last subsection. 
Let $\gamma^s_{p,J}\in\Gamma_1^{u}(p,J)$ be the unique continuous section such that $\gamma^s_{p,J}(q)\in E_0^*(q)=(E_u(q)\oplus E_s(q))^\perp$ for $q\in W^u_{J}(p)$ and let $\psi^s_{p,J}:J\to \real$ be its representation function. 
\begin{Remark}   
The function $\psi^s_{p,J}$ captures  the variation of the stable subbundle $E_s$ along $W^u_J(p)$. But note that we consider only the component normal to $W^u_{J}(p)$. The variation of the other component will turn out to be negligible. (See Remark \ref{rem:eseu}.)
\end{Remark}
The function $\psi^s_{p,J}$ is not even $C^1$ in general but satisfies
\begin{equation}\label{eq:regularity_psi_s}
|\psi^s_{p,J}(\tau')-\psi^s_{p,J}(\tau)|\le C_* |\tau'-\tau| \cdot \langle \log |\tau'-\tau|\rangle\quad\text{for }\tau,\tau'\in J 
\end{equation}
as a consequence of \eqref{eq:irregularity_distribution}. Now we introduce

\begin{Definition}[$s$-templates]
The functions $\psi^s_{p,(-1,1)}$ for $p\in M$ are called the \emph{$s$-templates} for the flow $f^t$. We write $
\template=\template(f^t)=\{\psi^s_{p,(-1,1)}\mid p\in M\}$ for the set of  all $s$-templates for the flow $f^t$.
\end{Definition}
Note that, from the condition \eqref{eq:specyfy_gamma0} in the choice of $\gamma^0_{p,(-1,1)}$, the $s$-templates satisfy 
\begin{equation}\label{eq:boundary_cond}
\psi^s_{p,(-1,1)}(\pm 1):=\lim_{\tau\to \pm 1} \psi^s_{p,(-1,1)}(\tau)=0.
\end{equation}
The next lemma tells that the twist of $E_s$ along the unstable manifolds in a microscopic scale is a miniature of an $s$-template up to affine equivalence. 
(This is the reason for the name ``template".) 
\begin{Lemma}\label{lm:psis}
For any $q\in M$ and any  $\delta\in (0,1)$, there exist $t> 0$ such that  
\begin{equation}\label{eq:psis}
\psi^s_{q,(-\delta,\delta)}(\tau)=\delta\cdot \psi_{p,(-1,1)}^s(\delta^{-1}\tau)+\alpha \tau +\beta\qquad \text{with} \quad p=f^t(q)
\end{equation}
where $|\alpha|\le C_*\langle \log \delta\rangle$ and $|\beta|\le C_*$.
\end{Lemma}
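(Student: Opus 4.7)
The plan is to exploit the flow-invariance of the covector section $\gamma^s$ and translate it into the transformation formula \eqref{eq:recur} for representation functions.

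First I would verify that $\gamma^s$ is flow-invariant, in the sense that $\gamma^s_{q,(-\delta,\delta)}(q') = (Df^t)^*\gamma^s_{p,(-1,1)}(f^t(q'))$ whenever $p = f^t(q)$. This holds because the line bundle $E_0^* = (E_s\oplus E_u)^\perp$ is $(Df^t)^*$-invariant (as $E_s$ and $E_u$ are $Df^t$-invariant), and the normalization $\langle\gamma^s,v\rangle\equiv 1$ is preserved by $(Df^t)^*$ since $Df^t(v)=v$; these two properties uniquely characterize $\gamma^s$ within $\Gamma^u_1$. Next, choose $t>0$ with $a(t)=\|Df^t_q|_{E_u}\|=\delta^{-1}$, so that $f^t$ maps $W^u_{(-\delta,\delta)}(q)$ onto $W^u_{(-1,1)}(p)$ in the intrinsic parametrization. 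Applying \eqref{eq:recur} to the invariance identity gives
\[
\psi^s_{p,(-1,1)}(\tau)=\delta^{-1}\,\psi^s_{q,(-\delta,\delta)}(\delta\tau)+\varphi_{q,t}(\tau),
\]
where $\varphi_{q,t}$ is the representation function, with respect to $\gamma^0_{p,(-1,1)}$, of the pushforward of $\gamma^0_{q,(-\delta,\delta)}$ by $f^t$.

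Since both $\gamma^0_{q,(-\delta,\delta)}$ and its pushforward are straight (by the last statement of Lemma \ref{lm:straight}), and since any two straight sections of $\Gamma^u_1$ on the same interval are affine equivalent by the same lemma, $\varphi_{q,t}(\tau)=\alpha_0\tau+\beta_0$ is an exact affine function. Setting $\sigma=\delta\tau$ and solving for $\psi^s_{q,(-\delta,\delta)}$ then gives the claimed identity with $\alpha=-\alpha_0$ and $\beta=-\delta\beta_0$.

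Finally, to estimate $\alpha$ and $\beta$, I would evaluate the resulting identity at the endpoints $\tau=\pm\delta$ and use $\psi^s_{p,(-1,1)}(\pm 1)=0$ from \eqref{eq:boundary_cond}, which yields
\[
\beta=\tfrac12\bigl(\psi^s_{q,(-\delta,\delta)}(\delta)+\psi^s_{q,(-\delta,\delta)}(-\delta)\bigr),\qquad
\alpha=\tfrac{1}{2\delta}\bigl(\psi^s_{q,(-\delta,\delta)}(\delta)-\psi^s_{q,(-\delta,\delta)}(-\delta)\bigr).
\]
The bound $|\beta|\le C_*$ is then immediate from the uniform boundedness of $\psi^s$ at the endpoints (coming from the uniform $C^{r-1}$ bound on the reference sections $\gamma^0$), while $|\alpha|\le C_*\langle\log\delta\rangle$ follows from the H\"older--log regularity estimate \eqref{eq:regularity_psi_s} applied to the increment of $\psi^s_{q,(-\delta,\delta)}$ across an interval of length $2\delta$. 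The only delicate point is the use of \emph{exact} (rather than merely approximate) affineness of $\varphi_{q,t}$: a direct application of \eqref{eq:recur} would only give affineness modulo a $C^{r-1}$-bounded error for $|t|\le t_0$, which is insufficient since our $t$ is of order $\chi_*^{-1}|\log\delta|$; it is precisely the invariance of the straightness property under arbitrary pushforward, provided by Lemma \ref{lm:straight}, that cleanly circumvents this issue.
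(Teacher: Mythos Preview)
Your derivation of the formula \eqref{eq:psis} is correct and follows exactly the paper's route: the flow-invariance of $E_0^*$ gives the identity $(\gamma^s_{q,(-\delta,\delta)})_t=\gamma^s_{p,(-1,1)}$, and the exact affineness of $\varphi_{q,t}$ comes from Lemma~\ref{lm:straight} applied to the pushforward (or, equivalently in the paper, the pullback) of the straight reference section $\gamma^0$. Your closing remark about why one needs Lemma~\ref{lm:straight} rather than merely the $C^{r-1}$ bound on $\varphi_{q,t}$ for bounded $t$ is precisely the point.

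Where you diverge from the paper is in the estimates on $\alpha$ and $\beta$. The paper obtains $|\alpha|\le C_*\langle\log\delta\rangle$ by iterating the relation \eqref{eq:eq:psis_recursive} with $\delta'/\delta$ bounded, accumulating $O(|\log\delta|)$ bounded contributions. Your argument is more direct: plugging in $\tau=\pm\delta$ and using the boundary condition \eqref{eq:boundary_cond} reduces $\alpha,\beta$ to endpoint values of $\psi^s_{q,(-\delta,\delta)}$, and then the uniform boundedness of $\gamma^s,\gamma^0,\gamma^\perp$ gives $|\beta|\le C_*$, while \eqref{eq:regularity_psi_s} applied to the increment over $[-\delta,\delta]$ gives $|\alpha|\le C_*\langle\log\delta\rangle$. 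This is a clean shortcut that avoids the recursion; the paper's recursive argument, on the other hand, does not rely on the normalization \eqref{eq:boundary_cond} and yields the intermediate relation \eqref{eq:eq:psis_recursive}, which is reused later (e.g.\ in the discussion of $\torsion^s$ and in Subsection~\ref{ss:prob_func}).
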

\begin{proof}
Let $q\in M$ and $0<\delta<\delta'\le 1$. We
take $t>0$ satisfying $\|Df^t_q|_{E_u}\|=\delta'/\delta$, so that  $f^t(W^u_{(-\delta,\delta)}(q))=W^u_{(-\delta',\delta')}(p)$ with $p=f^t(q)$. 
Let us recall the relation \eqref{eq:recur} and find that
\begin{equation}\label{eq:recur2}
\psi_{\gamma_t}(\tau)=(\delta'/\delta) \cdot \psi_{\gamma}( (\delta/\delta') \tau)+\varphi_{q,t}(\tau)
\end{equation}
for any section $\gamma \in \Gamma^u_1(q,(-\delta,\delta))$ and its image $\gamma_t\in  \Gamma^u_1(p,(-\delta',\delta'))$ by $f^t$ that is defined by the relation \eqref{eq:gammat}. (See also Remark \ref{rm:sign}.)

We show that the function $\varphi_{q,t}(\tau)$ in \eqref{eq:recur2} is an affine function. 
To see this, we let $\gamma \in \Gamma^u_1(q,(-\delta,\delta))$ be the pull-back of the section $\gamma^0_{p,(-\delta',\delta')}$ by $f^t$, so that $\gamma_t=\gamma^0_{p,(-\delta',\delta')}$. Then the representation function of $\gamma_t=\gamma^0_{p,(-\delta',\delta')}$ on the left-hand side is null by definition. Also that of $\gamma$ on the right-hand side is an affine function because it is a straight section from Lemma~\ref{lm:straight}. Therefore $\varphi_{q,t}(\tau)$ is also an affine function. 
 
In order to get the conclusion of the lemma, we set $\gamma=\gamma^s_{q,(-\delta,\delta)}\in \Gamma^u_1(q,(-\delta,\delta))$ in \eqref{eq:recur2}. Then, from invariance of $E_0^*$, we see
\[
\psi^s_{p,(-\delta',\delta')}(\tau)=(\delta'/\delta) \cdot \psi^s_{q,(-\delta,\delta)}( (\delta/\delta') \tau)+\varphi_{q,t}(\tau).
\]
Changing the variable $\tau$ to $(\delta'/\delta)\tau$, we rewrite it as 
\begin{equation}\label{eq:eq:psis_recursive}
\psi_{q,(-\delta,\delta)}^s(\tau)= (\delta/\delta') \cdot \psi^s_{p,(-\delta',\delta')}((\delta'/\delta)\tau)- (\delta/\delta')\cdot \varphi_{q,t}((\delta'/\delta)\tau).
\end{equation}
We obtain the formula \eqref{eq:psis} as the case $\delta'=1$. The required estimate on $\alpha$ is obtained by applying \eqref{eq:eq:psis_recursive} with $\delta'/\delta$ bounded recursively (as in the proof of Lemma \ref{lm:straight}) and by using the fact that the affine function $\varphi_{q,t}(\tau)$ is bounded provided that  $\delta'/\delta$ (or $t$) is bounded. The required estimate on $\beta$ should be obvious. 
\end{proof}

\begin{Remark}
\label{Rem:time-reversal}
As we noted in the beginning of Subsection \ref{ss:intrinsic_metric}, we can develop the argument above for the time-reversed flow $f^{-t}$ in parallel. The objects corresponding to
\begin{equation}\label{eq:sense1}
|\cdot|_{W^u(p)},\quad W^u_J(p),\quad w^u_p(\cdot),\quad \Gamma^u(p,J),\quad\Gamma^u_1(p,J),\quad\gamma^{\perp}_{p,J},\quad \gamma^0_{p,J},\quad \gamma^s_{p,J}, \quad \psi^s_{p,J} 
\end{equation}
in such argument will be denoted respectively by 
\begin{equation}\label{eq:sense2}
|\cdot|_{W^s(p)},\quad W^s_J(p),\quad w^s_p(\cdot),\quad \Gamma^s(p,J),\quad \Gamma^s_1(p,J), \quad\hat{\gamma}^{\perp}_{p,J}, \quad\hat{\gamma}^0_{p,J}, \quad \gamma^u_{p,J},\quad \psi^u_{p,J}. 
\end{equation}
\end{Remark}

\subsection{The non-integrability condition}
Now we put the following definition.
\begin{Definition} \label{def:ni}
Let $0<\rho<1$.
We say that 
a $C^3$ Anosov flow $f^t$ on $M$ preserving a smooth volume $\mu$ satisfies the non-integrability condition $(NI)_{\rho}$  if, for sufficiently large $b>0$, the estimate 
\begin{equation}\label{eq:tv}
\left|\int_{-1}^1 \exp\left(ib \left(\psi(\tau)+\alpha \tau\right)\right) d\tau\right|<
b^{-\rho}
\end{equation}
holds for all $s$-templates $\psi\in \template(f^t)$ and $\alpha\in \real$. 
\end{Definition}
\begin{Remark}\label{Rem:largealpha}
From \eqref{eq:regularity_psi_s} (or \eqref{eq:irregularity_distribution}), the $s$-templates $\psi\in \template(f^t)$ are H\"older continuous with any exponent $0<\beta<1$ and the corresponding  H\"older coefficients are bounded by a uniform constant $C_{\beta,*}$. Hence, for each $0<\rho<1$, the condition \eqref{eq:tv} holds for free  if $|\alpha|>b$ and $b$ is sufficiently large. (For instance, we can check this by using ``regularized" integration by parts given in Lemma \ref{lm:reg_int_part}.) 
\end{Remark}
\begin{Remark}\label{Rem:indep_NI_metric} 
From Lemma \ref{lm:psis}, the non-integrability condition $(NI)_{\rho}$ remains unchanged even if  we replace the Riemann metric on $M$ by another Riemann metric and the volume $\mu$ by its scalar multiple.  
\end{Remark}
\begin{Remark}\label{Rem:contact}
In the case of contact Anosov flows (see \cite{MR2113022} for the definition), the set $\template$ of $s$-templates consists of a single trivial equivalence class $[0]$ modulo affine functions. (To check this, observe that $\gamma^s(q,J)$ is given by the contact form restricted to $W^u_{J}(p)$ and is straight because  the contact form is preserved by the flow.)
Therefore our non-integrability condition $(NI)_{\rho}$ excludes the case of contact Anosov flows!  
\end{Remark}

The main theorem, Theorem \ref{th:main}, follows if we prove the following two theorems.
\begin{Theorem}\label{th:od}
Let $3\le r< \infty$. 
If  we let $0<\rho<1$ be sufficiently small depending only on~$r$, the set of flows that satisfy the non-integrability condition $(NI)_{\rho}$ is dense in $\fF^r_A$. 
\end{Theorem}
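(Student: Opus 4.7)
The plan is, given any $f_0^t \in \fF^r_A$ and any $C^r$-neighborhood of it, to produce within that neighborhood a volume-preserving Anosov flow satisfying $(NI)_\rho$. By Lemma \ref{lm:psis}, every $s$-template at scale $\delta$ is obtained from a unit-scale template $\psi^s_{p,(-1,1)}$ by an affine change of variables, so the affine term $\alpha\tau$ in \eqref{eq:tv} already absorbs the reduction-of-scale correction; it therefore suffices to verify \eqref{eq:tv} for the unit-scale templates, with $|\alpha|\le b$ (the range $|\alpha|>b$ being handled by Remark \ref{Rem:largealpha}).

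The construction I would use is a high-dimensional perturbation family. Fix a small length scale $h>0$, a maximal $h$-separated set $\{q_j\}_{j=1}^N\subset M$ (so $N\asymp h^{-3}$), and smooth volume-preserving vector fields $X_j$ supported in disjoint neighborhoods of $q_j$ with $\|X_j\|_{C^r}\lesssim h^{r+1}$. Let $f^t_\omega$ be the flow of $v+\sum_{j=1}^N\omega_jX_j$ for $\omega\in\Omega:=[-\varepsilon,\varepsilon]^N$; for $\varepsilon$ and $h$ small enough, the family lies in the prescribed neighborhood and each $f^t_\omega$ is volume-preserving. Combining the series representation \eqref{eq:rep_exp} with the linearized dependence of the stable bundle on the vector field, one finds that each parameter $\omega_j$ contributes to the second derivative of $\psi^s_{\omega,p,(-1,1)}$ a term which, to leading order, is a bump supported on a $\tau$-interval of length $\asymp |a(t_j(p))|$, where $t_j(p)<0$ is the latest time the backward orbit of $p$ meets $\supp X_j$, and whose amplitude is likewise of order $|a(t_j(p))|$.

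The non-integrability is then established by a parameter-space moment bound. For fixed $b$, $p$, $\alpha$, treat
\[
I_\omega(b,p,\alpha):=\int_{-1}^{1}\exp\bigl(ib(\psi^s_{\omega,p,(-1,1)}(\tau)+\alpha\tau)\bigr)\,d\tau
\]
as a function of $\omega\in\Omega$ and estimate a high moment $\int_\Omega|I_\omega(b,p,\alpha)|^{2k}\,d\omega$ by expanding as a $2k$-fold integral over $\tau$-variables and using stationary-phase-in-$\omega$ along directions $\omega_j$ whose $t_j(p)$ lies in the right dyadic band; this yields a bound $\le (b^{-\eta})^{k}$ for some $\eta=\eta(r)>0$, whence by Chebyshev the set of bad $\omega$ at scale $b$ has measure $\le b^{-k(\eta-2\rho)}$. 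Taking a net of polynomial cardinality in $(p,\alpha)\in M\times[-b,b]$, applying a union bound, and summing over the dyadic sequence $b=2^m$ produces, via Borel--Cantelli, a full-measure set of $\omega\in\Omega$ for which $f^t_\omega$ satisfies $(NI)_\rho$, yielding the density statement.

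The principal obstacle is proving that the contributions of the $N$ perturbations to the second derivative of $\psi^s$ are effectively linearly independent on $(-1,1)$, so that the moment estimate produces genuine cancellation. Distinct $\omega_j$'s whose past-orbit visit times lie in different dyadic bands give contributions supported on disjoint dyadic scales in $\tau$ and are essentially orthogonal for free; the delicate case is $\omega_j$'s with comparable visit times, where one must use the transverse geometry of the stable and unstable bundles together with \eqref{eq:irregularity_distribution} to ensure that the shapes of the localized bumps are not all parallel. Balancing the polynomial loss from the H\"older character of $\psi^s$ (controlled by \eqref{eq:regularity_psi_s}) against the gain from the moment estimate is what forces $\rho$ to be small, and determines its dependence on $r$.
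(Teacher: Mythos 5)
Your proposal has the same high-level shape as the paper's (a multi-parameter perturbation family, a moment/large-deviation bound in parameter space, a polynomial net in $(p,\alpha)$, and Borel--Cantelli over dyadic $b$), but there is a gap at the heart of it that I do not think can be repaired in the form you propose.

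\textbf{The fixed perturbation scale cannot give the estimate uniformly in $b$.} You fix one length scale $h$, one $\varepsilon$, and one finite family $\{X_j\}$, and then claim a bound $\int_\Omega|I_\omega(b,p,\alpha)|^{2k}\,d\omega\lesssim b^{-k\eta}$ for all large $b$ and all $k$. But the backward orbit $f^t(W^u_{(-1,1)}(p))$ contracts exponentially, so for $t$ with $|a(t)|\lesssim h$ the arc $f^t(W^u_{(-1,1)}(p))$ fits inside a \emph{single} $X_j$-neighborhood; the perturbation restricted to that arc is, to leading order, an affine function (because $X_j$ is smooth at scale $h$), and affine corrections to $\psi^s$ are exactly absorbed by the free $\alpha$ in \eqref{eq:tv}. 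Hence the directions $\omega_j$ with $|a(t_j)|\lesssim h$ contribute nothing to the oscillatory integral. The directions with $|a(t_j)|\gtrsim h$ number $O(\log(1/h))$ at most (the orbit can only visit $O(1)$ of the $q_j$'s before contracting past scale $h$), so the \emph{effective} number of independent probing directions for a fixed family is bounded independently of $b$. A bounded number of $O(1)$ random phases can only give $|I_\omega|\lesssim N_{\mathrm{eff}}^{-1/2}$, which does not tend to zero as $b\to\infty$; you cannot get $|I_\omega|<b^{-\rho}$ uniformly in $b$. The paper avoids this by making the perturbation family $b$-\emph{adapted}: the $\varphi_j$'s in \eqref{eq:def_varphi_j} have support width $\sim b^{-1/R}$ and amplitude $\sim b^{-1-1/R}$ on a flow box aligned with $W^u_{(-1,1)}(p)$, so that there are $\sim b^{1/R}$ essentially independent probing directions for each fixed $(p,b)$, and this is exactly what produces the factor $b^{-\rho}$ with $\rho\sim 1/R$. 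The infinite-dimensional quasi-invariant measure $\boldsymbol{\mu}$ of \eqref{eq:trans_inv} is what ties these different $b$-adapted finite families together into a single space; a fixed finite-dimensional cube cannot play this role.

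\textbf{The independence you acknowledge as an ``obstacle'' is also not something one can leave for later.} You note that the ``delicate case'' is when the past-orbit visit times $t_j(p)$ are comparable. In fact this is not a sporadic degeneracy: it happens precisely for $p$ near short periodic orbits, and (Anosov closing) these form a positive-measure set of $p$'s with complicated combinatorics. The paper must exclude these points (the set $\mathcal{E}(b)$), prove the perturbations decouple away from them via (Ob1) and (Ob2), and separately prove Lemma \ref{Lm:ap} to recover the estimate on $\mathcal{E}(b)$ from the estimate off $\mathcal{E}(b)$. Your moment-expansion/stationary-phase argument would also need an analogue of this exclusion, because near periodic points the derivatives $\partial_{\omega_j}\psi^s_\omega$ for different $j$ are not even approximately linearly independent and the phase can be stationary on a large portion of $\Omega$. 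Finally, a smaller but real issue: your claim that $\partial_{\omega_j}\psi^s_{\omega,p}$ is a bump of width \emph{and} amplitude $\asymp|a(t_j(p))|$ has the wrong scaling -- the width of the affected $\tau$-interval is governed by the ratio of $\|X_j\|$-support size to $|a(t_j)|$, and becomes the whole of $(-1,1)$ once $|a(t_j)|\lesssim h$; the amplitude involves $\varepsilon$ and $\|X_j\|_{C^2}$, not $|a(t_j)|$ alone. Because the bump picture is the input to your stationary-phase step, this needs to be fixed before the moment estimate can even be set up.

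In short: the Chebyshev/Borel--Cantelli scaffolding and the reduction to unit-scale templates via Lemma \ref{lm:psis} are sound and match the paper, but the choice of a fixed finite-dimensional, fixed-scale perturbation family is not merely a technical variant -- it removes the mechanism ($\sim b^{1/R}$ independent probes at scale $b^{-1/R}$, made possible by the quasi-invariant measure $\boldsymbol{\mu}$ and the exceptional-set/recurrence analysis) that actually yields the decay $b^{-\rho}$.
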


\begin{Theorem}\label{th:exp}
If a flow $f^t_0\in \fF^3_{A}$ satisfies the non-integrability condition $(NI)_{\rho}$ for some $0<\rho<1$, there exists an open neighborhood $\mathcal{V}$ of $f_0^t$ in  $\fF^3_A$ such that all $f^t\in \mathcal{V}$ are exponentially mixing and further that  the decay estimate \eqref{eq:cor} holds for all $f^t\in \mathcal{V}$ with uniform
  constants $c_{\alpha}$ and $C_\alpha$. 
\end{Theorem}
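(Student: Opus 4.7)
The plan is to establish a spectral gap for an appropriate transfer operator associated to $f^t_0$, and then extract exponential mixing together with its persistence under $C^3$ perturbations. The argument is a Dolgopyat-type scheme in the framework of anisotropic Banach spaces developed in the author's previous works \cite{MR2652469, MR2995886}; the substantive new ingredient, compared to the contact case, is that the oscillatory cancellation in the high-frequency regime must be supplied by $(NI)_\rho$ applied to the $s$-templates rather than by a contact form.

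First I would construct an anisotropic Banach space $\Bargmann$ of distributions on $M$---of Bargmann or wave-packet type, with differing orders of regularity along the cotangent directions $E_s^*$, $E_0^*$, $E_u^*$---on which the Koopman semigroup $U_t\varphi = \varphi\circ f^t$ extends to a strongly continuous semigroup whose infinitesimal generator has discrete spectrum in a neighborhood of the imaginary axis, with the essential spectral radius pushed strictly into the open left half-plane and $0$ a simple eigenvalue with eigenspace $\complex\cdot 1$ coming from the invariant volume $\mu$. Equivalence between such a spectral picture and the decay estimate \eqref{eq:cor} is standard (Paley--Wiener plus resolvent bounds), so the task reduces to establishing a high-frequency spectral bound.

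The heart of the argument is a Dolgopyat-type norm estimate on the Fourier components of $U_t$ in the flow direction. Decomposing $U_t = \int_{\real} U_t^{(b)}\,db$ with $U_t^{(b)}$ isolating temporal frequency $b$ (defined via the spectral projection onto $E_0^*$ in the wave-packet representation), one aims at
\begin{equation*}
\|U_{T_*}^{(b)}\|_{\Bargmann\to \Bargmann}\le C_*\,\langle b\rangle^{-\theta}
\end{equation*}
for some $\theta=\theta(\rho)>0$ and a time $T_*$ depending logarithmically on $\langle b\rangle$. After many iterates of the transfer operator, localizing via the local product structure, the off-diagonal kernel reduces to oscillatory integrals of the form $\int_{-1}^{1} e^{i b \Phi_q(\tau)} \chi(\tau)\,d\tau$ along unit-scale pieces of unstable manifolds, where the phase $\Phi_q(\tau)$ measures the temporal shear between the stable leaf through $w^u_q(\tau)$ and that through $w^u_q(0)$. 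The geometric content of Section~\ref{sec:ni}---in particular Lemma \ref{lm:psis} and the choice $\gamma^s_{p,J}\in E_0^*$---is precisely that, after rescaling to unit size, $\Phi_q$ agrees with an $s$-template $\psi^s_{p,(-1,1)}$ plus an affine function in $\tau$, up to errors controlled by \eqref{eq:regularity_psi_s} and the $C^{r-1}$ bound on straight sections. The hypothesis $(NI)_\rho$ then converts directly into the bound $\le b^{-\rho}$ for each such integral, uniformly in the affine slope $\alpha$ (the range $|\alpha|>b$ being trivial by Remark \ref{Rem:largealpha}); summation over the preimage pieces yields the displayed estimate with some $\theta>0$.

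For the open neighborhood $\mathcal{V}$, it suffices to observe that every structural ingredient above---the hyperbolicity constants, the angle bounds \eqref{eq:irregularity_distribution} and \eqref{eq:irregularity_distribution2}, the uniform $C^{r-1}$ control of the sections $\gamma^0_{p,J}$, the anisotropic norms on $\Bargmann$, and crucially the map $(f^t,p)\mapsto \psi^s_{p,(-1,1)}(f^t)$---varies continuously on a small $C^3$ neighborhood of $f_0^t$. Combined with the quantitative H\"older estimate on the $s$-templates, this yields that \eqref{eq:tv} with $\rho$ replaced by any $\rho'<\rho$ continues to hold for every $f^t\in \mathcal{V}$ and sufficiently large $b$, with uniform constants, so the whole Dolgopyat argument goes through giving uniform $c_\alpha,C_\alpha$. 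The main obstacle will be the first step: engineering a Banach space $\Bargmann$ that is simultaneously (i) compatible with the merely log-Lipschitz regularity of $E_s,E_u$, (ii) strong enough to localize the Ruelle resonances and to implement the Fourier decomposition in $b$, and (iii) stable under $C^3$ perturbations; once this anisotropic framework is in place, translating $(NI)_\rho$ into the required spectral gap via the oscillatory integral above is comparatively clean.
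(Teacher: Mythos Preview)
Your overall architecture matches the paper's: build an anisotropic wave-packet space, decompose the transfer operator by flow-direction frequency, and reduce the high-frequency components to oscillatory integrals along unstable leaves whose phases are, after rescaling via Lemma~\ref{lm:psis}, $s$-templates plus affine corrections. So the broad strategy is right.

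There is, however, a genuine gap in your stability argument, and it is tied to a structural feature of the proof that you have not identified. You argue that because $(f^t,p)\mapsto \psi^s_{p,(-1,1)}(f^t)$ varies continuously, the estimate \eqref{eq:tv} with exponent $\rho'<\rho$ persists on a $C^3$ neighborhood ``for sufficiently large $b$, with uniform constants''. But continuity of templates only gives you, for each \emph{fixed} $b$, a neighborhood on which \eqref{eq:tv} survives; the neighborhood may shrink as $b\to\infty$, so you cannot conclude that a single $\mathcal V$ works for all large $b$. The paper's route around this is not to strengthen the continuity but to show that the condition $(NI)_\rho$ is only ever invoked for $b$ in a \emph{bounded} interval $[b_0,\konst]$ (Remark~\ref{rem:perturb}); once the range of $b$ is compact, a single neighborhood suffices by continuity.

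That bounded-$b$ reduction is not free: it comes from a dichotomy you omit. The paper splits the high-frequency analysis into Case~(I), where the approximate non-integrability $\Delta(p,\konst\langle\omega\rangle^{-1/2})$ of Definition~\ref{def:approx_ni} is below a threshold, and Case~(II), where it is large. Only in Case~(I) is \eqref{eq:tv2} (i.e.\ $(NI)_\rho$) applied, and there the parameter $b$ in \eqref{eq:tv} is confined to $[b_0,\konst]$. In Case~(II) the twist $\Delta$ itself supplies enough oscillation, via regularized integration by parts (Lemma~\ref{lm:reg_int_part}), and $(NI)_\rho$ is not used at all. This separation is also what prevents the factors of $\Delta_{\bj}$ appearing in the wave-packet weights (see the matrix $\bDelta_{a,\omega,n}$ in \eqref{eq:bDelta}) from spoiling the Schur-test estimates; the paper explicitly notes (footnote in the Case~(II) discussion) that the Case~(I) argument does not go through when $\Delta_{\bj}$ is unbounded, because $(NI)_\rho$ gives no control on how the oscillatory bound depends on the affine slope $\alpha$. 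Your sketch treats $(NI)_\rho$ as a universal input and misses both of these points: you need the Case~(I)/(II) split to make the summation work, and you need it again to get stability with uniform constants.
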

We prove Theorem \ref{th:od} in the next section, Section \ref{sec:pr_od}. 
We prove Theorem \ref{th:exp} in Section \ref{sec:proof1}, after preparations in Section \ref{sec:lc} and Section \ref{sec:anisoH}.

\subsection{Approximate  non-integrability}
\label{ss:approxNI}
We finish this section by a discussion on another important idea about joint non-integrability of the stable and unstable foliation, which is closer to the ideas of Frobenius non-integrability and the uniform non-integrability condition introduced by Chernov\cite{MR1626741}.   
Let us consider how the flow $f^t$ twists the tangent bundle along local unstable (resp. stable) manifolds (in a more literal sense). 
Consider a point $q\in M$ and a positive number $0<\delta<1$. 
Recall that we have specified the straight sections $\gamma^0_{q,J}$ uniquely when $J=(-1,1)$ by the condition \eqref{eq:specyfy_gamma0}, but not yet for the case $J= (-\delta,\delta)$ with $0<\delta<1$. 
There are two natural but different ways to choose a straight section in $\Gamma_1^{u}(q,(-\delta,\delta))$:
\begin{itemize}
\item[(a)] we take it as a restriction of $\gamma^0_{q,(-1,1)}$ to $W^u_{(-\delta,\delta)}(q)\subset W^u_{(-1,1)}(q)$, or 
\item[(b)] we take $t> 0$ such that $
f^t(W^u_{(-\delta, \delta)}(q))=W^u_{(-1,1)}(p)$ with $p=f^t(q)$ and let it be the pull-back of $\gamma^0_{p,(-1,1)}\in \Gamma^u_1(p,(-1,1))$ by $f^t$. 
\end{itemize}
Let us denote the straight sections obtained in (a) and (b) by $\gamma^{0}_{q,(-\delta,\delta)}$ and $\gamma^{\dag}_{q,(-\delta,\delta)}$ respectively. 
They are both straight sections and hence affine equivalent, that is, 
\begin{equation}\label{eq:exp_gamma_ast}
\gamma^\dag_{q,(-\delta,\delta)}(\tau)=
\gamma^0_{q,(-\delta,\delta)}(\tau)+\psi^\dag_{q,(-\delta,\delta)}(\tau)\cdot \gamma^\perp_{q,(-\delta,\delta)}(\tau)
\end{equation}
for an affine function $\psi^\dag_{q,(-\delta,\delta)}(\tau)$. 
The linear part of $\psi^\dag_{q,(-\delta,\delta)}(\tau)$ may be understood as the torsion that $f^t$ (with $t$ in (b) above) makes along $W^u_{(-\delta, \delta)}(q)$. This motivate us to define 
\begin{equation}\label{def:tors}
\torsion^s(q,\delta):=(\psi^\dag_{q,\delta})'(0).
\end{equation}

As we noted in Remark \ref{Rem:time-reversal}, we can apply the parallel argument to the time-reversed flow $f^{-t}$ and define 
\begin{equation}\label{eq:sense3}
\hat{\gamma}^\dag_{q,(-\delta,\delta)}, \quad\hat{\psi}^\dag_{q,\delta},\quad \torsion^u(q,\delta)
\end{equation}
as the objects corresponding to 
\begin{equation}\label{eq:sense4}
{\gamma}^\dag_{q,(-\delta,\delta)},\quad  {\psi}^\dag_{q,\delta},\quad  \torsion^s(q,\delta).
\end{equation}
These extends the correspondence\footnote{The correspondence between \eqref{eq:sense1}, \eqref{eq:sense3} and \eqref{eq:sense2}, \eqref{eq:sense4} may look a bit confusing. But, since we will not use the notation for the time-reversed system very often, the readers do not have to care too much about it.} between \eqref{eq:sense1} and \eqref{eq:sense2}.

Recall that there are options of choosing signs in the definitions of $\gamma^\perp_{q,(-\delta,\delta)}$ and $\hat{\gamma}^\perp_{q,(-\delta,\delta)}$. In the following definition, we suppose that the signs are chosen so that
\[
\langle \gamma^\perp_{q,(-\delta,\delta)}(0), (w^s_{q,(-\delta,\delta)})'(0)\rangle>0,\qquad 
\langle \hat{\gamma}^\perp_{q,(-\delta,\delta)}(0), (w^u_{q,(-\delta,\delta)})'(0)\rangle>0.
\]

\begin{Definition}\label{def:approx_ni}
For $q\in M$ and $0<\delta<1$, we set 
\begin{align}
\Delta(q,\delta)&=\torsion^u(q,\delta)-\torsion^s(q,\delta)
\end{align}
and call it the \emph{approximate non-integrability} at $q\in M$ in the scale $\delta$.
\end{Definition}

\begin{Remark}
We regard the quantity $\Delta(q,\delta)$ as an approximation of Frobenius non-integrability between stable and unstable foliation at $q\in M$ viewed  in the scale $\delta>0$. 
It is natural to expect that $\Delta(q,\delta)$ will take large values for most of small $\delta>0$ and most points $q\in M$ in generic cases. 
However the problem with the quantity $\Delta(q,\delta)$ is its dependence on the scale $\delta$. When we perturb the flow, it is difficult to see how the quantity $\Delta(q,\delta)$ varies for small $\delta>0$. 
On the contrary, our non-integrability condition $(NI)_{\rho}$ is formulated in terms of  \hbox{$s$-templates} and does not involve the scale $\delta$. This is the main technical advantage of  the notion of $s$-templates. Note however that we will make use of the quantity $\Delta(q,\delta)$ in the proof of Theorem \ref{th:exp}. In fact, we will  use 
the non-integrability condition $(NI)_{\rho}$ only in the situation where $|\Delta(q,\delta)|$ is not large enough. 
\end{Remark}

The next lemma gives a few basic properties of the quantities we have introduced.
\begin{Lemma}\label{lm:Tor} For $\sigma=s,u$, $0<\delta,\delta'<1$, $q\in M$ and $t\in \real$, we have 
\begin{align}\label{eq:torsion_bound}
&|\torsion^\sigma(q,\delta)-\torsion^\sigma(q,\delta')|< C_* \langle\log (\delta'/\delta)\rangle\quad\text{and hence}\quad
|\torsion^\sigma(q,\delta)|< C_* \langle\log \delta\rangle,\\
&|\torsion^\sigma(f^t(q),\delta)-\torsion^\sigma(q,\delta)|\le C_*\langle t\rangle,\quad\text{and hence } |\Delta(f^t(q),\delta)-\Delta(q,\delta)|\le C_*\langle t\rangle
\label{eq:torsion_bound2}
\intertext{and}
&|\torsion^\sigma(q',\delta)-\torsion^\sigma(q,\delta)|\le C_*\quad\text{if $d(q,q')<\delta$.} \label{eq:torsion_bound3}
\end{align} 
\end{Lemma}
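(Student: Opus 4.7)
The plan is to reduce each of the three bounds to the $C^{r-1}$-boundedness of the affine correction $\varphi_{p,s}$ appearing in \eqref{eq:recur}, via a single cocycle identity for the $\varphi$'s. Since the argument for $\sigma=u$ is parallel through the time-reversed flow (Remark~\ref{Rem:time-reversal}), I describe only $\sigma=s$. The first step is to isolate a clean formula for $\torsion^s$. Let $t^*=t^*(q,\delta)>0$ be the time with $\|Df^{t^*}_q|_{E_u}\|=1/\delta$ and set $p=f^{t^*}(q)$, so that $f^{t^*}$ identifies $W^u_{(-\delta,\delta)}(q)$ with $W^u_{(-1,1)}(p)$. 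By definition (b), $\gamma^\dag_{q,(-\delta,\delta)}$ is the $(Df^{t^*})^*$-pullback of $\gamma^0_{p,(-1,1)}$, whose representation function vanishes identically. Since both reference sections (the restriction of the straight $\gamma^0_{q,(-1,1)}$ and the straight $\gamma^0_{p,(-1,1)}$) are straight, the computation in the proof of Lemma~\ref{lm:psis} shows that $\varphi_{q,t^*}$ in \eqref{eq:recur} is affine. Substituting the null representation into \eqref{eq:recur} and differentiating at $0$ yields
\[
\torsion^s(q,\delta)=-\varphi'_{q,t^*(q,\delta)}(0).
\]

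Next, composing \eqref{eq:recur} along the decomposition $f^{t_1}=f^{t_1-t_2}\circ f^{t_2}$ produces the cocycle
\[
\varphi_{q,t_1}(\tau)=b\,\varphi_{q,t_2}(b^{-1}\tau)+\varphi_{f^{t_2}(q),\,t_1-t_2}(\tau),\qquad b=\|Df^{t_1-t_2}_{f^{t_2}(q)}|_{E_u}\|,
\]
which on differentiating at $\tau=0$ collapses to $\varphi'_{q,t_1}(0)=\varphi'_{q,t_2}(0)+\varphi'_{f^{t_2}(q),\,t_1-t_2}(0)$. For (1) I set $t_i=t^*(q,\delta_i)$ with $\delta_1=\delta$, $\delta_2=\delta'$; then $b=\delta'/\delta$ and $|t_1-t_2|\le C_*\langle\log(\delta'/\delta)\rangle$ by \eqref{eq:hyperbolicity}, and
\[
\torsion^s(q,\delta)-\torsion^s(q,\delta')=-\varphi'_{f^{t_2}(q),\,t_1-t_2}(0).
\]
Since $\varphi_{p,s}$ is uniformly bounded in $C^{r-1}$ for $|s|\le 1$, the right-hand side is $O_*(1)$ whenever $\delta'/\delta\in[1/2,2]$; iterating along a dyadic chain of scales gives the first inequality of (1), and the second follows by taking $\delta'=1$, where $t^*(q,1)=0$ forces $\torsion^s(q,1)=0$.

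For (2), applying the same cocycle with the shift time $t$ in place of $t_2$ expresses $\torsion^\sigma(f^t(q),\delta)-\torsion^\sigma(q,\delta)$ as a sum of $\varphi'(0)$-terms whose time parameters are bounded by $C_*\langle t\rangle$ (the discrepancy between $t^*(q,\delta)-t$ and $t^*(f^t(q),\delta)$ being controlled by $\bigl|\log\|Df^t_q|_{E_u}\|\bigr|\le C_*|t|$); subtracting the same identity for $\torsion^u$ yields the bound on $\Delta$. For (3), I use the local product structure to split the motion $q\to q'$ into three legs of length $O_*(\delta)$ along $W^u$, $W^s$, and the flow direction. The flow leg is handled by (2). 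After flowing by $f^{t^*(q,\delta)}$, the $W^u$-leg endpoints lie on $W^u(p)$ at bounded intrinsic distance, and the $W^s$-leg endpoints become exponentially close in $M$; in both cases one is reduced to comparing $\varphi'_{\cdot,\,t^*(\cdot,\delta)}(0)$ at two base points whose unit-scale pictures are uniformly close. The main obstacle is precisely this spatial continuity: showing that $\gamma^0_{\cdot,(-1,1)}$ depends uniformly continuously on its base point in $C^{r-1}$, so that the resulting $\varphi'(0)$-difference is $O_*(1)$. This should fall out of the recursive characterization used in the proof of Lemma~\ref{lm:straight} (each term in the series depends Lipschitz-continuously on the base point), but the interplay between the endpoint normalization \eqref{eq:specyfy_gamma0} and the intrinsic parametrization $w^u_{(\cdot)}$ across nearby base points needs careful book-keeping.
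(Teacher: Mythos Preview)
Your approach is essentially the paper's: the cocycle identity $\varphi'_{q,t_1}(0)=\varphi'_{q,t_2}(0)+\varphi'_{f^{t_2}(q),t_1-t_2}(0)$, once translated via your formula $\torsion^s(q,\delta)=-\varphi'_{q,t^*(q,\delta)}(0)$, is exactly the relation \eqref{eq:torsion2} together with the basic step \eqref{eq:torsion1}, and the dyadic iteration is the same recursion the paper runs.

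The one place where the paper's argument is a bit cleaner than yours is \eqref{eq:torsion_bound3}. You reduce the $W^u$- and $W^s$-legs to a general spatial-continuity statement for $\gamma^0_{(\cdot),(-1,1)}$ and then worry about the book-keeping. The paper avoids this by exploiting the specific geometry of each leg. For $q'\in W^u_{(-\delta,\delta)}(q)$, the two local unstable pieces lie on the \emph{same} leaf, so every straight section on one extends uniquely as a straight section to the other; the torsion comparison becomes a comparison of two affine functions on a common domain, with slopes bounded directly by the uniform $C^2$-bound on $\gamma^0_{(\cdot),(-1,1)}$ (from the series \eqref{eq:rep_exp}) and the endpoint condition \eqref{eq:specyfy_gamma0}. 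For $q'\in W^s_{(-\delta,\delta)}(q)$, the paper simply notes that $d(f^t(q),f^t(q'))\to 0$ exponentially, so after flowing forward the comparison is between objects at nearly coincident base points, and the bound is immediate. This sidesteps the general continuity question you flagged; the ``careful book-keeping'' you anticipate is not actually needed.
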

\begin{proof} We prove the claims in the case $\sigma=s$. We can prove those in the case $\sigma=u$ in the parallel manner considering the time reversed flow $f^{-t}$. Note first of all that $\torsion^s(q,1)=0$ by definition. From the definition of $\torsion^s(q,\delta)$, it is easy to see that, for fixed $t_0>0$, there exist $C_*=C_*(t_0)>0$ satisfying
\[
|\torsion^s(q,\delta)-\torsion^s(f^t(q),\|Df^t_q|_{E_u}\|\cdot \delta)|\le C_* \quad  \text{for $0\le t\le t_0$, $q\in M$ and $0<\delta\le \|Df^t_q|_{E_u}\|^{-1}$.}
\]
Recursive application of this estimate yields 
\begin{equation}\label{eq:torsion1}
|\torsion^s(q,\delta)-\torsion^s(f^t(q),\|Df^t_q|_{E_u}\|\cdot \delta)|\le C_*\langle t\rangle  \quad  \text{for $t>0$ and $0<\delta\le \|Df^t_q|_{E_u}\|^{-1}$.}
\end{equation}
In particular, letting $t\ge 0$ be such that $\|Df^t_q|_{E_u}\|\cdot  \delta=1$, we get
\[
|\torsion^s(q,\delta)|\le C_*\langle \log \delta \rangle  \quad  \text{for  $0<\delta\le 1$.}
\]
For $0<\delta\le \delta'\le 1$ and $t\ge 0$ such that $\|Df^t_q|_{E_u}\|\cdot \delta'=1$, we have 
\begin{equation}\label{eq:torsion2}
|\torsion^s(q,\delta)-\torsion^s(q,\delta')|=
|\torsion^s(f^t(q),\delta/\delta')-\torsion^s(f^t(q),1)|=|\torsion^s(f^t(q),\delta/\delta')|
\end{equation}
where the first equality is a consequence of the definition of $\torsion^s(q,\delta)$ and Lemma \ref{lm:straight}. 
The last two estimates yield \eqref{eq:torsion_bound}. Then \eqref{eq:torsion_bound2} follows from \eqref{eq:torsion_bound} and \eqref{eq:torsion1}. 
To prove the last claim \eqref{eq:torsion_bound3}, it is enough to show the claim   
\[
|\torsion^s(q',\delta)-\torsion^s(q,\delta)|\le C_*
\]
for sufficiently small  $\delta>0$ and for any  $q,q'\in M$ satisfying either 
\begin{center} (i) $q'\in W^u_{(-\delta,\delta)}(q)$, \quad (ii) 
$q'\in W^s_{(-\delta,\delta)}(q)$\;\; and \;\;(iii) $q'\in f^\tau(q)$ for some $\tau\in (-\delta, \delta)$.
\end{center}
In the case (i), each straight section on $W^u_{(-\delta,\delta)}(q)$ extends uniquely to that on $W^u_{(-\delta,\delta)}(q')$. 
In the case (iii), the flow $f^\tau$ brings each straight section on $W^u_{(-\delta,\delta)}(q)$ to that on $W^u_{(-\delta',\delta')}(q')$ with $\delta'=\|Df^\tau_q|_{E_u}\|\cdot  \delta$, which extends (or restricts) uniquely to that on
$W^u_{(-\delta,\delta)}(q')$. Therefore we obtain the claim in these two cases by comparing the the definitions of $\torsion^s(q,\delta)$ and $\torsion^s(q',\delta)$ through such correspondences. 
In the case (ii), the claim is obvious because the distance between $f^t(q)$ and $f^t(q')$ decreases exponentially as $t\to \infty$.
 \end{proof}

\section{Proof of Theorem \ref{th:od}}\label{sec:pr_od}
In this  section, we prove Theorem \ref{th:od}. We consider an arbitrary $C^\infty$ flow $f^t\in \fF^\infty_A$  and deform its $s$-templates, perturbing the flow  by \emph{time-changes}\footnote{After time changes, the flow will no longer preserve the Riemann volume $\vol$ though it will preserve a smooth volume close to $\vol$. 
We will resolve this problem by using a result of Moser \cite{MR0182927} on deformation of smooth volumes on a manifold. See the last part of Subsection \ref{ss:prob_func}.}. 
The problem is how often the condition \eqref{eq:tv} in the non-integrability condition $(NI)_{\rho}$ holds when $b$ is large. Roughly speaking, the point of the argument below is that, for some $R>0$, we can  change the values of $s$-templates $\psi^s_{p,(-1,1)}$ on each of disjoint subintervals in $(-1,1)$ with size $b^{-1/R}$ almost independently with amplitude proportional to $b^{-1}$. This together with the large deviation argument enable us to show that the condition \eqref{eq:tv} with large $b$ is violated only with very small possibility bounded by a  stretched exponential rate in $b$.   

\subsection{Exceptional set $\mathcal{E}(b)$}\label{ss:Exc}
In order to avoid technical problems caused by interference of perturbations, we will regard the $s$-templates for points in some subset $\mathcal{E}(b)$ as exceptions and treat them in a different manner.
The definition of the exceptional set $\mathcal{E}(b)$ in the next paragraph is motivated as follows. 
In order to modify the values of the $s$-template $\psi^s_{p,(-1,1)}$ for $p\in M$, we will perturb the flow $f^{t}$ in a small neighborhood $\mathcal{N}$ of the subset  $f^{t_*}(W^u_{(-1,1)}(p))$ with some $t_*>0$. 
In such perturbation, the problem of interference arises when $f^t(\mathcal{N})\cap \mathcal{N}\neq \emptyset$ for some $t>0$ that is \emph{not} large enough. 
(As we will see, the interference is negligible if $t>0$ is  large enough.) This is the situation that we would like to avoid. Note that, in such situation, there exists a periodic orbit with period $\lesssim t$ that passes through a neighborhood of $\mathcal{N}$, by the pseudo-orbit tracing property of Anosov flow (or Anosov closing lemma \cite[Corollary 18.1.8]{MR1326374}). 

Below we give the precise definition of the exceptional set $\mathcal{E}(b)$. We denote the prime period of a periodic point $w\in M$ by  $\per(w)$ and take  and fix a constant $\tau_*>0$ such that 
\begin{equation}\label{eq:tau_ast}
10 \tau_* <  \min\{\per(w)\mid \text{$w$ is a periodic point of $f^t$}.\}
\end{equation}
Then we set 
\[
\lambda_*:=e^{10\chi_* \tau_*}>1\quad\text{and}\quad
c_*=\frac{2}{1-\lambda_*^{-1}}.
\]
so that the modulus of hyperbolicity 
$\|Df^{\per(w)}_w|_{E_u}\|$ of a periodic point $w\in M$ is always greater than $\lambda_*$. Let $R>0$ be an integer constant that we will take soon in the next subsection. (One may suppose $R=r+11$ if one like.) For $b>1$ and $p\in M$, we set
\begin{equation}\label{eq:T}
T(p,b)=\inf\{ t\ge 0\mid \|Df^{-t}_p|_{E_u}\|\le b^{-1/(4R)}\}.
\end{equation}
\begin{Definition}[The exceptional set $\mathcal{E}(b)$]
The exceptional set $\mathcal{E}(b)\subset M$ for $b>1$ is the open set of points $p\in M$ such that there exits a periodic orbit $\gamma$ whose prime period is less than $T(p,b)$ and whose (minimum) distance from $W^u_{[-c_*,c_*]}(p)$ is less than $b^{-2/(3R)}$.
\end{Definition}
The next lemma tells that the periodic orbit $\gamma$ in the definition above is unique for each $p\in \mathcal{E}(b)$, provided that $b$ is sufficiently large.
\begin{Lemma}\label{lm:potp}
For any $C>1$, there exists $b_0>0$ such that, 
for $b\ge b_0$ and $p\in M$, there exists at most one periodic orbit $\gamma$ satisfying both of the following conditions:
\begin{enumerate}
\item the prime period of $\gamma$ is bounded by $T(p,b)+C$, and
\item the (minimum) distance from $\gamma$ to $W^u_{[-c_*,c_*]}(p)$ is less than $b^{-3/(5R)}$.
\end{enumerate} 
Further, if such periodic orbit $\gamma$ exists, it passes through the $b^{-3/(5R)}$-neighborhood of $W^u_{[-c_*,c_*]}(p)$ only once. More precisely, if there are two points $p_1,p_2$ on $\gamma$ that belongs to the $b^{-3/(5R)}$-neighborhood of $W^u_{[-c_*,c_*]}(p)$, there exists $t$ with $|\tau|<b^{-1/(3R)}$ such that $f^t(p_1)=p_2$. 
\end{Lemma}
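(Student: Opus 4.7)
The plan is to use backward iteration $f^{-T(p,b)}$ to concentrate points of the periodic orbits near $p^* := f^{-T(p,b)}(p)$, then invoke a hyperbolicity/Anosov closing argument to force uniqueness.

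I would first apply local product structure to each $q_i\in\gamma_i$ with $d(q_i, W^u_{[-c_*,c_*]}(p))<b^{-3/(5R)}$: there exist $\tau_i\in\real$, $c_i\in W^u(p)$ lying within $C_* b^{-3/(5R)}$ of the arc, and $s_i\in W^s_{\mathrm{loc}}(c_i)$ such that $q_i=f^{\tau_i}(s_i)$ with $|\tau_i|,\, d(s_i,c_i)\le C_* b^{-3/(5R)}$. Then I apply $f^{-T(p,b)}$. By Lemma~\ref{eq:intrinsic_parameter} and the definition of $T(p,b)$, the point $c_i\in W^u(p)$ is contracted to within $C_* b^{-1/(4R)}$ of $p^*$. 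The stable displacement $d(s_i,c_i)$ grows by at most $\|Df^{-T(p,b)}|_{E_s}\|\le C_* b^{1/(4R)}$ (from \eqref{eq:hyperbolicity}), giving $d(f^{-T(p,b)}(s_i),f^{-T(p,b)}(c_i))\le C_*b^{-7/(20R)}$. Together with the flow-shift error $|\tau_i|\le C_* b^{-3/(5R)}$, each $\gamma_i$ thus contains the point $x_i := f^{-T(p,b)}(q_i)$ within $C_*b^{-1/(4R)}$ of $p^*$.

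The uniqueness is then reduced to the claim that, for $b$ sufficiently large, a ball of radius $C_*b^{-1/(4R)}$ around $p^*$ meets at most one periodic orbit of period $\le T(p,b)+C$. This follows from the Anosov closing lemma: the local return map of a flow-box through $p^*$ is hyperbolic at a rate independent of the period as soon as the prime period exceeds $10\tau_*$ (which holds by \eqref{eq:tau_ast}), so its fixed points in the box are separated by a scale controlled by the local unstable/stable Jacobians of one recurrence, and this scale dominates $b^{-1/(4R)}$ for large $b$. Hence $\gamma_1=\gamma_2$.

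For the second statement, take $p_1,p_2\in\gamma$ within $b^{-3/(5R)}$ of the arc with $p_2=f^{t_0}(p_1)$ and apply the same decomposition: $p_j=f^{\tau_j}(s_j)$, $s_j\in W^s_{\mathrm{loc}}(c_j)$, $c_j\in W^u(p)$. Setting $T:=t_0+\tau_1-\tau_2$ one obtains $s_2=f^T(s_1)$, which forces $f^T(c_1)\in W^s_{\mathrm{loc}}(c_2)$. Since $c_2\in W^u(p)$ and $f^T(c_1)\in W^u(f^T(p))$, while $d(c_1,c_2)\le C_*b^{-3/(5R)}$, a local coordinate calculation at $c_2$ in a chart adapted to the splitting $E_0\oplus E_s\oplus E_u$ (using that the three subbundles span $TM$ with uniformly bounded angles) shows that the stable-transversality constraint forces $|T|$ to be of order $d(c_1,c_2)$, so $|T|\le C_*b^{-3/(5R)}$. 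Since $3/(5R)>1/(3R)$, this yields $|t_0|\le |T|+|\tau_1|+|\tau_2|\le 3C_*b^{-3/(5R)}<b^{-1/(3R)}$ for $b$ large enough.

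The main obstacle I anticipate is the quantitative closing step in the first part: one must carefully track the constants in the Anosov closing lemma (and in the local product structure) to verify that the scale $C_*b^{-1/(4R)}$ indeed lies below the hyperbolic separation threshold for periodic orbits of period up to $T(p,b)+C$, uniformly for $p\in M$ and for all sufficiently large $b$.
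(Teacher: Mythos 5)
Your concentration step is sound and closely parallels the beginning of the paper's proof: by local product structure the displacement of $q_i$ from the arc is dominated by the stable component $\lesssim b^{-3/(5R)}$, and after applying $f^{-T(p,b)}$ the unstable arc contracts by $\asymp b^{-1/(4R)}$ while the stable component grows only to $\asymp b^{1/(4R)} b^{-3/(5R)} = b^{-7/(20R)}$ (using $\|Df^{-t}|_{E_s}\|\cdot\|Df^{-t}|_{E_u}\|\asymp 1$ by volume preservation in dimension $3$), so both orbits do pass within $C_*b^{-1/(4R)}$ of $p^*$. This part is fine.

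The gap is in the uniqueness step. Your reduced claim, that a ball of radius $C_*b^{-1/(4R)}$ about an \emph{arbitrary} point $p^*$ can meet at most one periodic orbit of period $\le T(p,b)+C$, is not justified by the argument given, and is false in the stated generality. The separation scale of periodic orbits of period $\le T$ is governed by the \emph{worst} expansion factor along such orbits, i.e.\ by $e^{-\Lambda_{\max}T}$; since $T(p,b)\le \frac{\log b}{4R\chi_*}$ one only gets a separation of order $b^{-\Lambda_{\max}/(4R\chi_*)}$, which is \emph{smaller} than $b^{-1/(4R)}$ whenever $\Lambda_{\max}>\chi_*$ (the generic case). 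So ``this scale dominates $b^{-1/(4R)}$'' goes in the wrong direction. Moreover, the two candidate periodic orbits need not have the same period, so they are fixed points of \emph{different} Poincar\'e return maps, and ``a hyperbolic return map has a unique fixed point in the box'' does not apply directly. What the paper uses instead, and what you have not noticed, is that the tracing of the orbit of $p_1$ by the backward orbit of $p$ persists well \emph{beyond} time $T(p,b)$: it only fails when $\|Df^{-t}_p|_{E_u}\|^{-1}\sim b^{3/(5R)}$, and since $3/(5R)>1/(4R)$ this buys an extra window of length $\asymp\log b$, which exceeds $T(p,b)+C$ for $b$ large. Hence a single orbit segment of $p$ is $\varepsilon$-close to each candidate periodic orbit for a \emph{full period}, and uniqueness then follows from expansivity/shadowing uniqueness. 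Passing near $p^*$ at one time, which is all your argument establishes, is not enough.

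There is a second, smaller gap in your treatment of the final claim: you assert $d(c_1,c_2)\le C_*b^{-3/(5R)}$, but this is not given. The points $c_1,c_2$ are only known to lie on the arc $W^u_{[-c_*,c_*]}(p)$, which has length $O(1)$; a priori they could be far apart, and the very content of this claim is that they cannot be. This bound has to be derived (as in the paper, by running the full-period shadowing argument), not assumed.
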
 
\begin{proof} 
Suppose that a periodic orbit $\gamma$ satisfies the conditions (1) and (2) in the lemma for some $b\ge b_0$ and $p\in M$. Let $p_1$ be a point on the periodic orbit $\gamma$ that belongs to the $b^{-3/(5R)}$-neighborhood of $W^u_{[-c_*,c_*]}(p)$. 
By a crude estimate we see that  the backward orbit $f^{-t}(p)$ of the point $p$ traces that of $p_1$ as far as 
\[
\|Df^{-t}_p\|\sim \|Df_p^{-t}|_{E_u}\|^{-1}\ll \textcolor{\revisionColor}{b^{3/(5R)}}.
\]
Hence, for any $\varepsilon>0$, we can take $t(\varepsilon)>0$ and let $b_0$ large so that the distance between $f^{-t}(p)$ and $f^{-t}(p_1)\in \gamma$ is bounded by $\varepsilon$ for $t\in [t(\varepsilon), t(\varepsilon)+T(p,b)+C]$. That is, the orbit $f^{-t}(p)$ for $t\in [t(\varepsilon), t(\varepsilon)+|\gamma|]$ traces the periodic orbit $|\gamma|$ within distance $\varepsilon$. 
By uniform hyperbolicity of the flow $f^t$,  this implies uniqueness of the periodic orbit $\gamma$ and also the last claim of the lemma because $b^{-1/(3R)}\gg b^{-3/(5R)}$.  
\end{proof}

\subsection{A probability measure on the space of functions} \label{ss:prob_func}
Let $3\le r <\infty$ and let $C^r(M)$ be the Banach space of $C^r$ functions. Let us consider the translations on $C^r(M)$:
\[
\tau_\varphi:C^r(M)\to C^r(M), \qquad 
\tau_\varphi(u)=u+\varphi
\]
In the following, we fix some $R>r$ and a Borel probability measure $\boldsymbol{\mu}$ on $C^r(M)$ such that
 \begin{equation}\label{eq:trans_inv}
\exp(-\|\varphi\|_{C^R})\le \left|\frac{d((\tau_\varphi)_*\boldsymbol{\mu})}{d\boldsymbol{\mu}}\right|\le \exp(\|\varphi\|_{C^R})\qquad\text{for any }\varphi\in C^R(M).
\end{equation}
From \cite[Lemma E]{MR1167374}, such a measure $\boldsymbol{\mu}$ exists provided that we let $R> r$ be sufficiently large, say $R=r+11$. Note that, from \eqref{eq:trans_inv}, we have $\boldsymbol{\mu}(U)>0$ for any non-empty open subset $U\subset C^r(M)$. 

We suppose that $\mathcal{W}$ is a small neighborhood of the origin $0$ in $C^r(M)$ and will let it be smaller in the course of the argument if necessary. Let $v$ be the generating vector field of the flow $f^t$. 
For $\varphi\in \mathcal{W}$, let $f^t_{\varphi}$  be the flow generated by the vector field $v_{\varphi}=(1+\varphi)\cdot v$. 
Note that, since the flow $f^t_{\varphi}$ preserves the $C^r$ volume $m_{\varphi}=(1+\varphi)^{-1}\cdot \vol$, we can apply the argument in Section \ref{sec:ni} to the flow $f^t_{\varphi}$ with setting $\mu=m_{\varphi}$. 
For $0<\rho<1$, $p\in M$, $\alpha\in \real$ and $b>0$, let 
\[
X_\rho(p,\alpha;b)\subset \mathcal{W}
\]
be the set of functions $\varphi\in \mathcal{W}$ such that the condition \eqref{eq:tv} with these $\rho$, $\alpha$ and $b$ \emph{fails} for the \hbox{$s$-template} at $p$ for  the flow $f^t_{\varphi}$. As the main step in the proof of  Theorem \ref{th:od}, we show the following proposition. 
\begin{Proposition}\label{Prop:mesure}
If $\rho>0$ is sufficiently small, then, for sufficiently large $b>0$, we have
\[
\boldsymbol{\mu}(X_\rho(p,\alpha;b))< \exp(-b^{\,\rho})
\]
for any  $p\in M\setminus \mathcal{E}(b)$ and $\alpha\in \real$. 
\end{Proposition}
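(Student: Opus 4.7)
The plan is to construct a rich family of localized time-change perturbations, indexed by a high-dimensional parameter $s$, whose effect on the $s$-template $\psi^s_{p,(-1,1)}$ is approximately additive and independent across the coordinates of $s$. A large-deviation estimate then shows that the failure set $X_\rho(p,\alpha;b)$ is stretched-exponentially thin. Before starting, I would reduce to $|\alpha|\le b$ using Remark~\ref{Rem:largealpha}, and cover this range by $\sim b^2$ values of $\alpha$ spaced by $b^{-1}$, since the resulting polynomial loss is absorbed by $\exp(-b^\rho)$ for small $\rho$.

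Setting $N=\lfloor b^{1/R}\rfloor$, I would partition $(-1,1)$ into disjoint subintervals $\{I_j\}_{j=1}^N$ of length $\sim 1/N$, choose $\tau_j\in I_j$, and let $q_j=w^u_p(\tau_j)$. Pick $t_*>0$ roughly of size $(4R)^{-1}\log b$ so that $\|Df^{-t_*}_{p}|_{E_u}\|\le b^{-1/(4R)}$, and set $p_j=f^{t_*}(q_j)$. For each $j$ take a $C^R$ bump $\varphi_j$ supported in a tiny flowbox $\mathcal{N}_j$ around $p_j$ with $\mathrm{diam}(\mathcal{N}_j)\ll b^{-2/(3R)}$. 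The hypothesis $p\notin\mathcal{E}(b)$ together with Lemma~\ref{lm:potp} guarantees that no short orbit revisits $\mathcal{N}_j$ within time $T(p,b)$, so the backward saturations $\bigcup_{0\le t\le t_*} f^{-t}(\mathcal{N}_j)$ can be arranged to be pairwise disjoint. This is the crucial place where the exceptional-set condition is used.

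Parametrize the family by $\Phi(s)=\sum_{j=1}^N s_j\varphi_j$ for $s\in [-1,1]^N$, so that $\|\Phi(s)\|_{C^R}=\mathcal{O}_*(1)$. A first-variation computation, using the recursion \eqref{eq:recur}--\eqref{eq:rep_exp} for straight sections together with the disjointness of the $\mathcal{N}_j$, shows that
\[
\psi^s_{p,(-1,1)}[f^t_{\Phi(s)}](\tau)-\psi^s_{p,(-1,1)}[f^t](\tau) = \sum_{j=1}^N s_j\, a_j\, \chi_j(\tau) + \mathcal{O}_*(b^{-1-\eta})
\]
for some $\eta>0$, where each $\chi_j$ is a smooth bump of height $\asymp b^{-1}$ supported in an $\sim N^{-1}$-neighborhood of $\tau_j$ and $|a_j|\asymp 1$. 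Inserting this into \eqref{eq:tv} and splitting the integral over the $I_j$ gives, up to an error $\mathcal{O}_*(b^{-\eta})$,
\[
I(s) \;=\; \sum_{j=1}^N e^{i s_j \beta_j}\, J_j, \qquad \beta_j\asymp 1, \quad J_j=\int_{I_j} e^{ib(\psi^s_{p,(-1,1)}(\tau)+\alpha\tau)}\, d\tau.
\]
The translation quasi-invariance \eqref{eq:trans_inv} applied to the one-parameter subgroups $s_j\mapsto \tau_{s_j\varphi_j}$ implies that the pushforward of $\boldsymbol{\mu}$ to any finite subset of the coordinates $s_j$ has density bounded above and below on compact sets. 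Averaging $|I(s)|^2=\sum_{j,k}e^{i(s_j\beta_j-s_k\beta_k)}J_j\overline{J_k}$ over $s$ kills the off-diagonal terms to leading order and leaves the diagonal $\sum_j |J_j|^2\lesssim N^{-1}$. Standard Hoeffding-type concentration for the real and imaginary parts of $I(s)$, viewed as sums of nearly-independent bounded contributions, upgrades the $L^2$ bound to $\boldsymbol{\mu}(\{|I|>b^{-\rho}\})\le \exp(-cN)=\exp(-c\, b^{1/R})$. Choosing $\rho<1/R$ gives the proposition.

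The main obstacle will be the first-variation step: producing explicit, independent bumps $\chi_j$ of the correct height $\asymp b^{-1}$ on disjoint pieces of $W^u_{(-1,1)}(p)$, with remainder better than $b^{-1}$. Because $\psi^s_{p,(-1,1)}$ is defined by the infinite backward iteration in \eqref{eq:rep_exp} and is only log-Lipschitz by \eqref{eq:regularity_psi_s}, the linearization has to be carried out carefully, and the disjointness of the flowboxes---guaranteed exactly by $p\notin\mathcal{E}(b)$ via Lemma~\ref{lm:potp}---is essential to prevent the backward orbits from returning and creating cross-terms that would contaminate the near-independence driving the large-deviation bound.
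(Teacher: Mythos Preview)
Your overall architecture—a $\sim b^{1/R}$-parameter family of localized time-changes whose effect on the $s$-template is approximately diagonal, followed by a large-deviation bound and a transfer back to $\boldsymbol{\mu}$ via \eqref{eq:trans_inv}—is exactly the paper's strategy. But two of your implementation choices contain genuine gaps.

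First, the placement and scale of the perturbations. You put the bumps around $p_j=f^{t_*}(q_j)$ with $t_*=T(p,b)$, in flowboxes of diameter $\ll b^{-2/(3R)}$. Since $f^{t_*}$ expands $W^u$ by at least $b^{1/(4R)}$, consecutive $p_j$ are only $\sim b^{-1/R+1/(4R)}=b^{-3/(4R)}$ apart, which is \emph{smaller} than your flowbox diameter (as $3/4>2/3$), so the supports overlap and the claimed disjointness of backward saturations fails outright. The paper instead places the $\varphi_j$ at a \emph{fixed} forward time $4\tau_*$, with transverse support $\sim b^{-1/R}$ and amplitude $b^{-1-1/R}$ (see \eqref{eq:def_varphi_j} and Remark~\ref{Rem:adj}); the condition $p\notin\mathcal E(b)$ is then used not for disjointness of backward tubes but for the \emph{forward} non-recurrence (Ob1), namely $f^t(\supp\varphi_i)\cap\supp\varphi_j=\emptyset$ for $4\tau_*\le t\le T(p,b)-4\tau_*$, which is precisely what makes the off-diagonal entries of $D\Psi_\tau$ small in Proposition~\ref{prop:Psi}.

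Second, the concentration step. With $\beta_j\sim 1$ and $s_j$ uniform on $[-1,1]$, one has $\mathbb E[e^{is_j\beta_j}]=(\sin\beta_j)/\beta_j$, which is of order $1$: averaging $|I(s)|^2$ does \emph{not} kill the off-diagonal terms, and Hoeffding only concentrates $I(s)$ about its (a priori uncontrolled) mean $\sum_j\tfrac{\sin\beta_j}{\beta_j}J_j$. The paper's remedy is Proposition~\ref{prop:Psi}: for each fixed offset $\tau$, the map $\mathbf t\mapsto\bigl(b\,\tilde\psi^s_{p,(-1,1)}(s(j)+\tau;\mathbf t)\bigr)_{j\in\cJ}$ is a near-diagonal diffeomorphism carrying a cube $Y\subset[-4,4]^{\cJ}$ into $[-\pi,\pi]^{\cJ}$ with almost-constant Jacobian, so that after the change of variables one can exploit $\int_{-\pi}^\pi\cos x\,dx=0$ in the exponential-moment bound \eqref{eq:LD}; the large-deviation estimate is run pointwise in $\tau$ and only afterwards averaged via Jensen. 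Your factoring $I(s)=\sum_j e^{is_j\beta_j}J_j$ also hides that the effective phase depends on $\tau$ through $\chi_j(\tau)$, and cannot yield the required centering without this change-of-variables step.
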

The proof of this proposition will be given in the following subsections. Below we deduce Theorem  \ref{th:od} from this proposition. Note that we have $X_{\rho}(p,\alpha;b)=\emptyset$ for $\alpha$ with  $|\alpha|\ge b$ from Remark \ref{Rem:largealpha}, provided that $b$ is sufficiently large. 
Take small $\rho>0$ so that the conclusion of  Proposition \ref{Prop:mesure} holds  and let  $\rho'$ be a real number such that $0<\rho'<\rho$. 
\begin{Corollary}\label{Cor:ap}
For sufficiently large $b>0$, we have
\[
\boldsymbol{\mu}\left(\bigcup_{p\in M\setminus \mathcal{E}(b)} \bigcup_{\alpha\in \real} X_{\rho'}(p,\alpha;b)\right)< \exp(-b^{\,\rho}/2).
\]
\end{Corollary}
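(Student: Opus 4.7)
The plan is to reduce the claim to a union bound over a polynomial-size net in $(p,\alpha)$-space and apply Proposition~\ref{Prop:mesure} at each grid point. First, by Remark~\ref{Rem:largealpha} the set $X_{\rho'}(p,\alpha;b)$ is empty once $|\alpha|\ge b$ and $b$ is large, so the inner union in $\alpha$ may be restricted to the compact interval $[-b,b]$.

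Next I would quantify the $(p,\alpha)$-continuity of the relevant integral. Write
\[
I(p,\alpha;\varphi):=\int_{-1}^{1}\exp\!\bigl(ib(\psi^s_{p,(-1,1)}(\tau)+\alpha\tau)\bigr)\,d\tau,
\]
where $\psi^s_{p,(-1,1)}$ is the $s$-template at $p$ of the perturbed flow $f^t_\varphi$. A direct estimate gives $|I(p,\alpha;\varphi)-I(p,\alpha';\varphi)|\le 2b\,|\alpha-\alpha'|$ uniformly in $\varphi\in\mathcal{W}$. For the $p$-dependence, the log-Hölder regularity \eqref{eq:irregularity_distribution} of $E_s$, the $C^1$ regularity of $E_s\oplus E_0$ noted in Remark~\ref{rem:eseu}, and continuous dependence of the local unstable manifolds on the base point together yield, for any fixed Hölder exponent $\beta<1$, an inequality of the form $|I(p,\alpha;\varphi)-I(p_*,\alpha;\varphi)|\le C_*\,b\cdot d(p,p_*)^{\beta}$ uniformly over $\varphi\in\mathcal{W}$.

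Now I would fix integer exponents $N_\alpha,N_p$ large enough that $2b\cdot b^{-N_\alpha}$ and $C_* b\cdot b^{-\beta N_p}$ are both bounded by $\tfrac{1}{4}(b^{-\rho'}-b^{-\rho})$ for $b$ large (possible since $\rho'<\rho$ and both exponents are free). By a standard packing argument, choose a covering of the closed set $M\setminus\mathcal{E}(b)$ by balls of radius $b^{-N_p}$ centered at points of $M\setminus\mathcal{E}(b)$, of cardinality $\le C b^{3N_p}$, and a $b^{-N_\alpha}$-net in $[-b,b]$ of cardinality $\le b^{N_\alpha+2}$. The product grid $\mathcal{G}$ has cardinality bounded by a polynomial $P(b)$, and the continuity estimates above guarantee that $X_{\rho'}(p,\alpha;b)\subset X_\rho(p_*,\alpha_*;b)$ whenever $(p_*,\alpha_*)\in\mathcal{G}$ is the grid point closest to $(p,\alpha)$. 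Applying Proposition~\ref{Prop:mesure} to each $(p_*,\alpha_*)\in\mathcal{G}$ and summing gives
\[
\boldsymbol{\mu}\!\left(\bigcup_{p\in M\setminus\mathcal{E}(b)}\bigcup_{\alpha\in\real} X_{\rho'}(p,\alpha;b)\right)\le P(b)\exp(-b^{\rho})<\exp(-b^{\rho}/2)
\]
for all sufficiently large $b$, as required.

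The hardest step is the Hölder modulus-of-continuity estimate in $p$ with constants uniform over $\varphi\in\mathcal{W}$. This requires tracking how the intrinsic parameterizations $w^u_p$ and the reference sections $\gamma^0_{p,(-1,1)}$ and $\gamma^s_{p,(-1,1)}$ vary with the base point $p$, then comparing $s$-templates at nearby base points using a holonomy in the direction of $E_s\oplus E_0$ (which is $C^1$ by Remark~\ref{rem:eseu}) together with the regularity \eqref{eq:irregularity_distribution} of $E_s$. Once this modulus-of-continuity estimate is in place, the remainder of the proof is a routine packing/union-bound computation.
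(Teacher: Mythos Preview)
Your proposal is correct and takes essentially the same approach as the paper: both cover the parameter space $(M\setminus\mathcal{E}(b))\times[-b,b]$ by a net of polynomial cardinality, use continuity of the integral $I(p,\alpha;\varphi)$ together with the gap $\rho'<\rho$ to absorb each $X_{\rho'}(p,\alpha;b)$ into $X_{\rho}(p_i,\alpha_i;b)$ at a nearby grid point, and then apply Proposition~\ref{Prop:mesure} with a union bound. You have simply spelled out more of the continuity estimate (and correctly flagged the $p$-dependence as the nontrivial part) where the paper just writes ``by approximation'' and ``by crude estimate''.
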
 
\begin{proof}
If we take a finite but sufficiently dense subset of points $\{(p_i,\alpha_i)\}_{i=1}^I$ in 
\[
(M\setminus \mathcal{E}(b))\times \{\alpha\in \real\mid |\alpha|<b\}
\]
depending on $b$, then, by approximation, the union of the subsets $X_{\rho}(p_i, \alpha_i;b)$ will cover $\bigcup_{p\in M\setminus \mathcal{E}(b)} \bigcup_{\alpha\in \real} X_{\rho'}(p,\alpha;b)$. By crude estimate, we can see that the cardinality $I$ of the finite set necessary for this to be true is bounded by a polynomial order in $b$. Therefore we obtain the conclusion from Proposition \ref{Prop:mesure}.
\end{proof}

Next we prove the following lemma which tells roughly that if the condition \eqref{eq:tv} holds for all $p\notin \mathcal{E}(b)$, it also holds for $p\in \mathcal{E}(b)$ with slightly smaller $\rho$. Let us say that a flow $f^t_\varphi$ satisfies the condition $(NI)_{\rho,b}$ for $b>0$ and $\rho>0$ if the condition \eqref{eq:tv} with these $b$ and $\rho$ holds for all the \hbox{$s$-templates} (for $f^t_{\varphi}$) and $\alpha\in \real$.  Let  $\rho''$ be a real number such that  \[
0<\rho''<\rho'(1-\rho')<\rho'.
\]
\begin{Lemma}\label{Lm:ap}
If $b>0$ is sufficiently large and if $\varphi\in \mathcal{W}$ does not belong to the subset $\bigcup_{p\in M\setminus \mathcal{E}(b')} \bigcup_{\alpha\in \real} X_{\rho'}(p,\alpha;b')$ for any integer $b'$ with $ b^{1-\rho'}\le b'\le \lceil b\rceil$, then the flow $f^t_{\varphi}$ satisfies the condition $(NI)_{\rho'',b}$.  
\end{Lemma}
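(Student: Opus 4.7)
The plan is to derive the condition $(NI)_{\rho'',b}$ for $f^t_\varphi$ by rescaling from the moderate-frequency hypothesis using Lemma \ref{lm:psis}. Fix an $s$-template $\psi^s_{p',(-1,1)}$ of $f^t_\varphi$ and $\alpha\in\real$; I must bound $I:=\int_{-1}^1 e^{ib(\psi^s_{p',(-1,1)}(\tau)+\alpha\tau)}\,d\tau$ by $b^{-\rho''}$. I take $b':=\lceil b^{1-\rho'}\rceil$, an integer lying in $[b^{1-\rho'},\lceil b\rceil]$ so that the hypothesis applies to it, and set $\delta:=b'/b$, so $b\delta=b'$. I partition $(-1,1)$ into $N\asymp 1/\delta$ subintervals $I_j=(a_j-\delta,a_j+\delta)$. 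For each $j$, I translate along $W^u(p')$ to the base point $q_j:=w^u_{p'}(a_j)$ and then apply Lemma \ref{lm:psis} to rescale from scale $\delta$ at $q_j$ to scale $1$ at $p_j:=f^{t_j}_\varphi(q_j)$, where $t_j>0$ is chosen so that $\|Df^{t_j}_{q_j}|_{E_u}\|=1/\delta$. Because every reference section $\gamma^0$ in use is straight and any two straight sections are affine equivalent (Lemma \ref{lm:straight}), both the translation and the rescaling modify the phase only by an affine function of the integration variable, which I absorb into a new linear slope $\tilde\alpha_j\in\real$ and a constant $\theta_j\in\real$, giving
\[
\int_{I_j} e^{ib(\psi^s_{p',(-1,1)}(u)+\alpha u)}\,du
= \delta\,e^{i\theta_j}\int_{-1}^1 e^{ib'(\psi^s_{p_j,(-1,1)}(v)+\tilde\alpha_j v)}\,dv.
\]

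For each $j$ with $p_j\notin\mathcal{E}(b')$, the hypothesis bounds the inner integral by $(b')^{-\rho'}$, so the sum over these good indices is at most $N\cdot\delta\cdot(b')^{-\rho'}\le 2(b')^{-\rho'}\le 2b^{-\rho'(1-\rho')}$, which is below $b^{-\rho''}$ for large $b$ thanks to the gap $\rho''<\rho'(1-\rho')$. For the bad indices with $p_j\in\mathcal{E}(b')$, only the trivial bound $2$ on the inner integral is available, and they contribute at most $2\delta\,N_{\mathrm{exc}}$, where $N_{\mathrm{exc}}:=\#\{j:p_j\in\mathcal{E}(b')\}$. The conclusion thus reduces to showing $N_{\mathrm{exc}}\le b^{\rho'-\rho''}$ for large $b$.

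The main obstacle is this bound on $N_{\mathrm{exc}}$, and the plan is a geometric counting argument. Since $t_j$ depends only mildly on $j$ (it is essentially $(\rho'/\chi_*)\log b$ for all $j$), after inessential adjustments of the partition the points $\{p_j\}$ lie along a single piece of unstable manifold of length $\asymp 1/\delta=b^{\rho'}$. The set $\mathcal{E}(b')$ is contained in the union of the $(b')^{-2/(3R)}$-neighborhoods of periodic orbits of prime period at most $T(\cdot,b')\asymp(\log b')/(4R\chi_*)$, and the number of such periodic orbits is controlled by the topological entropy bound $\le(b')^{h_{\mathrm{top}}/(4R\chi_*)}$. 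For each such periodic orbit, Lemma \ref{lm:potp} constrains its intersection with the neighborhood of $W^u_{[-c_*,c_*]}$ at each $p_j$ to a single short passage, which translates into only $O(1)$ bad indices contributed per orbit along our unstable arc. Combining these estimates yields $N_{\mathrm{exc}}\le C\,b^{h_{\mathrm{top}}/(4R\chi_*)}$, and choosing $R$ large enough at the outset (harmlessly, since $R$ was only required to exceed $r$ in Section \ref{ss:prob_func}) together with $\rho'$ suitably small relative to $h_{\mathrm{top}}/\chi_*$ makes this bound at most $b^{\rho'-\rho''}$ as required. Finally, by Remark \ref{Rem:largealpha} only $|\alpha|\le b$ need be considered, so the argument above covers all relevant $\alpha$.
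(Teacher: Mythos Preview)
Your rescaling setup via Lemma~\ref{lm:psis} is correct, but the proof breaks down at the bound on $N_{\mathrm{exc}}$, for two reasons.

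First, Lemma~\ref{lm:potp} does not say what you claim. It asserts that for a \emph{fixed} base point $p$, at most one periodic orbit qualifies and it passes near $W^u_{[-c_*,c_*]}(p)$ only once. It says nothing about how many of the base points $p_j$ along your long unstable arc a \emph{given} periodic orbit can approach; a periodic orbit can come close to that arc at many separated places, each generating bad indices, so the ``$O(1)$ per orbit'' claim is unjustified.

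Second, even granting that claim, the numerics fail. For a volume-preserving $3$-dimensional Anosov flow one has $h_{\mathrm{top}}\ge \chi_*$, so your periodic-orbit count $(b')^{h_{\mathrm{top}}/(4R\chi_*)}$ is at least $(b')^{1/(4R)}\approx b^{(1-\rho')/(4R)}$. You need this below $b^{\rho'-\rho''}$, but the constraint $\rho'<\rho<1/(16R)$ coming from Proposition~\ref{Prop:mesure} forces $\rho'-\rho''<1/(16R)$ while $(1-\rho')/(4R)>3/(16R)$. Enlarging $R$ does not help, since $\rho$ must then shrink like $1/R$ and both sides scale the same way; the inequality never closes.

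The paper's proof sidesteps any counting of exceptional pieces. For $p\notin\mathcal{E}(b)$ the hypothesis applies directly at $b'=\lceil b\rceil$. For $p\in\mathcal{E}(b)$, Lemma~\ref{lm:potp} identifies the \emph{unique} offending periodic orbit $\gamma$, and the paper takes a dyadic-type decomposition of $W^u_{[-1,1]}(p;\varphi)$ centered at the point $q$ closest to $\gamma$, so that each non-central piece $W_k$ has length $\delta(k)$ comparable to its distance from $q$. The crucial step shows that after rescaling, the center $q'(k)=f^{t_k}_\varphi(q(k))$ lies \emph{outside} $\mathcal{E}(\delta(k)b)$: any periodic orbit making $q'(k)$ exceptional would, after pulling back, also qualify at $p$ and hence coincide with $\gamma$ by Lemma~\ref{lm:potp}, yet $\gamma$ is too far from $W_k$ by construction. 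Thus every non-central piece is handled by the hypothesis at the scale $b'=\delta(k)b\in[b^{1-\rho'},b]$ (this is where the full range of $b'$ in the assumption is actually used, not just the single value $\lceil b^{1-\rho'}\rceil$), and the lone central piece $W_0$ of length $O(b^{-\rho'})$ is bounded trivially.
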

\begin{proof}
We henceforth write $W^u_{J}(p;\varphi)$, $w^u_p(\tau;\varphi)$ and $\psi^s_{J}(\tau;\varphi)$ respectively for $W^u_{J}(p)$, $w^u_p(\tau)$ and $\psi^s_{J}(\tau)$ defined for the flow $f^t_{\varphi}$, with setting $f^t=f^t_{\varphi}$ and $\mu=m_{\varphi}$ in the argument in Section \ref{sec:ni}. Note that, since $f^t_{\varphi}$ is a time change of the flow $f$, the projection of $W^u_{J}(p;\varphi)$ along the flow line (to some transversal section) coincides with that of $W^u_{J}(p)=W^u_{J}(p;0)$. Below we always suppose that $b$ is sufficiently large. 

From the assumptions, the condition \eqref{eq:tv} with $\rho$ replaced by $\rho'$ holds for the $s$-templates at any point in $ M\setminus \mathcal{E}(b)$ and for any $\alpha$. It is therefore enough to prove the condition \eqref{eq:tv} with $\rho$ replaced by $\rho''$ for the $s$-template at $p\in \mathcal{E}(b)$ and $\alpha\in \real$ with $|\alpha|<b$. For the proof, we will use the following simple relation that follows from \eqref{eq:psis}:  for $q\in M$ and $0<\delta<1$, let $t>0$ be such that $\|(Df_\varphi^t)_q|_{E_u}\|=\delta^{-1}$,  then  
\begin{equation}\label{eq:obs}
\frac{1}{2\delta}\left|\int^{\delta}_{-\delta} \exp(ib(\psi^s_{q,(-\delta,\delta)}(\tau;\varphi)+\alpha \tau)) d\tau\right|
= \frac{1}{2}\left|\int^{1}_{-1} \exp(i\delta b(\psi^s_{q',(-1,1)}(\tau;\varphi)+\alpha' \tau)) d\tau\right|
\end{equation}
for $q'=f^t_\varphi(q)$ and some $\alpha'\in \real$ depending on $\alpha$. 

Suppose that $p\in \mathcal{E}(b)$. 
From the definition of the set $\mathcal{E}(b)$, there exists a periodic orbit $\gamma$ for $f^t$ whose prime period is less than $T(p,b)$ and whose distance from $W^u_{[-c_*,c_*]}(p)$ is less than $b^{-2/(3R)}$.
Note that $\gamma$ is a periodic orbit also for the flow $f^t_\varphi$ and the distance between $\gamma$ and $W^u_{[-c_*,c_*]}(p;\varphi)$ is bounded by $C_*b^{-2/(3R)}$ provided that $\mathcal{W}$ is sufficiently small. 


We will estimate the integral on right-hand side of \eqref{eq:tv} by
estimating its restrictions to subintervals using \eqref{eq:obs}. To this end, we divide  $W^u_{[-1,1]}(p;\varphi)$ into finitely many pieces
\[
W_k=W^u_{[-\delta(k),\delta(k)]}(q(k);\varphi)\quad\text{for $0\le k\le k(p)$}
\]
with choosing $q(k)\in W^u_{[-1,1]}(p;\varphi)$ and $\delta(k)>0$ appropriately. 
(We allow the pieces $W_k$ to meet each other only at their end points.) Let us write $q=w^u_p(\tau_\gamma;\varphi)$ for the point in $W^u_{[-c_*,c_*]}(p;\varphi)$ that is closest to the periodic orbit $\gamma$.
We take $W_k$, $1\le k\le k(p)$, in the following manner: 
\begin{itemize}
\item 
 $W_0$ is taken as an exceptional piece that covers the intersection of $W^u_{[-1,1]}(p;\varphi)$ with a neighborhood of $q$ of size proportional to  $b^{-\rho'}$. More precisely, we take $q(0)$ and $\delta(0)$ so that $0\le \delta(0)\le 4c_*b^{-\rho'}$ and  that $W_0$ covers the intersection 
\[
w^u_{p}([\tau_\gamma-2c_* b^{-\rho'}, \tau_\gamma+2c_* b^{-\rho'}];\varphi)\cap W^u_{[-1,1]}(p;\varphi).
\]
\item 
For $1\le  k\le k(p)$, the length of $W_k$ is proportional to its distance from the point $q=w^u_p(\tau_\gamma;\varphi)$ in $W^u(p;\varphi)$. More precisely, we take  $q(k)=w^u_p(\tau_k;\varphi)$ and $\delta(k)$ so that 
\[
C_*^{-1}|\tau_k-\tau_\gamma|\le \delta(k)\le c_*^{-1}|\tau_k-\tau_\gamma|/2\quad\text{and}\quad b^{-\rho'}\le \delta(k)\le (c_*-1)/c_*.
\] 
\end{itemize}
Clearly such construction is possible. 
Further we may and do suppose that $\delta(k)b$ for $k\neq 0$ are integers, by adjusting the non-exceptional piece $W_k$ with $k\neq 0$ and incorporating the remnants in the exceptional piece $W_0$.

Below we are going to consider the non-exceptional piece $W_k$ with
 $1\le k\le k(p)$.  Take $t_k>0$ so that $\|(Df^{t_k}_\varphi)_{q(k)}|_{E_u}\|= \delta(k)^{-1}$ or, in other words,  that  
\[
 f_\varphi^{t_k}(W^u_{[-\delta(k),\delta(k)]}(q(k);\varphi))=W^u_{[-1,1]}(f^{t_k}_{\varphi}(q(k));\varphi).
 \]
We claim that $f^{t_k}_{\varphi}(q(k))$ for $1\le  k\le k(p)$ does not belong to $\mathcal{E}(\delta(k)b)$. Set $q'(k):=f^{t_k}_{\varphi}(q(k))$ for brevity. To prove the claim, let us suppose that $q'(k)$  belongs to $\mathcal{E}(\delta(k)b)$ and show that the periodic orbit $\gamma$ is too close to $W_k$, which contradicts the fact that $W_k$ is a non-exceptional piece. 
Note, first of all, that we may suppose the ratio $t_k/T(p,b)$ to be  close to $0$ because $\rho'<\rho$ are assumed to be small. From the definition of $\mathcal{E}(\delta(k)b)$, there exists a periodic orbit $\gamma'$ for the flow $f^t$ whose period $|\gamma'|$ is  less than $T(q'(k),\delta(k)b)$ and whose distance from 
$W^u_{[-c_*,c_*]}(q'(k))$ is bounded by $(\delta(k)b)^{-2/(3R)}$. 
The backward orbit $f^{-t}_{\varphi}(q'(k))$ of $q'(k)$ traces the periodic orbit $\gamma'$ as far as $\|Df^{-t}_{q'(k)}\|\sim \|Df^{-t}_{q'(k)}|_{E_u}\|^{-1}\ll (\delta(k)b)^{2/(3R)}$ and hence for $0\le t\le T(q'(k),\delta(k)b)$. Therefore 
we have that 
\begin{equation}\label{eq:periodqdd}
\|Df^{-|\gamma'|}_{q''}|_{E_u}\|>C_*^{-1}\|Df^{-T(q'(k),\delta(k)b)}_{q'(k)}|_{E_u}\|\ge C_*^{-1}(\delta(k) b)^{-1/(4R)}\gg  b^{-1/(4R)}
\end{equation}
for any point $q''$ on the orbit $\gamma'$. Since $f^{-t_k}_{\varphi}$ preserves the periodic orbit $\gamma'$,  
the distance between $\gamma'$ and $W_k=W^u_{[-c_*\delta(k), c_*\delta(k)]}(q(k);\varphi)\subset W^u_{[-c_*,c_*]}(p;\varphi)$ is less than 
\[
C_* \delta(k)^{-1}\cdot  (\delta(k)b)^{-2/(3R)}\ll \textcolor{\revisionColor}{b^{-1/(2R)}}.
\] 
This and the fact that $q(k)\in W^u_{[-1,1]}(p,\varphi)$ implies that the backward orbit $f^{-t}(p)$ of $p$ also traces $\gamma'$ as far as $\|Df^{-t}_p\|\sim \|Df^{-t}_p|_{E_u}\|^{-1}\ll \textcolor{\revisionColor}{b^{1/(2R)}}$. Hence, from \eqref{eq:periodqdd},  
the period $|\gamma'|$ of $\gamma'$ is bounded by $T(p,b)+C$.
This and Lemma \ref{lm:potp} imply $\gamma'=\gamma$. But the estimate above on the distance between $\gamma'=\gamma$ and $W_k$ clearly contradicts the fact that $W_k$ is a non-exceptional piece.   

Now we can apply \eqref{eq:obs} to the non-exceptional pieces $W_k$ for $1\le k\le k(p)$ and find
\[
\frac{1}{|I_k|}\left|\int_{I_k} \exp(ib(\psi^s_{p,(-1,1)}(\tau;\varphi)+\alpha \tau)) d\tau\right|
= \frac{1}{2}\left|\int^{1}_{-1} \exp(i\delta(k) b(\psi^s_{q'(k),(-1,1)}(\tau;\varphi)+\alpha' \tau)) d\tau\right|
\]
where $I_k\subset [-1,1]$ is the interval such that $W_k=W^u_{I_k}(p;\varphi)$.
Since $q'(k)=f^{t_k}_{\varphi}(q(k))$ for $1\le  k\le k(p)$ does not belong to $\mathcal{E}(\delta(k)b)$ as we have proved above, the assumption of the lemma tells that the right-hand side is bounded by $(\delta(k)b)^{-\rho'}\ll b^{-\rho''}$. Since the length of the exceptional piece $W_0$ is bounded by $C_* b^{-\rho'}\ll b^{-\rho''}$,  we obtain the required estimate \eqref{eq:tv} with $\rho$ replaced by $\rho''$. 
\end{proof}

From  Lemma \ref{Lm:ap}, we see that, if $\varphi\in \mathcal{W}$ does not belong to the subset
\[
\bigcap_{B} \bigcup_{b\ge B, b\in \mathbb{N}}\left(\bigcup_{p\in M\setminus \mathcal{E}(b)} \bigcup_{\alpha\in \real} X_{\rho}(p,\alpha;b)\right),
\]
the flow $f^t_{\varphi}$ satisfies the condition $(NI)_{\rho'}$ for $\rho'<\rho(1-\rho)$. Since the $\boldsymbol{\mu}$-measure of the set above is $0$ from Corollary \ref{Cor:ap} and Borel-Cantelli lemma, we can find $\varphi\in C^r(M)$ arbitrarily close to $0$  such that the flow $f^t_{\varphi}$ satisfies the non-integrability condition $(NI)_{\rho/2}$. By a theorem of Moser \cite{MR0182927}, there is a $C^r$ diffeomorphism $\Phi_{\varphi}:M\to M$ which transfers the volume $m_\varphi=(1+\varphi)^{-1}\vol$ to $\vol$, and $\Phi_{\varphi}$ converges to the identity in $C^r$ sense as $\varphi$ converges to $0$ in $C^r(M)$. Therefore, taking conjugation of $f^t_{\varphi}$ by such a diffeomorphism $\Phi_{\varphi}$ and recalling Remark \ref{Rem:indep_NI_metric}, we obtain a $C^r$ flow in $\fF^r_{A}$ which is arbitrarily close to $f^t$ in the $C^r$ sense and satisfies the non-integrability condition $(NI)_{\rho/2}$. We have finished the proof of Theorem~\ref{th:od}.

\subsection{Perturbation family}\label{ss:perturbation_family}
In this subsection, we explain the scheme of perturbation for the proof of Proposition \ref{Prop:mesure}. Recall that we write $v$ for the generating vector field of the flow $f^t$. Suppose that $b>0$ is large and that a point $p\in M\setminus \mathcal{E}(b)$ and $\alpha\in \real$ with $|\alpha|< b$ are given arbitrarily. 
Below we set up  functions $\varphi_j\in C^\infty(M)$ for $1\le j\le \lceil b^{1/R}\rceil$. Then,  for arbitrary $\varphi_0\in \mathcal{W}$ and a subset 
$\cJ\subset \{1,2,\dots,\lceil b^{1/R}\rceil\}$ of integers, we consider the family of vector fields
\begin{equation}\label{eq:vfamily}
v_{\bt}=
(1+\varphi_{\bt})\cdot v\quad
\text{ where }\quad \varphi_{\bt}=\varphi_0+
\sum_{j\in \cJ}t_{j}\cdot \varphi_{j}\quad \text{and}\quad
\bt=(t_{j})\in [-4,4]^{\cJ}.
\end{equation}
Once we fix such family of vector fields, we write  $f^t_{\bt}=f^t_{\varphi_{\bt}}$ for the flow generated by $v_\bt$. 

Recall the constant $\tau_*>0$ taken so that the condition \eqref{eq:tau_ast} holds and set $q=f^{4\tau_*}(p)$. 
We take the points 
\[
s(j)=-1+(2j-1)\cdot b^{-1/R}\quad \text{for $1\le j \le \lceil b^{1/R}\rceil$}
\]
so that 
\[
s(1)=-1+b^{-1/R}, \quad s(i+1)-s(i)=2b^{-1/R}\quad\text{ and }\quad s(\lceil b^{1/R}\rceil)\in [1,1+b^{-1/R}].
\] Then we put, for $1\le j \le \lceil b^{1/R}\rceil$, 
\[
p(j)=w^u_{p}(s(j)),\qquad  
q(j)=f^{4\tau_*}(p(j))=w^u_q(\lambda s(j))
\]
where we set  $\lambda=\|Df^{4\tau_*}_p|_{E_u}\|$. (See Figure \ref{fig2}.) 

To proceed, we take a $C^\infty$ local coordinate chart
\[
\kappa_{p}:U\to V\times [-\tau_*,6\tau_*]\subset \real^2\times \real,\quad \kappa_p(m)=(x,y,z)
\]
on a neighborhood $U$ of $W^u_{[-2,2]}(p)$, so that
\begin{itemize}
\item it is a flow box coordinate charts for the flow $f^t$, that is,  $(\kappa_p)_*(v)=\partial_z$,
\item it transfers the Riemann volume $m$ to the standard volume $dxdydz$ on $\real^3$, and 
\item $\kappa_p(w^u_{p}(\tau))=(\tau,0,0)$ if $\tau\in [-2,2]$.
\end{itemize}
We may and do assume that the coordinate charts $\kappa_p$ for $p\in M$ are bounded uniformly in $C^\infty$ sense. (Recall that we are assuming $f\in \fF^\infty_A$.) Note that we have 
\[
\kappa_{p}(p(j))=(s(j),0,0),\quad 
\kappa_{p}(q(j))=\kappa_{p}(f^{4\tau_*}(p(j)))=(s(j),0,4\tau_*).
\]

Next we take and fix a $C^\infty$ functions $h_0:\real^2\to [0,1]$ supported on the disk $|(x,y)|\le 3/2$ such that $
h_0(x,y)=y$ if $|(x,y)|\le 1$. Also let  $\chi:\real\to [0,1]$ be a $C^\infty$ function such that $\chi(s)=1$ if $|s|\le 1$ and $\chi(s)=0$ if $|s|\ge 3/2$. 
Then we define  
\[
\varphi_j:M\to [0,1]\quad\text{ for $1\le j\le \lceil b^{1/R}\rceil$}
\]
by 
\begin{equation}\label{eq:def_varphi_j}
\varphi_j(m)=- b^{-1-1/R}\cdot \chi\big((z-4\tau_*)/\tau_*\big)\cdot h_0\big(b^{1/R}(x-s(j)),b^{1/R}y\big)
\end{equation}
if $m\in U$, where $\kappa_p(m)=(x,y,z)$, and set $\varphi_j(m)=0$ if $m\notin U$. 
\begin{Remark}\label{Rem:adj}
The motivation for the choice of $\varphi_j$ above is explained as follows.
Consider the family $f^t_{\bt}$ defined by \eqref{eq:vfamily} with  $\cJ=\{j\}$ and $\varphi_0\equiv 0$. 
Then, in the local chart $\kappa_p$, the vector field $v_{\bt}$ looks
\[
(\kappa_{p})_*v_{\bt}(x,y,z)=
\bigg(1-t_j \cdot b^{-1-1/R} \cdot \chi((z-4\tau_*)/\tau_*) \cdot h_0\big(b^{1/R}(x-s(j)),b^{1/R}y\big)\bigg)\cdot \partial_z.
\]
A simple computation tells that, 
if $w=(x,y)$ satisfies $|w-(s(j),0)|\le b^{-1/R}$, the map $f^{-6\tau_*}_{\bt}$  takes the point $(w, 6\tau_*)$  to $
(w,  a_0 b^{-1} t_j  y +\mathcal{O}_*(|b^{-1}t_j y|^2))$ where $a_0=\int \chi(z/\tau_*) dz$. Therefore, by varying the parameter $t_j$, we will be able to rotate the stable subspace $E_s(w)$ around the unstable manifold by the rate proportional to $b^{-1}$.   
\end{Remark}

\begin{figure}
\begin{overpic}[scale=0.5]
{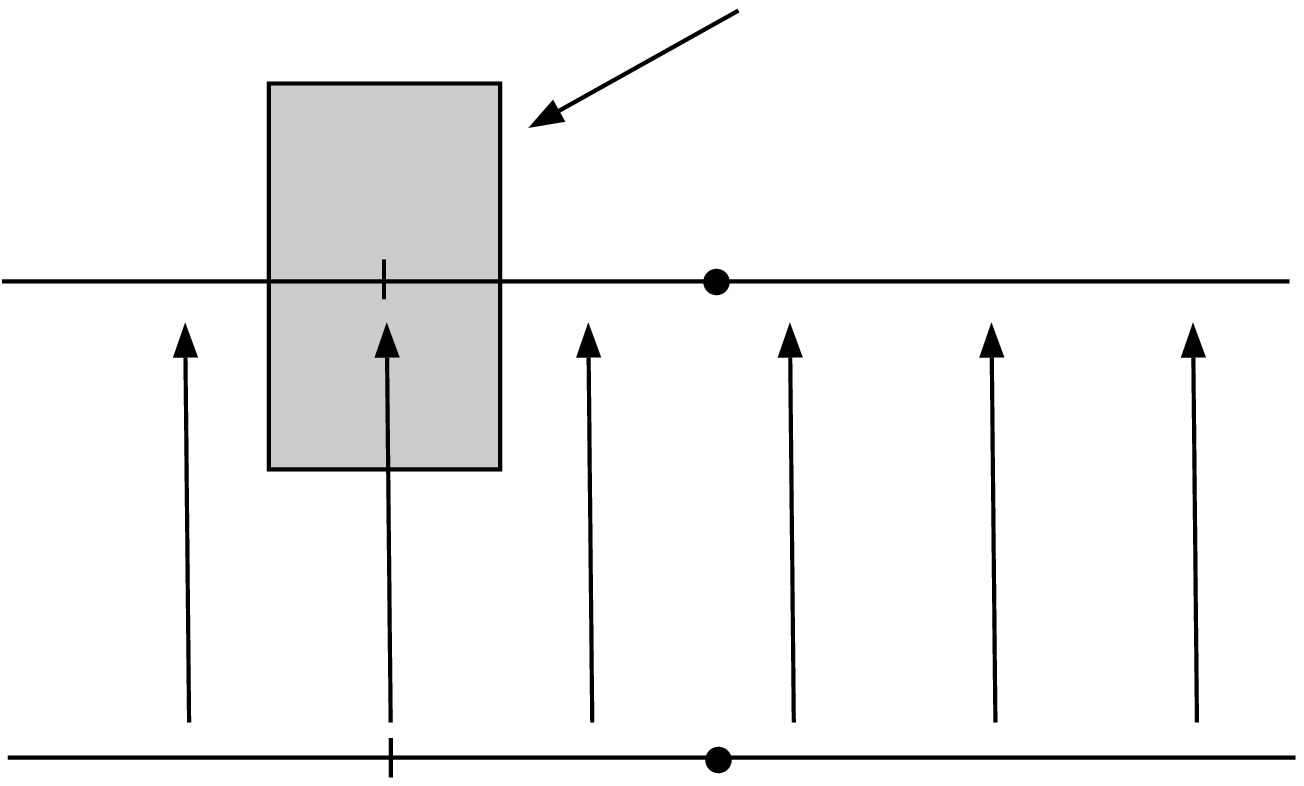}
\put(58,58){The support of $\varphi_j$}
\put(54,41){$q$}
\put(21,41){$q(j)$}
\put(20,4){$p(j)$}
\put(54,4){$p$}
\put(-10,6){$W^u_{(-1,1)}(p)$}
\end{overpic}
\caption{A picture of the flow $f^t$ in a section parallel to the flow that contains the unstable manifold $W^u_{(-1,1)}(p)$. }
\label{fig2}
\end{figure}

Below we discuss about the perturbation family $f^t_{\bt}=f^t_{\varphi_{\bt}}$ defined in \eqref{eq:vfamily}. First of all, note that the functions $\varphi_j$ satisfy 
\begin{equation}\label{eq:norm}
\|D^k\varphi_j\|_{\infty}\le C_* b^{((k-1)/R)-1}\quad\text{ for $0\le k\le R$.}
\end{equation}
The intersection multiplicity of  $\supp \varphi_{j}$ for $1\le j\le \lceil b^{1/R}\rceil$ is bounded by $2$. Hence, regardless of the choice of $\cJ$, we have that, for $\bt\in [-4,4]^{\cJ}$, 
\begin{equation}\label{eq:basic_est_family}
\|\varphi_{\bt}-\varphi_{\mathbf{0}}\|_{C^R}<C_* b^{-1/R},\;\; 
\|\partial_{\bt}\varphi_{\bt}\|_{C^1}\le C_* b^{-1}\;\;\text{and}\;\;
\|\varphi_{\bt}-\varphi_{\mathbf{0}}\|_{C^0}<C_* b^{-1-1/R}.
\end{equation}
As we explained in Remark \ref{Rem:adj}, 
our aim is to modify the $s$-template $\psi^s_{p,(-1,1)}$ on the intervals 
\[
J(j):=[s(j)-b^{-1/R}, s(j)+b^{-1/R}]\cap [-1,1]\quad \text{ for $j\in \cJ$}
\]
almost independently
by varying the parameters $t_j$ for $j\in \cJ$.  
To realize this, we have to choose the subset $\cJ$ carefully. 
Let us make two observations: The first one is that
\begin{itemize}
\item[(Ob1)] $
f^{t}(\supp \varphi_i) \cap \supp \varphi_j= \emptyset$ for $1\le i,j\le \lceil b^{1/R}\rceil$ and $4\tau_*\le t\le T(p,b)-4\tau_*$.
\end{itemize}
This is a consequence of the fact that $p$ does not belong to $\mathcal{E}(b)$.
Indeed, if this were not true, we could find a periodic orbit with period $< T(p,b)$ in the $C_* b^{-1/R}$-neighborhood of $W^u_{[-c_*,c_*]}(p)$ by the pseudo-orbit tracing property and hence the point $p$ would belong to $\mathcal{E}(b)$, provided that $b$ is  sufficiently large. 

The second observation below is somewhat similar to the first one but makes use of the fact that the curve $f^{-t}(W^u_{[-1,1]}(p))$ shrinks exponentially fast as $t$ increases:
\begin{itemize}
\item[(Ob2)] 
Let $\mathfrak{E}(p,b)$ be the set integers $1\le j\le \lceil b^{1/R}\rceil$ such that the support of $\varphi_j$ meets $
f^{-t}(W^u_{[-1,1]}(p))$  for some $t\ge 0$ with $\|Df^{-t}_p|_{E_u}\|\ge b^{-1}$. Then $\sharp \mathfrak{E}(p,b)\le C_* b^{3/(4R)}$.
\end{itemize}
Indeed, by the similar reason as that for (Ob1),  we can find a sequence of real numbers $t_0=0<t_1<t_2<\dots<t_k<\cdots$ such that the curve  $f^{-t}(W^u_{(-1,1)}(p))$ meets $\supp \varphi_j$ for some $1\le j\le \lceil b^{1/R}\rceil$ only if  $t\in \cup_{i=1}^\infty [t_i-2\tau_*, t_i+2\tau_*]$ and that  $t_{i+1}-t_i> T(p,b)-4\tau_*$ for $i\ge 0$, provided that $b$ is sufficiently small. 
Then (Ob2) follows since the length of $f^{-t_i}(W^u_{[-1,1]}(p))$ is bounded by $(C_* b^{-1/(4R)})^i$ from the definition of $T(p,b)$. 

In the following,  we let the subset $\cJ$ be either of $\cJ_{\mathrm{even}}$ and $\cJ_{\mathrm{odd}}$ that consist of even and odd integers $1\le j\le \lceil b^{1/R}\rceil$ respectively, \emph{but we exclude}
\begin{itemize}
\item those $j$ in $\mathfrak{E}(p,b)$ given in (Ob2), and
\item those $j$ for which $J(j-1)\cup J(j) \cup J(j+ 1)$ contains either of $-1$ or $1$.
\end{itemize}
\begin{Remark}\label{rem:excp_j}
The cardinality of $1\le j\le \lceil b^{1/R}\rceil$ that does not belong to $\cJ_{\mathrm{even}}\cup\cJ_{\mathrm{odd}}$ is bounded by $C_*b^{3/(4R)}$ from (Ob2) and hence the Lebesgue measure of the union of $J(j)$ for such $j$'s is bounded by $C_* b^{-1/(4R)}$. With this reason, these exceptions will turn out to be negligible when we prove the estimate \eqref{eq:tv} in the following subsections, provided that $\rho$ is so small that $\rho<1/(4R)$.
\end{Remark}

\subsection{Deformation of $s$-templates}
In this subsection, we suppose that $\cJ$ is either of the subsets $\cJ_{\mathrm{even}}$ and $\cJ_{\mathrm{odd}}$ defined above and observe how the $s$-template $\psi^s_{p,(-1,1)}$ at the point $p$ is deformed in the
perturbation family \eqref{eq:vfamily}  with arbitrary $\varphi_0\in \mathcal{W}$. 

Let us write $
T_qM=E_0(q;\bt)\oplus E_s(q;\bt)\oplus E_u(q;\bt)$
for the hyperbolic decomposition at $q\in M$ for the flow $f^t_\bt$, correspondingly to 
\eqref{eq:hyperbolic_splitting}. Let 
\[
\gamma^0_{p,\bt}, \;\gamma^\perp_{p,\bt}, \;\gamma^s_{p,\bt}\; :W^u_{(-1,1)}(p;\varphi_{\bt})\to T^*M
\]
be the sections $\gamma^0_{p,(-1,1)}, \;\gamma^\perp_{p,(-1,1)}, \;\gamma^s_{p,(-1,1)}$ considered in Section \ref{sec:ni} but defined for the perturbed flow  $f^t_{\bt}$. 
Then, by definition, the $s$-template $\psi^s_{p, (-1,1)}(\tau;\varphi_{\bt})$ for the flow $f^t_{\bt}$ at $p$ is the function satisfying 
\begin{equation}\label{eq:expression_psi}
\gamma^s_{p,\bt}(z)=\psi^s_{p, (-1,1)}(\tau;\varphi_{\bt})\cdot \gamma^\perp_{p,\bt}(z) +\gamma^0_{p,\bt}(z)\qquad\text{for }z=w^u_p(\tau;\varphi_{\bt}).
\end{equation}

Actually it is not a very simple task to observe how the $s$-template $\psi^s_{p, (-1,1)}(\cdot;\varphi_{\bt})$ depends on the parameter $\bt$, because the frames $\gamma^\perp_{p,\bt}$ and $\gamma^0_{p,\bt}$ also depend on $\bt$. In order to simplify the argument, we consider an intermediate approximation of $\psi^s_{p, (-1,1)}(\cdot;\varphi_{\bt})$, which is defined in a similar  manner as $\psi^s_{p, (-1,1)}(\tau;\varphi_{\bt})$  but with the fixed frames defined for $\mathbf{t}=\mathbf{0}$. Let us define $\tilde{\gamma}^s_{p,\bt}:W^u_{(-1,1)}(p;\varphi_{\mathbf{0}})\to T^*M$ as
the unique continuous section in $\Gamma_1^u(p,(-1,1))$ for the flow $f^t_{\mathbf{0}}$ (with $\bt=\mathbf{0}$!) such that $\tilde{\gamma}^s_{p,\bt}(z)$ is normal to 
$E_u(z;\mathbf{0})\oplus E_s(z;\bt)$ for each $z\in W^u_{(-1,1)}(p;\varphi_{\mathbf{0}})$. Notice that this is a section \emph{not} on $W^u_{(-1,1)}(p;\varphi_{\bt})$ but on $W^u_{(-1,1)}(p;\varphi_{\mathbf{0}})$. We can express it as 
\begin{equation}\label{eq:expression}
\tilde{\gamma}^s_{p,\bt}(z)=\tilde{\psi}^s_{p,(-1,1)}(\tau;\bt)\cdot \gamma^\perp_{p,\mathbf{0}}(z)+\gamma^0_{p,\mathbf{0}}(z)\quad\text{for $z=w^u_p(\tau;\varphi_{\mathbf{0}})$}
\end{equation}
using a unique continuous function $\tilde{\psi}^s_{p,(-1,1)}(\cdot;\bt):(-1,1)\to \real$. In the next lemma, we show  that  $\tilde{\psi}^s_{p,(-1,1)}(\cdot;\bt)$ is a nice approximation of ${\psi}^s_{p,(-1,1)}(\cdot;\varphi_{\bt})$. Before stating the lemma, we note that there exist unique functions
\[
h:[-4,4]^{\cJ}\to \real\quad\text{and}\quad
H:(-1,1)\times [-4,4]^{\cJ}\to \real
\]
such that $h(\mathbf{0})=1$, $H(\tau,\mathbf{0})=0$ , $H(0,\bt)=0$ and that 
\[
w^u_p(h(\bt)\cdot\tau;\varphi_{\bt})=f^{H(\tau,\bt)}(w^u_p( \tau;\varphi_{\mathbf{0}})) \quad\text{for $\tau\in (-1,1)$ and $\bt\in [-4,4]^{\cJ}$.}
\]
This is a consequence of the definition of the intrinsic metric \eqref{eq:intrinsice_metric} and the fact that our perturbation does not change the flow line. 
\begin{Lemma}\label{lm:approx_psis}
For $\bt\in [-4,4]^{\cJ}$, we have $|h(\bt)-1|\le C_* b^{-2}$ and  
\[
|\tilde{\psi}^s_{p,(-1,1)}(\tau;\bt)-
{\psi}^s_{p,(-1,1)}(h(\bt)\cdot \tau;\varphi_{\bt})|
<C_* b^{-1-(1/(4R))}\quad\text{for $\tau\in(-1,1)$}.
\] 
\end{Lemma}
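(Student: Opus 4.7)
The plan is to prove both assertions by exploiting two structural features of the perturbation family of Subsection \ref{ss:perturbation_family}. First, in the flow-box chart $\kappa_p$, both $v = \partial_z$ and $v_\bt = (1+\varphi_\bt)\,\partial_z$ point in the $\partial_z$ direction, so they share the same (unparametrized) flow lines inside $U$; in particular the weak unstable manifold of $p$ is the same set for every $\bt$, which is precisely what permits the definition of $h(\bt)$ and $H(\tau,\bt)$. Second, since $h_0(\xi,0) = 0$, each $\varphi_j$ vanishes on $\{y = 0\}$, so $\varphi_\bt \equiv 0$ along the $z$-axis in $\kappa_p(U)$, which contains the portion of the orbit of $p$ lying in $U$. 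By observation (Ob1), moreover, the backward orbit of $p$ re-enters $\supp\varphi_\bt$ only at times $\ge T(p,b) - 4\tau_*$.

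To establish $|h(\bt) - 1| \le C_* b^{-2}$, I would use the intrinsic-metric formula \eqref{eq:intrinsice_metric} to write $h(\bt)$ as the $t\to\infty$ limit of the Jacobian ratio $\|(Df^{-t}_\bt)_p|_{E_u(p;\bt)}\|/\|(Df^{-t}_{\mathbf{0}})_p|_{E_u(p;\mathbf{0})}\|$ and then differentiate in $\bt$. The first-order variation is an integral of derivatives of $\varphi_\bt$ along the backward orbit of $p$; the portion inside $U$ lies on the $z$-axis where $\varphi_\bt$ vanishes, and the only contributing interactions occur at past times $\ge T(p,b) - 4\tau_*$ where hyperbolic attenuation applies. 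The leading term cancels, leaving only a quadratic-in-$b^{-1-1/R}$ residue, producing the bound $C_* b^{-2}$.

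For the main estimate, I would express both $\tilde\psi^s_{p,(-1,1)}(\tau;\bt)$ and $\psi^s_{p,(-1,1)}(h(\bt)\tau;\varphi_\bt)$ as coefficients in the unperturbed frame $\gamma^0_{p,\mathbf{0}}, \gamma^\perp_{p,\mathbf{0}}$. Pulling $\gamma^s_{p,\bt}$ back by $f^{-H(\tau,\bt)}$ places its base point on $W^u(p;\varphi_{\mathbf{0}})$ alongside $\tilde\gamma^s_{p,\bt}$, and the resulting difference splits into three pieces: (a) the change in the annihilated plane, from $E_u(z;\bt)\oplus E_s(z;\bt)$ to $E_u(z;\mathbf{0})\oplus E_s(z;\bt)$; (b) the straight-section frame shifts $\gamma^0_{p,\bt} - \gamma^0_{p,\mathbf{0}}$ and $\gamma^\perp_{p,\bt} - \gamma^\perp_{p,\mathbf{0}}$, controlled by Lemma \ref{lm:straight} applied to the $C^R$-small perturbation and of size $\mathcal{O}(b^{-1-1/R})$; and (c) the flow-transport error of size $\mathcal{O}(|H(\tau,\bt)|) = \mathcal{O}(b^{-1-1/R})$.

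The decisive piece is (a), and it sets the exponent $1/(4R)$. The displacement $E_u(z;\bt) - E_u(z;\mathbf{0})$ at a point $z$ near $p$ is governed by backward-propagation of the vector-field perturbation along the past orbit of $z$; by observation (Ob1), the first past encounter with $\supp\varphi_\bt$ occurs at a time $T \ge T(p,b) - 4\tau_*$, where the local effect on $E_u$ is of order $\|\partial_\bt\varphi_\bt\|_{C^1} \le C_* b^{-1}$ from \eqref{eq:basic_est_family}, attenuated under forward propagation to $z$ by the spectral-gap factor $\|Df^{-T(p,b)}_p|_{E_u}\|^2 \le b^{-1/(2R)}$ coming from \eqref{eq:T}. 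Later revisits contribute strictly less by uniform hyperbolicity, so (a), (b), (c) combine to at most $C_* b^{-1-1/(4R)}$. The main obstacle is organising these hyperbolic propagation estimates so that the geometric-series attenuations sum cleanly and the cancellations in (a)--(c) are tracked consistently.
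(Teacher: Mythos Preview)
Your decomposition into pieces (a), (b), (c) is reasonable, but you have misidentified which piece is decisive and where the exponent $1/(4R)$ actually comes from.

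Piece (a), the change $E_u(z;\bt)$ versus $E_u(z;\mathbf{0})$ along the unstable manifold, is in fact $\mathcal{O}_*(b^{-2})$, not $\mathcal{O}_*(b^{-1-1/(4R)})$. The reason is observation (Ob2) together with the exclusion of $\mathfrak{E}(p,b)$ from $\cJ$ (not (Ob1) as you write): the backward orbit $f^{-t}(W^u_{[-1,1]}(p))$ avoids the perturbation domain $\bigcup_{j\in\cJ}\supp\varphi_j$ until $\|Df^{-t}_p|_{E_u}\|<b^{-1}$, so the graph-transform construction of $W^u(p;\varphi_\bt)$ differs from that of $W^u(p;\varphi_{\mathbf{0}})$ only after contraction to scale $b^{-1}$, where the $C^1$ size of the perturbation (which is $\mathcal{O}_*(b^{-1})$ by \eqref{eq:basic_est_family}) yields a $C^1$ change of order $b^{-2}$. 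This is what gives \eqref{eq:estmu}, \eqref{eq1} and \eqref{eq2} in the paper, and also the bound on $h(\bt)$; your ``quadratic residue'' explanation for the latter is not the right mechanism.

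The genuine bottleneck is piece (b), specifically $\gamma^0_{p,\bt}-\gamma^0_{p,\mathbf{0}}$, and your estimate $\mathcal{O}(b^{-1-1/R})$ for it is unjustified. Lemma~\ref{lm:straight} determines $\psi''_{\gamma^0}$ from the backward dynamics (and that part is indeed well controlled), but the section $\gamma^0_{p,(-1,1)}$ itself is pinned by the boundary normalization \eqref{eq:specyfy_gamma0}, which forces $\gamma^0$ to agree with $\gamma^s$ (hence to see $E_s$) at the endpoints $\tau=\pm 1$. The change in $E_s$ at those endpoints is governed by the \emph{forward} orbit of $w^u_p(\pm 1;\varphi_{\mathbf{0}})$; by (Ob1) and the exclusion from $\cJ$ of indices $j$ with $J(j-1)\cup J(j)\cup J(j+1)$ containing $\pm 1$, that forward orbit avoids the perturbation for time $T(p,b)-4\tau_*$, and the resulting attenuation produces exactly the factor $b^{-1/(4R)}$. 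This boundary-condition argument is the step you are missing, and without it one cannot obtain the exponent $1+1/(4R)$.
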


\begin{proof}
From the observation (Ob2) in the last subsection and the definitions of $\cJ_{\mathrm{odd}}$ and $\cJ_{\mathrm{even}}$, the backward orbit $f^{-t}(W^u_{[-1,1]}(p))$ of $W^u_{[-1,1]}(p)$, which shrinks exponentially fast as $t$ increases,  does not meet the domain of perturbation $\cup_{j\in \cJ} \supp \varphi_j$ until the condition $\|Df^{-t}_p|_{E_u}\|< b^{-1}$ is fulfilled.  Therefore, from the basic estimate \eqref{eq:basic_est_family} and the construction of the unstable manifold by the graph transform method, we see that  the $C^1$ distance between $W^u_{(-1,1)}(p;\varphi_{\bt})$ and $W^u_{(-1,1)}(p;\varphi_{\mathbf{0}})$ is bounded by $C_* b^{-2}$ and consequently
\begin{equation}\label{eq:estmu}
|h(\bt)-1|\le C_* b^{-2}\quad\text{ and } 
\quad 
\|\partial_\tau H(\tau;\bt)\|_{\infty}\le C_* b^{-2}.
\end{equation}
Next we consider the sections $\gamma^s_{p,\bt}(\cdot)$, $\gamma^\perp_{p,\bt}(\cdot)$ and $\gamma^0_{p,\bt}(\cdot)$. For convenience, we look them in the flow box coordinate chart $\kappa_p$ considered in the last subsection and regard them as mappings from $(-1,1)$ to $\real^3$.
From \eqref{eq:estmu}, we have
\begin{equation}\label{eq1}
\gamma^\perp_{p,\bt}(w^u_p(h(\bt)\cdot \tau;\varphi_{\bt}))=\left(1+\mathcal{O}_*(b^{-2})\right)\cdot 
\gamma^\perp_{p,\mathbf{0}}(w^u_p(\tau;\varphi_{\mathbf{0}}))
\end{equation}
and 
\begin{equation}\label{eq2}
\gamma^s_{p,\bt}(w^u_p(h(\bt)\cdot \tau;\varphi_{\bt}))=
\tilde{\gamma}^s_{p,\bt}(w^u_p(\tau;\varphi_{\mathbf{0}}))+\mathcal{O}_*(b^{-2}).
\end{equation}
To compare $\gamma^0_{p,\bt}(\cdot)$ and $\gamma^0_{p,\mathbf{0}}(\cdot)$, 
we recall that they are specified by the condition \eqref{eq:specyfy_gamma0}. For the point $z=w_p^u(\pm 1;\varphi_{\mathbf{0}})$, we see, from the choice of the subsets $\cJ_{\mathrm{odd}}$ and $\cJ_{\mathrm{even}}$ and from the observation (Ob1), that its forward orbit $f^t(z)$ for $0\le t\le T(p, b)-4\tau_*$ does not pass through the domain of perturbation. Hence, from the construction of the stable subspace,   
\[
\|\gamma^s_{p,\bt}(w^u_p(\pm h(\mathbf{t});\varphi_{\bt}))-\gamma^s_{p,\mathbf{0}}(w^u_p(\pm 1;\varphi_{\mathbf{0}}))\|<C_* b^{-1-(1/(4R))}.
\]
This together with \eqref{eq:estmu} and the $C^2$ boundedness of $\gamma^0_{p,\bt}$ imply
\[
\|\gamma^0_{p,\bt}(w^u_p(\pm h(\mathbf{t});\varphi_{\bt}))-\gamma^0_{p,\mathbf{0}}(w^u_p(\pm 1;\varphi_{\mathbf{0}}))\|<C_* b^{-1-(1/(4R))}.
\]
Let us recall, from the proof of Lemma \ref{lm:straight}, how the straight sections are determined as  limits in terms of the time evolution along the orbit of $W^u_{(-1,1)}(p)$ in the \emph{negative} time direction. 
Then, from disjointness of the backward orbit of $W^u_{(-1,1)}(p)$ with the domain of perturbation, mentioned in the beginning of this proof, and also from the estimate above at the end points,  we see that
\begin{equation}\label{eq3}
\|\gamma^0_{p,\bt}(w^u_p(h(\bt)\cdot \tau;\varphi_{\bt}))-
\gamma^0_{p,\mathbf{0}}(w^u_p(\tau;\varphi_{\mathbf{0}}))\|<C_* b^{-1-(1/(4R))}\quad \text{for $\tau\in (-1,1)$}.
\end{equation}
We can now conclude the lemma by comparing    \eqref{eq:expression_psi} and \eqref{eq:expression} and using  \eqref{eq1}, \eqref{eq2}, \eqref{eq3}. 
\end{proof}

From Lemma \ref{lm:approx_psis}, we have 
\[
\left|\int_I \exp\left(ib \left(\tilde{\psi}^s_{p,(-1,1)}(\tau;\bt)+\alpha \tau\right)\right) d\tau
-\int_I \exp\left(ib \left(\psi^s_{p,(-1,1)}(\tau;\varphi_{\bt})+\alpha \tau\right)\right) d\tau\right|<C_* b^{-1/(4R)}
\]
for any Borel subset $I\subset [-1,1]$. 
Therefore, in proving \eqref{eq:tv},  we may consider $\tilde{\psi}^s_{p,(-1,1)}(\tau;\bt)$ in the place of ${\psi}^s_{p,(-1,1)}(\tau;\varphi_{\bt})$, provided that $0<\rho<1/(4R)$. 
 
To proceed, we introduce the mapping
\[
\Psi_{\tau}=\Psi_{\tau,p,\alpha,b}:[-4,4]^{\cJ}\to \real^{\cJ}
\]  
defined for each $\tau\in (-b^{-1/R}, b^{-1/R})$ by 
\[
\Psi_{\tau}(\bt)=b\cdot 
\bigg(\tilde{\psi}_{p,(-1,1)}^s(s(j)+\tau;\textcolor{\revisionColor}{a_0^{-1}}\bt)+\alpha(s(j)+\tau)\bigg)_{j\in \cJ}
\]
where $a_0=\int \chi(z/\tau_*) dz$. (The factor $a_0^{-1}$ before $\bt$ above is thrown-in just for normalization. See Remark \ref{Rem:adj}.)
For $\varphi_0\in \mathcal{W}$ in \eqref{eq:vfamily}, let $A(\varphi_0)$ be the  diagonal matrix of size $\sharp \cJ$ with the diagonal elements 
\[
{a}_j(\varphi_0)=\textcolor{\revisionColor}{a_0^{-1}}\cdot \int (1+\varphi_0(f^{t}(q(j))))^{-2}\cdot \chi(t/\tau_*) \, dt
\] 
where $\chi(\cdot)$ is the function that appeared in the definition \eqref{eq:def_varphi_j} of $\varphi_j$. 
Clearly we have $C_*^{-1}<{a}_j(\varphi)<C_*$ for $\varphi\in \mathcal{W}$ and 
\begin{equation}\label{eq:var_phi}
|{a}_j(\tilde{\varphi})-{a}_j(\varphi)|<C_* \|\tilde{\varphi}-\varphi\|_{\infty} \quad\text{for }\varphi,\tilde{\varphi}\in \mathcal{W}.
\end{equation}
The next lemma realize the idea explained in Remark \ref{Rem:adj}.
\begin{Proposition} \label{prop:Psi}
If $b>0$ is sufficiently large, we have   
\[
\left\|D\Psi_{\tau}(\bt)- A(\varphi_0):\real^{\cJ}\to \real^{\cJ}\,\right\|_{\max}\le C_{*} b^{-1/(4R)}
\]
for any $\bt\in [-4,4]^{\cJ}$, $p\in M\setminus \mathcal{E}(b)$, $\alpha\in \real$ and $\tau\in (-b^{-1/R}, b^{-1/R})$, 
where we consider the operator norm with respect to the  norm $\|(s_j)\|_{\max}=\max_j|s_j|$ on $\real^{\cJ}$. This implies in particular that the mapping $\Psi_\tau$ restricted to $[-4,4]^{\cJ}$ is a diffeomorphism onto its image. 
In addition, for the Jacobian determinant, we have
\begin{equation}\label{eq:logdet}
|\log \det D\Psi_{\tau}(\bt)-\log \det A(\varphi_0)|\le C_*  b^{-1/(4R)}\cdot \sharp \cJ.
\end{equation}
\end{Proposition}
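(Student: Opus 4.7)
The plan is to differentiate the defining relation \eqref{eq:expression} for $\tilde\psi^s_{p,(-1,1)}(\cdot\,;\bt)$ with respect to the parameters $t_i$ and track how the stable direction $E_s(z;\bt)$ at a point $z=w^u_p(s(j)+\tau;\varphi_{\mathbf{0}})$ responds to turning on an individual perturbation $\varphi_i$. Since $\Psi_\tau^{(j)}(\bt)$ is essentially $b$ times $\tilde\psi^s$ evaluated near $p(j)$, everything reduces to estimating the partial derivatives of $\tilde\psi^s$ in $\bt$. The combinatorial backbone of the argument is that the supports $\supp\varphi_i$ for $i\in\cJ$ are pairwise disjoint in the flow box near $q$, while observations (Ob1) and (Ob2) together with the choice of $\cJ$ ensure that the forward orbit of $z$ avoids $\bigcup_{i\in\cJ}\supp\varphi_i$ outside the flow box until time $t\ge T(p,b)-4\tau_*$.

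For the diagonal entries I would redo the flow-box computation of Remark~\ref{Rem:adj} while keeping the base time-change factor $(1+\varphi_0)^{-2}$ in the integrand. Working in the chart $\kappa_p$, the map $f^{-6\tau_*}_\bt$ sends $(w,6\tau_*)$ with $w=(x,y)$ near $(s(j)+\tau,0)$ to $(w,z^*)$ where $z^*=a_0\cdot a_j(\varphi_0)\cdot b^{-1}t_j\cdot y+\mathcal{O}_*(b^{-2})+\mathcal{O}_*(b^{-1-1/R}\,y^2)$; the coefficient $a_j(\varphi_0)$ emerges exactly from integrating $(1+\varphi_0(s(j),0,z))^{-2}\chi((z-4\tau_*)/\tau_*)\,dz$, while disjointness of supports kills the cross-terms from $\varphi_i$ with $i\neq j$. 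This $y$-linear vertical shear of order $b^{-1}$ translates into a rotation of $E_s(z;\bt)$ about $E_u$ at rate $a_j(\varphi_0)\,b^{-1}$ per unit $t_j$, and via the expansion \eqref{eq:expression} plus the outer factor $b$ in $\Psi_\tau$ (which cancels the $a_0^{-1}$ scaling placed before $\bt$), we obtain $\partial_{t_j}\Psi_\tau^{(j)}(\bt)=a_j(\varphi_0)+\mathcal{O}_*(b^{-1/(4R)})$. The error absorbs curvature contributions to $\tilde\gamma^s$, the $\mathcal{O}_*(b^{-2})$ discrepancy between $w^u_p(\cdot;\varphi_\bt)$ and $w^u_p(\cdot;\varphi_{\mathbf{0}})$ used in Lemma~\ref{lm:approx_psis}, and the Lipschitz variation \eqref{eq:var_phi} of $a_j$.

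For the off-diagonal entries ($i\neq j$) I would represent $E_s(z;\bt)$ as a backward limit along the $f^t_\bt$-orbit of a reference stable vector and use that a variation of the stable direction at a future point propagates back under $Df^{-t}_\bt$ with contraction of order at least $\|Df^t|_{E_u}\|^{-1}$ in the Grassmannian. Since the forward orbit of $z$ meets $\supp\varphi_i$ only at times $t\ge T(p,b)-4\tau_*$ for $i\neq j$, and the rotational effect of $\varphi_i$ at each visit is $\mathcal{O}_*(b^{-1})$ per unit $t_i$ (from the $C^1$-bound on $\varphi_i$), each visit contributes at most $\mathcal{O}_*(b^{-1}\,\|Df^t|_{E_u}\|^{-1})$ to $\partial_{t_i}\tilde\psi^s$. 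The crucial observation for the row sum is that disjointness of the supports forces each visit of the forward orbit of $z$ to $\bigcup_{i\in\cJ}\supp\varphi_i$ to be charged to a single index $i$, so that
\begin{equation*}
\sum_{i\neq j}\bigl|\partial_{t_i}\Psi_\tau^{(j)}(\bt)\bigr|\;\le\;C_*\,b\cdot b^{-1}\sum_{k}\|Df^{t_k}|_{E_u}\|^{-1},
\end{equation*}
where the $k$-sum runs over successive returns with $t_k\ge T(p,b)-4\tau_*$; the return times are spaced by at least $T(p,b)-4\tau_*$ by (Ob1), so this is a geometric series dominated by its first term $\|Df^{T(p,b)}|_{E_u}\|^{-1}\le C_*\,b^{-1/(4R)}$ from the definition of $T(p,b)$.

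Once $\|D\Psi_\tau(\bt)-A(\varphi_0)\|_{\max}=\mathcal{O}_*(b^{-1/(4R)})$ is in hand, $D\Psi_\tau$ is a small perturbation of the diagonal matrix $A(\varphi_0)$ whose entries are bounded away from $0$ and $\infty$; hence $D\Psi_\tau$ is invertible for large $b$, and injectivity of $\Psi_\tau$ on $[-4,4]^\cJ$ follows by integrating $D\Psi_\tau^{-1}$ along straight line segments. For the Jacobian, writing $D\Psi_\tau=A(\varphi_0)(I+E)$ with $\|E\|_{\max}\le C_*\,b^{-1/(4R)}$, I would use $\log\det(I+E)=\mathrm{tr}\log(I+E)=\mathrm{tr}(E)+\mathcal{O}(\sharp\cJ\cdot\|E\|_{\max}^{\,2})$; since $|\mathrm{tr}(E)|\le\sharp\cJ\cdot\|E\|_{\max}$ and the quadratic remainder is smaller by a further factor $b^{-1/(4R)}$, this gives \eqref{eq:logdet}. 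The principal technical obstacle will be the off-diagonal row-sum estimate, since entry-wise bounds of $\mathcal{O}_*(b^{-1/(4R)})$ alone would sum to $\mathcal{O}_*(b^{3/(4R)})$ over the $\sim b^{1/R}$ off-diagonal indices; recovering the correct bound genuinely requires the disjointness of the supports and the exponential stable-direction contraction to collapse the $\sharp\cJ$ contributions into a single geometric sum.
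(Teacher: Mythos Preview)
Your argument is correct and follows essentially the same route as the paper: express $\partial_{t_i}\tilde\psi^s_{p,(-1,1)}(\tau;\bt)$ as an integral along the forward orbit of $w^u_p(s(j)+\tau;\varphi_{\mathbf 0})$ weighted by the contraction $\|Df^t|_{E_u}\|^{-1}$, read off the diagonal contribution $a_j(\varphi_0)$ from the first passage through $\supp\varphi_j$ in the flow box, and bound the off-diagonal row sum by the geometric series of later visits spaced by at least $T(p,b)-O(\tau_*)$. The Jacobian estimate via $\log\det(I+E)=\mathrm{tr}\log(I+E)$ and $|\mathrm{tr}(E^n)|\le \sharp\cJ\cdot\|E\|_{\max}^n$ is exactly what the paper does.

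The only presentational difference is in how the row sum is packaged: the paper records the finer structure that for each fixed output point at most $4R+1$ input indices $i$ give an entry exceeding $C_*b^{-1}$ (those are the visits occurring before the expansion reaches $b^2$), while all remaining entries are $O_*(b^{-2})$; it then adds $(4R+1)\cdot C_*b^{-1/(4R)}+\sharp\cJ\cdot C_*b^{-1}$. Your direct geometric-series bound on $\sum_k\|Df^{t_k}|_{E_u}\|^{-1}$ reaches the same $C_*b^{-1/(4R)}$ row-sum estimate without this bookkeeping, and is arguably cleaner. Either way the first claim and \eqref{eq:logdet} follow.
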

\begin{proof}The stable subspace at  $w\in M$ for  $f^t_{\bt}$ is given as the limit
\[
E_s(w;\bt)=\lim_{t\to \infty} Df^{-t}_{\bt}(E(f^t_{\bt}(w)))
\] 
where $E(w)$ is the one dimensional subspace in $T_wM$  which is orthogonal to 
$E_u(w)\oplus E_0(w)$. Thus we can compute the differentials of $E_s(w;\bt)$ and $\tilde{\psi}^s_{p,(-1,1)}(w;\bt)$ with respect to the parameter $\bt$ as an integral of the infinitesimal contribution of the perturbation at $f^t_{\bt}(w)$ for $t\ge 0$. It is not difficult to see that the differential is given in the form\footnote{Here we use the fact that our perturbation does not change the flow lines. Though we can express $X_j(\tau, p,t,\bt)$ explicitly by preparing some definitions, this is not necessary in the following.
}
\[
\partial_{t_j}\tilde{\psi}^s_{p,(-1,1)}(\tau;\bt) =
\int_0^\infty |Df^t_{\mathbf{0}}|_{E^u}(w_p^u(\tau;\varphi_{\mathbf{0}}))|^{-1}\cdot X_j(\tau, p,t,\bt)\, dt
\]
where $X_j(\tau, p,t,\bt)$ 
satisfies $|X_j(\tau, p,t,\bt)|<C_*b^{-1}$ from \eqref{eq:basic_est_family} and vanishes if $f^t_{\mathbf{0}}(w^u_p(\tau;\varphi_{\mathbf{0}}))$ does not belong to $\supp \varphi_j$. From the construction of $\varphi_{j}$ and Remark \ref{Rem:adj}, we have 
\[
\int_0^{6\tau_*} |Df^t_{\mathbf{0}}|_{E_u}(w_p^u(\tau;\varphi_{\mathbf{0}}))|^{-1} X_j(\tau, p,t,\bt)\, dt=\begin{cases}
b^{-1} \cdot {a}_j(\varphi_0)+\mathcal{O}_*(b^{-2}),&\text{if $\tau\in J(j)$;}\\
0,&\text{if $\tau\in J(i)$ for $i\neq j$.}
\end{cases} 
\]
From the observation (Ob1) in the last subsection, we have
\begin{equation}\label{eq:cons_Ob1}
\int_{6\tau_*}^{\infty} |Df^t_{\mathbf{0}}|_{E^u}(w_p^u(\tau;\varphi_{\mathbf{0}}))|^{-1}\cdot X_j(\tau, p,t,\bt)\,  dt\le C_* b^{-1-(1/(4R))}\quad \text{for all $\tau\in (-1,1)$}.
\end{equation}
Further we can strengthen the last estimate \eqref{eq:cons_Ob1} for most of $j\in \cJ$. Indeed,  from the definition of $\cJ_{\mathrm{odd}}$ and $\cJ_{\mathrm{even}}$ and from the observation (Ob1), we see that
\begin{itemize}
\item each point $w\in M$  belongs to the support of $\varphi_j$ for at most one $j\in \cJ$, 
\item if $w\in \supp \varphi_j$ for some $j\in \cJ$, the orbit 
$f^t_{\mathbf{0}}(w)$ for $t\in [4\tau_*, T(p,b)-4\tau_*]$ does not meet $\bigcup_{i\in \cJ}\supp\varphi_i$, and
\item if $w\in \supp \varphi_j$ and $f^t_{\bt}(w)\in \supp \varphi_{j'}$ for some $j,j'\in \cJ$ and $t \ge  T(p,b)-4\tau_*$, we have $|Df^t_{\mathbf{0}}|_{E_u}(w)|\ge C_*^{-1} b^{1/(4R)}$.
\end{itemize}
Therefore the forward orbit of $w_p^u(\tau;\varphi_{\mathbf{0}})$ passes through the region $\bigcup_{i\in \cJ}\supp\varphi_i$ in some intervals in time and, in between such intervals, the factor $|Df^t_{\mathbf{0}}|_{E^u}(w_p^u(\tau;\varphi_{\mathbf{0}}))|$ grows by a rate greater than $C_*^{-1}b^{1/(4R)}$. 
In particular, within the time when $|Df^t_{\mathbf{0}}|_{E^u}(w_p^u(\tau;\varphi_{\mathbf{0}}))| \le b^2$, the number of such intervals in time is bounded by $4R+1$.
This implies that, for each $\tau\in \cup_{i\in \cJ} J(i)$, we have
\begin{equation}\label{eq:cons_Ob12}
\int_{6\tau_*}^{\infty}  |Df^t_{\mathbf{0}}|_{E^u}(w_p^u(\tau;\varphi_{\mathbf{0}}))|^{-1}\cdot X_j(\tau,p,t,\bt)  dt\le C_* b^{-2}
\end{equation}
\emph{except} for a subset of $j\in \cJ$ (depending on $\tau$) whose cardinality is bounded by $4R+1$. 
This and the estimates above give the first claim of the proposition. 

In order to prove the second claim, let us define the $|\cJ|\times |\cJ|$ matrix $M$ by 
\[
M= D\Psi_{\tau}(\bt) \cdot  A(\varphi_0)^{-1}-\mathbf{1}
\]
where $\mathbf{1}$ at the end denotes the $|\cJ|\times |\cJ|$ identity matrix. 
From \eqref{eq:cons_Ob1}, each component $m_{ij}$ of $M$ is bounded by $C_* b^{-1/(4R)}$ and, for each $i\in \cJ$, the cardinality of $j\in \cJ$ such that $|m_{ij}|\ge C_* b^{-1}$ is bounded by $4R+1$. From these estimates, we can deduce
\[
\|M\|_{\max}\le C_* b^{-1/(4R)}(4R+1)+C_*b^{-1} \cdot \sharp \cJ \le C_* b^{-1/(4R)}.
\]
From this, we obtain
\[
|\mathrm{Tr}\, M^n|\le  \left(C_* b^{-1/(4R)}\right)^{n}\cdot \sharp \cJ\quad \text{for }n\ge 1
\] 
by estimating the diagonal entries. 
Now we can deduce the latter claim of the proposition:
\begin{align*}
|\log \det D\Psi_{\tau}(\bt)-&\log \det A(\varphi_0)|
= |\log \det (\mathbf{1}+M)|=|\mathrm{Tr} \log (\mathbf{1}+M)|\\
&=
\left|\sum_{n=1}^\infty  \frac{1}{n} \mathrm{Tr} M^n\right|\le \sum_{n=1}^\infty \frac{C_* \sharp \cJ}{n}  (C_* b^{-1/(4R)})^{n}\le C_*b^{-1/(4R)}\cdot \sharp \cJ
\end{align*}
provided that  $b$ is sufficiently large. 
\end{proof} 

\subsection{Proof of Proposition \ref{Prop:mesure}}
Let  $\cJ$ be either of $\cJ_{\mathrm{even}}$ and $\cJ_{\mathrm{odd}}$. 
Below we follow a standard argument in the large (or moderate) deviation theory\cite{MR1739680}. First, by using the fact that $1+s\le \exp(s)\le 1+s+s^2$ when $|s|\ll 1$ and that 
$\int_{-\pi}^{\pi}\mathrm{Re}\,( \exp( i s))ds=0$, 
we see
\begin{align}\label{eq:LD}
&\int_{[-\pi,\pi]^{\cJ}}
\exp\bigg(b^{-1/(8R)}\cdot \mathrm{Re}\,\bigg(\sum_{j\in \cJ} \exp(i x_j)\bigg)\bigg) \prod_{j\in \cJ} \frac{dx_j}{2\pi} \\
&=\left(\int_{-\pi}^{\pi} \exp\bigg(b^{-1/(8R)}\cdot \mathrm{Re}\,\big( \exp(i x)\big)\bigg) \frac{dx}{2\pi}\right)^{\sharp \cJ}\notag\\
&\le 
\left(1+ b^{-1/(8R)} \int_{-\pi}^{\pi} \mathrm{Re}\,(\exp(ix)) \frac{dx}{2\pi}+\int_{-\pi}^{\pi}  b^{-1/(4R)}\frac{dx}{2\pi}\right)^{\sharp \cJ}< \exp( b^{-1/(4R)}\cdot  \sharp \cJ).\notag
\end{align}
Let us recall the mapping $\Psi_{\tau}:[-4,4]^{\cJ}\to \real^{\cJ}$.
From the former claim of Proposition \ref{prop:Psi},  we can take a constant $K_*>0$ so that the subset     
\begin{equation}\label{def:Y}
Y=Y(\varphi_0)=\left\{ (x_j)_{j\in \cJ}\in [-4,4]^{\cJ}\;\left|\; {a}_j(\varphi_0)\cdot |x_j|+K_* b^{-1/(4R)}<\pi,\; \forall j\in \cJ\right.\right\}
\end{equation}
satisfies  
\begin{equation}\label{eq:inclu}
\Psi_{\tau}(Y)\subset [-\pi,\pi]^{\cJ}\quad \text{for $\tau\in (-b^{-1/R}, b^{-1/R})$. }
\end{equation}
Hence, by the latter claim  of Proposition \ref{prop:Psi} and the inequality \eqref{eq:LD} above, we see 
\begin{align*}
&\int_{Y}
\exp\bigg(b^{-1/(8R)}\cdot \mathrm{Re}\,\sum_{j\in \cJ} \exp\left(i\,\Psi_{\tau}(\bt)_j\right)\bigg) d\bt\\
&=\int_{\Psi_\tau(Y)}
\exp\bigg(b^{-1/(8R)}\cdot \mathrm{Re}\,\bigg(\sum_{j\in \cJ} \exp(i x_j)\bigg)\bigg) 
\cdot |\det(D\Psi_\tau)(\Psi^{-1}_{\tau}((x_j)_{j\in \cJ}))|^{-1} \prod_{j\in \cJ} dx_j
\\
&\le \int_{[-\pi,\pi]^{\cJ}}
\exp\bigg(b^{-1/(8R)}\cdot \mathrm{Re}\,\bigg(\sum_{j\in \cJ} \exp(i x_j)\bigg)\bigg) 
\prod_{j\in \cJ} dx_j\times  \exp\big(C_*  b^{-1/(4R)}\cdot  \sharp \cJ \big)\cdot |\det  A(\varphi_0)|^{-1}
\\
&\le (2\pi)^{\# \cJ}\cdot \exp\big(C_*  b^{-1/(4R)}\cdot  \sharp \cJ \big)\cdot |\det  A(\varphi_0)|^{-1}\\
&\le  \exp\big(C_*  b^{-1/(4R)}\cdot  \sharp \cJ \big)\cdot \mathrm{Leb}(Y)
\end{align*}
where $\Psi_{\tau}(\bt)_j$ denotes the $j$-th component of $\Psi_{\tau}(\bt)$. (In the above, we have used \eqref{eq:inclu} and Proposition \ref{prop:Psi}  in the first inequality, the estimate \eqref{eq:LD} in the second and the definition (\ref{def:Y}) in the third.)
Taking average of the both sides of the inequality above with respect to $\tau$ on  $[-b^{-1/R},b^{-1/R}]$ and noting $\sharp J\le b^{1/R}$, we find 
\begin{align*}
 \int_{Y}  \bigg(\frac{1}{2b^{-1/R}} \int_{-b^{-1/R}}^{b^{1/R}}
\exp\bigg(b^{-1/(8R)}&\sum_{j\in \cJ} \mathrm{Re}\, \exp\left(i \big(
  \tilde{\psi}_{p,(-1,1)}^s(s(j)+\tau;a_0^{-1}\bt)+\alpha (s(j)+\tau) \big)
  \right)\bigg)\, d\tau\bigg) d\bt\\
  &\qquad\qquad <  \exp\big(C_* b^{3/(4R)}\big)\cdot \mathrm{Leb}(Y).
\end{align*}
Then, applying Jensen's inequality for the exponential function (outside)  to the integral with respect to $\tau$, we deduce
\begin{align*}
&\int_{Y}
\exp\bigg(\frac{b^{7/(8R)}}{2}\int_{I}\mathrm{Re}\, \exp\left(i \big(
  \tilde{\psi}_{p,(-1,1)}^s(\tau;\textcolor{\revisionColor}{a_0^{-1}}\bt)+\alpha \tau \big)
  \right)d\tau\bigg)\, d\bt< \exp\big(C_* b^{3/(4R)}\big)\cdot \mathrm{Leb}(Y)
\end{align*}
where $I(\cJ):=\cup_{j\in \cJ}J(j)$. This implies
\begin{align*}
\mathrm{Leb}&\left\{ 
\bt\in Y\left\vert
 \mathrm{Re}\, 
 \int_{I(\cJ)} 
\exp\left(
i \big(
  \tilde{\psi}_{p,(-1,1)}^s(\tau;\textcolor{\revisionColor}{a_0^{-1}}\bt)+\alpha \tau  \big)
  \right)
  d\tau >b^{-1/(16R)}\right.
  \right\}\\
  &\qquad < \exp\big(C_* b^{3/(4R)}
  -(1/2)b^{13/(16R)}\big)\cdot \mathrm{Leb}(Y)<\exp(-b^{3/(4R)})\cdot \mathrm{Leb}(Y)
 \end{align*}
provided that $b$ is large enough. 

In the argument above, we considered only the real part of  $\exp(i(\tilde{\psi}_{p,(-1,1)}^s(\tau;\bt)+\alpha \tau))$. But we can argue in parallel also for  either of
\[
-\mathrm{Re}\, \exp(i(\tilde{\psi}_{p,(-1,1)}^s(\tau;\bt)+\alpha \tau))\quad \text{or}\quad
\pm \mathrm{Im}\, \exp(i(\tilde{\psi}_{p,(-1,1)}^s(\tau;\bt)+\alpha \tau)).
\]
Therefore we obtain
\[
\frac{1}{\mathrm{Leb}(Y)} \cdot 
\mathrm{Leb}\left\{\; \bt\in Y\;\left|\;\; \left|\int_{I(\cJ)} 
\exp\left(
i \big(
  \tilde{\psi}_{p,(-1,1)}^s(\tau;a_0^{-1}\bt)+\alpha \tau  \big)
  \right)
  d\tau \right|>b^{-1/(16R)}\right.\right\}<\exp(-b^{3/(4R)})
\] 
Note that this estimate is uniform for $\varphi_0\in \mathcal{W}$.
Letting $\mathcal{W}$ be slightly smaller, we may and do suppose that this estimate is true for all $\varphi_0\in C^r(M)$  such that 
\[
\varphi_0+\sum_{j\in \cJ} t_j\varphi_j \in \mathcal{W} \quad \text{for some $\bt=(t_j)_{j\in \cJ} \in [-4,4]^{\cJ}$.}
\]
Therefore, if we define a measure $\boldsymbol{\nu}$ on $\mathcal{W}$ by setting
\[
\boldsymbol{\nu}(X)=\int_{C^r(M)} 
\left(\frac{1}{\mathrm{Leb}(Y(\varphi_0))}\int_{Y(\varphi_0)} \mathbf{1}_X
\left(\varphi_0+\sum_{j\in \cJ} \textcolor{\revisionColor}{a_0^{-1}}t_j\varphi_j\right) d\bt\right)
 d\boldsymbol{\mu}(\varphi_0)
\]
for measurable subsets $X\subset \mathcal{W}$, 
where $Y(\varphi_0)$ is that in \eqref{def:Y},
it holds
\begin{equation}\label{eq:req}
\boldsymbol{\nu}\left\{\; \varphi\in \mathcal{W} \;\left|\; \int_{I} 
\left|\exp\left(
i \big(
  \tilde{\psi}_{p,(-1,1)}^s(\tau;\varphi)+\alpha \tau  \big)
  \right)\right|
  d\tau >b^{-1/(16R)}\right.\right\}< \exp(-b^{3/(4R)}).
\end{equation}

For the proof of Proposition \ref{Prop:mesure}, we need to show the inequality similar to the above, but for the measure $\boldsymbol{\mu}$ in the place of $\boldsymbol{\nu}$. 
To this end, we show  that the measure $\boldsymbol{\mu}$ restricted to $\mathcal{W}$ is absolutely continuous with respect to $\boldsymbol{\nu}$ with bounded  Radon-Nikodym derivative. This is basically a simple consequence of  the property \eqref{eq:trans_inv} of the measure~$\boldsymbol{\mu}$. But, since $Y(\varphi)$ depends on the function $\varphi\in \mathcal{W}$, we need a little argument. Let us take the following approximations of $Y(\varphi)$ from inside and outside:   
\[
Y^\pm(\varphi)=\left\{ (x_j)_{j\in \cJ}\in [-4,4]^{\cJ}\;\left|\; {a}_j(\varphi)\cdot |x_j|+K_* b^{-1/(4R)}<\textcolor{\revisionColor}{\pi} \pm   b^{-1},\; \forall j\in \cJ\right.\right\}
\]
so that $Y^-(\varphi)\subset Y(\varphi)\subset Y^+(\varphi)$. 
If we write $\Phi(\bt)=\sum_{j\in \cJ} \textcolor{\revisionColor}{a_0^{-1}} t_j\varphi_j$ for $\bt=(t_j)_{j\in \cJ}$ for brevity, we have  $
\|\Phi(\bt)\|_\infty<C_* b^{-1-1/R}$ for  $\bt\in [-4,4]^{\cJ}$  from \eqref{eq:basic_est_family}. 
Hence, from \eqref{eq:var_phi}, we have
\[
Y^-(\varphi)\subset Y(\varphi_0)\subset Y^+(\varphi)\quad \text{ whenever }\varphi=\varphi_0+\Phi(\bt)\text{ for some $\bt=(t_j)_{j\in \cJ}\in Y(\varphi_0)$.}
\]
For any measurable subset $X\subset \mathcal{W}$, we have
\begin{align*}
\boldsymbol{\nu}(X)&= 
\int_{C^r(M)\times Y(\varphi_0)} \frac{\mathbf{1}_{X}(\varphi_0+\Phi(\bt))}{\mathrm{Leb}(Y(\varphi_0))}
 (d\boldsymbol{\mu}(\varphi_0)\times d\bt)\\
&\ge 
\int_{\mathcal{W}\times Y^{-}(\varphi)} \frac{\mathbf{1}_{X}(\varphi)}{\mathrm{Leb}(Y^+(\varphi))}
d\boldsymbol{\mu}(\varphi-\Phi(\bt)) d\bt
\end{align*}
where, in the second line, we changed the variable $(\varphi_0,\bt)$ to $(\varphi=\varphi_0+\Phi(\bt),\bt)$. Then, by using  the property \eqref{eq:trans_inv} of $\boldsymbol{\mu}$ and \eqref{eq:basic_est_family}, we deduce 
\begin{align*}
\boldsymbol{\nu}(X)&
\ge  
\int_{\mathcal{W}\times Y^-(\varphi)} \frac{(1- C_* b^{-1/R})\cdot \mathbf{1}_{X}(\varphi)}{\mathrm{Leb}(Y^+(\varphi))}
d\boldsymbol{\mu}(\varphi)  d\bt
\ge
 \frac{1}{2} \frac{\mathrm{Leb}(Y^-(\varphi))}{\mathrm{Leb}(Y^+(\varphi))}
\boldsymbol{\mu}(X) \ge 
\frac{\boldsymbol{\mu}(X)}{4}.
\end{align*}

By the claim proved in the last paragraph and \eqref{eq:req}, we deduce 
\[
\boldsymbol{\mu}\left\{\; \varphi\in \mathcal{W} \;\left|\; \int_{I(\cJ)} 
\exp\left(
i \big(
  \tilde{\psi}_{p,(-1,1)}^s(\tau;\varphi)+\alpha \tau  \big)
  \right)
  d\tau >b^{-1/(16R)}\right.\right\}< 4\exp(-b^{3/(4R)}).
\]
With this inequality for the cases $\cJ=\cJ_{\mathrm{even}},\cJ_{\mathrm{odd}}$ and Remark \ref{rem:excp_j}, we 
 complete the proof of  Proposition \ref{Prop:mesure}, letting $0<\rho<1/(16R)$. 
\section{Local charts}
\label{sec:lc}
In the proof of Theorem \ref{th:exp}, which will be carried out in Section \ref{sec:proof1}, we study the  semigroup of  transfer operators (or Perron-Frobenius operators) 
\begin{equation}\label{def:cL}
\cL^t: L^2(M)\to L^2(M),\quad \cL^t u=u\circ f^{-t}
\end{equation}
associated to a flow $f^t\in \fF^3_A$. For analysis of the  action of $\cL^t$, we will decompose functions on $M$ with respect to the frequency in the flow direction and then investigate the action on each of the components. 
Since the operator $\cL^t$ virtually preserves the frequency in the flow direction, this method is natural and works efficiently. (See \cite{MR2652469, MR2995886} for the corresponding argument in the case of contact Anosov flows.)

In this section, we present preliminary arguments for the proof of Theorem \ref{th:exp}. We henceforth consider a $C^3$ flow $f^t\in \fF^3_A$ generated by a vector field $v$. 
We consider a large constant $t_\sharp>0$ that will be specified later in the course of the argument. 
Roughly the constant $t_{\sharp}>0$ will be taken so that the flow $f^t$ with $t\ge t_\sharp$ exhibits sufficiently strong hyperbolicity. Also we set 
\begin{equation}\label{eq:konst_def}
\konst=\exp({t_{\sharp}}^2)
\end{equation} so that we may suppose $
\|Df^{t_{\sharp}}\|_{\infty}\le C_* \exp(C_* t_\sharp) \ll \konst$.
\begin{Remark}
The constant $t_\sharp$ (as well as the constants $\omega_\sharp$ and $m_\sharp$ which will be introduced later) will turn out to be the kind of constants that are denoted by symbols with  subscript~$*$. (See the beginning of Section \ref{sec:ni}.)
But we use symbols with subscript $\sharp$ for them because the choice will be made much later. 
\end{Remark}
\subsection{Local charts depending on $\omega\in \mathbb{Z}$}\label{ss:lc}  
We are going to take  a finite system of local charts on $M$  and a partition of unity subordinate to it,  depending on an integer parameter $\omega\in \mathbb{Z}$. These will be used when we consider the action of transfer operators $\cL^t$ on components of functions that have frequency around $\omega$ in the flow direction. 
We will take them  so that their sizes are proportional to $\konst \pomega^{-1/2}$ (resp. $1$) in the  directions transversal (resp. parallel) to the flow. Also we will let them satisfy a few preferable properties in relation to the flow $f^t$. 

To begin with, we take and fix a finite system of $C^3$ flow box local charts for $f^t$,
\[
\kappa_a:U_a\to B(0,2r_*)\times (-r_*,r_*)\subset \real^3,\quad
\kappa_{a}(m)=(x,y,z)
\]
for $a\in A$ with $\#A<\infty$, where $B(w,r)$ denotes  the open disk in $\real^2$ with radius $r>0$ centered at $w\in \real^2$ and $r_*>0$ is a small real number. We write $\pi:\real^3\to \real^2$ for the projection to the first two components $(x,y)$ in $(x,y,z)\in \real^3$.

\begin{Remark}
By ``flow box local chart", we mean that $(\kappa_a)_* v=\partial_z$ holds for the generating vector field $v$. Since the flow $f^t$ is assumed to be only $C^3$, we can not expect that the flow box local charts $\kappa_a$ is smoother than $C^3$.  
\end{Remark}

Also we take $C^3$ functions $
\rho_a, \;\tilde{\rho}_a:\real^3\to [0,1]$ for $a\in A$
so that 
\begin{itemize}
\item the supports of $\rho_a$ and $\tilde{\rho}_a$ are contained in $B(0,r_*)\times (-r_*,r_*)$,
\item $\{\rho_a\circ \kappa_a\mid  a\in A\}$ is a partition of unity on $M$, that is, $\sum_{a\in A}\rho_a\circ \kappa_a\equiv 1$, and 
\item $\tilde{\rho}_a\equiv 1$ on the support of $\rho_a$.
\end{itemize}
By applying a mollifier along the flow, we can and do assume that $\rho_a$ are $C^\infty$ with respect to the variable $z$ and each of the partial derivatives $\partial_x^{k}\partial_y^{\ell}\partial_z^{m}\rho_{a}$ and $\partial_x^{k}\partial_y^{\ell}\partial_z^{m}\tilde{\rho}_{a}$ with $k+\ell\le 3$ are continuous and therefore bounded. 

Next, depending on $\omega \in \integer$, we construct a finer system of local charts and a partition of unity subordinate to it. The construction of the local charts is done in two steps as follows. 
For the first step, we take a finite subset $N(a,\omega)\subset B(0,r_*)$ and, for each  $n\in N(a, \omega)$, we take its neighborhood $V_{a,\omega,n}\subset B(0,2r_*)$ and a $C^3$ diffeomorphism 
\[
g_{a,\omega,n}:V_{a,\omega,n}\times \real\to D_{a,\omega,n}\times \real \subset B(0,2\konst\pomega^{-1/2})\times \real
\] 
of the form 
\[
g_{a,\omega,n}(x,y,z)=(\hat{g}_{a,\omega,n}(x,y), z+\check{g}_{a,\omega,n}(x,y)).
\]
We suppose that $N(a,\omega)$ contains many points so that  $V_{a,\omega,n}$ for $n\in N(a,\omega)$ cover $B(0,r_*)$.
For the diffeomorphisms $g_{a,\omega,n}$, we may and do assume the following conditions:
\begin{itemize}
\item[(G0)] the $C^3$ norms of  ${g}_{a,\omega, n}$ and those of their inverses are bounded by $C_*$,
\item[(G1)] $g_{a,\omega, n}(n,0)=(0,0,0)$ and the differential $D(g_{a,\omega,n}\circ \kappa_a)$ at $p_{a,\omega,n}:=\kappa_a^{-1}(n,0)$ carries   $E_u(p_{a,\omega,n})$, $E_s(p_{a,\omega,n})$, $E_0(p_{a,\omega,n})$ to the $x$-axis, $y$-axis, $z$-axis respectively,
\item[(G2)]  there exists $\omega_0>0$  such that, if $|\omega|\ge \omega_0$,  we have $D_{a,\omega,n}=B(0,2\konst\pomega^{-1/2})$ and 
\[
g_{a,\omega,n}\circ \kappa_a(w^u_{p_{a,\omega,n}}(\tau))=(\tau,0,0),\qquad 
g_{a,\omega,n}\circ \kappa_a(w^s_{p_{a,\omega,n}}(\tau))=(0,\tau,0)
\]
for $\tau\in [-2\konst \pomega^{-1/2}, 2\konst\pomega^{-1/2}]$,
where $w^s_{p_{a,\omega,n}}(\cdot)$ and $w^u_{p_{a,\omega,n}}(\cdot)$ are the intrinsic parametrization of the stable and unstable manifolds introduced in Section \ref{ss:intrinsic_metric}.
\item[(G3)] $(g_{a,\omega,n}\circ \kappa_a)_*m=c_{a,\omega,n}\cdot dx dy dz$ for some constant $c_{a,\omega,n}$. 
\end{itemize}
\begin{Remark} \label{rem:caon}
The constant $c_{a,\omega,n}$ in (G3) is determined by the angles between the subspaces $E_0(p_{a,\omega,n})$, $E_s(p_{a,\omega,n})$ and $E_u(p_{a,\omega,n})$. Clearly we have $C_*^{-1}\le |c_{a,\omega,n}|\le C_*$. 
\end{Remark}
 
For the second step, we first recall the argument in Subsection \ref{ss:approxNI} and, especially, the definition of the quantity $\torsion^s(q,\delta)$ in \eqref{def:tors}.
We define a non-linear diffeomorphism
\[
b_{a,\omega,n}:\real^3\to \real^3,\qquad
b_{a,\omega,n}(x,y,z)=(x,y,\,z+\beta(a,\omega,n)\cdot xy)
\]
where  
\begin{equation}\label{def:beta}
\beta(a,\omega,n)=
c_{a,\omega,n}\cdot \torsion^s\left(p_{a,\omega,n}, \konst \pomega^{-1/2}\right)\quad \text{with $p_{a,\omega,n}=\kappa_a^{-1}(n,0)$}
\end{equation}
if $|\omega|\ge \omega_0$ and $\beta(a,\omega,n)=0$ otherwise.
Note that 
\begin{equation}\label{eq:beta}
|\beta(a,\omega,n)|<C_*\langle\log \langle \omega\rangle\rangle\quad \text{for any $a\in A$, $\omega\in \integer$, $n\in \mathcal{N}(a,\omega)$}
\end{equation}
from \eqref{eq:torsion_bound} in Lemma \ref{lm:Tor}. 
Then we set $U_{a,\omega,n}=U_a\cap \kappa^{-1}_a(V_{a,\omega,n}\times \real)$ and regard  
\[
\kappa_{a,\omega,n}:=b_{a,\omega,n}\circ g_{a,\omega,n}\circ \kappa_a:U_{a,\omega,n}\to U'_{a,\omega,n}\subset D_{a,\omega,n}\times \real,\qquad\text{for}\quad
 (a,n)\in A\times N(a,\omega)
\]
as the system of local charts on $M$ defined for $\omega\in \integer$. These are flow box local charts and satisfies  $\kappa_{a,\omega,n}(p_{a,\omega,n})=(0,0,0)$ and 
\[
\kappa_{a,\omega,n}(w^u_{p_{a,\omega,n}}(\tau))=(\tau,0,0),\quad 
\kappa_{a,\omega,n}(w^s_{p_{a,\omega,n}}(\tau))=(0,\tau,0)\;\;\text{for $\tau\in [-2\konst \pomega^{-1/2}, 2\konst\pomega^{-1/2}]$.}
\]
Also we have $
(\kappa_{a,\omega,n}^{-1})^*m=c_{a,\omega,n}\cdot  dx dy dz$
from (G2) and (G3) above. 

\begin{Remark}\label{rem:b}
The motivation to compose $b_{a,\omega,n}$ in the second step can be explained as follows. Recall the argument in Subsection \ref{ss:approxNI} and, in particular, the definitions of the sections $\gamma^\dag_{p,(-\delta,\delta)}$ and $\hat{\gamma}^\dag_{p,(-\delta,\delta)}$.
 Suppose that $|\omega|\ge \omega_0$.  
From (G2), we can express $\gamma^\dag_{p,(-\delta,\delta)}$ and $\hat{\gamma}^\dag_{p,(-\delta,\delta)}$ for $p=p_{a,\omega,n}$ and $\delta=\konst\pomega^{-1/2}$ (viewed in the local chart $\kappa_{a,\omega,n}$)  as 
\[
(\kappa_{a,\omega,n}^{-1})^*\circ \gamma^\dag_{p,(-\delta,\delta)}(w^u_{p}(\tau))=(0,\varphi(\tau),1),\quad
(\kappa_{a,\omega,n}^{-1})^*\circ \gamma^\dag_{p,(-\delta,\delta)}(w^s_{p}(\tau))=(\hat{\varphi}(\tau),0,1)
\] 
with $C^2$ functions $\varphi,\hat{\varphi}:(-\delta,\delta)\to \real$. Also note that we have
\[
(\kappa_{a,\omega,n}^{-1})^*\circ \gamma^\perp_{p,(-\delta,\delta)}(p_{a,\omega,n})=(0,c_{a,\omega,n},1),\quad
(\kappa_{a,\omega,n}^{-1})^*\circ \hat{\gamma}^\perp_{p,(-\delta,\delta)}(p_{a,\omega,n})=(c_{a,\omega,n},0,1).
\] 
If we did not have the post-composition of $b_{a,\omega,n}$, we would have
\[
|\varphi'(\tau)-c_{a,\omega,n}\cdot 
\torsion^s(p,\konst\pomega^{-1/2}) |<C_*,\quad 
|\hat{\varphi}'(\tau)-c_{a,\omega,n}\cdot \torsion^u(p,\konst\pomega^{-1/2}) |<C_*
\]
because the $C^2$ norms of $\gamma^{0}_{p,(-\delta,\delta)}$ and $\hat{\gamma}^{0}_{p,(-\delta,\delta)}$ are bounded by a uniform constant $C_*$. 
Hence, with  the post-composition of $b_{a,\omega,n}$, we have  
\begin{equation}\label{eq:varphi}
|\varphi'(\tau)|<C_*\quad \text{and}\quad |\hat{\varphi}'(\tau)-c_{a,\omega,n}\cdot \Delta(p,\konst\pomega^{-1/2}) |<C_*.
\end{equation}
That is to say, by the post-composition of $b_{a,\omega,n}$, 
we stabilize the rotation of the stable subbundle $E_s$ along $W^u_{(-\delta,\delta)}(p)$. Consequently the unstable subbundle $E_u$ will look rotating along $W^s_{(-\delta,\delta)}(p)$ by a rate  proportional to $\Delta(p,\konst\langle \omega\rangle^{-1/2})$.  
\end{Remark}

We next construct a partition of unity associated to the systems of local charts $\{ \kappa_{a,\omega,n}\}_{a,n}$ for $\omega\in \integer$. 
Let $\varrho_{a,\omega,n}, \tilde{\varrho}_{a,\omega,n}:\real^2\to [0,1]$ for  $n\in N(a,\omega)$  be $C^3$ functions such that
\begin{enumerate}
\item  $\supp \varrho_{a,\omega,n}\subset\supp \tilde{\varrho}_{a,\omega,n}\subset  V_{a,\omega,n}$,
\item $\sum_{n\in N(a,\omega)}\varrho_{a,\omega,n}=1$ on $B(0,r_*)$,
and $\tilde{\varrho}_{a,\omega,n}\equiv 1$ on $\supp \varrho_{a,\omega,n}$,
\item  $\max\{\|\partial^{\alpha}\varrho_{a,\omega,n}\|_\infty,\|\partial^{\alpha}\tilde{\varrho}_{a,\omega,n}\|_\infty\}\le C_{*}(\alpha)(\konst^{-1} \pomega^{1/2})^{|\alpha|}$ for any multi-index $\alpha$ with $|\alpha|\le 3$, where $C_*(\alpha)$ does not depend on $\omega$.  
\end{enumerate}
For each $\omega\in \mathbb{Z}$, we consider the family of functions
\[
\rho_{a,\omega,n}=(\rho_a\cdot \varrho_{a,\omega,n})\circ g_{a,\omega,n}^{-1}\circ b_{a,\omega,n}^{-1},\quad
\tilde{\rho}_{a,\omega,n}=(\tilde{\rho}_a\cdot \tilde{\varrho}_{a,\omega,n})\circ g_{a,\omega,n}^{-1}\circ b_{a,\omega,n}^{-1}
\]
defined for $a\in A$ and $n\in N(a,\omega)$, 
where we regard $\varrho_{a,\omega,n}$ as a function on $\real^3$ by reading  $\varrho_{a,\omega,n}(x,y,z)=\varrho_{a,\omega,n}(x,y)$. The set of functions $\rho_{a,\omega,n}\circ \kappa_{a,\omega,n}$ for  $a\in A$ and $n\in N(a,\omega)$ is a partition of unity subordinate to the system of local charts $\{\kappa_{a,\omega,n}\}$ and the function $\tilde{\rho}_{a,\omega,n}\circ \kappa_{a,\omega,n}$ takes the constant value $1$ on $\supp(\rho_{a,\omega,n}\circ \kappa_{a,\omega,n})$. 
\begin{Remark}\label{Rem:intersection_mulitpliicity_rho} 
Since we have wide choices in the definitions above, 
we may and do put the following additional assumptions without loss of generality:
\begin{itemize}
\item  For each $\omega\in \integer$, the intersection multiplicity of the $8\konst \pomega^{-1/2}$-neighborhoods of the  subsets $
\pi (\supp \tilde{\rho}_{a,\omega,n})\subset \real^2$ for $n\in N(a,\omega)$ 
is bounded by an absolute constant.
\item The diameters of the supports $\supp \tilde{\varrho}_{a,\omega,n}$ are bounded uniformly by any given constant.
\end{itemize}
For the second condition, we will specify the bound later in the course of the argument. 
\end{Remark}

\subsection{The central bundle $E_0^*$ viewed in the local charts}
In this subsection, we consider how the central subbundle $E_0^*=(E_s\oplus E_u)^{\perp}$ looks in the local charts $\kappa_{a,\omega,n}$. Since $E^*_0$ is invariant with respect to the flow $f^t$ and since $\kappa_{a,\omega,n}$ is a flow box local chart, the subspace $E^*_0$ viewed in those local charts does not depend on the variable $z$. That is to say, there is a unique continuous mapping
\begin{equation}\label{eq:eaob}
e_{a,\omega,n}:D_{a,\omega, n}\to \real^2, \quad e_{a,\omega,n}(w)=(\theta_{a,\omega,n}^u(w), \theta_{a,\omega,n}^s(w))
\end{equation}
such that  
\[
(D\kappa_{a,\omega,n})^*_p(e_{a,\omega,n}(w),1)\in E_0^*(p)\quad \text{when}\quad \kappa_{a,\omega,n}(p)=(w,z)\text{ and } p\in U_{a,\omega,n}.
\]
From the assumption (G2) on the choice of $g_{a,\omega,n}$, we have
\[
\theta^u_{a,\omega,n}(\tau,0)=\theta^s_{a,\omega,n}(0,\tau)=0\quad\text{for }\quad\tau\in [-2\konst \pomega^{-1/2}, 2\konst\pomega^{-1/2}].
\]
By slight abuse of notation, we will sometimes regard the functions $e_{a,\omega,n}$, $\theta^u_{a,\omega,n}$ and $\theta^s_{a,\omega,n}$ above as  functions on $\real^{3}$ by letting $e_{a,\omega,n}(x,y,z)=e_{a,\omega,n}(x,y)$ and so on. 
\begin{Remark}\label{rem:eaon}
The function $e_{a,\omega,n}$ is not smooth in general, but  satisfies
\begin{equation}\label{eq:e_var}
\|e_{a,\omega,n}(w')-e_{a,\omega,n}(w)\|\le C_* \|w'-w\|\cdot  
 \big\langle\log \|w'-w\|\big\rangle
\quad\text{for }w,w'\in D_{a,\omega, n}
\end{equation}
 from \eqref{eq:irregularity_distribution} and \eqref{eq:torsion_bound}. 
In particular, 
\begin{equation}\label{eq:e_var_2}
\|e_{a,\omega,n}(w)\|=\|e_{a,\omega,n}(w)-e_{a,\omega,n}(0)\|\le 
 C_*\konst \pomega^{-1/2} 
\log \pomega \quad\text{for }w\in D_{a,\omega, n}.
\end{equation}
\end{Remark}

The next lemma gives estimates on the variation of the functions $\theta^{\sigma}_{a,\omega,n}(w)$, $\sigma=s,u$. The former statement is a consequence of the construction of the local charts (see Remark \ref{rem:b}), and  the latter is that of the definition of the $s$-templates and the non-integrability condition $(NI)_{\rho}$. We take and fix a constant $0<\theta_*<1/2$.
(One may suppose $\theta_*=1/4$, say.) 

\begin{Lemma}\label{lm:e_approx} 
There exists a constant $\omega_\sharp>\omega_0$ depending on $t_\sharp$ such that, for any  $a\in A$, $\omega\in \integer$, $n\in \mathcal{N}(a,\omega)$ with $|\omega|\ge \omega_\sharp$, the following holds true. For a point $w\in B(0,\konst\pomega^{-1/2})$ and $h\in [\pomega^{-1+\theta_*}, \,\konst \pomega^{-1/2}]$, we consider the segments
\[
\ell,\hat{\ell}:(-h,h)\to \real^2,\quad \ell(\tau)=w+(\tau, 0),\quad \hat{\ell}(\tau)=w+(0,\tau)
\]
parallel to the $x$-axis and $y$-axis respectively. 
Then, for $-h\le \tau\le h$, we have
\begin{align}
\label{eq:tv3}
&\left|\theta^{u}_{a,\omega,n}\circ \hat{\ell}(\tau)-\theta^{u}_{a,\omega,n}\circ \hat{\ell}(0)-c_{a,\omega,n} \Delta(p_{a,\omega,n},\konst\pomega^{-1/2})\cdot \tau\right|<C_* h  \langle\log (\konst\langle \omega\rangle^{-1/2}/h)\rangle, \\
\label{eq:tv4}
&
\left|\theta^{s}_{a,\omega,n}\circ \hat{\ell}(\tau)-\theta^{s}_{a,\omega,n}\circ \hat{\ell}(0)\right|<C_* \konst^2 \pomega^{-1} \log \pomega
\end{align}
and similarly 
\begin{align}
&\left|\theta^{s}_{a,\omega,n}\circ {\ell}(\tau)-\theta^{s}_{a,\omega,n}\circ {\ell}(0)\right|<C_* h \cdot \langle \log (\konst\langle \omega\rangle^{-1/2}/h)\rangle,\label{eq:tv5}\\
&
\left|(\theta^{u}_{a,\omega,n}\circ {\ell}(\tau)-\theta^{u}_{a,\omega,n}\circ {\ell}(0))\right|<C_* \konst^2 \pomega^{-1} \log \pomega.\label{eq:tv6}
\end{align}
If the non-integrability condition $(NI)_\rho$ holds, we have, for sufficiently large $b_0>0$, that
\begin{equation}\label{eq:tv2}
\frac{1}{2h}\left|\int_{-h}^h \exp\left(ib h^{-1} \left(\theta^s_{a,\omega,n}(\ell(\tau))+\alpha \tau\right)\right) d\tau \right|<
b^{-\rho/2} 
\end{equation} 
for any $b_0\le b\le \konst$, $\alpha\in \real$ and any $h$, $w$ as above.
\end{Lemma}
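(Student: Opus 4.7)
The plan is to translate each estimate into a statement about the components of $E_s$ and $E_u$ viewed in the chart $g_{a,\omega,n}\circ \kappa_a$ (prior to the post-composition with $b_{a,\omega,n}$), and then to invoke the results of Section \ref{sec:ni} about straight sections, torsions, and the non-integrability condition. Writing $E_s = \langle (s_1,1,s_3)\rangle$ and $E_u = \langle (1,u_2,u_3)\rangle$ in that intermediate chart, a direct computation shows that the covector $\eta = A\,dx + B\,dy + dz$ spanning $E_0^*$ satisfies $A = (u_2 s_3 - u_3)/(1-u_2 s_1)$ and $B = -s_3 - As_1$. After post-composing with $b_{a,\omega,n}$, the two components of $e_{a,\omega,n}$ become $\theta^u_{a,\omega,n}(X,Y) = A(X,Y) - \beta Y$ and $\theta^s_{a,\omega,n}(X,Y) = B(X,Y) - \beta X$, with $\beta = \beta(a,\omega,n)$ as in \eqref{def:beta}. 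Properties (G1) and (G2) immediately yield the vanishings $\theta^s_{a,\omega,n}(0,Y)\equiv 0$ on the stable manifold (where $s_1\equiv s_3\equiv 0$) and $\theta^u_{a,\omega,n}(X,0)\equiv 0$ on the unstable manifold (where $u_2\equiv u_3\equiv 0$).

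The two small-variation estimates \eqref{eq:tv4} and \eqref{eq:tv6} follow from these vanishings by Taylor expansion transverse to the relevant axis. For \eqref{eq:tv4}, since $\theta^s_{a,\omega,n}(0,Y)\equiv 0$, the difference $\theta^s_{a,\omega,n}\circ\hat\ell(\tau) - \theta^s_{a,\omega,n}\circ\hat\ell(0)$ expands as $w_1\cdot[\partial_X\theta^s_{a,\omega,n}(0,w_2+\tau)-\partial_X\theta^s_{a,\omega,n}(0,w_2)]$ plus a quadratic remainder in $w_1$; using that $s_1$ is $C^1$ (since $E_u^*$ is $C^1$ by Remark \ref{rem:eseu}) and that $s_3$ obeys the log-Lipschitz bound \eqref{eq:irregularity_distribution}, the bracketed quantity is controlled by $|\tau|\log\pomega$, yielding the desired $|w_1|\cdot|\tau|\log\pomega \le C_*\konst^2\pomega^{-1}\log\pomega$. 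The log-linear estimates \eqref{eq:tv3} and \eqref{eq:tv5} I first establish on the axis where the relevant $\theta$ does not vanish: along the $Y$-axis, $\theta^u_{a,\omega,n}(0,Y) = -u_3(0,Y,0) - \beta Y$, and, up to bounded $C^2$-error and a normalization factor, $-u_3(0,\cdot,0)$ coincides with the function $\hat\varphi$ of Remark \ref{rem:b}. Then \eqref{eq:varphi} shows that, after the choice \eqref{def:beta} of $\beta$ absorbs the $\torsion^s$-part, the linear coefficient of $\theta^u_{a,\omega,n}(0,Y)$ becomes $c_{a,\omega,n}\Delta(p_{a,\omega,n},\konst\pomega^{-1/2})$, and the log-Lipschitz deviation from being purely linear is controlled by the regularity of $\hat\varphi''$, itself a consequence of the series representation \eqref{eq:rep_exp} combined with \eqref{eq:regularity_psi_s}. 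The extension from $w=0$ to general $w\in B(0,\konst\pomega^{-1/2})$ is absorbed into the error using the already-proved \eqref{eq:tv6}; the estimate \eqref{eq:tv5} is symmetric, with the $s$-template $\psi^s_{p_{a,\omega,n},(-\konst\pomega^{-1/2},\konst\pomega^{-1/2})}$ playing the role of $\hat\varphi$.

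Finally, for the oscillatory integral \eqref{eq:tv2}, the analysis that proved \eqref{eq:tv5} identifies $\theta^s_{a,\omega,n}\circ\ell(\tau)$ with $c_{a,\omega,n}\cdot\psi^s_{p_{a,\omega,n},(-\delta,\delta)}(w_1+\tau)$, where $\delta = \konst\pomega^{-1/2}$, modulo an affine function of $\tau$ and a uniform error of size $\konst^2\pomega^{-1}\log\pomega$ (whose contribution to the integral is negligible since $b\le\konst$). After reducing to $w_2=0$ via \eqref{eq:tv4} (the resulting perturbation of the phase is $o(1)$ once $\omega_\sharp$ is large, because $bh^{-1}\konst^2\pomega^{-1}\log\pomega \le \konst^3\pomega^{-\theta_*}\log\pomega$), Lemma \ref{lm:psis} rewrites the restriction of $\psi^s_{p_{a,\omega,n},(-\delta,\delta)}$ to the relevant sub-interval centered at $w_1$ as a scaled copy of $\psi^s_{p',(-1,1)}$ for some $p'=f^{t'}(p_{a,\omega,n})$ with $t'>0$, plus an affine function of $\tau$. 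After the change of variables, the left-hand side of \eqref{eq:tv2} becomes (up to negligible terms) the oscillatory integral of Definition \ref{def:ni} for $\psi^s_{p',(-1,1)}$, with frequency parameter $b'\asymp b$ and a new $\alpha'\in\real$ absorbing all accumulated linear terms. Choosing $\omega_\sharp$ sufficiently large ensures $b'\ge b_0$, so that $(NI)_\rho$ applies and delivers a bound of $(b')^{-\rho}$; this translates back to \eqref{eq:tv2} with $\rho$ replaced by $\rho/2$, the halving absorbing the logarithmic losses from the reductions. The main obstacle throughout is the bookkeeping of affine corrections: at each reduction the added linear term must be verified to be absorbable into a redefinition of $\alpha$, and the constants $\omega_\sharp$ and $b_0$ must be chosen so that every intermediate frequency parameter lies in the regime where both $(NI)_\rho$ and the asymptotic identifications of Section \ref{sec:ni} remain effective.
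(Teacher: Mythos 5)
The central gap in your proposal is the Taylor expansion used to prove \eqref{eq:tv4} (and by symmetry \eqref{eq:tv6}). You write
\[
\theta^s_{a,\omega,n}\circ\hat\ell(\tau)-\theta^s_{a,\omega,n}\circ\hat\ell(0)
= w_1\bigl[\partial_X\theta^s_{a,\omega,n}(0,w_2+\tau)-\partial_X\theta^s_{a,\omega,n}(0,w_2)\bigr]+O(w_1^2),
\]
but $\theta^s_{a,\omega,n}$ is \emph{not} $C^1$ in the $X$-direction at $X=0$. In your own notation, $\theta^s = -s_3-As_1-\beta X$ with $A=(u_2s_3-u_3)/(1-u_2s_1)$, and $s_3$ (the flow-component of $E_s$) has only log-Lipschitz modulus of continuity \eqref{eq:irregularity_distribution} in the $X$-direction — this is exactly the transverse roughness encoded by the $s$-templates. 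So $\partial_X s_3(0,Y)$ does not exist, $\partial_X\theta^s(0,Y)$ does not exist, and the remainder is in fact $O(w_1\log(1/w_1))$, not $O(w_1^2)$. Feeding that $O(w_1\log w_1)$ remainder into the mixed difference gives only the bound $C_*\konst\pomega^{-1/2}\log\pomega$, off by a full factor of $\konst\pomega^{-1/2}$ from the target $C_*\konst^2\pomega^{-1}\log\pomega$. The same defect propagates into \eqref{eq:tv3}, \eqref{eq:tv5} and \eqref{eq:tv2}, because you use \eqref{eq:tv6} (resp.\ \eqref{eq:tv4}) to pass from $w=0$ to general $w$ and to reduce the phase of \eqref{eq:tv2} to the $s$-template.

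The paper avoids the transverse Taylor expansion entirely. For \eqref{eq:tv4} it first replaces the segment $\hat\ell$ by the actual stable curve $\kappa_{a,\omega,n}\circ w^s_q$ at a cost controlled by \eqref{eq:diff_l_w}, and then uses the exact orthogonality \eqref{eq:orth} between $(\theta^u,\theta^s,1)$ and the tangent $(\kappa_{a,\omega,n}\circ w^s_q)'$; combining this with the uniform curvature bound \eqref{eq:wudd}/\eqref{eq:wudd2} for that curve and the size bounds \eqref{eq:e_var_2}, \eqref{eq:aprox_l2_pi} yields the quadratic-scale estimate without ever differentiating $\theta^s$ transversally. Your derivation $B=(u_3s_1-s_3)/(1-u_2s_1)$ is precisely the orthogonality identity, but you then discard the dynamical structure behind it and treat $\theta^s$ as an unstructured log-Lipschitz function, which is too weak. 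Apart from this, the rest of your scheme — identifying the vanishings of $\theta^s(0,\cdot)$, $\theta^u(\cdot,0)$ from (G1)--(G2), reading off the linear coefficient of $\theta^u(0,\cdot)$ from $\hat\varphi$ and \eqref{eq:varphi}, and reducing \eqref{eq:tv2} to the $s$-template via Lemma \ref{lm:psis} and then invoking $(NI)_\rho$ — matches the paper's route.
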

\begin{proof} 
We prove the claims \eqref{eq:tv3} and \eqref{eq:tv4}.  
Let $q=\kappa_{a,\omega,n}^{-1}(w,0)$.
We first see that how the segments $\ell$ and $\hat{\ell}$ approximate local unstable and stable manifolds on the local charts respectively.  
From the definition of local charts $\kappa_{a,\omega,n}$, \eqref{eq:irregularity_distribution} and \eqref{eq:beta}, we see that 
\begin{equation}
\label{eq:aprox_l2}
\|( \kappa_{a,\omega,n}\circ w^s_q)'(\tau)-(0,1,0)\|\le C_* \konst\pomega^{-1/2} \log \pomega\quad\text{for }-h<\tau<h.
\end{equation}
Further, from the fact noted in Remark \ref{rem:eseu}, we actually have better estimate if we consider the image by the projection $\pi:\real^3\to \real^2$, $\pi(x,y,z)=(x,y)$ along the $z$-axis:
\begin{equation}
\label{eq:aprox_l2_pi}
\|\pi\circ ( \kappa_{a,\omega,n}\circ w^s_q)'(\tau)-(0,1)\|\le C_* \konst\pomega^{-1/2}\quad\text{for }-h<\tau<h.
\end{equation}
For the second derivative, we can prove the following estimate:
\begin{equation}
\label{eq:wudd}
\|(\kappa_{a,\omega,n}\circ w^s_q)''(\tau)\|< C_* \konst\pomega^{-1/2} \log \pomega\quad \text{ for $-h<\tau<h$.}
\end{equation}
Indeed,  at the origin,  we have $(\kappa_{a,\omega,n}\circ w^s_{p})''(0)=0$ with $p=p_{a,\omega,n}$ from the choice of the local chart $\kappa_{a,\omega,n}$.
Hence, if we disregard the composition of $b_{a,\omega,n}$ in the construction of the local chart $\kappa_{a,\omega,n}$, we obtain \eqref{eq:wudd} by a simple estimate\footnote{This estimate is not completely obvious but parallel to that behind \eqref{eq:irregularity_distribution} and straightforward. } on continuity of the $2$-jets of the stable manifolds. 
Then, by using \eqref{eq:beta} and \eqref{eq:aprox_l2_pi}, we can check that the estimate remains valid when
we restore the composition of $b_{a,\omega,n}$. 
Note that the estimates \eqref{eq:aprox_l2}, \eqref{eq:aprox_l2_pi} and \eqref{eq:wudd} above imply respectively 
\begin{align}
&\|\kappa_{a,\omega,n}\circ w^s_q(\tau)-\hat{\ell}(\tau)\|
\le C_* \konst\pomega^{-1/2} \log \pomega\cdot \tau, 
\\
&\|\pi \circ \kappa_{a,\omega,n}\circ w^s_q(\tau)-\pi\circ \hat{\ell}(\tau)\|
\le C_* \konst\pomega^{-1/2}\cdot \tau,\quad \text{and}\label{eq:wudd3}\\
&\|(\kappa_{a,\omega,n}\circ w^s_q)'(\tau)-(\kappa_{a,\omega,n}\circ w^s_q)'(0)\|
\le C_* \konst\pomega^{-1/2}\log \pomega\cdot \tau \label{eq:wudd2}
\end{align}
for $-h<\tau<h$. 
Putting \eqref{eq:wudd3} in  \eqref{eq:e_var}, we find that
\begin{equation}\label{eq:diff_l_w}
\|e_{a,\omega,n}(\hat{\ell}(\tau))-e_{a,\omega,n}(\kappa_{a,\omega,n}\circ w^s_q(\tau))\|\le C_* \konst^2 \pomega^{-1} \log \pomega\quad\text{for }-h<\tau<h.
\end{equation}
The right hand side above is so small that the claims  \eqref{eq:tv3} and \eqref{eq:tv4} follow if we prove them with the segment $\hat{\ell}$ replaced by the curve $ \kappa_{a,\omega,n}\circ w^s_q$.

Let $e_x$ be the vector field on $U_{a,\omega,n}\subset M$ defined by $e_x=(\kappa_{a,\omega,n}^{-1})_*(\partial_x)$. From the definition of $\hat{\gamma}^\dag_{q,(-h,h)}$ and the relation \eqref{eq:recur}, it holds, for $-h<\tau<h$, 
\begin{align*}
\|\gamma^u_{q,(-h,h)}(w^s_q(\tau))-{\hat{\gamma}}^\dag_{q,(-h,h)}(w^s_q(\tau))\| &\le C_* h\cdot \|\gamma^u_{q',(-1,1)}(w^s_{q'}(h^{-1}\tau))-{\hat{\gamma}}^0_{q',(-1,1)}(w^s_{q'}(h^{-1}\tau))\|\\
& <C_*h
\end{align*}
where we take $t>0$ so that $\|Df^{-t}_q|_{E_s}\|=1/h$ and set $q'=f^{-t}(q)$. 
Hence 
\begin{align}\label{eq:thetau}
\theta^u_{a,\omega,n}(\kappa_{a,\omega,n}&(w^s_q(\tau)))
=\langle \gamma^u_{q,(-h,h)}(w^s_q(\tau)),\; e_x(w^s_q(\tau))\rangle
\\
&=\langle \hat{\gamma}^\dag_{q,(-h,h)}(w^s_q(\tau)), \;e_x(w^s_q(\tau))\rangle+\mathcal{O}_*(h)\notag\\
&=
\left\langle \torsion^u(q,h)\cdot \tau \cdot\hat{\gamma}^{\perp}_{q,(-h,h)}(w^s_q(\tau))+\hat{\gamma}^{0}_{q,(-h,h)}(w^s_q(\tau))\;, \; e_x(w^s_q(\tau))\right\rangle+\mathcal{O}_*(h)\notag
\end{align}
for $-h<\tau<h$. 
For the term $\langle \hat{\gamma}^0_{q,(-h,h)}(w^s_q(\tau)), e_x(w^s_q(\tau))\rangle$ on the last line, we show 
\begin{equation}\label{eq:thetau2}
\langle \hat{\gamma}^0_{q,(-h,h)}(w^s_q(\tau)), e_x(w^s_q(\tau))\rangle
-
\langle \hat{\gamma}^0_{q,(-h,h)}(w^s_q(0)), e_x(w^s_q(0))\rangle
=\beta(a,\omega,n)\tau+\mathcal{O}_*(h)
\end{equation}
for $-h<\tau<h$. To this end, we first disregard the composition of $b_{a,\omega,n}$ in the construction of the local chart $\kappa_{a,\omega,n}$ and check
the inequality \eqref{eq:thetau2} without the term $\beta(a,\omega,n) \tau$ on the right-hand side. This is rather obvious because  $\hat{\gamma}^0_{q,(-h,h)}$ is a restriction of $\hat{\gamma}^0_{q,(-1,1)}$ and hence bounded uniformly in the $C^2$ norm. Then it is clear that the term $\beta(a,\omega,n) \tau$ appears on the right-hand side when we restore the composition of $b_{a,\omega,n}$. 

For the other terms on the last line of  \eqref{eq:thetau}, we have that 
\begin{equation}\label{eq:approx_gamma_perp}
\langle \hat{\gamma}^{\perp}_{q,(-h,h)}(w^s_q(\tau)),e_x(w^s_q(\tau))\rangle=c_{a,\omega,n}+\mathcal{O}_* (\konst\pomega^{-1/2})\quad \text{for $-h<\tau<h$}
\end{equation}
from \eqref{eq:aprox_l2_pi}, and also that
\begin{equation}\label{eq:ToruVar}
|\torsion^u(q,h)-\torsion^u(p_{a,\omega,n},\konst\pomega^{-1/2})|<C_*\log \langle \konst\pomega^{-1/2}/h\rangle
\end{equation}
from Lemma \ref{lm:Tor}. 
Therefore we obtain the first claim \eqref{eq:tv3} with $\hat{\ell}$ replaced by $\kappa_{a,\omega,n}\circ w^s_q$, if we rewrite  the left-hand side of \eqref{eq:tv3} using \eqref{eq:thetau} and then apply the estimates  \eqref{eq:thetau2}, \eqref{eq:approx_gamma_perp} and \eqref{eq:ToruVar} above. 

We next prove \eqref{eq:tv4} with $\hat{\ell}$ replaced by $\kappa_{a,\omega,n}\circ w^s_q$. Note that we have the relation
\begin{equation}\label{eq:orth}
(\theta^u(\kappa_{a,\omega,n}\circ w^s_{q}(\tau)), \theta^s(\kappa_{a,\omega,n}\circ w^s_{q}(\tau)), 1)\cdot (\kappa_{a,\omega,n}\circ w^s_{q})'(\tau) \equiv 0\quad \text{for }-h<\tau<h
\end{equation}
from the definition \eqref{eq:eaob}.
It follows from \eqref{eq:e_var_2} and \eqref{eq:aprox_l2_pi} that
\[
|(\theta^u(\kappa_{a,\omega,n}\circ w^s_{q}(\tau)), \theta^s(\kappa_{a,\omega,n}\circ w^s_{q}(\tau)))\cdot (\pi \circ (\kappa_{a,\omega,n}\circ w^s_{q})'(\tau)-(0,1))|\le C_* \konst^2 \pomega^{-1} \log \pomega.
\]
This together with \eqref{eq:orth} gives
\[
|\theta^s(\kappa_{a,\omega,n}\circ w^s_{q}(\tau)) + (0,0,1)\cdot (\kappa_{a,\omega,n}\circ w^s_{q})'(\tau) |\le C_* \konst^2 \pomega^{-1} \log \pomega).
\]
But, for the second term on the left-hand side, we have 
\[
|(0,0,1)\cdot (\kappa_{a,\omega,n}\circ w^s_{q})'(\tau) -(0,0,1)\cdot (\kappa_{a,\omega,n}\circ w^s_{q})'(0)|\le C_* \konst^2 \pomega^{-1} \log \pomega
\]
from \eqref{eq:wudd2}. Therefore the required estimate \eqref{eq:tv4} follows.   

We have finished the proofs of \eqref{eq:tv3} and \eqref{eq:tv4}. We omit the proofs of \eqref{eq:tv5} and \eqref{eq:tv6} because they are obtained by translating the argument above to the time-reversed flow $f^{-t}$ through obvious correspondences. Let us proceed to the proof of the last claim \eqref{eq:tv2}. 
From \eqref{eq:diff_l_w} for the time-reversed flow $f^{-t}$, it is enough to prove the claim \eqref{eq:tv2} with $\ell(\tau)$ in it replaced by $\kappa_{a,\omega,n}\circ w^u_{p}(\tau)$ with $p=p_{a,\omega,n}$. (Notice that  $b\le \konst$ from the assumption.) From the non-integrability condition $(NI)_\rho$, there exists some $b_0>0$ such that the estimate \eqref{eq:tv} holds for any $\psi\in \template$, $\alpha\in \real$ and $b\ge b_0$. Then, by \eqref{eq:psis}, we have    
\begin{equation}\label{eq:conseq_ni_1}
\frac{1}{2h}\left|\int_{-h}^h \exp(ibh^{-1}(\psi^s_{q,(-h,h)}(\tau)+\alpha \tau)) d\tau \right| <b^{-\rho}\qquad\text{for $q\in M$, $\alpha\in \real$ and $b\ge b_0$.}
\end{equation}
Below we consider the relation between $\psi^s_{q,(-h,h)}(\tau)$ and $\theta^s_{a,\omega,n}(\kappa_{a,\omega,n}\circ w^u_{p}(\tau))$. 
Similarly to the argument in the proof of former claims, with setting $e_y=(\kappa_{a,\omega,n}^{-1})_*(\partial_y)$, we find
\begin{align*}
\theta^s_{a,\omega,n}(\kappa_{a,\omega,n}(w^u_q(\tau)))
&=\langle \gamma^s_{q,(-h,h)}(w^u_q(\tau)), e_y(w^u_q(\tau))\rangle\\
&=\psi^s_{q,(-h,h)}(\tau)\cdot 
\langle {\gamma}^{\perp}_{q,(-h,h)}(w^u_q(\tau)), e_y(w^u_q(\tau))\rangle
+
\langle {\gamma}^0_{q,(-h,h)}(w^u_q(\tau)), e_y(w^u_q(\tau))\rangle
\end{align*}
for $-h<\tau<h$. Further we write the first term on the last line as
\begin{align}\label{eq:psis}
&\psi^s_{q,(-h,h)}(0)\cdot 
\langle {\gamma}^{\perp}_{q,(-h,h)}(w^u_q(\tau)), e_y(w^u_q(\tau))\rangle\notag\\
&\qquad +(\psi^s_{q,(-h,h)}(\tau)-\psi^s_{q,(-h,h)}(0))\cdot 
\langle {\gamma}^{\perp}_{q,(-h,h)}(w^u_q(\tau)), e_y(w^u_q(\tau))\rangle.\notag
\end{align}
Notice that the terms $\langle {\gamma}^0_{q,(-h,h)}(w^u_q(\tau)), e_y(w^u_q(\tau))\rangle$ and $\psi^s_{q,(-h,h)}(0)\cdot 
\langle {\gamma}^{\perp}_{q,(-h,h)}(w^u_q(\tau)), e_y(w^u_q(\tau))\rangle$ are of class $C^2$ with respect to $\tau$ (at least) and their second derivatives are bounded by $C_*$, so that we can approximate the sum of them by an affine function, say $\alpha\tau+\beta$, with an error term bounded by $C_* \konst^2 \pomega^{-1}$. For the remaining term on the last line, we have
\[
|\psi^s_{q,(-h,h)}(\tau)-\psi^s_{q,(-h,h)}(0)|\le C_* \konst \pomega^{-1/2} \log \pomega 
\]
from \eqref{eq:regularity_psi_s} and also
\[
\langle {\gamma}^{\perp}_{q,(-h,h)}(w^u_q(\tau)),e_y(w^u_q(\tau))\rangle=c_{a,\omega,n}+\mathcal{O}_* (\konst\pomega^{-1/2})
\]
correspondingly to \eqref{eq:approx_gamma_perp}.
Hence, with some $\alpha', \beta'\in \real$,  it holds   
\[
|\theta^s_{a,\omega,n}(\kappa_{a,\omega,n}\circ w^u_{p}(\tau))-c_{a,\omega,n}\cdot \psi^s_{q,(-h,h)}(\tau)-\alpha'\tau-\beta'|<C_*\konst^2 \pomega^{-1}\log \pomega
\]  
for $-h<\tau<h$. 
Since we assume $b_0\le b\le \konst$ in the claim \eqref{eq:tv2}, we may suppose that the right-hand side above is much smaller than $b^{-1-\rho} h> \konst^{-1-\rho} \pomega^{-1+\theta_*}$ by letting $\omega_\sharp$ be larger if necessary. Therefore we obtain the claim  \eqref{eq:tv2} from \eqref{eq:conseq_ni_1} with $b$ replaced by $c_{a,\omega,n}\cdot  b$, noting that $C_*^{-1}<|c_{a,\omega,n}|<C_*$ from Remark \ref{rem:caon}. 
\end{proof}

\begin{Remark}\label{rem:ext_eawn}
\textcolor{\revisionColor}{
From Lemma \ref{lm:e_approx} and from the definition of $\konst$ in \eqref{eq:konst_def}, we have 
\begin{equation}\label{eq:e_var2}
\|\bDelta_{a,\omega,n}^{-1} (e_{a,\omega,n}(w')- e_{a,\omega,n}(w))\|\le C_* t_\sharp^2 \cdot \langle\omega\rangle^{-1/2}\cdot \langle \langle\omega\rangle^{1/2}\|w'-w\|\rangle
\end{equation}
where $\bDelta_{a,\omega,n}$ is the $2\times 2$ matrix defined by
\begin{equation}\label{eq:bDelta}
\bDelta_{a,\omega,n}=\begin{pmatrix}
\Delta_{a,\omega,n}&0\\
0& 1
\end{pmatrix},\qquad 
\Delta_{a,\omega,n}:=
\langle \Delta(p_{a,\omega,n},\,\konst \pomega^{-1/2})\rangle.
\end{equation}}
(Recall that $\Delta(q,\delta)$ is the approximate non-integrablity defined in Definition \ref{def:approx_ni}.)
For a technical reason to be explained in Remark \ref{Re:ext}, we will need to extend the functions $e_{a,\omega,n}(\cdot)$ to $\real^2$. 
Though the choice of the extension is rather arbitrary, we will suppose that the extension is continuous and satisfies  \eqref{eq:e_var2}  for all $w,w'\in \real^2$. 
\end{Remark}

\subsection{The flow $f^t$ viewed in the local charts}
\label{ss:ftlocal}
In this subsection, we  consider how the flow $f^t$ looks in the local charts $\kappa_{a,\omega,n}$. 
Below we consider $(a,\omega,n)$ and $(a',\omega',n')$ with $a,a'\in A$, $\omega,\omega'\in \integer$ and $n\in \mathcal{N}(a,\omega)$, $n'\in \mathcal{N}(a',\omega')$ and  suppose  
\begin{equation}\label{eq:set}
t_\sharp\le t\le 2t_\sharp\quad\text{and}\quad U=U_{a,\omega,n}\cap f^{-t}(U_{a',\omega',n'})\neq \emptyset.
\end{equation}
Then the flow $f^t$ viewed in the local charts $\kappa_{a,\omega,n}$ and $\kappa_{a',\omega',n'}$ is
\[
f:=\kappa_{a',\omega',n'}\circ f^t\circ \kappa_{a,\omega,n}^{-1}:\kappa_{a,\omega,n}(U)\to \kappa_{a',\omega',n'}(f(U)).
\]
Since $\kappa_{a,\omega,n}$ are flow box local charts,  this diffeomorphism $f$ is written in the form
\begin{equation}\label{eq:expression_f}
f(x,y,z)=(\hat{f}(x,y), z+\check{f}(x,y))
\end{equation}
and therefore we may extend it naturally to
\[
f:V\times \real\to V'\times \real,\qquad\text{with setting $V:=\pi\circ \kappa_{a,\omega,n}(U)$,  \;\;$V':=\pi\circ \kappa_{a',\omega,n}(f(U))$} 
\]
where $\pi:\real^3\to \real^2$ denotes the projection to the first two components.
Before proceeding further, beware that we dropped dependence on $t_\sharp \le t\le 2t_\sharp$ and  $a$, $\omega$, $n$, $a'$, $\omega'$, $n'$ from the notation above. We use this simplified notation only in the following part of this subsection.

Letting $\chi_*$ in \eqref{eq:hyperbolicity} be slightly smaller and  choosing the local charts $\kappa_a$ a little more carefully (see Remark \ref{Rem:intersection_mulitpliicity_rho}), we may and do assume that the diffeomorphism $f$ given as above is uniformly hyperbolic in  the sense that
\begin{equation}\label{eq:fhyp1}
Df_p^*(\mathbf{C}(2))\subset\mathbf{C}(1/2)\quad \text{for $p\in V\times \real$}
\end{equation}
where $\mathbf{C}(\theta)=\{(\xi_x,\xi_y,0)\in \real^3\mid |\xi_y|\le \theta|\xi_x|\}$ and that 
\begin{equation}\label{eq:fhyp2}
\|Df^*_p(v)\|\ge e^{\chi_* t_\sharp} \|v\| \quad \text{if $v\in \mathbf{C}(2)$\quad and \quad}
\|(Df^{-1})^*_{f(p)}(v)\|\ge e^{\chi_* t_\sharp} \|v\| \quad \text{if $v\notin \mathbf{C}(1/2)$.}
\end{equation}
For the higher order derivatives, we will use a crude estimate
\begin{equation}\label{eq:crude_estimate_df}
\|D^k f\|_\infty\le C_* \exp(C_*t_\sharp) \cdot (\log \max\{\pomega, \langle \omega'\rangle\})^2\quad\text{for $k=2, 3$}
\end{equation}
where the last factor appears as a consequence of the composition of $b_{a,\omega,n}$ and $b_{a',\omega',n'}$ in the definition of the local charts $\kappa_{a,\omega,n}$ and $\kappa_{a',\omega',n'}$. 

When $|\omega|$ and $|\omega'|$ are large, the domain $V$ of $f$ is small in the directions transversal to the flow and hence  $f$ will be well approximated by its linearization. In the next lemma, we give a statement along this idea. Let $A:\real^3\to \real^3$ be the mapping defined by
\begin{equation}\label{eq:A}
A(x,y,z)=
\big(\lambda x,\tilde{\lambda} y, z+\varpi \cdot (x,y)+\beta xy\big)+f(0,0,0).
\end{equation}
where 
\begin{equation}\label{eq:b}
\lambda=\pm \|D\hat{f}_{\mathbf{0}}(\partial_x)\|,\quad \tilde{\lambda}=\pm\|D\hat{f}_{\mathbf{0}}(\partial_y)\|, \quad \varpi=\left(\partial_x \check{f}(0,0), \partial_y \check{f}(0,0)\right), \quad   
\beta=\partial_{xy} \check{f}(0,0)
\end{equation}
and the signs of $\lambda$ and $\tilde{\lambda}$ are chosen so that $A$ approximates $f$ better at the origin. 
Then we write  the diffeomorphism $f$ as the composition
\begin{equation}\label{eq:AG}
f=A\circ  G\qquad\text{with setting $G=A^{-1}\circ f$. }
\end{equation}
The diffeomorphism $G$ is written in the form 
\begin{equation}\label{eq:Gf}
G(x,y,z)=\big(\hat{G}(x,y), z+\check{G}(x,y)\big).
\end{equation}
In the next lemma, we let  $0<\theta_*<1/2$ be the constant taken just before  the statement of Lemma \ref{lm:e_approx} and let the constant $\omega_\sharp$ taken  in Lemma \ref{lm:e_approx} be larger if necessary.  
\begin{Lemma}\label{lm:abc}
For any $t_\sharp\le t\le 2t_\sharp$ and any combination of $(a,\omega,n)$ and $(a',\omega',n')$  satisfying $|\omega|>\omega_\sharp$ and $1/2\le | \omega'|/|\omega|\le 2$ , we have the following estimates:\\
\noindent
For the diffeomorphism $G:V\times \real \to \hat{G}(V)\times \real$ defined in \eqref{eq:AG}, we have $G(0)=0$ and 
\begin{align}
&\|\mathrm{Id}-D{G}\|_{\infty}< \pomega^{-1/2+\theta_*}\quad\text{and}\quad
\|D^k G\|_\infty< \pomega^{\theta_*}\quad \text{for } k=2,3,
\label{eq:G1}
\intertext{and also}
&\|D\check{G}\|_{\infty}< \pomega^{-1+\theta_*}\quad\text{and}\quad\|D^2\check{G}\|_{\infty}< \pomega^{-1/2+\theta_*}.
\label{eq:G2}
\end{align}
\noindent
For the diffeomorphism $A$ defined in \eqref{eq:A}, we have
\begin{equation}\label{eq:bB}
\|\varpi\|<\pomega^{-1/2+\theta_*}\quad\text{and}\quad
|\beta|<C_* t_\sharp.
\end{equation}
Further,
\begin{equation}\label{eq:est_on_hatf}
\|D\check{f}\|_{\infty}<\pomega^{-1/2+\theta_*}\quad\text{and} \quad\|D^2\check{f}\|_{\infty}<C_* t_\sharp.
\end{equation}
\end{Lemma}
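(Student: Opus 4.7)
The plan is to factor out the polynomial $b$-compositions. Write $f = b_{a',\omega',n'}\circ \tilde f\circ b_{a,\omega,n}^{-1}$, where the intermediate diffeomorphism
$$\tilde f := g_{a',\omega',n'}\circ\kappa_{a'}\circ f^t\circ\kappa_a^{-1}\circ g_{a,\omega,n}^{-1}$$
depends only on the uniformly $C^3$-bounded $g$-charts, so that $\|\tilde f\|_{C^3}\le \exp(C_* t_\sharp)$ with constants independent of $\omega$. Since $\hat b_{a,\omega,n}=\mathrm{id}$, this yields $\hat f=\hat{\tilde f}$ and the elementary identity
$$\check f(x,y) = \check{\tilde f}(x,y) - \beta(a,\omega,n)\,xy + \beta(a',\omega',n')\,\hat f_1(x,y)\,\hat f_2(x,y).$$
All the $\omega$-growth in the derivatives of $\check f$ is thus isolated in two explicit polynomial corrections.

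I would first bound $\varpi$. The partial derivative $\partial_x\check f(0)$ is precisely the $z$-component of $Df(\partial_x)$ in the target chart. By (G1) for the source chart, $\partial_x\in E_u(p_{a,\omega,n})$, so $Df(\partial_x)\in E_u(f^t(p_{a,\omega,n}))$; in the target chart this vector, normalized to unit $x$-component, has $z$-component $-\theta^u_{a',\omega',n'}(w')+\mathcal O_*(\konst^2\pomega^{-1}(\log\pomega)^2)$, where $w':=\pi\hat f(0)$ satisfies $|w'|\le C_*\konst\pomega^{-1/2}$. Applying \eqref{eq:e_var_2} and multiplying by $|\lambda|\le e^{C_*t_\sharp}$ gives $|\partial_x\check f(0)|\le C_*\exp(C_*t_\sharp)\konst\pomega^{-1/2}\log\pomega<\pomega^{-1/2+\theta_*}$ once $\omega_\sharp$ is chosen large depending on $t_\sharp$; the estimate for $\partial_y\check f(0)$ is parallel. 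Granting the Hessian bound below, $\|D\check f\|_\infty$ follows from Taylor expansion since the transversal domain has diameter $O(\konst\pomega^{-1/2})$.

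The essential step is the bound $|\beta|<C_*t_\sharp$. Differentiating the decomposition at $0$ gives
$$\beta = -\beta(a,\omega,n) + \partial_{xy}\check{\tilde f}(0) + \beta(a',\omega',n')\bigl[\partial_x\hat f_1\,\partial_y\hat f_2+\partial_y\hat f_1\,\partial_x\hat f_2\bigr]\big|_{0} + \mathcal E,$$
where the error $\mathcal E$, collecting the product-rule terms involving $\hat f(0)\neq 0$, is negligible because $|\hat f(0)|\le C_*\konst\pomega^{-1/2}$ while $|\beta(a',\omega',n')|\le C_*\log\pomega$ and $\|D^2\tilde f\|_\infty\le\exp(C_*t_\sharp)$. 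By (G1) the Jacobian $D\hat f(0)$ is essentially diagonal, so the bracket equals $\lambda\tilde\lambda+\mathcal O_*(\pomega^{-1/2+\theta_*})$, and volume preservation combined with (G3) gives $\lambda\tilde\lambda=c_{a,\omega,n}/c_{a',\omega',n'}+\mathcal O_*(\text{small})$. Substituting the definition $\beta(a,\omega,n)=c_{a,\omega,n}\,\torsion^s(p_{a,\omega,n},\konst\pomega^{-1/2})$, the combination $-\beta(a,\omega,n)+(c_{a,\omega,n}/c_{a',\omega',n'})\beta(a',\omega',n')\lambda\tilde\lambda$ reduces to $c_{a,\omega,n}$ times a difference of $s$-torsions at the nearby points $p_{a,\omega,n}$ and $p_{a',\omega',n'}$ and nearby scales $\konst\pomega^{-1/2}$ and $\konst\langle\omega'\rangle^{-1/2}$; by Lemma \ref{lm:Tor} (via \eqref{eq:torsion_bound}, \eqref{eq:torsion_bound2}, \eqref{eq:torsion_bound3}), together with $1/2\le|\omega'|/|\omega|\le2$ and $\|Df^t|_{E_u}\|\le\exp(C_*t_\sharp)\ll\konst$, this difference is $O(t_\sharp)$. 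A parallel analysis — treating $\tilde f$ as a smooth-chart flow transition for which the same torsion-telescoping holds with the $b$-corrections replaced by zero — yields $|\partial_{xy}\check{\tilde f}(0)|\le C_*t_\sharp$.

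Once $|\beta|\le C_*t_\sharp$ is established at the origin, the sup-norm bound $\|D^2\check f\|_\infty<C_*t_\sharp$ follows by Taylor expansion: the cross-derivative of $\check f$ at any $(x,y)$ differs from $\beta$ by $O(\|D^3\check f\|_\infty\cdot\konst\pomega^{-1/2})$, and \eqref{eq:crude_estimate_df} shows that after the $b$-cancellations this contribution is again $O(t_\sharp)$ provided $\omega_\sharp$ is large. The estimates \eqref{eq:est_on_hatf} for $\check f$ are then complete. Finally, the bounds \eqref{eq:G1} and \eqref{eq:G2} on $G=A^{-1}\circ f$ are routine Taylor estimates: $A$ is the partial Taylor polynomial of $f$ at $0$ (affine in $(x,y)$ for $\hat f$ and including the quadratic $\varpi\cdot(x,y)+\beta xy$ term in the $z$-direction), so each component of $\mathrm{Id}-DG$ and its derivatives is controlled by $\|D^{k+1}f\|_\infty$ times the appropriate power of $\konst\pomega^{-1/2}$, which gives each claimed estimate after enlarging $\omega_\sharp$. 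The main obstacle throughout is the cancellation producing $|\beta|\le C_*t_\sharp$ instead of the naive $O(\log\pomega)$; this is where the construction of $b_{a,\omega,n}$ absorbing the $s$-torsion at $p_{a,\omega,n}$ pays off, and where the fine structure of Lemma \ref{lm:Tor} enters.
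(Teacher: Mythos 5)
The bulk of your argument tracks the paper's proof closely: the factorization $f = b_{a',\omega',n'}\circ\tilde f\circ b_{a,\omega,n}^{-1}$ and the explicit formula for $\check f$ make precise what the paper expresses as ``ignore, then restore, the $b$-compositions''; your telescoping of the torsion via Lemma~\ref{lm:Tor} to get $|\beta|\le C_*t_\sharp$, with $\lambda\tilde\lambda\approx c_{a,\omega,n}/c_{a',\omega',n'}$ from volume preservation, is exactly the paper's cancellation; and your $\varpi$-bound via $E_u$-invariance and the regularity of $\theta^u_{a',\omega',n'}$ is a slight rephrasing of the paper's estimate via $(\kappa_{a,\omega,n}\circ w^u_q)'$.

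However, there is a genuine gap in the final step, where you dismiss \eqref{eq:G1}--\eqref{eq:G2} as ``routine Taylor estimates''. The quadratic correction $\check A = \varpi\cdot(x,y)+\beta xy$ built into $A$ is \emph{not} the full second-order Taylor polynomial of $\check f$ at the origin: it deliberately omits the $\tfrac12\partial_{xx}\check f(0)\,x^2$ and $\tfrac12\partial_{yy}\check f(0)\,y^2$ terms. Consequently $\partial_{xx}\check G(0)\approx\partial_{xx}\check f(0)$ and $\partial_{yy}\check G(0)\approx\partial_{yy}\check f(0)$, and these are \emph{not} controlled by $\|D^3 f\|_\infty\cdot(\konst\pomega^{-1/2})$. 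A naive bound from your Hessian estimate only gives $|\partial_{xx}\check f(0)|\le C_*t_\sharp$, which is far from the claimed $\|D^2\check G\|_\infty<\pomega^{-1/2+\theta_*}$. What is needed, and what the paper explicitly establishes, is that $\|\partial_{xx}\check f\|_\infty,\|\partial_{yy}\check f\|_\infty<\pomega^{-1/2+\theta_*}$ — a separate geometric input coming from condition (G2) (the intrinsic straightening of $W^u$ and $W^s$ along the axes) together with $f^t$-invariance of the stable/unstable foliations, concretely via the second-derivative estimate \eqref{eq:wudd} on $\kappa_{a,\omega,n}\circ w^u_q$. Your $\varpi$-argument differentiates $\check f$ once along these geometric data; you simply need to differentiate once more, but as written the proof does not do so. Note that this missing estimate is also tacitly required for your claim that ``$\|D^2\check f\|_\infty<C_*t_\sharp$ follows by Taylor'': your Taylor argument only controls $\partial_{xy}\check f$, and the bound on the pure second derivatives $\partial_{xx}\check f$, $\partial_{yy}\check f$ needs the same geometric estimate.
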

\begin{Remark}\label{Re:aboutG}
If we use the local chart $\kappa_{a,\omega,n}$ and consider the scale $\pomega^{-1}$ (resp.~$\pomega^{-1/2}$) in the $z$ direction (resp.~the $xy$ direction) and if $\omega'=\omega$, the claims \eqref{eq:G1} and \eqref{eq:G2} in the last lemma implies that  the rescaled map of $G$, 
\[
(\tilde{w},\tilde{z})\mapsto \left(\pomega^{1/2}\cdot \hat{G}\big(\pomega^{-1/2}\tilde{w},\pomega^{-1}\tilde{z}\big), \tilde{z}+\pomega \cdot \check{G}(\pomega^{-1/2}\tilde{w})\right),
\]
 tends to identity in $C^3$ sense as $|\omega|\to \infty$, uniformly in  $a$, $n$ and $t_\sharp \le t\le 2t_\sharp$. 
This tells roughly that the non-linearity of $G$ will be negligible when $|\omega|$ is large. Note that this would not be true if we did not put the nonlinear factor $\beta xy$ in $A$. 
We will see that the decomposition \eqref{eq:AG} enables us to concentrate on the essential part $A$ of $f$ by separating the negligible non-linearity in the factor $G$.  
\end{Remark}
\begin{proof}
Since we may choose large $\omega_\sharp$ depending on $t_\sharp$ and $\theta_*$ and since we are assuming $1/2\le | \omega'|/|\omega|\le 2$,  some of the claims are obtained easily by  bounding some factors depending on $t_\sharp$ by $\omega^{\epsilon}_\sharp$ with small $\epsilon>0$.  For instance, the latter claim of \eqref{eq:G1} follows immediately from \eqref{eq:crude_estimate_df}. 
In the following, we argue in parallel for every value of $\theta_*\in (0,1/2)$. Thus, when we refer previous estimates, we will actually refer them for different (or slightly smaller) values of $\theta_*$. 
 
By the estimates \eqref{eq:aprox_l2} and \eqref{eq:wudd} in the beginning of the proof of Lemma \ref{lm:e_approx} and by the corresponding ones for the time reversed flow $f^{-t}$, we obtain that, for $q\in U$,
\begin{align*}
&\|(\kappa_{a,\omega,n}\circ w^u_{q})'(0)-(1,0,0)\|< \pomega^{-1/2+\theta_*},\quad \|(\kappa_{a,\omega,n}\circ w^s_{q})'(0)-(0,1,0)\|<\pomega^{-1/2+\theta_*}.
\intertext{and also}
&\|(\kappa_{a,\omega,n}\circ w^u_{q})''(0)\|< \pomega^{-1/2+\theta_*},\quad
\|(\kappa_{a,\omega,n}\circ w^s_{q})''(0)\|< \pomega^{-1/2+\theta_*}.
\end{align*}
Of course we have the same estimates for the local chart $\kappa_{a',\omega',n'}$.  
Hence, from the fact that $f^t$ preserves the (un)stable manifolds, we obtain 
\[
\|D\check{f}\|_\infty<\pomega^{-1/2+\theta_*},\quad \|\partial_{xx} \check{f}\|_\infty< \pomega^{-1/2+\theta_*},\qquad
\|\partial_{yy}\check{f}\|_\infty < \pomega^{-1/2+\theta_*}
\] 
and also the former claim of \eqref{eq:bB} on $\varpi$. From these and the definitions of $A$ and $G$, it is not difficult to see  
\[
\|(DG)_0-\mathrm{Id}\|<\pomega^{-1/2+\theta_*}\quad\text{and}\quad \|(D\check{G})_0\|<\pomega^{-1+\theta_*}, \quad \|(D^2\check{G})_0\|<\pomega^{-1/2+\theta_*}.
\]
These estimates at the origin and the latter claim of \eqref{eq:G1} that we mentioned in the beginning yield\footnote{Note that, as we mentioned in the beginning, we argue in parallel for different values of $\theta_*$, so that we may and do suppose that the previous estimates are valid for any $\theta_*>0$.}
\[
\|DG-\mathrm{Id}\|_\infty<\pomega^{-1/2+\theta_*},\quad 
\|D\check{G}\|_{\infty}<\pomega^{-1+\theta_*},\quad 
\|D^2\check{G}\|_{\infty}<\pomega^{-1/2+\theta_*}.
\]
To prove the latter claim of \eqref{eq:bB} on $\beta$, we recall the construction of the local chart $\kappa_{a,\omega,n}$. If we ignore the compositions of  $b_{a,\omega,n}$ and $b_{a',\omega',n'}$ in the local charts $\kappa_{a,\omega,n}$ and $\kappa_{a',\omega',n'}$, we can get this claim just by simple application of the chain rule. Then, restoring the compositions of  $b_{a,\omega,n}$ and $b_{a',\omega',n'}$, 
we find the additional term 
\[
(\lambda \tilde{\lambda})\cdot\beta(a',\omega',n')-\beta(a,\omega,n)
= c_{a,\omega,n}\cdot \left(\frac{\beta(a',\omega',n')}{c_{a',\omega',n'}}-
\frac{\beta(a,\omega,n)}{c_{a,\omega,n}}\right)
\]
with an error term bounded by $C_*$. 
By the definition \eqref{def:beta} and Lemma \ref{lm:Tor}, this is bounded by $C_* t_\sharp$, so that we obtain the required estimate. The latter claim of \eqref{eq:est_on_hatf} then follows. \end{proof}

We finish this subsection with the following crude estimate. 
\begin{Lemma}\label{lm:rhot}
If $0\le t\le 2t_{\sharp}$, we have, for integers $k,\ell,m\ge 0$ with $k+\ell\le 3$, that
\[
\|\partial_x^k\partial_y^\ell\partial_z^m (\rho_{a,\omega,n}\cdot (\rho_{a',\omega',n'}\circ f))\|_{\infty}\le C_* \cdot (e^{C_* t_\sharp}\cdot  \konst^{-1}\max\{\pomega, \langle \omega'\rangle\}^{1/2})^{(k+\ell)}.
\]
\end{Lemma}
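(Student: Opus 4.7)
The plan is a direct computation by the Leibniz and chain rules; the bulk of the work is to estimate the derivatives of each factor separately, and the key inputs (derivative bounds on $\varrho_{a,\omega,n}$, $g_{a,\omega,n}$, $\beta(a,\omega,n)$, $Df$ and $D^jf$) are all already in hand.

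First I would bound the derivatives of $\rho_{a,\omega,n}$ itself. Writing it as the composition
\[
\rho_{a,\omega,n}=(\rho_a\cdot \varrho_{a,\omega,n})\circ g_{a,\omega,n}^{-1}\circ b_{a,\omega,n}^{-1},
\]
the chain rule produces contributions from three layers. The innermost function has tangential derivatives of size at most $C_*(\konst^{-1}\pomega^{1/2})^{k+\ell}$ (by property (3) of the partition) and bounded $\partial_z$-derivatives (since $\rho_a$ was mollified along the flow, hence is $C^\infty$ in $z$). The map $g_{a,\omega,n}^{-1}$ is flow-box and has $C^3$-norm bounded by $C_*$ by (G0). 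Finally $b_{a,\omega,n}^{-1}(x,y,z)=(x,y,z-\beta(a,\omega,n)\,xy)$ contributes at most a factor $C_*\log\pomega$ per mixed derivative through $\beta$, which is negligible against $\konst^{-1}\pomega^{1/2}$ once $|\omega|$ is large enough (and $\beta=0$ for $|\omega|<\omega_0$, so this case is trivial). The conclusion is
\[
\|\partial_x^k\partial_y^\ell\partial_z^m \rho_{a,\omega,n}\|_\infty\le C_*(\konst^{-1}\pomega^{1/2})^{k+\ell}.
\]

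Next I would bound the derivatives of $\rho_{a',\omega',n'}\circ f$. Since $f$ is a flow-box diffeomorphism of the form $f(x,y,z)=(\hat f(x,y),z+\check f(x,y))$, the vertical derivatives pass straight through: $\partial_z^m(\rho_{a',\omega',n'}\circ f)=(\partial_z^m\rho_{a',\omega',n'})\circ f$, which is bounded by the previous step. For the tangential derivatives, Faà di Bruno expresses $\partial_x^k\partial_y^\ell(\rho_{a',\omega',n'}\circ f)$ as a sum of products of partial derivatives of $\rho_{a',\omega',n'}$ (bounded by $C_*(\konst^{-1}\langle\omega'\rangle^{1/2})^{|\alpha|}$) and of $f$; the latter are controlled by $\|Df\|_\infty\le C_*e^{C_*t_\sharp}$ for first derivatives and by the crude estimate \eqref{eq:crude_estimate_df} for $j=2,3$. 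Each tangential derivative falling on this factor thus costs at most $C_*e^{C_*t_\sharp}\konst^{-1}\langle\omega'\rangle^{1/2}$, once the logarithmic factor $(\log\max\{\pomega,\langle\omega'\rangle\})^2$ is absorbed by enlarging the constant in $e^{C_*t_\sharp}$.

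Finally, applying the Leibniz rule to the product $\rho_{a,\omega,n}\cdot(\rho_{a',\omega',n'}\circ f)$ and distributing the $k+\ell$ tangential and $m$ vertical derivatives between the two factors yields the claimed bound: every tangential derivative costs at most $C_*e^{C_*t_\sharp}\konst^{-1}\max\{\pomega,\langle\omega'\rangle\}^{1/2}$ and every vertical derivative costs at most an $m$-dependent constant absorbed into the leading $C_*$. The only bookkeeping issue is dominating the logarithmic factors coming from $\beta(a,\omega,n)$, $\beta(a',\omega',n')$ and the second factor in \eqref{eq:crude_estimate_df}; this is handled uniformly by using $\log\pomega\le C_\epsilon\pomega^\epsilon$ for any $\epsilon>0$ and choosing $\epsilon<1/2$, so I do not expect any genuine obstacle beyond this bookkeeping.
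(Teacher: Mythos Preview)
Your proposal is correct and is precisely the approach the paper has in mind: the paper omits the proof entirely, calling it ``straightforward,'' and only records the observation that in the case $k=\ell=0$ the bound reduces to $C_*$ independent of $t_\sharp$ because $\varrho_{a,\omega,n}$ depends only on $(x,y)$, $\rho_a$ is $C^\infty$ in $z$, and $\partial_z f\equiv(0,0,1)$ --- exactly the mechanism you identify when you note that vertical derivatives pass straight through. Your Leibniz/chain-rule bookkeeping with the inputs (G0), property (3) of $\varrho_{a,\omega,n}$, the bound \eqref{eq:beta} on $\beta$, and the crude estimate \eqref{eq:crude_estimate_df} is the intended argument; the logarithmic factors from $\beta$ are in fact harmless not merely because $\log\langle\omega\rangle\le C_\epsilon\langle\omega\rangle^\epsilon$, but more directly because on the support one has $|x|,|y|\le C_*\konst\langle\omega\rangle^{-1/2}$, so the cross term $\beta y\,\partial_z(\cdots)$ is of order $\konst\langle\omega\rangle^{-1/2}\log\langle\omega\rangle$, which is dominated by $\konst^{-1}\langle\omega\rangle^{1/2}$.
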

We omit the proof since it is straightforward. We just note that, in the case $k=\ell=0$, the right hand side in the estimate  above is just $C_*$ and does not depend on the choice of $t_\sharp$. This is because the function $\varrho_{a,\omega,n}$ in the definition of $\rho_{a,\omega,n}$ is the functions of $(x,y)$ and does not depend on $z$.  (Recall that $\rho_a$ are $C^\infty$ in the variable $z$ and we have $\partial_z f\equiv (0,0,1)$.)

\section{The anisotropic Sobolev space}
\label{sec:anisoH}
As the next step towards the proof of Theorem \ref{th:exp}, we introduce the Hilbert space  $\mathcal{H}$, called the anisotropic Sobolev space, and consider the action of the transfer operator $\cL^t$ on it. The argument in this subsection is a modification of that in the previous papers \cite{1311.4932, MR2995886}. 

\subsection{Partial Bargmann transform}\label{ss:pBt}
The purpose of the following construction is to consider the action of the transfer operator $\cL^t$ in the frequency space (that is, through Fourier transform on local charts). But, one because the direction of $E_0^*$ depends on the base point sensitively, we need to consider the action in the real space simultaneously. The partial Bargmann transform, introduced below, meets these demands. We refer \cite[Sec.~4-5]{MR2995886}, \cite[Sec.~4.2-3]{1311.4932} and \cite[Sec.~3-4]{FaureTsujii12} and the references therein for more detailed accounts on the (partial) Bargmann transform (or more generally on wave packet transforms).  

For $(w,\xi,\eta)\in \real^{2+2+1}$ with $w,\xi\in \real^2$ and $\eta\in \real$, we define $
\phi_{w,\xi,\eta}:\real^{3}\to \complex$ 
by 
\[
\phi_{w,\xi,\eta}(w',z')=2^{-3/2} \pi^{-2}\cdot \langle\eta\rangle^{1/2}\cdot \exp
\left(i\eta \cdot z'+i\xi \cdot  (w'-(w/2))-\langle \eta\rangle\cdot  \|w'-w\|^2/2 \right).
\] 
The partial Bargmann transform $
\pBargmann:L^2(\real^{2+1})\to L^2(\real^{2+2+1})$
is defined by
\begin{equation}\label{eq:volume}
\pBargmann u(w,\xi,\eta)=\int \overline{\phi_{w,\xi,\eta}(w',z')}\cdot u(w',z') \, dw'dz'.
\end{equation}
\begin{Remark}
In the above and also henceforth, we write  $\real^{2+1}$ (resp. $\real^{2+2+1}$) for the Euclidean space of dimension $3$ (resp. $5$) equipped with the standard coordinate $(w,z)=(x,y,z)$ (resp. $(w,\xi,\eta)=(x,y,\xi_x,\xi_y,\eta)$) where $w=(x,y)\in \real^2$ and $\xi=(\xi_x,\xi_y)\in \real^2$. We regard $\xi=(\xi_x,\xi_y)$ and $\eta$ as the dual variables of $w=(x,y)$ and $z$ respectively. 
\end{Remark}
The $L^2$-adjoint $
\pBargmann^*: L^2(\real^{2+2+1}) \to L^2(\real^{2+1})$
of the partial Bargmann transform $\pBargmann$ is 
\begin{equation}\label{eq:pBargmannAdj}
\pBargmann^* v(w',z')= \int \phi_{w,\xi,\eta}(w',z')\cdot v(w,\xi,\eta)  
dw d\xi d\eta.
\end{equation}

\begin{Lemma}\cite[Proposition 5.1]{MR2995886}\label{lm:pBargmannL2}
The partial Bargmann transform $\pBargmann$ is an $L^2$-isometric injection and $\pBargmann^*$ is a $L^2$-bounded operator such that $\pBargmann^*\circ \pBargmann =\mathrm{Id}$. 
The composition  
\begin{equation}\label{eq:pBargmannP}
\pBargmannP:=\pBargmann\circ \pBargmann^*:L^2(\real^{2+2+1})\to L^2(\real^{2+2+1})
\end{equation}
is the $L^2$ orthogonal projection onto the image of $\pBargmann$. The (Schwartz) kernel of $\pBargmannP$ is written in the form
\[
K_{\pBargmannP}(w,\xi,\eta;w',\xi',\eta')=\delta(\eta'-\eta)
\exp\left(\frac{i(\xi w'-\xi' w)}{2} -\frac{\langle \eta\rangle \|w-w'\|^2}{4}-\frac{\langle \eta\rangle^{-1} \|\xi-\xi'\|^2}{4}\right).
\]
\end{Lemma}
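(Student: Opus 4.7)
My plan is to derive the three assertions in the order they appear, with the crucial step being a direct verification of the identity $\pBargmann^*\circ\pBargmann=\mathrm{Id}$. Everything else follows from this identity by soft arguments: the $L^2$-isometry claim becomes $\|\pBargmann u\|^2=\langle \pBargmann^*\pBargmann u,u\rangle=\|u\|^2$; boundedness of $\pBargmann^*$ follows by duality; the operator $\pBargmannP=\pBargmann\pBargmann^*$ is manifestly self-adjoint, and $\pBargmannP^2=\pBargmann(\pBargmann^*\pBargmann)\pBargmann^*=\pBargmann\pBargmann^*=\pBargmannP$, so $\pBargmannP$ is an $L^2$-orthogonal projection. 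Its image coincides with $\mathrm{Image}(\pBargmann)$ because $\pBargmannP\circ\pBargmann=\pBargmann(\pBargmann^*\pBargmann)=\pBargmann$ shows one inclusion, while $\mathrm{Image}(\pBargmannP)\subset\mathrm{Image}(\pBargmann)$ holds trivially.

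To establish $\pBargmann^*\pBargmann=\mathrm{Id}$, I would compute the Schwartz kernel
\[
K(w',z';w'',z'')=\int \phi_{w,\xi,\eta}(w',z')\,\overline{\phi_{w,\xi,\eta}(w'',z'')}\,dw\,d\xi\,d\eta.
\]
The $\xi$-integral produces $(2\pi)^2\delta(w'-w'')$, which forces the two Gaussian factors to coalesce into $e^{-\langle\eta\rangle\|w'-w\|^2}$; the subsequent $w$-integral over $\real^2$ yields $\pi/\langle\eta\rangle$, cancelling precisely the $\langle\eta\rangle$ produced by $|\phi_{w,\xi,\eta}|^2$; finally the $\eta$-integral of $e^{i\eta(z'-z'')}$ reproduces $2\pi\delta(z'-z'')$. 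Tracking the overall constant, one checks that $(2^{-3/2}\pi^{-2})^2\cdot(2\pi)^2\cdot\pi\cdot 2\pi=1$, so $K(w',z';w'',z'')=\delta(w'-w'')\delta(z'-z'')$ as required. A conceptual alternative is to factor $\pBargmann$ as a partial Fourier transform in $z$ followed by an $\eta$-parametrised family of Bargmann-type wave-packet transforms on $\real^2$ with Gaussian width $\langle\eta\rangle^{-1/2}$, and then invoke the classical resolution of identity for each.

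The explicit kernel of $\pBargmannP$ is obtained in the same style by computing
\[
K_{\pBargmannP}(w,\xi,\eta;w',\xi',\eta')=\int \overline{\phi_{w,\xi,\eta}(w'',z'')}\,\phi_{w',\xi',\eta'}(w'',z'')\,dw''\,dz''.
\]
The $z''$-integral gives $2\pi\delta(\eta'-\eta)$, forcing $\eta'=\eta$. The remaining integral over $w''$ is Gaussian; performing the shift $w''\mapsto w''+(w+w')/2$ splits the quadratic exponent as $-\langle\eta\rangle\|w''\|^2 - \langle\eta\rangle\|w-w'\|^2/4$, while the linear part reduces to $i(\xi'-\xi)w''$ plus the phase $i(\xi w'-\xi'w)/2$ arising from the shift of the cross-term. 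A two-dimensional Gaussian integration converts the linear term into $(\pi/\langle\eta\rangle)\exp(-\|\xi-\xi'\|^2/(4\langle\eta\rangle))$, and multiplying through with the normalisation constants produces the stated formula.

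No step is conceptually deep; the only genuine obstacle is bookkeeping of normalisation constants and signs across several Gaussian integrations. The prefactor $2^{-3/2}\pi^{-2}\langle\eta\rangle^{1/2}$ in the definition of $\phi_{w,\xi,\eta}$ has been arranged precisely so that the various $\pi$-powers and factors of $\langle\eta\rangle$ cancel cleanly in both computations above.
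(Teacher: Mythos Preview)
Your proposal is correct and follows the standard route: verify $\pBargmann^*\pBargmann=\mathrm{Id}$ by computing the kernel via successive Gaussian and Fourier integrations, then deduce isometry and the projection property by soft functional-analytic reasoning, and finally compute the kernel of $\pBargmannP$ by another Gaussian integration. This is exactly the approach one finds in the cited reference; the present paper does not give its own proof but simply quotes the result from \cite[Proposition~5.1]{MR2995886}.

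One minor remark on bookkeeping: if you carry out the phase computation carefully you will find $\tfrac{i}{2}(\xi' w-\xi w')$ rather than $\tfrac{i}{2}(\xi w'-\xi' w)$, and the overall constant in front of the kernel of $\pBargmannP$ is not $1$ but a power of $\pi$. Neither discrepancy matters for the paper, which explicitly declares (at the start of Section~\ref{sec:proof1}) that absolute constants such as $2\pi$ are ignored, and only ever uses the kernel formula for estimates on its modulus. But since you emphasised that tracking constants is ``the only genuine obstacle'', it is worth being aware that the stated formula is meant modulo such constants.
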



\subsection{Decomposition of functions in the phase space}
\label{ss:dc}
We introduce a few $C^\infty$ partitions of unity. Let $\chi:\real\to [0,1]$ be a $C^\infty$ function such that $\chi(s)=1$ if $|s|\le 1$ and $\chi(s)=0$ if $|s|\ge 3/2$, which we have already introduced in Subsection \ref{ss:perturbation_family}.
\begin{enumerate}
\item a partition of unity on the projective space:
$\{\chi_{\sigma}:\mathbb{P}^1\to [0,1]\mid \sigma=+,-\}$ such that
\[
\chi_+([(x,y)])=
\begin{cases}1,&\text{if $|x|\ge 2|y|$;}\\
0,&\text{if $|y|\ge 2|x|$,}
\end{cases}\qquad \text{and }\qquad \chi_-([(x,y)])=1-\chi_+([(x,y)]).
\]
\item a periodic partition of unity on the real line $\real$:\; $\{q_\omega:\real\to [0,1]\mid \omega\in \mathbb{Z}\}$ such that 
\[  
\mathrm{supp}\, q_{\omega}\subset [\omega-1,\omega+1], \quad 
q_\omega(s)=q_0(s-\omega),
\]
\item a Littlewood-Paley type partition of unity:\; $\{\chi_m:\real^2\to [0,1]\mid m\in \mathbb{Z}, m\ge 0\}$ defined by 
\[
\chi_m(w)=
\begin{cases}
\chi(\|w\|),&\quad \text{if $m=0$;}\\
\chi(e^{-m}\|w\|)-\chi(e^{-m+1}\|w\|),&\quad \text{if $m>0$.}
\end{cases}
\]
\end{enumerate}
We define also the (anisotropic) partition of unity
$\{\psi_m:\real^2\to [0,1]\mid m\in \mathbb{Z}\}$ by  
\[
\psi_{m}(x,y)=\chi_{\mathrm{sgn}(m)}([(x,y)])\cdot \chi_{|m|}(x,y)
\]  
where we ignore the first factor on the right-hand side when $m=0$. 

We next introduce partitions of unity on the phase space $\real^{2+2+1}$. 
For $a\in A$, $\omega\in \mathbb{Z}$, $n\in N(a,\omega)$ and $m\in \integer$, we define the function $\psi_{a,\omega,n,m}:\real^{2+2+1}\to [0,1]$ by
\begin{equation}\label{eq:sumq}
\psi_{a,\omega,n,m}(w,\xi,\eta)=
q_{\omega}(\eta)\cdot 
\psi_m\left(\pomega^{-1/2}\cdot \textcolor{\revisionColor}{\bDelta_{a,\omega,n}^{-1}}
 (\xi-\eta\cdot  e_{a,\omega,n}(w))\right)
\end{equation}
where $\bDelta_{a,\omega,n}$ is the $2\times 2$ matrix defined in \eqref{eq:bDelta}.
Then we have, for each $a\in A$ and $n\in \mathcal{N}(a,\omega)$, that
\[
\sum_{m}\psi_{a,\omega,n,m}(w,\xi,\eta)=q_\omega(\eta)\quad\text{and hence}\quad 
\sum_\omega\sum_{m}\psi_{a,\omega,n,m}(w,\xi,\eta)\equiv 1.
\]  
\begin{Remark}
Note the factor $\bDelta_{a,\omega,n}^{-1}$ in the definition \eqref{eq:sumq}, which did not appear in \cite{MR2652469,MR2995886, 1311.4932} when we studied contact Anosov flows using a parallel method.
We put this factor because, as we observed in Lemma \ref{lm:e_approx},  the direction of $E_0^*$ viewed in the local chart $\kappa_{a,\omega,n}$, varies with resect to its base point  at a rate proportional to $\Delta(p_{a,\omega,n}, \konst \pomega^{-1/2})$, which is not uniform in $a$, $\omega$, $n$ and can be as large as $\mathcal{O}_*(\log \langle \omega\rangle)$ in absolute value.  
\end{Remark}

\begin{Remark}\label{Rem:tilde_psi}
For the argument in the proofs in the next section, we define the (enveloping) family of functions 
$\tilde{\psi}_{a,\omega,n,m}:\real^{2+2+1}\to [0,1]$ by 
\begin{equation}\label{eq:tilde_sumq}
\tilde{\psi}_{a,\omega,n,m}(w,\xi,\eta)=
\tilde{q}_{\omega+1}(\eta)\cdot 
\tilde{\psi}_m\left(\pomega^{-1/2}\cdot A_m\cdot \textcolor{\revisionColor}{\bDelta_{a,\omega,n}^{-1}}
 (\xi-\eta\cdot  e_{a,\omega,n}(w))\right)
\end{equation}
where
\begin{equation}\label{eq:tildeq}
\tilde{q}_{\omega}={q}_{\omega-1}+{q}_{\omega}+{q}_{\omega+1},\qquad
\tilde{\psi}_m={\psi}_{m-1}+{\psi}_m+{\psi}_{m+1}
\end{equation}
and
\[
A_m(\xi_x,\xi_y)=\begin{cases}(2\xi_x, \xi_y/2),&\text{if $m>0$;}\\
(\xi_x, \xi_y),&\text{if $m=0$;}\\
(\xi_x/2, 2\xi_y),&\text{if $m>0$.}
\end{cases}
\]
From the definition, we have $\tilde{\psi}_{a,\omega,n,m}=1$ on the support of $\psi_{a,\omega,n,m}$. 
\end{Remark}

For each $C^r$ function $u$ on $M$, we define a family of functions $\hat{u}_{a,\omega,n,m}:\real^{2+2+1}\to \complex$ for $a\in A$, $\omega\in \mathbb{Z}$, $n\in N(a,\omega)$ and $m\in \mathbb{Z}$, by    
\[
\hat{u}_{a,\omega,n,m}(w,\xi,\eta)=
{\psi}_{a,\omega,n,m}(w,\xi,\eta)\cdot \pBargmann(\rho_{a,\omega,n}\cdot (u\circ \kappa_{a,\omega,n}^{-1}))(w,\xi,\eta).
\]
We regard this correspondence $u\mapsto (u_{a,\omega,n,m})$ as an operator
\[
\bI:C^\infty(M)\to \prod_{a,\omega,n,m}C^\infty_0(\supp \psi_{a,\omega,n,m}),\quad \bI(u)=(\hat{u}_{a,\omega,n,m})_{a\in A, \omega\in \mathbb{Z}, n\in N(a,\omega), m\in \mathbb{Z}}.
\]

\begin{Remark}\label{Re:ext}
Since $\pBargmann(\rho_{a,\omega,n}\cdot (u\circ \kappa_{a,\omega,n}^{-1}))$ is real-analytic, its support is $\real^2\oplus\real^3$ unless $u_{a,\omega,n}\equiv 0$.
Thus, in order that the definitions above make good sense, we have to extend the mapping $e_{a,\omega,n}$ from $D_{a,\omega,n}$ to $\real^2$. (Recall Remark \ref{rem:eaon}.)  
Note however that the functions $\pBargmann(\rho_{a,\omega,n}\cdot (u\circ \kappa_{a,\omega,n}^{-1}))$  decays extremely  fast on the outside of $\supp \rho_{a,\omega,n}\times \real^3$, so that we can basically neglect its part on the outside of $D_{a,\omega,n}\times \real^3$. 
\end{Remark}

The next lemma tells that the operator $\bI^*:\bigoplus_{a,\omega,n,m}C^\infty_0(\supp \psi_{a,\omega,n,m})\to C^3(M)$,
\[
\bI^*((u_{a,\omega,n,m})_{a\in A, \omega\in \mathbb{Z}, n\in N(a,\omega), m\in \mathbb{Z}})=\sum_{a,\omega,n,m} \left(\tilde{\rho}_{a,\omega,n}\cdot \pBargmann^*u_{a,\omega,n,m}\right)\circ \kappa_{a,\omega,n}, 
\]
gives a construction reverse to the decomposition in $\bI$. 
\begin{Lemma}
\label{lm:ii} $\bI^*\circ \bI=\mathrm{Id}$ on $C^\infty(M)$.
\end{Lemma}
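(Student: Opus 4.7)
The plan is to evaluate $\bI^*\bI(u)(p)$ by interchanging sums with integrals and exploiting three successive partition-of-unity collapses, connected by the flow-box property of the charts.

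Set $P_{a,\omega,n}:=\rho_{a,\omega,n}\circ\kappa_{a,\omega,n}$, $\tilde{P}_{a,\omega,n}:=\tilde{\rho}_{a,\omega,n}\circ\kappa_{a,\omega,n}$, and $v_{a,\omega,n}:=\rho_{a,\omega,n}\cdot(u\circ\kappa_{a,\omega,n}^{-1})$. First I would collapse the sum over $m$: since $\sum_m\psi_{a,\omega,n,m}(w,\xi,\eta)=q_\omega(\eta)$,
\[
\bI^*\bI(u) = \sum_{a,\omega,n}\bigl(\tilde{\rho}_{a,\omega,n}\cdot Q_\omega v_{a,\omega,n}\bigr)\circ\kappa_{a,\omega,n}, \qquad Q_\omega v := \pBargmann^*\bigl(q_\omega(\eta)\cdot\pBargmann v\bigr).
\]
A direct Gaussian computation starting from the explicit kernel of $\pBargmannP$ given in the preceding lemma (integrate over $\xi\in\real^2$ to produce $\delta(w'-w'')$, then over $w\in\real^2$) identifies $Q_\omega$ as a convolution in the $z$-variable alone: $(Q_\omega v)(w,z)=\int K_\omega(z-z')v(w,z')\,dz'$ with $K_\omega(s):=(2\pi)^{-1}\int e^{is\eta}q_\omega(\eta)\,d\eta$. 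In particular, $\sum_\omega K_\omega(s)=\delta(s)$ because $\sum_\omega q_\omega\equiv 1$.

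Next I would use the flow-box property $(\kappa_{a,\omega,n})_*v=\partial_z$, which gives $\kappa_{a,\omega,n}(f^{-s}p)=\kappa_{a,\omega,n}(p)-(0,0,s)$. Changing variable $s=z_p-z'$ converts the $z$-convolution into a flow-convolution:
\[
(Q_\omega v_{a,\omega,n})\circ\kappa_{a,\omega,n}(p) = \int K_\omega(s)\,P_{a,\omega,n}(f^{-s}p)\,u(f^{-s}p)\,ds.
\]
The crucial simplification is that $P_{a,\omega,n}=(\rho_a\cdot\varrho_{a,\omega,n})\circ\kappa_a$, because the diffeomorphisms $g_{a,\omega,n}$ and $b_{a,\omega,n}$ cancel in the composition $\rho_{a,\omega,n}\circ\kappa_{a,\omega,n}$; analogously $\tilde{P}_{a,\omega,n}=(\tilde{\rho}_a\cdot\tilde{\varrho}_{a,\omega,n})\circ\kappa_a$. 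Since $\varrho_{a,\omega,n}$ depends only on $(x,y)$ and $\pi\circ\kappa_a(f^{-s}p)=\pi\circ\kappa_a(p)$,
\[
\tilde{P}_{a,\omega,n}(p)P_{a,\omega,n}(f^{-s}p) = \tilde{\rho}_a(\kappa_a(p))\,\rho_a(\kappa_a(p)-(0,0,s))\cdot\tilde{\varrho}_{a,\omega,n}(w_p)\varrho_{a,\omega,n}(w_p),
\]
with $w_p:=\pi\circ\kappa_a(p)$.

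Finally, using $\tilde{\varrho}_{a,\omega,n}\equiv 1$ on $\supp\varrho_{a,\omega,n}$ and $\sum_{n\in N(a,\omega)}\varrho_{a,\omega,n}\equiv 1$ on $B(0,r_*)\supset\pi(\supp\rho_a)$, the sum over $n$ collapses to $\tilde{\rho}_a(\kappa_a(p))\,\rho_a(\kappa_a(p)-(0,0,s))$, which is now \emph{independent of $\omega$}. Summing over $\omega$ pulls out $\sum_\omega K_\omega(s)=\delta(s)$, so the $s$-integral evaluates at $s=0$; using $\tilde{\rho}_a\equiv 1$ on $\supp\rho_a$ leaves $\rho_a(\kappa_a(p))u(p)$, and summing over $a$ with the partition $\sum_a\rho_a\circ\kappa_a\equiv 1$ yields $\bI^*\bI(u)(p)=u(p)$. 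The main obstacle will be the identification of $Q_\omega$ as a $z$-only convolution (handled by the explicit Gaussian integration) and justifying the interchange of $\sum_\omega$ with the $s$-integral when invoking the distributional identity $\sum_\omega K_\omega=\delta$; this is legitimate because the test function $s\mapsto\tilde{\rho}_a(\kappa_a(p))\rho_a(\kappa_a(p)-(0,0,s))u(f^{-s}p)$ is smooth and compactly supported in $s$, so Fourier inversion applies after interchange.
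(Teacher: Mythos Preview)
Your proof is correct and follows essentially the same approach as the paper. The paper's proof is only a sketch that highlights the key observation---namely that $\pBargmann^*\circ\mathcal{M}(q_\omega)\circ\pBargmann$ is a convolution in the $z$-variable alone and therefore commutes with multiplication by $\varrho_{a,\omega,n}$ (a function of $(x,y)$)---while you carry out the resulting computation explicitly, collapsing the sums over $m$, $n$, $\omega$, $a$ in turn exactly as the structure dictates.
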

\begin{proof} The claim is not trivial but can be checked by simple computations. Just note that, since the function $\varrho_{a,\omega,n}$ in the construction of  $\rho_{a,\omega,n}$ in Subsection \ref{ss:lc} does not depend on the variable $z$ and since $\pBargmann \circ  \mathcal{M}(q_{\omega})\circ \pBargmann^*$ is a convolution operator that involves only the $z$-variable, we have the commutative relation 
\[
(\pBargmann \circ  \mathcal{M}(q_{\omega})\circ \pBargmann^*)\circ \mathcal{M}(\varrho_{a,\omega,n})=
\mathcal{M}(\varrho_{a,\omega,n})\circ( \pBargmann \circ  \mathcal{M}(q_{\omega})\circ \pBargmann^*)
\]
where $\mathcal{M}(\varphi)$ denotes the multiplication operator by $\varphi$. 
We refer \cite[Lemma 6.5]{1311.4932} for the detail. 
\end{proof}

We can now define the Hilbert space $\mathcal{H}$ of distributions. We henceforth fix $\alpha_0\in (0,1/6)$. To simplify the notation, we set
\[
\cJ=\{(a,\omega, n,m)\mid a\in A, \omega\in \mathbb{Z}, n\in N(a,\omega), m\in \mathbb{Z}\}
\] 
and refer the components of $\bj=(a,\omega, n,m)\in \cJ$ as $a(\mathbf{j})=a$, $\omega(\bj)=\omega$ and so on. (Of course, $\cJ$ here is different from  that in Section \ref{sec:pr_od}.) Also, for $\bj=(a,\omega, n,m)\in \cJ$, we set 
\begin{equation}\label{eq:convention}
\kappa_{\bj}:=\kappa_{a,\omega,n},\quad 
\rho_{\bj}:=\rho_{a,\omega,n}, \quad\psi_{\bj}:=\psi_{a,\omega,n,m}, \quad \Delta_{\bj}:= \Delta_{a,\omega,n}, \quad \bDelta_{\bj}:= \bDelta_{a,\omega,n}
\end{equation}
and so on. It will be useful to remember that the components $a$ and $n$ are related to the position, $\omega$ to the frequency in the flow direction and $m$ to the frequency in the directions transversal to the subspace $E_0^*$. 

\begin{Definition}
We define $\bbH$ as the Hilbert space obtained as the completion of the direct sum $\bigoplus_{\bj\in \cJ} L^2(\supp \psi_{\bj})$ with respect to the norm
\begin{equation}\label{eq:def_normH}
\|(u_{\bj})_{\bj\in \cJ}\|_{\bbH}=\left(\sum_{\bj\in \cJ} e^{\alpha_0 \cdot m(\bj)}\|u_{\bj}\|_{L^2}^2\right).
\end{equation} 
We define $\mathcal{H}$ as the Hilbert space of distributions on $M$ that is obtained as the completion of 
$C^\infty(M)$ with respect to the norm $\|u\|_{\mathcal{H}}=\|\bI(u)\|_{\bbH}$. Then we have\footnote{For this relation, we refer \cite[Ch.1]{NicolaBook} and  \cite[Appendix A]{Taylor}. }
\begin{equation}\label{eq:H_inclusion}
C^{\alpha_0}(M)\subset H^{\alpha_0}(M)\subset \mathcal{H} \subset H^{-\alpha_0}(M)\subset(C^{\alpha_0}(M))'
\end{equation}
where $H^{r}(M)$ denotes the Sobolev space of order $r$.   
By definition, the operator $\bI$ extends to an isometric injection $\bI:\mathcal{H}\to \bbH$. 
\end{Definition}

We define the operator $\bbL^t$ formally
 by $\bbL^t=\bI\circ \cL^t\circ \bI^*$, so that the following diagram commutes:
\[
\begin{CD}
\bbH@>{\bbL^t}>> \bbH\\
@A{\bI}AA @A{\bI}AA\\
\mathcal{H}@>{\cL^t}>> \mathcal{H}
\end{CD}
\]
\begin{Remark}
At this moment, we only know that the operator $\bbL^t$ is defined  
as an operator from $\bigoplus_{\bj}C^\infty_0(\supp \psi_{\bj})$ to 
$\prod_{\bj}C^\infty_0(\supp \psi_{\bj})$. We will see that it extends naturally to a bounded operator on $\bbH$ and consequently that $\mathcal{L}^t$ extends to a bounded operator on $\mathcal{H}$ when $t\ge t_\sharp$.
\end{Remark}

\section{Proof of Theorem \ref{th:exp}}
\label{sec:proof1}
We henceforth assume that $f^t\in \fF^3_A$ satisfies  the non-integrability condition $(NI)_\rho$ for some $\rho>0$ and suppose $t_{\sharp}\le t\le 2t_{\sharp}$. 
Most part of the argument below is devoted to show that $f^t$ itself is exponentially mixing. 
In the last subsection, we complete the proof of Theorem \ref{th:exp} by examining dependence of the argument on the flow $f^t$. For this last part of the argument, we emphasize at this moment that, for the proof of exponential mixing for $f^t$, we actually need the estimate \eqref{eq:tv} in the non-integrability condition $(NI)_\rho$ only for $b$ in some bounded range. (See Remark \ref{rem:perturb}.) 
This is crucial when we prove stability of exponential mixing. 

In the following, we suppose that $\omega_\sharp>\omega_0$ is the constant in Lemma \ref{lm:abc}, but will let it be larger if necessary. 
We will also introduce a large constant $m_{\sharp}>0$ depending on $t_\sharp$ and $\omega_\sharp$. Below we will  ignore some absolute constants, such as $2\pi$,  that appear in Fourier transform and partial Bargmann transform, since they are not essential at all in our argument.

\subsection{Estimates on the components of $\bbL^t$}
Below we will use the notations prepared in the last section, especially \eqref{eq:convention}. 
We write $\bbL^t_{\bj\to\bj'}:C^\infty_0(\supp \psi_{\bj})\to C^\infty_0(\supp \psi_{\bj'})$ for the component of $\bbL^t$ that sends the $\bj$-component to the $\bj'$-component. It is written as \begin{align}
&\bbL_{\bj\to \bj'}^t u=\mathcal{M}({\psi}_{\bj'})\circ\pBargmann\circ \cL^t_{\bj\to\bj'}\circ  \pBargmann^*  u \label{def:bbL}
\intertext{
where $\cL^t_{\bj\to \bj'}$ is the transfer operator on the local charts defined by}
&\cL^t_{\bj\to \bj'} v=({\rho}^t_{\bj\to \bj'} \cdot v) \circ (f^t_{\bj\to \bj'})^{-1}
\end{align}
with setting
\[
f^t_{\bj\to \bj'}=\kappa_{\bj'}\circ f^t\circ \kappa_{\bj}^{-1}
\quad\text{and}\quad
\rho^t_{\bj\to \bj'}=(\rho_{\bj'}\circ f^t_{\bj\to \bj'})\cdot \tilde{\rho}_{\bj}.
\]
As we noted in Subsection~\ref{ss:ftlocal}, the diffeomorphism $f^t_{\bj\to \bj'}$ is written 
\[
f^t_{\bj\to \bj'}(x,y,z)=(\hat{f}^t_{\bj\to \bj'}(x,y), z+\check{f}^t_{\bj\to \bj'}(x,y)).
\]
This extends naturally to
\[
f^t_{\bj\to \bj'}:V^t_{\bj\to \bj'}\times \real\to \widetilde{V}^t_{\bj\to \bj'}\times \real
\]
where, with setting $U_{\bj}:=U_{a(\bj), \omega(\bj), n(\bj)}$, we define 
\[
V^t_{\bj\to \bj'}=\pi\circ \kappa_{\bj}(U_{\bj}\cap f^{-t}(U_{\bj'}))\quad\text{and}\quad
\widetilde{V}^t_{\bj\to \bj'}=\hat{f}^t_{\bj\to \bj'}({V}^t_{\bj\to \bj'}).
\]
Since the differential of $f^{t}_{\bj\to \bj'}$ at a point $(w,z)\in V^t_{\bj\to \bj'}$ does not depend on the variable $z$, we will write $(Df^t_{\bj\to \bj'})_w$ for it. 
The natural action of $f^t_{\bj\to \bj'}$ on the cotangent bundle is written 
\[
(Df^t_{\bj\to \bj'})^*:\widetilde{V}^t_{\bj\to \bj'}\times \real^3\to V^t_{\bj\to \bj'}\times \real^3,\quad 
(Df^t_{\bj\to \bj'})^*(w,\xi,\eta)=(w', (Df^t_{\bj\to \bj'})^*_{w'}(\xi,\eta))
\]
where $w'=(\hat{f}^t_{\bj\to \bj'})^{-1}(w)$. 

From the definitions of the partial Bargmann transform $\pBargmannP$ and its adjoint $\pBargmannP^*$ given in Subsection~\ref{ss:pBt}, 
the operator $\bbL_{\bj\to \bj'}^t$ is written as an integral operator with smooth rapidly decaying kernel
\begin{equation}\label{eq:kernel}
K(w,\xi,\eta;w',\xi',\eta')={\psi}_{\bj'}(w',\xi',\eta')\cdot \int  
({\rho}_{\bj\to \bj'}\cdot \phi_{w,\xi,\eta})\circ (f^t_{\bj\to \bj'})^{-1}(z)\cdot  \overline{\phi_{w',\xi',\eta'}(z)}\, 
dz,
\end{equation}
so that it is a compact  operator from $L^2(\supp \psi_{\bj})$ to $L^2(\supp \psi_{\bj'})$. 

We take a large constant $m_\sharp>0$, which will be specified in the course of the argument, and let $K:\bbH\to \bbH$ be the part of the operator $\bbL^t$ that consists of the components $\bbL_{\bj\to \bj'}^t$ with 
\begin{equation}
\label{ass:momega}
\max\{|\omega(\bj)|,|\omega(\bj')|\}\le \omega_\sharp\quad\text{and}\quad \max\{|m(\bj)|,|m(\bj')|\}\le m_\sharp.
\end{equation}
This operator $K$ consists of finitely many non-vanishing components and therefore compact regardless of the choice of $\omega_\sharp$ and $m_\sharp$. 
Let $\Pi_{\omega}:\bbH\to \bbH$ be the projection operator that extracts the components with $\omega(\bj)=\omega$.
We are going to prove the following proposition.
\begin{Proposition}\label{pp:main_est}
There exists a constant $c>0$ (independent of the choice of $t_\sharp$) such that 
\[
\|\Pi_{\omega'}\circ (\bbL^t-K) \circ \Pi_{\omega}:\bbH\to \bbH\|\le \exp(-c t)\cdot  \langle \omega'-\omega\rangle^{-1}\quad
\text{for $\omega,\omega'\in \integer$ and $t_\sharp\le t\le 2t_\sharp$.}
\]
\end{Proposition}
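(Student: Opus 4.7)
\emph{Plan.} I prove Proposition~\ref{pp:main_est} by a weighted Schur-type estimate on the block decomposition of $\bbL^t$ in the indices $\bj\in\cJ$. The factor $\langle\omega'-\omega\rangle^{-1}$ is supplied by localization in the flow-direction frequency: since $(\kappa_\bj)_* v=\partial_z$ for every $\bj$, the change-of-chart diffeomorphism $f^t_{\bj\to\bj'}$ has the flow-box form $(w,z)\mapsto(\hat f(w),\,z+\check f(w))$, so $\cL^t_{\bj\to\bj'}$ commutes with $\partial_z$ and acts as a multiplier in the $\eta$-variable. Combined with the $\delta(\eta-\eta')$ factor in the projection kernel $K_{\pBargmannP}$ of Lemma~\ref{lm:pBargmannL2}, the kernel \eqref{eq:kernel} of $\bbL^t_{\bj\to\bj'}$ is supported on $\eta=\eta'$, and the cutoff $q_{\omega(\bj)}(\eta)\,q_{\omega(\bj')}(\eta)$ then forces the block to vanish unless $|\omega(\bj)-\omega(\bj')|\le 2$. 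This is already much stronger than $\langle\omega'-\omega\rangle^{-1}$.

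\emph{Component estimates via wave-packet covariance.} For $|\omega-\omega'|\le 2$ and $|\omega|\ge\omega_\sharp$ large, Lemma~\ref{lm:abc} applies. Using $f^t_{\bj\to\bj'}=A\circ G$, the non-linear factor $G$ is close to identity in rescaled coordinates (Remark~\ref{Re:aboutG}), so up to controlled error the operator $\pBargmann\circ\cL^t_{\bj\to\bj'}\circ\pBargmann^*$ is the metaplectic-type operator attached to the affine map $A$. A direct computation with the explicit Gaussian kernel of Lemma~\ref{lm:pBargmannL2} shows that its Schwartz kernel decays at a Gaussian rate off the graph of the classical point transformation $(w,\xi,\eta)\mapsto(Aw,(DA)^{-T}\xi,\eta)$. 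Intersected with the phase-space cutoffs $\psi_\bj,\psi_{\bj'}$, this selects a number $N(t_\sharp)$ of $\bj'$ per $\bj$ bounded by a power of $e^{C_*t_\sharp}$, each satisfying $\|\bbL^t_{\bj\to\bj'}\|_{L^2\to L^2}\le C_*$.

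\emph{$m$-shift and Schur conclusion.} The cotangent action $(DA)^{-T}$ contracts the unstable component of $\xi$ by $\lambda^{-1}\le e^{-\chi_* t/2}$ and expands the stable one by $\tilde\lambda^{-1}\ge e^{\chi_* t/2}$. Read through the rescaled variable $\pomega^{-1/2}\bDelta^{-1}(\xi-\eta\, e_{a,\omega,n}(w))$ that defines the partition $\psi_{\bj}$, this rotates the projective direction governing $m$ toward the stable axis at rate at least $c_0 t$, with $c_0>0$ depending only on $\chi_*$ and not on $t_\sharp$, as long as we stay outside the compact index block removed by $K$ (i.e.\ $|\omega|>\omega_\sharp$ or $|m|>m_\sharp$). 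Hence $m(\bj')-m(\bj)\le -c_0 t$ on non-negligible components, and passing from the $\bbH$-norm to the unweighted $L^2$-norm picks up a factor $e^{\alpha_0(m(\bj')-m(\bj))/2}\le e^{-\alpha_0 c_0 t/2}$. Combined with the bounded multiplicity,
\[
\sup_{\bj}\sum_{\bj'}\|\bbL^t_{\bj\to\bj'}\|_{L^2}\,e^{\alpha_0(m(\bj')-m(\bj))/2}\le N(t_\sharp)\,C_*\,e^{-\alpha_0 c_0 t/2},
\]
and symmetrically for column sums via the adjoint dynamics. Schur's test then yields the bound on $\bbH$; choosing $t_\sharp$ large enough (after $\omega_\sharp,m_\sharp$) to absorb the prefactor $N(t_\sharp)$ produces the claim with any $c<\alpha_0 c_0/2$ independent of $t_\sharp$.

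\emph{Main obstacle.} The delicate step is the $m$-shift: one must track precisely how the cotangent action rotates the projective index through the rescaled, $\bDelta$-dependent variable, particularly in the transitional regime where an $m>0$ packet gets pushed across the central direction into $m'<0$, and verify that the auxiliary rescaling by $\bDelta$ (growing like $\log\pomega$, see \eqref{eq:bDelta}) does not degrade the uniform shift rate $c_0$. The subsidiary difficulty is the correct ordering of $t_\sharp,\omega_\sharp,m_\sharp$ so that the $t_\sharp$-dependent multiplicity and the non-linear error from $G$ are both dominated by $e^{\alpha_0 c_0 t_\sharp/2}$, yielding a constant $c$ independent of $t_\sharp$.
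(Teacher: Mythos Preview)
Your argument has a genuine gap: it does not handle the \emph{central} block where both $m(\bj)$ and $m(\bj')$ are small. Your ``$m$-shift'' claim asserts that the cotangent action of $f^t$ rotates the projective index toward the stable axis at a uniform rate $c_0 t$, forcing $m(\bj')-m(\bj)\le -c_0 t$ on non-negligible components. But this is false when $m(\bj)\approx 0$: a wave packet microlocalized near $E_0^*$ (which is exactly what $m=0$ means in the definition of $\psi_{\bj}$) is carried by $(Df^t)^*$ to another packet microlocalized near $E_0^*$, because $E_0^*$ is invariant. There is no hyperbolic mechanism moving it off the central direction, so $m(\bj')\approx 0$ as well and the weight factor $e^{\alpha_0(m'-m)}$ gives no decay whatsoever. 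This is precisely the case---$\max\{|m(\bj)|,|m(\bj')|\}\le \delta_* t_\sharp$ with $|\omega|$ large---treated in the paper by Proposition~\ref{prop:central_components}, whose proof is the only place in the entire argument where the non-integrability condition $(NI)_\rho$ is used. Your proposal never invokes $(NI)_\rho$, so it cannot succeed: without it, the proposition is simply false (contact Anosov flows furnish counterexamples in the sense that the central block has norm bounded below uniformly in $t$ for the scheme you outline, and a different mechanism is needed there).

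Two smaller points. First, your claim that $\bbL^t_{\bj\to\bj'}$ vanishes for $|\omega(\bj)-\omega(\bj')|>2$ is not correct: the multiplication by $\rho^t_{\bj\to\bj'}$, which depends on $z$, convolves in the $\eta$-variable and only gives rapid decay $\langle\omega-\omega'\rangle^{-\nu}$ (this is Lemma~\ref{lm:basic_components}), not exact support. Second, your multiplicity bound $N(t_\sharp)\lesssim e^{C_* t_\sharp}$ cannot be absorbed into $e^{-\alpha_0 c_0 t/2}$ unless $\alpha_0 c_0/2>C_*$, which there is no reason to expect; the paper handles this by passing to the localized operators $\hat{\bbL}^t_{\bj\to\bj'}$ (bounded intersection multiplicity, Remark~\ref{Rem:intersection_multiplicity}) and controlling the remainder $\bbL^t_{\bj\to\bj'}-\hat{\bbL}^t_{\bj\to\bj'}$ by $\konst^{-\nu}$, which \emph{does} dominate $e^{C_* t_\sharp}$ since $\konst=e^{t_\sharp^2}$.
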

This proposition implies that $f^t$ is exponentially mixing. Indeed, from the proposition, we have $\|\bbL^t-K:\bbH\to \bbH\|<e^{-(c/2)t}$ for $t_\sharp\le t\le 2t_\sharp$, by letting $t_\sharp$ be larger if necessary. Since $K$ is compact as we noted above, the essential spectral radius of $\bbL^t$ is bounded by $e^{-(c/2)t}$ and so is that of $\cL^t:\mathcal{H}\to \mathcal{H}$. Since $f^t$ is mixing\footnote{It is easy to see that $(NI)_\rho$ implies joint non-integrability of the stable and unstable foliations and hence $f^t$ is stably mixing. See the argument in the proof of Proposition \ref{pp:main_est}.},  there is a unique eigenvalue $1$ on the region $|z|\ge 1$, which is simple and the corresponding spectral projector is the averaging with respect to the volume $\vol$; The other part of the spectrum is contained in the region $|z|<e^{-c't}<1$ for some $c'>0$. Therefore, letting $\mathcal{H}_0=\{ u\in \mathcal{H}\mid \int u\, d\vol=0\}$, we have 
\[
\|\cL^t:\mathcal{H}_0\to \mathcal{H}_0\|\le Ce^{-c't}\quad\text{for $t\ge 0$.}
\]   
This and \eqref{eq:H_inclusion} give the required decay estimate:
\[
\left|\int \varphi \cdot(\psi\circ f^t) \, d\vol\right|
=\left|\int  \psi \cdot \mathcal{L}^t \varphi\, d\vol\right|
\le \|\psi\|_{\mathcal{H}'}\cdot \|\mathcal{L}^t\varphi\|_{\mathcal{H}}
\le Ce^{-c't}\cdot \|\psi\|_{C^{\alpha_0}(M)}\cdot \|\varphi\|_{C^{\alpha_0}(M)} 
\]
for $\varphi, \psi\in C^{\alpha_0}(M)$ with $\int \varphi \,d\vol =0$.

\subsection{Estimates on components $\bbL^t_{\bj\to \bj'}$} 
\label{ss:est_comp}
In this subsection, we present a few statements on the components $\bbL^t_{\bj\to \bj'}$ \emph{with respect to the $L^2$ norm} and deduce Proposition \ref{pp:main_est} from them. The proofs of the estimates (precisely, Lemma \ref{lm:basic_components}, Lemma \ref{lm:hook} and Proposition \ref{prop:central_components}) are deferred to the subsections that follow. Since our task is the proof of Proposition \ref{pp:main_est}, we will disregard the components $\bbL^t_{\bj\to \bj'}$ (or suppose $\bbL^t_{\bj\to \bj'}=0$) for which  \eqref{ass:momega} holds.  

We begin with a few simple estimates and then proceed to more involved ones. The most important statement is Proposition \ref{prop:central_components}, in which we give a consequence of the non-integrability condition $(NI)_\rho$. 
First of all, we note that 
\begin{equation}
\|\bbL^t_{\bj\to \bj'}\|_{L^2}\le 1\quad \text{ for any $t\ge 0$ and $\bj,\bj'\in \cJ$.} 
\end{equation}
This is obvious because neither of $\pBargmann$, $\pBargmann^*$ and $\cL^t$ increases the $L^2$ norm. 

Observe that the operator $\bbL^t_{\bj\to \bj'}$ is localized in the space from the expression \eqref{eq:kernel} of its kernel. 
In order to give  quantitative estimates related to this observation, we introduce a few definitions. Let $\pi:\real^{2+1}\to \real^2$ and $\tilde{\pi}:\real^{2+2+1}\to \real^2$ be the projections to the first two components, that is, we set  $\pi(w,z)=w$, $\tilde{\pi}(w,\xi,\eta)= w$. 
In order to cut off the tail part of the operator $\bbL^t_{\bj\to \bj'}$, we introduce the $C^\infty$ function 
\[
\mathcal{X}_{\bj\to \bj'}:\real^2\to [0,1]\quad \text{ (\;resp. }\mathcal{X}'_{\bj\to \bj'}:\real^2\to [0,1]\;)
\]
so that it takes the constant value $1$ on the $\konst^{1/2}\langle \omega(\bj)\rangle^{-1/2}$-neighborhood of $\pi(\supp \rho_{\bj\to \bj'})$ (resp. $\pi(f^t_{\bj\to \bj'}(\supp \rho_{\bj\to \bj'}))$),  
while it is supported in the  $2\konst^{1/2}\langle \omega(\bj)\rangle^{-1/2}$-neighborhood of the same subset. 
Further we may and do suppose that 
\begin{equation}\label{eq:Dalpha}
\|D^\alpha \mathcal{X}_{\bj\to \bj'}\|
\le C_*(\alpha) \cdot (\konst^{-1/2}\langle \omega(\bj)\rangle^{1/2})^{|\alpha|},\quad
\|D^\alpha \mathcal{X}'_{\bj\to \bj'}\|
\le C_*(\alpha)\cdot  (\konst^{-1/2}\langle \omega(\bj)\rangle^{1/2})^{|\alpha|}
\end{equation}
for any multi-index $\alpha$.
For brevity, we will write $\mathcal{X}_{\bj\to \bj'}$ and $\mathcal{X}'_{\bj\to \bj'}$ also for the functions $\mathcal{X}_{\bj\to \bj'}\circ \tilde{\pi}$ and $\mathcal{X}'_{\bj\to \bj'}\circ \tilde{\pi}$ on $\real^{2+2+1}$, abusing the notation slightly. With this convention, we  define 
\begin{equation}\label{eq:defhatbbL}
\hat{\bbL}^t_{\bj\to \bj'}: L^2(\supp \psi_{\bj})\to L^2(\supp \psi_{\bj'}), \quad
\hat{\bbL}^t_{\bj\to \bj'}u=\mathcal{X}'_{\bj\to \bj'}\cdot \bbL^t_{\bj\to \bj'}(\mathcal{X}_{\bj\to \bj'}\cdot u).
\end{equation}
\begin{Remark}\label{Rem:intersection_multiplicity}
From Remark \ref{Rem:intersection_mulitpliicity_rho}, we may assume that, for any $\omega,\omega'\in \integer$ and $m,m'\in \integer$ and for each $\bj\in \cJ$ with $\omega(\bj)=\omega$ (resp. $\bj'\in \cJ$ with $\omega(\bj')=\omega'$), the intersection multiplicity of 
\[
\{ \supp \mathcal{X}_{\bj\to \bj'}\mid \bj'\in \cJ, \omega(\bj')=\omega', m(\bj')=m'\}\;\;
\text{( resp.}
\{ \supp \mathcal{X}'_{\bj\to \bj'}\mid \bj\in \cJ, \omega(\bj)=\omega, m(\bj)=m\}
\text{)}
\]
is bounded by an absolute constant. 
\end{Remark}

From the observation on $\bbL^t_{\bj\to \bj'}$ mentioned above, it
 is not difficult to get the estimate 
\begin{equation}\label{eq:bbLjj_approx}
\|\bbL^t_{\bj\to \bj'}-\hat{\bbL}^t_{\bj\to \bj'} \|_{L^2}\le C_*(\nu) \cdot \konst^{-\nu} \quad \text{ for $t_\sharp\le t\le 2 t_\sharp$ and $\bj,\bj'\in \cJ$} 
\end{equation}
for arbitrarily large $\nu>0$.
Actually the next lemma gives a little more precise estimates, making use of the fact that the flow $f^t$ viewed in our local charts is just a translation in each of the flow lines. (The proof is not difficult but deferred to Subsection \ref{ss:pf_lemmas}.)
\begin{Lemma}\label{lm:basic_components}
For any $\nu>0$, there exists a constant $C_*({\nu})>0$ such that 
\begin{align*}
&\|\bbL^t_{\bj\to \bj'}\|_{L^2}\le C_*({\nu})\cdot  \langle \omega(\bj)- \omega(\bj')\rangle^{-\nu}
\intertext{
and further}
&\|\bbL^t_{\bj\to \bj'}-\hat{\bbL}^t_{\bj\to \bj'}\|_{L^2}\le C_{*}(\nu)\cdot \konst^{-\nu}\cdot  \langle \omega(\bj)- \omega(\bj')\rangle^{-\nu}
\end{align*}
for  $t_\sharp \le t\le 2t_\sharp$ and $\bj,\bj'\in \cJ$.
\end{Lemma}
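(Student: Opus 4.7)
The strategy is to work directly with the integral kernel $K(w,\xi,\eta;w',\xi',\eta')$ in \eqref{eq:kernel}, extract a rapidly decaying factor in $\eta-\eta'$ from the flow-box structure of $f^t_{\bj\to\bj'}$, and then bound what remains by Schur's test.

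First I would exploit the fact that $f^t_{\bj\to \bj'}(x,y,z)=(\hat f^t_{\bj\to \bj'}(x,y),\,z+\check f^t_{\bj\to \bj'}(x,y))$, so that after the change of variables $\zeta = f^t_{\bj\to\bj'}(\zeta_0)$ (whose Jacobian in the $z$-direction is $1$), the $z$-dependence of $\phi_{w,\xi,\eta}\circ (f^t_{\bj\to\bj'})^{-1}$ is only through the plane wave $e^{i\eta(z_1-\check f^t_{\bj\to\bj'}(\hat f^{-1}(w_1)))}$, and the $z$-dependence of $\overline{\phi_{w',\xi',\eta'}}$ is only through $e^{-i\eta' z_1}$. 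Performing the $z_1$-integration produces a factor
\[
e^{i(\eta-\eta')\check f^t_{\bj\to\bj'}(\hat f^{-1}(w_1))}\cdot \mathcal{F}_z\big(\rho^t_{\bj\to\bj'}\big)\!\left(\hat f^{-1}(w_1),\,\eta-\eta'\right),
\]
where $\mathcal{F}_z$ denotes the Fourier transform in $z$. Since $\rho^t_{\bj\to\bj'}$ is $C^\infty$ in $z$ with uniformly bounded derivatives (recall that $\rho_a$ was mollified along the flow), $\mathcal{F}_z(\rho^t_{\bj\to\bj'})(\cdot,\tau)$ decays faster than any polynomial in $|\tau|$. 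The cutoffs $\psi_{\bj}$, $\psi_{\bj'}$ restrict $\eta$ and $\eta'$ to unit neighborhoods of $\omega(\bj)$ and $\omega(\bj')$ respectively, so $|\eta-\eta'|\ge \langle\omega(\bj)-\omega(\bj')\rangle-C$, and we pick up a gain of $C_*(\nu)\,\langle\omega(\bj)-\omega(\bj')\rangle^{-\nu}$ for arbitrary $\nu>0$.

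Next I would bound the remaining $(w_1,\xi,\xi')$-integrals by Schur's test. After the $z_1$-integration, the kernel is bounded in absolute value by
\[
C_*(\nu)\,\langle\omega(\bj)-\omega(\bj')\rangle^{-\nu}\cdot \langle\eta\rangle^{1/2}\langle\eta'\rangle^{1/2}\cdot \psi_{\bj'}\,\tilde\psi_{\bj}\cdot \int |\rho^t_{\bj\to\bj'}|\cdot G(w_1)\,dw_1,
\]
where $G(w_1)=\exp\!\left(-\tfrac{\langle\eta\rangle}{2}\|\hat f^{-1}(w_1)-w\|^2-\tfrac{\langle\eta'\rangle}{2}\|w_1-w'\|^2\right)$. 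Since $\hat f^t_{\bj\to\bj'}$ has bounded distortion (uniformly in $t_\sharp\le t\le 2t_\sharp$ once $\omega_\sharp$ is large, by Lemma~\ref{lm:abc}), $G$ is a two-centered Gaussian of widths $\sim\langle\omega(\bj)\rangle^{-1/2}$, $\langle\omega(\bj')\rangle^{-1/2}$. Integrating $G$ in $w_1$, $\xi$ or $\xi'$ one at a time, using that the $\psi_\bj$, $\psi_{\bj'}$ cutoffs only restrict $\xi-\eta e_{\bj}(w)$ to an anisotropic box of volume $\sim\langle\omega(\bj)\rangle\Delta_\bj$, and exploiting Remark \ref{Rem:intersection_multiplicity} where useful, I get both row and column $L^1$-norms of the kernel bounded by an $\omega$-independent constant. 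Combined with the first factor this yields the first inequality.

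For the second inequality, I treat the operator $\bbL^t_{\bj\to\bj'}-\hat{\bbL}^t_{\bj\to\bj'}$. By the definition of $\hat{\bbL}^t_{\bj\to\bj'}$, either the input is restricted to $|w-\pi(\supp \rho_{\bj\to\bj'})|\gtrsim \konst^{1/2}\langle\omega(\bj)\rangle^{-1/2}$ (through $1-\mathcal{X}_{\bj\to\bj'}$) or the output is restricted to $|w'-\pi(f^t_{\bj\to\bj'}(\supp\rho_{\bj\to\bj'}))|\gtrsim \konst^{1/2}\langle\omega(\bj)\rangle^{-1/2}$. In the first case, on the support of $\rho^t_{\bj\to\bj'}(\hat f^{-1}(w_1),\cdot)$ we have $\|\hat f^{-1}(w_1)-w\|\gtrsim \konst^{1/2}\langle\omega(\bj)\rangle^{-1/2}$, so
\[
\tfrac{\langle\eta\rangle}{2}\|\hat f^{-1}(w_1)-w\|^2\gtrsim \konst,
\]
which gives an extra Gaussian factor $e^{-c\konst}\le C_*(\nu)\konst^{-\nu}$, since $\konst=\exp(t_\sharp^2)$ is huge. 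The second case is symmetric, using the $\langle\eta'\rangle\|w_1-w'\|^2$ term. Running the same Schur argument then yields the stated bound. The main technical annoyance is keeping track of the anisotropic supports of $\psi_{\bj}$ (with the $\bDelta_{\bj}$ scaling) in the Schur estimate, but since $\Delta_{\bj}$ only enters logarithmically in $\omega$ by \eqref{eq:torsion_bound}, the extra factors are absorbed by the polynomial gain $\langle\omega(\bj)-\omega(\bj')\rangle^{-\nu}$ after adjusting $\nu$.
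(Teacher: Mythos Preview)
Your idea of isolating the $z$-integration and using the $C^\infty$-in-$z$ smoothness of $\rho^t_{\bj\to\bj'}$ to extract the $\langle\omega(\bj)-\omega(\bj')\rangle^{-\nu}$ factor is exactly right, and it is also the mechanism behind the paper's proof. The gap is in the Schur step that follows. After you take absolute values, the only dependence of the kernel on $\xi$ and $\xi'$ that survives is through the cutoffs $\psi_{\bj}$ and $\psi_{\bj'}$. But for a general component with $m=m(\bj)$, the $\xi$-support of $\psi_{\bj}(w,\cdot,\eta)$ has $2$-dimensional measure comparable to $e^{2|m|}\Delta_{\bj}\langle\omega\rangle$, not to $\Delta_{\bj}\langle\omega\rangle$ as you wrote; the annular Littlewood--Paley shell $\chi_{|m|}$ contributes the factor $e^{2|m|}$. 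So your row and column $L^1$ bounds pick up factors $e^{2|m|}$ and $e^{2|m'|}$, which are unbounded in $m,m'$ and are not controlled by anything involving $\omega,\omega'$. Your last remark about absorbing extra factors into $\langle\omega(\bj)-\omega(\bj')\rangle^{-\nu}$ does not help here, since $m$ and $\omega$ are independent indices. The same overcounting would contaminate your argument for the second inequality as well.

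The paper sidesteps the $\xi$-integration entirely by an operator-theoretic reduction: since $\psi_{\bj'}\le \tilde q_{\omega(\bj')}$ and $\tilde q_{\omega(\bj)}\equiv 1$ on $\supp q_{\omega(\bj)}$, one has
\[
\|\bbL^t_{\bj\to\bj'}\|_{L^2}\;\le\;\bigl\|(\pBargmann^*\circ\mathcal{M}(\tilde q_{\omega(\bj')})\circ\pBargmann)\circ\cL^t_{\bj\to\bj'}\circ(\pBargmann^*\circ\mathcal{M}(\tilde q_{\omega(\bj)})\circ\pBargmann)\bigr\|_{L^2(\real^{2+1})}.
\]
The point is that $\pBargmann^*\circ\mathcal{M}(\tilde q_\omega)\circ\pBargmann$ is just the Fourier multiplier $\tilde q_\omega(D_z)$ in the flow variable, acting as the identity in the transversal $w$-variable. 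So the transversal directions are handled by the trivial bound $\|\cL^t_{\bj\to\bj'}\|_{L^2}\le 1$ (volume preservation), and the $\langle\omega(\bj)-\omega(\bj')\rangle^{-\nu}$ decay comes from a one-dimensional estimate in $z$ using Lemma~\ref{lm:rhot}. This completely avoids integrating over the $\xi$-variable and hence the dependence on $m(\bj),m(\bj')$. If you want to keep your kernel approach, you would need to retain the oscillations in $\xi,\xi'$ and integrate them against the Gaussian to reproduce the Bargmann projector kernel before applying Schur---which amounts to redoing the paper's reduction by hand.
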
 

Next we give estimates on $\bbL^t$ obtained as consequences of the hyperbolic properties \eqref{eq:fhyp1} and \eqref{eq:fhyp2} of the flow~$f^t$. We first introduce the following definition. This definition is motivated by  a simple geometric observation on the  position of $(Df^t_{\bj\to \bj'})^*(\supp \tilde{\psi}_{\bj'})$ relative to  $\supp \tilde{\psi}_{\bj}$ in the phase space. (See Remark \ref{rem:hookarrow} below.)
\begin{Definition}\label{def:arrow} 
For two pairs $(m,\omega)$ and $(m',\omega')$ of integers and a positive real number $t>0$, we write 
$(m,\omega)\hookrightarrow^t (m',\omega')$  if either 
\textcolor{\revisionColor}{
\begin{enumerate}
\item $m\ge 0$ and $m'\le  0$, or
\item $m>0$, $m'>0$ and $m'
\le m -[\chi_*t/2]+|\log (\langle \omega'\rangle/\langle\omega\rangle)|+10$, or
\item $m<0$, $m'<0$ and $m'
\le m -[\chi_*t/2]+|\log (\langle \omega'\rangle/\langle\omega\rangle)|+10$.
\end{enumerate}
}
We write$(m,\omega)\not\hookrightarrow^t (m',\omega')$ otherwise. 
We write  $\bj \hookrightarrow^t\bj'$ (resp. $\bj \not\hookrightarrow^t\bj'$) for  $(\bj,\bj')\in \cJ\times \cJ$ if and only if $(m(\bj),\omega(\bj)) \hookrightarrow^t  (m(\bj'), \omega(\bj'))$ (resp. $(m(\bj),\omega(\bj)) \not\hookrightarrow^t  (m(\bj'), \omega(\bj'))$).
\end{Definition}

\begin{Remark}\label{rem:hookarrow}
The definition above is given so that subsets $(Df^t_{\bj\to \bj'})^*(\supp \tilde{\psi}_{\bj'})$ and $\supp \tilde{\psi}_{\bj}$ are separated in the case $\bj \not\hookrightarrow^t\bj'$. The terms  $|\log (\langle \omega'\rangle/\langle\omega\rangle)|$ in the conditions (2) and (3) above are put in order to deal with technical problems in the case where the ratio between $\langle \omega(\bj')\rangle$ and $\langle \omega(\bj)\rangle$ is not close to $1$. (But such technical problems will turn out to be far from essential.) 
At this moment, we ask the readers to observe that disjointness between $(Df^t_{\bj\to \bj'})^*(\supp \tilde{\psi}_{\bj'})$ and $\supp \tilde{\psi}_{\bj}$ would follow from the condition 
$\bj \not\hookrightarrow^t\bj'$ by simple geometric argument,  if we assumed $1/2\le \langle \omega(\bj')\rangle/\langle\omega(\bj)\rangle\le 2$ and  ignored the factor $\bDelta_{\bj}$ and the variation of $e_{\bj}$ in the definition of the function $\tilde{\psi}_{\bj}$.
In the next lemma, we give a related more quantitative estimate.
\end{Remark}

We henceforth consider two small constants 
\[
0<\delta_*<\rho_*
\]
whose choices are independent of $t_\sharp$, $\omega_\sharp$, $m_\sharp$ and made later in Lemma \ref{prop:central_components}. In the lemma below (and henceforth), the constants $t_\sharp$ and $\omega_\sharp$ are suppose to be taken according to the choice of $\delta_*$ and $\rho_*$. 
Let us recall the definition of $\bDelta_{\bj}$ from \eqref{eq:bDelta}.
\begin{Lemma}\label{lm:hook} There exists a constant $C_*>0$ such that, if\/ $\bj,\bj'\in \cJ$ satisfy $\bj \not\hookrightarrow^t \bj'$ for $t_\sharp\le t\le 2t_\sharp$ and if we have in addition that  
\begin{equation}\label{eq:m_not_small}
|m(\bj)|\ge  \delta_* t_\sharp \quad\text{ (resp. $|m(\bj')|\ge  \delta_* t_\sharp$)}
\end{equation}
then we have 
\begin{equation}\label{eq:hook}
\langle \langle \omega(\bj)\rangle^{1/2}\|w'-w\|\rangle^2 \cdot
\left\langle \langle \omega(\bj)\rangle^{-1/2}\|\textcolor{\revisionColor}{\bDelta_{\bj}^{-1}}(\xi'-\xi)\|\right\rangle \ge C_*^{-1} e^{|m(\bj)| }
\end{equation}
for  $(w,\xi,\eta)\in \supp \tilde{\psi}_{\bj}$ and $(w',\xi',\eta')\in (Df^t_{\bj\to \bj'})^*(\supp \tilde{\psi}_{\bj'})$ with $w'\in V^t_{\bj\to \bj'}$ ( resp. 
\begin{equation}\label{eq:hookrev}
\langle \langle \omega(\bj')\rangle^{1/2}\|w'-w\|\rangle^2 \cdot
\left\langle \langle \omega(\bj')\rangle^{-1/2}\|\textcolor{\revisionColor}{\bDelta_{\bj'}^{-1}}(\xi'-\xi)\|\right\rangle \ge C_*^{-1} e^{|m(\bj')| }
\end{equation}
for $(w,\xi,\eta)\in ((Df^t_{\bj\to \bj'})^*)^{-1}(\supp \tilde{\psi}_{\bj})$ and $(w',\xi',\eta')\in \supp \tilde{\psi}_{\bj'}$ with $w\in f^t_{\bj\to \bj'}(V^t_{\bj\to \bj'})$.)

\end{Lemma}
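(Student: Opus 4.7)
\emph{Plan.} The argument will hinge on the flow-invariance of the central subbundle $E_0^*$. Because $\kappa_{\bj}$ and $\kappa_{\bj'}$ are flow-box charts (so $f^t_{\bj\to\bj'}$ has the form \eqref{eq:expression_f}), the cotangent action $(Df^t_{\bj\to\bj'})^*$ preserves the $\eta$-coordinate, and since $(e_{\bj}(w),1)$ is the local coordinate representative of a section of $E_0^*$, we have $(Df^t_{\bj\to\bj'})^*(e_{\bj'}(w_0),1)=(e_{\bj}(w'),1)$ whenever $w'\in V^t_{\bj\to\bj'}$ and $w_0=\hat{f}^t_{\bj\to\bj'}(w')$. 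A short computation then yields the crucial identity
\[
v':=\xi'-\eta\, e_{\bj}(w')\;=\;(D\hat{f}^t_{\bj\to\bj'})^T_{w'}\,v_0,\qquad v_0:=\xi_0-\eta\, e_{\bj'}(w_0),
\]
whenever $(w',\xi',\eta)=(Df^t_{\bj\to\bj'})^*(w_0,\xi_0,\eta)$ with $(w_0,\xi_0,\eta)\in \supp \tilde{\psi}_{\bj'}$. Writing $v:=\xi-\eta\, e_{\bj}(w)$, I then decompose $\xi'-\xi=(v'-v)+\eta(e_{\bj}(w')-e_{\bj}(w))$, isolating a hyperbolic principal part and a tail controlled by \eqref{eq:e_var2}.

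In the rescaled variables $\tilde{v}:=\bDelta_{\bj}^{-1}v/\langle\omega(\bj)\rangle^{1/2}$ and $\tilde{v}_0:=\bDelta_{\bj'}^{-1}v_0/\langle\omega(\bj')\rangle^{1/2}$, the support conditions $\tilde{\psi}_{\bj}$, $\tilde{\psi}_{\bj'}$ force each of $\tilde{v}$, $\tilde{v}_0$ into an anisotropic annular region of scale $e^{|m|}$ aligned with the $x$-axis when $m>0$ and with the $y$-axis when $m<0$. Correspondingly, $\tilde{v}':=\bDelta_{\bj}^{-1}v'/\langle\omega(\bj)\rangle^{1/2}$ is the image of $\tilde{v}_0$ under the linear map $\bDelta_{\bj}^{-1}(D\hat{f})^T\bDelta_{\bj'}\,(\langle\omega(\bj')\rangle/\langle\omega(\bj)\rangle)^{1/2}$, which by \eqref{eq:fhyp1}--\eqref{eq:fhyp2} expands the unstable cone by $\gtrsim e^{\chi_* t}\cdot t_\sharp^{-1}$ (the $t_\sharp^{-1}$ accommodating the worst-case ratio $\Delta_{\bj'}/\Delta_{\bj}$, controlled by Lemma~\ref{lm:Tor}'s bound $|\Delta_{\bj}-\Delta_{\bj'}|\leq C_*\,t_\sharp$) and contracts the stable cone by $\lesssim e^{-\chi_* t}$.

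The definition of $\hookrightarrow^t$ is calibrated precisely against these rates, so that $\bj\not\hookrightarrow^t\bj'$ combined with $|m(\bj)|\geq\delta_*t_\sharp$ forces the $\tilde{v}$- and $\tilde{v}'$-boxes apart at scale $\gtrsim e^{|m(\bj)|}$. A short case analysis on the signs of $m(\bj),m(\bj')$ will yield this: for $m(\bj)>0$ the $\hookrightarrow^t$-threshold failure gives $|x(\tilde{v}')|\gtrsim e^{\chi_*t/2}|x(\tilde{v})|\gg e^{|m(\bj)|}$; for $m(\bj)<0$, the $y$-component of $\tilde{v}$ sits at scale $e^{|m(\bj)|}$ while $|y(\tilde{v}')|\lesssim e^{-c\,t}e^{|m(\bj)|}$ for some $c>0$, using the stable contraction (and, when $m(\bj')<0$, the $\hookrightarrow^t$-threshold $|m(\bj')|\leq|m(\bj)|+\chi_*t/2-10$). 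In every case
\[
\langle\omega(\bj)\rangle^{-1/2}\|\bDelta_{\bj}^{-1}(v'-v)\|\geq C_*^{-1}\,e^{|m(\bj)|}.
\]
Meanwhile \eqref{eq:e_var2} bounds the tail by $\langle\omega(\bj)\rangle^{-1/2}\|\bDelta_{\bj}^{-1}\eta(e_{\bj}(w')-e_{\bj}(w))\|\leq C_* t_\sharp^2 \langle\langle\omega(\bj)\rangle^{1/2}\|w'-w\|\rangle$, so splitting on $\|w'-w\|$ finishes the proof: if $\langle\omega(\bj)\rangle^{1/2}\|w'-w\|\geq e^{|m(\bj)|/2}$ the first factor of \eqref{eq:hook} already exceeds $e^{|m(\bj)|}$; otherwise the tail is at most $C_*t_\sharp^2 e^{|m(\bj)|/2}$, which is dominated by the principal lower bound since $e^{|m(\bj)|/2}\geq e^{\delta_*t_\sharp/2}$ dwarfs $t_\sharp^2$ for $t_\sharp$ large.

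The main obstacle will be the case-by-case bookkeeping in the separation step: the twist matrices $\bDelta_{\bj}$ may have entries as large as $\log\langle\omega\rangle$, and $\langle\omega(\bj')\rangle$ need not be comparable to $\langle\omega(\bj)\rangle$. The $|\log(\langle\omega'\rangle/\langle\omega\rangle)|$ terms baked into the definition of $\hookrightarrow^t$ and the Lipschitz estimate on $\Delta$ from Lemma~\ref{lm:Tor} are designed exactly to absorb these factors. The symmetric bound \eqref{eq:hookrev} will follow by running the entire argument for the time-reversed flow $f^{-t}$, which interchanges the roles of $\bj,\bj'$ and of the stable/unstable directions.
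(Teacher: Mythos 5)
Your proof is correct and follows essentially the same route as the paper's own: you use the flow-invariance identity $(Df^t_{\bj\to\bj'})^*(e_{\bj'}(\hat f^t_{\bj\to\bj'}(w')),1)=(e_{\bj}(w'),1)$ to reduce to the base-point frequency $v'=\xi'-\eta e_{\bj}(w')=(D\hat f)^T v_0$, separate the $\tilde v$-boxes via the cone estimates \eqref{eq:fhyp1}--\eqref{eq:fhyp2} and the $\Delta_{\bj'}/\Delta_{\bj}$ bound from Lemma~\ref{lm:Tor}, then treat $w'\neq w$ by the split on $\langle\omega(\bj)\rangle^{1/2}\|w'-w\|$ against $e^{|m(\bj)|/2}$ and the Hölder bound \eqref{eq:e_var2}, exactly as the paper does (it phrases the first step as first fixing $w'=w$, but the content is identical). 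Your version spells out the algebra of the decomposition $\xi'-\xi=(v'-v)+\eta(e_{\bj}(w')-e_{\bj}(w))$ a bit more explicitly than the paper, which is useful, but it is the same argument.
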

\begin{proof}
We are going to prove the former claim \eqref{eq:hook}. The other claim \eqref{eq:hookrev} is proved in a parallel manner replacing $f^t_{\bj\to \bj'}$ by its inverse.
First of all, note that the ratio between 
$\Delta_{\bj'}\ge 1$ and $ \Delta_{\bj}\ge 1$ is bounded by \textcolor{\revisionColor}{$C_*(t_\sharp+|\log \langle \omega(\bj')\rangle/\langle\omega(\bj)\rangle|)$} because so is $|\Delta_{\bj'}-\Delta_{\bj}|$ from  Lemma \ref{lm:Tor}. Also note that,  from the assumption \eqref{eq:m_not_small}, we may and do suppose 
\begin{equation}\label{eq:mmlarge}
e^{|m(\bj)|}\ge  e^{\delta_* t_\sharp}\gg t_\sharp^4.
\end{equation}

To fix ideas, let us start with considering the case $w=w'$. 
In this case the points $(w,\xi,\eta)$ and $(w',\xi',\eta')$ belongs to the same cotangent space $\{w\}\times \real^3$ and we have 
\[
(Df^t_{\bj\to \bj'})^*(e_{\bj'}(\hat{f}^t_{\bj\to \bj'}(w)))=e_{\bj}(w).
\]
By geometric consideration using \eqref{eq:fhyp1} and \eqref{eq:fhyp2}, it is easy to see that images of the subsets 
\[
\supp \tilde{\psi}_{\bj} \cap (\{w\}\times \real^3)
\quad \text{and}\quad 
(Df^t_{\bj\to \bj'})^*(\supp \tilde{\psi}_{\bj'}) \cap (\{w\}\times \real^3)
\]
projected to the $\xi$-plane along the direction of $e_{\bj}(w)$ and \textcolor{\revisionColor}{mapped by $\bDelta_{\bj}$} are separated by the distance 
$C_*^{-1} e^{|m(\bj)|}\cdot \langle \omega(\bj)\rangle^{1/2}$
at least and hence the claim \eqref{eq:hook} holds in this case. (To see this, we first consider the simple case where $1/2\le \langle \omega(\bj')\rangle/\langle\omega(\bj)\rangle\le 2$ and then note that,  in the other case, \textcolor{\revisionColor}{the term $|\log (\langle \omega'\rangle/\langle\omega\rangle)|$ in Definition \ref{def:arrow} and also} the estimate on the ratio $\Delta_{\bj'}/\Delta_{\bj}$ mentioned above help.)
Next we extend this estimate to the case $w'\neq w$. To this end, we have to consider the difference between $\eta\cdot e_{\bj}(w)$ and $\eta\cdot e_{\bj}(w')$.
If  $w$ and $w'$ are so close that $
\langle \omega(\bj)\rangle^{1/2}\|w'-w\| \le c_* e^{|m(\bj)|/2}$ with sufficiently small  $c_*>0$, 
we have  
\[
  |\textcolor{\revisionColor}{\bDelta_{\bj}^{-1}} (\eta \cdot e_{\bj}(w')-\eta\cdot e_{\bj}(w))|\ll   e^{|m(\bj)|}\cdot \langle \omega(\bj)\rangle^{1/2}
\]
from \eqref{eq:e_var2} and \eqref{eq:mmlarge} and, therefore,  \eqref{eq:hook} remains valid.  
Otherwise, the required estimate  \eqref{eq:hook} is trivial because of the first factor on its left-hand side.  
\end{proof}

The next lemma is a consequence of the observation made in the last lemma. The proof will be given in Subsection \ref{lm:hyp_component}.
\begin{Lemma}\label{lm:hyp_component}
For any $\nu>0$, there exists a constant $C_*({\nu})>0$, which is  independent of $t_\sharp$,  such that, for $t_{\sharp}\le t\le 2t_{\sharp}$ and $\bj,\bj'\in \cJ$ satisfying $\bj\not \hookrightarrow^t \bj'$ and \eqref{eq:m_not_small}, we have 
\begin{equation}\label{claim:hyp_component1}
\|\bbL^t_{\bj\to\bj'}\|_{L^2}\le C_{*}({\nu})\cdot  e^{-\max\{|m(\bj)|, |m(\bj')|\}/2}\cdot \langle \omega(\bj')-\omega(\bj)\rangle^{-\nu}
\end{equation}
and further
\[
\|\bbL^t_{\bj\to\bj'}-\hat{\bbL}^t_{\bj\to \bj'}\|_{L^2}\le C_{*}({\nu})\cdot  e^{-\max\{|m(\bj)|, |m(\bj')|\}/2}\cdot\konst^{-\nu}\cdot  \langle \omega(\bj)- \omega(\bj')\rangle^{-\nu}
.
\]
\end{Lemma}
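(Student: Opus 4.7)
The plan is to apply Schur's test to the Schwartz kernel $K$ of $\bbL^t_{\bj\to\bj'}$ given in \eqref{eq:kernel}. Since the decay factor $\langle\omega(\bj)-\omega(\bj')\rangle^{-\nu}$ is inherited essentially verbatim from the argument underlying Lemma \ref{lm:basic_components} (it comes from a one-dimensional integration by parts in the flow variable $z$, using that $\eta,\eta'$ are localised at different integer scales by $q_\omega,q_{\omega'}$), the essential new task is to extract an additional exponential factor $e^{-\max\{|m(\bj)|,|m(\bj')|\}/2}$ whenever $\bj\not\hookrightarrow^t\bj'$ and either alternative in \eqref{eq:m_not_small} holds. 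The key geometric input is Lemma \ref{lm:hook}, which quantifies the failure of $(Df^t_{\bj\to\bj'})^*\supp\tilde\psi_{\bj'}$ to meet $\supp\tilde\psi_{\bj}$ in the $(\bDelta_{\bj},\pomega)$-twisted metric on phase space.

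First I would establish a pointwise Gaussian decay estimate for $K$. Substituting the explicit wave packets $\phi_{w,\xi,\eta}$ and exploiting the flow-box form $f^t_{\bj\to\bj'}(x,y,z)=(\hat f^t(x,y),z+\check f^t(x,y))$, the $z$-integration produces, up to the cut-offs $q_\omega,q_{\omega'}$, essentially the factor $\delta(\eta'-\eta)$, while the remaining integral over $(x,y)$ is a Gaussian oscillatory integral whose phase is approximately $\xi\cdot w-\xi'\cdot\hat f^t(w)+\eta\check f^t(w)$. Here I would use the decomposition $f=A\circ G$ of Lemma \ref{lm:abc}: the linear factor $A$ is handled exactly by the Gaussian integral, while $G$, being identity up to $O(\pomega^{-1/2+\theta_*})$ in $C^3$ after the natural rescaling (cf.\ Remark \ref{Re:aboutG}), only perturbs the critical point and Hessian by amounts negligible at the relevant scales $\pomega^{-1/2}$ (in $w$) and $\pomega^{1/2}$ (in $\xi$). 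A routine stationary-phase expansion should then yield
\[
|K(w',\xi',\eta';w,\xi,\eta)|\le C_{*}(\nu)\,\langle\omega(\bj)-\omega(\bj')\rangle^{-\nu}\,G_1\cdot G_2
\]
on $\supp\psi_\bj\times\supp\psi_{\bj'}$, where $G_1=\exp(-c\,\pomega\,\|w'-\hat f^t(w)\|^2)$ and $G_2=\exp(-c\,\pomega^{-1}\,\|\bDelta_{\bj}^{-1}((Df^t_{\bj\to\bj'})^*(\xi',\eta')-\xi)\|^2)$.

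Next I would combine this pointwise bound with Lemma \ref{lm:hook}. Under the hypothesis $\bj\not\hookrightarrow^t\bj'$ and the first alternative $|m(\bj)|\ge\delta_* t_\sharp$, the product $\langle\pomega^{1/2}\|w'-\hat f^t(w)\|\rangle^2\cdot\langle\pomega^{-1/2}\|\bDelta_{\bj}^{-1}((Df^t_{\bj\to\bj'})^*(\xi',\eta')-\xi)\|\rangle$ is bounded below by $c\,e^{|m(\bj)|}$, so at least one of $G_1,G_2$ is no larger than $\exp(-c\,e^{|m(\bj)|/2})$; this overwhelms any polynomial factor of $e^{|m(\bj)|}$, and the remaining factor still has integrable Gaussian tails of the standard wave-packet scale. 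Applying Schur's test in $(w,\xi)$ and $(w',\xi')$ (together with the bounded intersection multiplicity from Remark \ref{Rem:intersection_multiplicity}) then extracts a factor $e^{-|m(\bj)|/2}$ in the operator norm. The second alternative $|m(\bj')|\ge\delta_* t_\sharp$ is handled symmetrically via \eqref{eq:hookrev}. For the second bound of the lemma, the same argument works once one observes that outside $\supp\mathcal{X}_{\bj\to\bj'}$ (resp.\ $\supp\mathcal{X}'_{\bj\to\bj'}$) one has $\|w-\pi(\supp\rho^t_{\bj\to\bj'})\|\ge\konst^{1/2}\pomega^{-1/2}$, so $G_1$ furnishes the extra factor $\exp(-c\konst)\ll\konst^{-\nu}$.

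The hard part will be establishing the pointwise Gaussian estimate uniformly in $t_\sharp$. The non-linearity of $f^t$ is controlled only crudely by \eqref{eq:crude_estimate_df}, and $e_{\bj}$ is merely log-Lipschitz (see \eqref{eq:e_var},\eqref{eq:e_var2}); one must check that the phase errors introduced by the non-linear factor $G$ and by the variation of $e_{\bj}$ across a single wave packet are genuinely subdominant to the separation lower bound $e^{|m(\bj)|}$ supplied by Lemma \ref{lm:hook}. Lemma \ref{lm:abc}, the twist $\bDelta_{\bj}$ built into the definition of $\psi_\bj$, and the extension of $e_{\bj}$ fixed in Remark \ref{rem:ext_eawn} are designed precisely to make these errors harmless. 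Once that delicate point is dispatched, the Schur-test computation in the remaining variables is routine and the constant $C_*(\nu)$ comes out independent of $t_\sharp$ as claimed.
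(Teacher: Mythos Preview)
Your strategy has a genuine gap in the claimed pointwise bound on the lifted kernel. The factor
\[
G_2=\exp\bigl(-c\,\langle\omega\rangle^{-1}\bigl\|\bDelta_{\bj}^{-1}\bigl((Df^t_{\bj\to\bj'})^*(\xi',\eta')-\xi\bigr)\bigr\|^2\bigr)
\]
cannot be extracted from the kernel \eqref{eq:kernel} when $f^t_{\bj\to\bj'}$ is merely $C^3$. The $\xi$--dependence of that kernel enters only through the oscillating factor $e^{i\xi\cdot(\hat f^{t}_{\bj\to\bj'})^{-1}(w'')}$ inside the $w''$--integral, so any decay in $\xi$ has to be manufactured by integration by parts in $w''$; each such step consumes one derivative of $f^t_{\bj\to\bj'}$, and with $C^3$ regularity you can do this at most twice, giving a fixed polynomial rate rather than a Gaussian. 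Equivalently, in the stationary--phase picture the quadratic remainder $\|D^2 f^t\|\cdot\|\xi\|\cdot\langle\omega\rangle^{-1}$ becomes of order one once $e^{|m|}$ is comparable to $\langle\omega\rangle^{1/2}$ (and in the small--$|\omega|$ regime much earlier), so the ``exact Gaussian for $A$, negligible perturbation from $G$'' heuristic breaks down precisely where you need it. With only $\langle\cdot\rangle^{-2}$ decay in $\xi$, Schur's test over $\supp\psi_{\bj}$ --- whose $\xi$--slice has area of order $e^{2|m|}\Delta_{\bj}\langle\omega\rangle$ --- does not close. Note also that the decomposition $f=A\circ G$ of Lemma~\ref{lm:abc} is only available when $|\omega|>\omega_\sharp$ and $1/2\le|\omega'|/|\omega|\le 2$, so it cannot cover all cases of the lemma.

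The paper avoids this obstruction by a different device: rather than bounding the lifted kernel directly, it passes to the kernel on the base $\real^{2+1}$ of the sandwiched operator
$\check{\bbL}^t_{\bj\to\bj'}=(\pBargmann^*\mathcal{M}(\tilde\psi_{\bj'})\pBargmann)\circ\cL^t_{\bj\to\bj'}\circ(\pBargmann^*\mathcal{M}(\tilde\psi_{\bj})\pBargmann)$.
This kernel now carries explicit integrations over $\xi,\xi'$ against the smooth cutoffs $\tilde\psi_{\bj},\tilde\psi_{\bj'}$, and since their $\xi$--dependence is $C^\infty$ one may integrate by parts arbitrarily often in $\xi,\xi'$ (operators $\mathcal D_3,\mathcal D_4$), producing localisation $\langle e^{|m|}\langle\omega\rangle^{1/2}\|\bDelta_{\bj}(w''-w)\|\rangle^{-\nu}$ of any order. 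The separation from Lemma~\ref{lm:hook} is then harvested by a \emph{single} integration by parts in $w''$ (operator $\mathcal D_1$), which is all the $C^3$ regularity allows, and this one gain of $e^{-|m|}$ is what yields \eqref{claim:hyp_component1} after Schur's test on $\real^{2+1}$.
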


The next proposition is the key to the proof of Proposition \ref{pp:main_est}, which gives an estimate on the components $\bbL^t_{\bj\to \bj'}$ for which $\max\{|m(\bj)|,|m(\bj')|\}$ is  relatively small. The proof of this proposition is the main ingredient of this section and will be given in the next subsection. 

\begin{Proposition}\label{prop:central_components}
There exist constants $0<\delta_*<\rho_*$ such that, if we let the constants  $t_\sharp$ and $\omega_\sharp$ be large depending on $\delta_*$, then, for all \/ $\bj,\bj'\in \cJ$ satisfying 
\begin{equation}\label{eq:ass_prop_central}
\max\{|m(\bj)|, |m(\bj')|\}\le \delta_* t_\sharp,\quad |\omega(\bj)|\ge \omega_\sharp/2 \quad\text{ and }\quad | \omega(\bj')-\omega(\bj)|\le \exp(\rho_* t_\sharp/10),
\end{equation}
we have 
\[
\|\hat{\bbL}^t_{\bj\to \bj'}\|_{L^2}\le  \exp(-\rho_* t_\sharp)\quad\text{ for }t_{\sharp}\le t\le 2t_{\sharp}.
\]
\end{Proposition}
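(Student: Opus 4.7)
\medskip

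\noindent\textbf{Proof plan.} The plan is to estimate $\|\hat{\bbL}^t_{\bj\to\bj'}\|_{L^2}$ via a $T^*T$ (or equivalently $TT^*$) argument that reduces the question to bounding an oscillatory integral whose phase, to leading order, is an $s$-template viewed in the local chart. The non-integrability condition $(NI)_\rho$, transported to the local chart through Lemma~\ref{lm:e_approx} (in particular the estimate \eqref{eq:tv2}), will then supply the decay. Concretely, I would write $\|\hat{\bbL}^t_{\bj\to\bj'}\|_{L^2}^2 \le \|(\hat{\bbL}^t_{\bj\to\bj'})^*\hat{\bbL}^t_{\bj\to\bj'}\|_{L^2}$ and express the latter kernel as a double integral using \eqref{eq:kernel}. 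After performing the Gaussian integrations in the $\xi$, $\eta$ variables (which are of rapid-decay Gaussian type by Lemma~\ref{lm:pBargmannL2}), what remains is essentially an oscillatory integral over the spatial variables with phase coming from $\eta\cdot(\check{f}^t(w_1)-\check{f}^t(w_2)) + \xi\cdot(\hat{f}^t(w_1)-\hat{f}^t(w_2))$ plus the Bargmann phase, localized by the cutoffs $\psi_{\bj}, \psi_{\bj'}, \mathcal{X}_{\bj\to\bj'}, \mathcal{X}'_{\bj\to\bj'}$.

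The next step is to exploit that on $\supp\psi_{\bj}$ the frequency $\xi$ is concentrated, up to a window of size $\konst\pomega^{-1/2}\cdot e^{|m(\bj)|}$ (which by \eqref{eq:ass_prop_central} is at most $\konst\pomega^{-1/2}e^{\delta_* t_\sharp}$), around $\eta\cdot e_{\bj}(w)$. Hence one may replace $\xi$ by $\eta\cdot e_{\bj}(w)$ in the phase up to a controlled error, reducing the essential phase to $\eta\cdot[\check{f}^t(w_1)-\check{f}^t(w_2) + e_{\bj}(w)\cdot(\hat{f}^t(w_1)-\hat{f}^t(w_2))]$. Using that $(D\kappa_{\bj}^{-1})^*(e_{\bj}(w),1)\in E_0^*$, this combination is precisely the pairing of the $E_0^*$-direction with the displacement, and by Lemma~\ref{lm:e_approx} its dependence on the unstable variable matches, up to an affine part and an error much smaller than $\konst^{-1}\pomega^{1/2}$, the $s$-template $\psi^s_{q,(-h,h)}$. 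Changing variables to $w_1-w_2 = (\tau,s)$ along the unstable/stable directions and integrating the $s$-variable against a Gaussian of width $\pomega^{-1/2}$, one is left with an integral over $\tau\in(-h,h)$ with $h\sim \konst\pomega^{-1/2}$ of $\exp(ib h^{-1}(\theta^s_{\bj}(\ell(\tau))+\alpha\tau))$, where $b=|\eta|\cdot h\sim \konst$. Applying \eqref{eq:tv2} with $b$ in the \emph{bounded} range $b_0\le b\le \konst$ gives the bound $b^{-\rho/2}\le \konst^{-\rho/2} = \exp(-\rho t_\sharp^2/2)$, which is much stronger than $\exp(-\rho_* t_\sharp)$ for $\rho_*$ small.

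To actually extract an operator norm, I would use a Schur-type bound: after the $T^*T$ reduction, bound the $L^1$ norm of the kernel of $(\hat{\bbL}^t_{\bj\to\bj'})^*\hat{\bbL}^t_{\bj\to\bj'}$ in one variable uniformly over the other, using the oscillatory estimate above to gain the factor $b^{-\rho/2}$ on the ``diagonal'' contribution where the standard quadratic Gaussian estimate is insufficient; off the diagonal, the hyperbolicity $\|D\hat{f}^t\|$ together with the cutoffs gives a trivial $L^2$ bound that we then interpolate with the oscillatory gain. One then squares to get $\|\hat{\bbL}^t_{\bj\to\bj'}\|_{L^2}^2\le C_*\konst^{-\rho/2}$, and chooses $\rho_*$ so that $e^{-\rho_* t_\sharp}\ge C_*\konst^{-\rho/4}$; since $\konst=e^{t_\sharp^2}$, any $\rho_*<\rho t_\sharp/4$ works once $t_\sharp$ is large, and we then take $\delta_*$ much smaller than $\rho_*$ so that Lemma~\ref{lm:e_approx} applies with $h\ge \pomega^{-1+\theta_*}$ throughout the support of interest.

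The main obstacle, I expect, will be bookkeeping the many error terms so that none of them destroys the gain $b^{-\rho/2}$. There are several sources of error: the non-smoothness of $e_{\bj}$ (only log-Lipschitz by \eqref{eq:e_var2}), the replacement of $\xi$ by $\eta\cdot e_{\bj}(w)$ which introduces a phase error of size up to $e^{|m(\bj)|}\le e^{\delta_* t_\sharp}$, and the replacement of $f^t_{\bj\to\bj'}$ by its linear/affine approximation via $A\circ G$ from Lemma~\ref{lm:abc}. The condition $\delta_*\ll \rho_*$ is precisely what is needed so that $e^{\delta_* t_\sharp}$ remains much smaller than $e^{\rho_* t_\sharp}$, while the condition $|\omega(\bj')-\omega(\bj)|\le \exp(\rho_* t_\sharp/10)$ bounds the fluctuation of $\eta$ across $\supp\tilde q_{\omega(\bj)}$ relative to $\supp\tilde q_{\omega(\bj')}$. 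A further delicate point is that we use $(NI)_\rho$ only with $b\le \konst$, so the estimate is robust under $C^3$ perturbations of the flow that preserve \eqref{eq:tv} for $b$ in this fixed bounded range—this is the observation needed for the stability statement in Theorem~\ref{th:exp}.
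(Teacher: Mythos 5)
Your overall strategy — a $T^*T$ reduction, Gaussian integration to produce an oscillatory integral in the spatial variables, identification of the surviving phase with the $s$-template, an appeal to \eqref{eq:tv2} from Lemma~\ref{lm:e_approx} for $b$ in a bounded range, and finally a Schur test — is the backbone of the paper's argument as well (there the $T^*T$-type step appears as the estimate of $\mathbb{A}^*\circ\mathcal{M}(\Psi_{\bj'}\cdot\mathcal{X}'_{\bj\to\bj'})^2\circ\mathbb{A}$, after first factoring $f^t_{\bj\to\bj'}=A^t_{\bj\to\bj'}\circ G^t_{\bj\to\bj'}$ and disposing of the nonlinear factor $\mathbb{G}$ via Lemma~\ref{lm:Gneg}). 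So you have correctly located the central oscillatory-integral estimate. But there is a genuine gap that the paper treats as the main technical difficulty, and which your outline does not address.

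The gap concerns the factor $\bDelta_{\bj}^{-1}$ built into the cutoffs $\psi_{\bj}$. On $\supp\psi_{\bj}$ the spread of $\xi_x-\eta\,\theta^u_{\bj}(w)$ is not of size $\pomega^{1/2}e^{|m(\bj)|}$, as you write, but of size $\Delta_{\bj}\,\pomega^{1/2}e^{|m(\bj)|}$, because $\bDelta_{\bj}=\mathrm{diag}(\Delta_{\bj},1)$. When you close the Schur test after applying \eqref{eq:tv2}, the $\xi$-support you must integrate over has Lebesgue measure $\sim e^{2\delta_* t_\sharp}\Delta_{\bj}|\omega|$, and $\Delta_{\bj}$ can grow like $\log\pomega$, hence is not bounded by any fixed power of $e^{t_\sharp}$ as $|\omega|\to\infty$. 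So the gain $b^{-\rho/2}\le\lambda^{-\rho/4}$ (note: the relevant $b$ in \eqref{eq:setting} is of order $\lambda^{1/2}$, not $\konst$) is overwhelmed by the unbounded loss $\Delta_{\bj}$ unless one restricts attention to the case $\Delta_{\bj}<e^{3\delta_* t_\sharp}$. Moreover this is not just a bookkeeping issue that sharper use of $(NI)_\rho$ repairs: as the paper points out in the footnote attached to the Case (II) argument, the estimate \eqref{eq:tv} is uniform in $\alpha$ but gives no control on the $\alpha$-dependence of the oscillatory integral, which is exactly what is needed to integrate over the enlarged $\xi_x$-spread. The paper therefore splits the estimate of $\hat{\bbL}^t_{\bj\to\bj'}$ into two cases according to the size of $\Delta_{\bj}$: Case (I), where your argument works, and Case (II) ($\Delta_{\bj}\ge e^{3\delta_* t_\sharp}$), where the non-integrability condition is not used at all — instead one observes that the large approximate non-integrability forces the unstable subspace to twist rapidly along the $y$-axis, giving a genuinely nonstationary phase in $x''$, and then applies the regularized integration by parts of Lemma~\ref{lm:reg_int_part} together with the refined kernel estimates of Lemma~\ref{lm:opAtA} to absorb the large $\Delta_{\bj}$ factor. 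Without this dichotomy and the Case (II) argument, the proposal does not prove the proposition.
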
 
Now we deduce Proposition~\ref{pp:main_est} from the estimates on the norms of the components $\bbL^t_{\bj\to \bj'}$ given  Lemma~\ref{lm:basic_components}, Lemma \ref{lm:hyp_component} and Proposition \ref{prop:central_components}.
\begin{proof}[Proof of Proposition \ref{pp:main_est}] 
Let $0<\delta_*<\rho_*$ be those constants in Proposition \ref{prop:central_components}. Note that we may and do suppose that $\delta_*$ is much smaller than $\rho_*$, because the claims of Proposition \ref{prop:central_components} remains valid when we let $\delta_*$ be smaller (and Lemma \ref{lm:hook} holds for any choice $\delta_*$). 
Below we suppose $t_\sharp\le t\le 2t_\sharp$ and proceed with the assumption  
\begin{equation}\label{eq:omegaomegadash}
|\omega|\ge \omega_\sharp/2\quad\text{and}\quad |\omega'-\omega|<\exp(\alpha_0 \delta_* t_\sharp/10)
\end{equation}
for $\omega,\omega'\in \integer$, where $\alpha_0\in (0,1/6)$ is that in the definition of the anisotropic Sobolev space~$\mathcal{H}$.  
The argument in the remaining case is much simpler and will be mentioned at the end. 

Let us take $m,m'\in \integer$ and consider the components $\bbL^t_{\bj\to \bj'}$ for $\bj,\bj'\in \cJ$ satisfying 
\begin{equation}\label{eq:condmomega}
\omega(\bj)=\omega, \quad m(\bj)=m,\quad \omega(\bj')=\omega',\quad m(\bj')=m'.
\end{equation}
Note that, for each $\bj$ (resp. $\bj'\in \cJ$), the cardinality of the set 
\begin{equation*}
\{ \bj'\in \cJ \mid \rho^t_{\bj\to \bj'}\neq 0, \omega(\bj')=\omega', m(\bj')=m'\}\quad\text{(resp. }\{ \bj\in \cJ \mid \rho^t_{\bj\to \bj'}\neq 0, \omega(\bj)=\omega, m(\bj)=m\}\text{)}
\end{equation*}
 may be large ({\it i.e.} grow exponentially with respect to $t$). This causes a problem when we sum the estimates for the components $\bbL^t_{\bj\to \bj'}$ for $\bj,\bj'\in \cJ$. 
Our idea to do with this problem is as follows:
\begin{itemize}
\item[(A)] if we consider the operator $\hat{\bbL}^t_{\bj\to \bj'}$ defined in \eqref{eq:defhatbbL} instead of ${\bbL}^t_{\bj\to \bj'}$, we will not have this problem by virtue of the assumption noted in Remark \ref{Rem:intersection_multiplicity}, and 
\item[(B)] the norm $\|\hat{\bbL}^t_{\bj\to \bj'}-{\bbL}^t_{\bj\to \bj'}\|_{L^2}$ of the difference is very small and dominates the cardinalities of the sets above,  which is bounded by $C_*\exp(C_* t_\sharp)\ll \konst$. 
\end{itemize} 
Below we consider the following three cases for the combination  $(m,m')\in \integer^2$, but note that we continue to assume \eqref{eq:omegaomegadash} for $(\omega,\omega')\in \integer^2$:
\begin{itemize}
\item[(i)] those $(m,m')$ satisfying $\max\{|m|, |m'|\}\le \delta_*t_\sharp$, 
\item[(ii)] those $(m,m')$ not in (i), but satisfies $(m,\omega) \not\hookrightarrow^t (m',\omega')$,  or
\item[(iii)] those $(m,m')$ not either in (i) and (ii).
\end{itemize}
We first consider the case (i). If we consider $\hat{\bbL}^t_{\bj\to \bj'}$ in the places of ${\bbL}^t_{\bj\to \bj'}$, then, by Proposition \ref{prop:central_components} and the idea (A) mentioned above, the operator norm (with respect to the norm on $\mathbb{H}$) of the totality of components satisfying \eqref{eq:condmomega} is bounded by $C_* e^{2\alpha_0 \delta_* t_\sharp}\cdot e^{-\rho_* t}$, where the first factor $ e^{2\alpha_0 \delta_* t_\sharp}$ comes from the weight in the definition of $\mathbb{H}$. For the differences between $\hat{\bbL}^t_{\bj\to \bj'}$ and ${\bbL}^t_{\bj\to \bj'}$, we apply the second claim of Lemma~\ref{lm:basic_components} and, by the idea (B), find that the last estimate remains valid when we replace $\hat{\bbL}^t_{\bj\to \bj'}$ by ${\bbL}^t_{\bj\to \bj'}$. 

Next we consider the case (ii). If we consider $\hat{\bbL}^t_{\bj\to \bj'}$ in the places of ${\bbL}^t_{\bj\to \bj'}$, then, from the first claim of  Lemma \ref{lm:hyp_component} and the idea (A), the operator norm of the totality of components satisfying \eqref{eq:condmomega} is bounded by $C_* e^{-((1/2)-2\alpha_0) \max\{|m|,|m'|\}}$. 
We apply the second claim of Lemma \ref{lm:hyp_component} 
for the differences between $\hat{\bbL}^t_{\bj\to \bj'}$ and ${\bbL}^t_{\bj\to \bj'}$ and, by the idea (B), find that the last estimate remains valid when we restore ${\bbL}^t_{\bj\to \bj'}$.

Finally we consider the case (iii). In this case, the weight in the definition on the Hilbert space $\mathbb{H}$ plays its roll. Suppose that ${\bbL}^t_{\bj\to \bj'}$ are replaced by $\hat{\bbL}^t_{\bj\to \bj'}$. Then we can apply the first claim of Lemma \ref{lm:basic_components} to each of the components and,
by the idea (A), show that the operator norm of the totality of components satisfying \eqref{eq:condmomega} is bounded by $C_*(\nu) e^{\alpha_0 (m'-m)}\langle \omega'-\omega\rangle^{-\nu}$. Then, applying the second claim of Lemma~\ref{lm:basic_components}, we check that this estimate remains valid when we restore ${\bbL}^t_{\bj\to \bj'}$.
Note that, since
$(m,\omega) \hookrightarrow^t (m',\omega')$ and $\max\{|m|, |m'|\}> \delta_*t_\sharp$ in this case, the factor  $e^{\alpha_0 (m'-m)}$ is bounded by 
$C_* \max\{ e^{-\alpha_0 (\chi_*/2)t_\sharp }\textcolor{\revisionColor}{\langle \omega'-\omega\rangle}, e^{-\alpha_0 \delta_* t_\sharp}\}$.

Collecting the estimates in the cases (i), (ii) and (iii) above and taking sum with respect to the combinations $(m,m')\in \integer^2$, we see that the operator norm of $\Pi_{\omega'}\circ (\bbL^t-K) \circ \Pi_{\omega}$  on $\mathbb{H}$ is bounded by 
\[
C_* \max\{\;\delta_* t_\sharp \cdot e^{(2\alpha_0\delta_*-\rho_*) t_\sharp},\; e^{-((1/2)-2\alpha_0)\delta_* t_\sharp},\; \delta_* t_\sharp\cdot  e^{-\alpha_0\delta_* t_\sharp},\;e^{-\alpha_0(\chi_*/2) t_\sharp}\;\}.
\]
Since we are assuming that $|\omega'-\omega|<\exp(\alpha_0 \delta_* t_\sharp/10)$, this implies the conclusion of Proposition \ref{pp:main_est}, provided that  $\delta_*$ is sufficiently small and $t_\sharp$ is sufficiently large. 

In the case where the assumption \eqref{eq:omegaomegadash} does not hold, the proof is parallel to the argument above but it becomes much simpler. Indeed,
\begin{itemize}
\item  In the case where $|\omega|< \omega_{\sharp}/2$ and $|\omega'|\le \omega_{\sharp}$, we may assume $\max\{|m|,|m'|\} \ge m_{\sharp}$ since  we subtract the compact part $K$ from $\bbL^t$. Since we  can choose large $m_{\sharp}$ depending on $t_\sharp$ and $\omega_\sharp$, we need not consider the case (i). Then the proof goes as well as the argument above for the cases (ii) and (iii). 
\item  In the remaining case, we may suppose that the factors $\langle \omega'-\omega\rangle^{-\nu}$ that appear in the claims of Lemma \ref{lm:basic_components} and Lemma~\ref{lm:hyp_component} are  small enough by letting $\omega_\sharp$ and  $\nu$ be large. Therefore we can go through the argument above with much cruder estimates. (In the case (i), we use Lemma \ref{lm:basic_components} instead of Proposition \ref{prop:central_components}.)
\end{itemize}
We therefore obtain the conclusion of  Proposition \ref{pp:main_est}.
\end{proof}
In the following subsections, we prove Lemma \ref{lm:basic_components}, Lemma \ref{lm:hyp_component} and Proposition \ref{prop:central_components}. 
We present the proof of Proposition \ref{prop:central_components} first.
\subsection{Proof of Proposition \ref{prop:central_components}}
Let us consider the operator $\hat{\bbL}^t_{\bj\to \bj'}$ for $\bj,\bj'\in \cJ$ satisfying \eqref{eq:ass_prop_central} and set $\omega=\omega(\bj)$, $\omega'=\omega(\bj)$, $m=m(\bj)$ and $m'=m(\bj)$ for brevity. Recall the argument in Subsection \ref{ss:ftlocal}, we express the diffeomorphism $f^t_{\bj\to \bj'}$ as 
\[
f^t_{\bj\to \bj'}=A^t_{\bj\to \bj'}\circ G^t_{\bj\to \bj'}
\] 
in parallel to \eqref{eq:AG}. Accordingly we write $\hat{\bbL}^t_{\bj\to\bj'}$ as
\begin{equation}\label{eq:bbLexp}
\hat{\bbL}^t_{\bj\to\bj'}=\mathcal{M}( \psi_{\bj'}\cdot \mathcal{X}'_{\bj\to \bj'}) \circ \mathbb{A}\circ \mathbb{G}
\end{equation}
where $\mathcal{M}(\varphi)$ denotes the multiplication operator by $\varphi$ and we set 
\[
\mathbb{G}:L^2(\supp \psi_{\bj})\to L^2(\real^{2+2+1}),\quad
\mathbb{G}u= \pBargmann \left(\left(\rho^t_{\bj\to \bj'}\cdot \pBargmann^* (\mathcal{X}_{\bj\to \bj'}\cdot u)\right)\circ (G^t_{\bj\to \bj'})^{-1}\right)
\]
and 
\[
 \mathbb{A}:L^2(\real^{2+2+1})\to L^2(\real^{2+2+1}),\quad
\mathbb{A}u=\pBargmann((\pBargmann^* u)\circ (A^t_{\bj\to \bj'})^{-1}).
\]
Below we disregard the part $\mathbb{G}$ for a while and concentrate on the part $\mathcal{M}(\psi_{\bj'}\cdot \mathcal{X}'_{\bj\to \bj'}) \circ\mathbb{A}$. In the last part of this proof,  we will show that the pre-composition of $\mathbb{G}$ is negligible.

The operator $\mathbb{A}$ is a rather simple integral operator and its kernel can be presented explicitly by computing Gaussian integrals. (See \cite[Ch.~3]{Martinez}.) But our main concern in the argument below is actually the variations of the functions $\psi_{\bj}$ and $\tilde{\psi}_{\bj}$ with respect to the space variable $w$. Recall from \eqref{eq:sumq} and \eqref{eq:tilde_sumq} that their definitions involve the mapping $e_{\bj}(w):e_{a(\bj),\omega(\bj),n(\bj)}(w)$.
To proceed, we introduce the functions
\begin{align}\label{eq:Psibj0}
&\Psi^0_{\bj}:\real^{2+2+1}\to [0,1], \quad \Psi^0_{\bj}(w,\xi,\eta)=  \chi\bigg(4^{-1}e^{-\delta_* t_\sharp}|\omega|^{-1/2}\cdot \|\textcolor{\revisionColor}{\bDelta_{\bj}^{-1}}(\xi-\eta\cdot  e_{\bj}(w))\|\bigg)
\intertext{and}
\label{eq:Psibj}
&\Psi_{\bj}:\real^{2+2+1}\to [0,1], \quad \Psi_{\bj}(w,\xi,\eta)= {q}_{\omega(\bj)}(\eta)\cdot \Psi_{\bj}^0(w,\xi,\eta)
\end{align}
where ${q}_{\omega}$ is the function defined in \eqref{eq:tildeq}. 
We prove the following lemma as  the main step of the proof of Proposition \ref{prop:central_components}.
\begin{Lemma}
\label{lem:main_est} Under the assumptions as above, we have 
\begin{equation}\label{eq:claim_on_A}
\|\mathcal{M}(\Psi_{\bj'}\cdot \mathcal{X}'_{\bj\to \bj'})\circ{\mathbb{A}}:L^2(\supp (\Psi^0_{\bj}\cdot  \mathcal{X}_{\bj\to \bj'}))\to L^2(\supp (\Psi_{\bj'}\cdot \mathcal{X}'_{\bj\to \bj'}))\|\le e^{-\rho_* t_\sharp}.
\end{equation}
\end{Lemma}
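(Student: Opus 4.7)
The plan is to compute the Schwartz kernel of $\mathcal{M}(\Psi_{\bj'}\mathcal{X}'_{\bj\to\bj'})\circ\mathbb{A}$ explicitly, bound it by a Schur-type estimate, and extract the decay $e^{-\rho_* t_\sharp}$ from the resulting oscillatory integral via the non-integrability condition $(NI)_\rho$ and Lemma \ref{lm:e_approx}. Since the map $A := A^t_{\bj\to\bj'}$ is polynomial of degree at most two in the spatial variables (see \eqref{eq:A}) and acts as a pure $z$-shift plus polynomial perturbation, the Gaussian integrations defining $\mathbb{A} = \pBargmann\circ(\cdot\circ A^{-1})\circ\pBargmann^*$ can be performed in closed form. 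The resulting kernel has a $\delta(\eta-\eta')$ factor multiplied by a Gaussian of natural Bargmann widths $\sim \langle\omega\rangle^{-1/2}$ in $w$ and $\sim\langle\omega\rangle^{1/2}$ in $\xi$, concentrating on the graph of the cotangent lift $T_A:(w,\xi,\eta)\mapsto (A(w),((DA_w)^*)^{-1}(\xi,\eta))$, times a real oscillatory factor. A Schur-type estimate in the spirit of \cite[\S 5--6]{MR2995886}, \cite[\S 6]{1311.4932} then bounds the operator norm by a worst-case overlap of $T_A(\supp(\Psi^0_{\bj}\cdot\mathcal{X}_{\bj\to\bj'}))$ with $\supp(\Psi_{\bj'}\cdot\mathcal{X}'_{\bj\to\bj'})$, weighted by this Gaussian.

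The heart of the argument is that the central covector bundle $E_0^*$ is preserved by the \emph{true} flow $f^t$, whereas the quadratic approximation $A$ preserves this direction only up to a first-order correction along the unstable manifold. A Taylor expansion of the cotangent lift along the unstable direction $\ell(\tau) = w_0 + (\tau,0)$ shows that the leading-order deviation of the $\xi$-component of $T_A(w,\eta\, e_{\bj}(w),\eta)$ from $\eta\, e_{\bj'}(A(w))$ equals $\eta\cdot(\theta^s_{\bj}(\ell(\tau))+\alpha\tau+\beta)$ for some $\alpha,\beta\in\real$ absorbing the affine contributions from the hyperbolic Jacobian, from the bilinear $\beta xy$ term of $A$, and from the $\bDelta_{\bj}$-anisotropy of $\Psi^0_{\bj}$, with remainder controlled by \eqref{eq:tv3}--\eqref{eq:tv6}. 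Substituting this expansion into the kernel and carrying out the transverse Gaussian integrations reduces the relevant overlap to the oscillatory integral $\int_{-h}^{h}\exp(ib h^{-1}(\theta^s_{\bj}(\ell(\tau))+\alpha\tau+\beta))\,d\tau$, with $h\sim\langle\omega\rangle^{-1/2}$ the Gaussian scale and $b = \eta h\sim\langle\omega\rangle^{1/2}$ the effective frequency. The non-integrability condition $(NI)_\rho$, through \eqref{eq:tv2}, bounds this integral by $h\cdot b^{-\rho/2}\sim h\cdot\langle\omega\rangle^{-\rho/4}$, producing the key gain $\langle\omega\rangle^{-\rho/4}$.

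Since $\omega_\sharp$ may be chosen to depend on $t_\sharp$ (as in Lemma \ref{lm:e_approx}), taking $\omega_\sharp\ge e^{4\rho_* t_\sharp/\rho}$ ensures $\langle\omega\rangle^{-\rho/4}\le e^{-\rho_* t_\sharp}$ for all $|\omega|\ge\omega_\sharp/2$, yielding the stated bound. The constants $0<\delta_*<\rho_*$ may be chosen both small; the enlarged spread $e^{\delta_* t_\sharp}$ of $\Psi^0_{\bj}$ over $\Psi_{\bj'}$ enters the estimate only through polynomial factors that are absorbed in the final bound provided $\delta_*<\rho_*$. The main obstacle is the careful bookkeeping in the Taylor expansion of the cotangent lift of $A$: one must identify the oscillating phase precisely with $\theta^s_{\bj}(\ell(\tau))+\alpha\tau$ in the form required by \eqref{eq:tv2}, absorb the $\bDelta_{\bj}$-anisotropy into $\alpha\tau$, and verify that the quadratic and higher remainders, the Gaussian tails in the stable direction, and the approximation error between $f^t$ and $A^t$ (bounded by Lemma \ref{lm:abc}) are all subdominant to $e^{-\rho_* t_\sharp}$.
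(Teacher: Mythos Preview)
Your proposal has a genuine gap in the mechanism producing the decay. You claim the oscillatory integral that emerges has effective frequency $b=\eta h\sim\langle\omega\rangle^{1/2}$ and that the resulting gain is $\langle\omega\rangle^{-\rho/4}$, to be converted into $e^{-\rho_* t_\sharp}$ by taking $\omega_\sharp$ large. This fails on two counts. First, the estimate \eqref{eq:tv2} in Lemma~\ref{lm:e_approx} is stated (and proved) only for $b_0\le b\le \konst$; with $b\sim\langle\omega\rangle^{1/2}$ the upper bound is violated. This restriction is not cosmetic: the approximation error between $\theta^s_{a,\omega,n}$ and the rescaled $s$-template is $C_*\konst^2\langle\omega\rangle^{-1}\log\langle\omega\rangle$, so in the phase $bh^{-1}\theta^s$ the error becomes $\sim\langle\omega\rangle\cdot\langle\omega\rangle^{-1}\log\langle\omega\rangle=\log\langle\omega\rangle$, which is not small and destroys the transfer of $(NI)_\rho$. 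Second, and more structurally, in the kernel of $\mathcal{M}(\Psi_{\bj'})\circ\mathbb{A}$ the function $e_{\bj'}(w'')$ enters only through the \emph{cutoff} $\Psi_{\bj'}$, not as an oscillatory phase; the deviation $\eta(\theta^s_{\bj}(\ell(\tau))+\alpha\tau)$ you compute is a displacement of Gaussian centres, of the same order $\langle\omega\rangle^{1/2}$ as their $\xi$-width, so a Schur estimate on overlaps alone yields no decay.

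The paper instead bounds $\mathbb{A}^*\circ\mathcal{M}(\Psi_{\bj'}\mathcal{X}'_{\bj\to\bj'})^2\circ\mathbb{A}$ and substitutes $\xi''=\zeta+\eta\, e_{\bj'}(w'')$. This produces, in the kernel, the genuine oscillatory factor $\exp(-i\eta(\tilde y-\tilde y')\theta^s(x'',y''))$, where $\tilde y,\tilde y'$ are Gaussian-localised to scale $|\omega|^{-1/2}$. The effective frequency is therefore $b\sim\lambda^{1/2}|\omega|^{1/2}|\tilde y-\tilde y'|\sim\lambda^{1/2}$, bounded by $\konst$, and the gain is $\lambda^{-\rho/4}\le e^{-\rho\chi_* t_\sharp/4}$, coming from \emph{hyperbolicity}, not from~$\omega$. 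Moreover this works only when $\Delta_{\bj}<e^{3\delta_* t_\sharp}$ (Case~(I)); when $\Delta_{\bj}$ is large (Case~(II)) a separate argument via regularised integration by parts is required, which your proposal does not address.
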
 
In the following, we prove this lemma by showing that the operator norm of\footnote{We suppose that the image of this operator is restricted to $\supp (\Psi_{\bj}\cdot \mathcal{X}_{\bj\to \bj'})$.}  
\begin{equation}\label{eq:opAtA}
 {\mathbb{A}}^*\circ \mathcal{M}(\Psi_{\bj'}\cdot\mathcal{X}'_{\bj\to \bj'})^2\circ  {\mathbb{A}}:L^2(\supp (\Psi_{\bj}\cdot \mathcal{X}_{\bj\to \bj'}))\to L^2(\supp (\Psi_{\bj}\cdot \mathcal{X}_{\bj\to \bj'}))
\end{equation}
is bounded by $e^{-2\rho_* t_\sharp}$. 
Let us recall the expression \eqref{eq:A} of the diffeomorphism $A^t_{\bj\to \bj'}$. Below we suppose $A^t_{\bj\to \bj'}(0)=0$ by shifting the coordinates, hence
\[
A^t_{\bj\to \bj'}(x,y,z)=
\big(\lambda x,\tilde{\lambda} y, z+\varpi\cdot (x,y)+\beta xy\big)
\] 
where $\lambda$, $\tilde{\lambda}$ and $\varpi$ are those given in \eqref{eq:b} for $f=f^t_{\bj\to \bj'}$.
The inverse of $A^t_{\bj\to \bj'}$ is then written
\begin{equation}\label{eq:A_inv}
(A^t_{\bj\to \bj'})^{-1}(x,y,z)=
\bigg(\Lambda^{-1}\begin{pmatrix}x\\y\end{pmatrix}, z-\varpi\cdot \Lambda^{-1}\begin{pmatrix}x\\y\end{pmatrix}-\sigma(x,y)\bigg)
\end{equation}
where 
\[
\Lambda=\begin{pmatrix}\lambda&0\\
0&\tilde{\lambda}
\end{pmatrix}
\quad\text{and}\quad
\sigma(x,y)=\beta \lambda^{-1} \tilde{\lambda}^{-1} xy.
\]
We write the operator $\mathbb{A}$ as an integral operator 
\[
\mathbb{A}u(w'',\xi'',\eta)=\int K_{\mathbb{A}}(w,\xi;w'',\xi'';\eta)\, u(w,\xi,\eta)\, dw d\xi 
\]
with the kernel
\[
K_{\mathbb{A}}(w'',\xi'';w,\xi;\eta)=  e^{-i\xi w/2-i\xi'' w''/2}
\cdot  k_{\mathbb{A}}(w'',\xi'';w,\xi;\eta) 
\]
where\footnote{Note that the right-hand side of \eqref{eq:Keta} does not depend on $z$. We separated the term $e^{-i\xi w/2-i\xi'' w''/2}$ in order to simplify the expressions below.}
\begin{equation}\label{eq:Keta}
k_{\mathbb{A}}(w'',\xi'';w,\xi;\eta)=e^{i\xi w/2+i\xi'' w''/2}\cdot \int \overline{{\phi}_{w'',\xi'',\eta}(\tilde{w},z)}\cdot {\phi}_{w,\xi,\eta}((A^t_{\bj\to \bj'})^{-1}(\tilde{w},z))\, d\tilde{w}.
\end{equation}
Using the expression \eqref{eq:A_inv} of $(A^t_{\bj\to \bj'})^{-1}$  and changing the variable $\tilde{w}$ to $\tilde{w}+w''$, we rewrite the last expression as  
\begin{align*}
k_{\mathbb{A}}&(w,\xi;w'',\xi'';\eta)\\
& =\langle \eta\rangle  \int d\tilde{w}\; 
\exp(i(\xi (\Lambda^{-1}(\tilde{w}+w''))-\xi'' \tilde{w}-\eta \varpi \cdot \Lambda^{-1}(\tilde{w}+w'')-\eta\cdot  \sigma(\tilde{w}+w'')))\\
&\qquad\qquad\qquad\cdot \exp(-
\langle \eta\rangle \|\Lambda^{-1}(\tilde{w}+w'')-w\|^2/2
-\langle \eta\rangle \|\tilde{w}\|^2/2).\notag
\end{align*}
Then we can write the operator \eqref{eq:opAtA} as 
\begin{equation}\label{eq:AtA}
({\mathbb{A}}^*\circ\mathcal{M}( \Psi_{\bj'}\cdot \mathcal{X}'_{\bj\to \bj'})^2 \circ   {\mathbb{A}}) u(w',\xi',\eta)=
\int e^{-i\xi w/2+i\xi' w'/2}\cdot \mathcal{K}(w',\xi';w,\xi;\eta)\, u(w,\xi,\eta)\, dw d\xi
\end{equation}
where, introducing the variable $\zeta=\xi''-\eta e_{\bj'}(w'')$,  we set 
\begin{multline}\label{eq:KAtA}
\mathcal{K}(w',\xi';w,\xi;\eta)= q_{\omega'}(\eta)^2 \cdot  \int d\zeta \int dw''\cdot \chi\left(\textcolor{\revisionColor}{4^{-1}}e^{-\delta_* t_\sharp}|\omega|^{-1/2}\cdot  \|\textcolor{\revisionColor}{\bDelta_{\bj'}^{-1}}\zeta\|\right)^2\cdot \mathcal{X}'_{\bj\to \bj'}(w'')^2\\
\times 
k_{\mathbb{A}}(w'',\zeta+\eta e_{\bj'}(w'');w,\xi;\eta)\cdot 
\overline{k_{\mathbb{A}}(w'',\zeta+\eta e_{\bj'}(w'');w',\xi';\eta)}.
\end{multline}
In the integral on the right-hand side of \eqref{eq:KAtA}, we are going to compute the integration with respect to the variable $w''=(x'',y'')$.  
If we write the integral by putting \eqref{eq:Keta} and extract the parts that involve the variable $w''=(x'',y'')$, then, setting
\[
e_{\bj'}(w):=(\theta^u(w),\theta^s(w)).
\] 
we find
\begin{align}\label{eq:int_xdd}
&I(w,\xi;w',\xi';\tilde{w},\tilde{w}';\eta):= \int dx'' dy'' \cdot \mathcal{X}'_{\bj\to \bj'}(x'',y'')^2
\\
&\qquad \cdot
\exp(-i\eta (\tilde{y}-\tilde{y}')\cdot\theta^s(x'',y'')+i(\xi_x-\xi'_x)\lambda^{-1}x''-i\beta \lambda^{-1} \tilde{\lambda}^{-1} \eta (\tilde{y}-\tilde{y}')x'')\notag\\
&\qquad\cdot \exp(-i\eta (\tilde{x}-\tilde{x}')\cdot \theta^u(x'',y''))\times \exp(i(\xi_y-\xi'_y)\tilde{\lambda}^{-1}y''-i\eta\beta\lambda^{-1}\tilde{\lambda}^{-1}(\tilde{x}-\tilde{x}')y'')\notag
\\
&\qquad \cdot \exp(
-\langle\eta\rangle|\lambda^{-1}(\tilde{x}+x'')-x|^2/2
-\langle\eta\rangle|\lambda^{-1}(\tilde{x}'+x'')-x'|^2/2)\notag\\
&\qquad \cdot \exp(
-\langle\eta\rangle|\tilde{\lambda}^{-1}(\tilde{y}+y'')-y|^2/2
-\langle\eta\rangle|\tilde{\lambda}^{-1}(\tilde{y}'+y'')-y'|^2/2)\notag
\end{align}
and \eqref{eq:KAtA} is written as
\begin{align}\label{eq:Kexpression_by_I}
\mathcal{K}&(w',\xi';w,\xi;\eta)=q_{\omega'}(\eta)^2 \cdot \langle \eta\rangle^{2}\cdot\int d\zeta d\tilde{w} d\tilde{w}' \cdot I(w,\xi;w',\xi';\tilde{w},\tilde{w}';\eta)\\
&\cdot \chi\left(\textcolor{\revisionColor}{4^{-1}}e^{-\delta_* t_\sharp}|\omega|^{-1/2}\cdot  \|\textcolor{\revisionColor}{\bDelta_{\bj'}^{-1}}\zeta\|\right)^2\cdot \exp(-\langle \eta\rangle(\|\tilde{w}\|^2+\|\tilde{w}'\|^2)/2)\notag\\
&\cdot \exp\Bigg(i(\xi  \Lambda^{-1}\tilde{w}-\xi' \Lambda^{-1}\tilde{w}'-\zeta(\tilde{w}-\tilde{w}')-\eta \varpi \Lambda^{-1}(\tilde{w}-\tilde{w}')-\eta(\sigma(\tilde{w})-\sigma(\tilde{w}')))\bigg)\notag
\end{align}

In the following, we consider the following two cases separately:
\begin{center}
(I)  $\Delta_{\bj}< 
e^{3\delta_* t_\sharp}$, \qquad
(II) $\Delta_{\bj}\ge 
e^{3\delta_* t_\sharp}$.
\end{center}
In the case (I), we will use the non-integrability condition $(NI)_\rho$ to deduce the required estimate.  
In the case (II), we will use the fact that the approximate  non-integrability $\Delta_{\bj}=\Delta(p,\konst \langle \omega(\bj)\rangle)$ is sufficiently large.  In the following, we suppose that 
\begin{equation}
\label{eq:wxieta}
(w,\xi,\eta), (w',\xi',\eta)\in \supp ({\Psi}_{\bj}\cdot  \mathcal{X}_{\bj\to \bj'}).
\end{equation} 
Note that this implies in particular that $
\|w\|$ and $\|w'\|$ are bounded by $C_* \konst \langle \omega\rangle$.

\subsubsection*{{\bf Case (I)}} 
We consider the integration in \eqref{eq:int_xdd} with respect to the variable $x''$. 
Notice that the factor on the second line of the right-hand side of \eqref{eq:int_xdd} is of the form to which we can apply \eqref{eq:tv2} in Lemma~\ref{lm:e_approx} with setting  
\begin{equation}\label{eq:setting}
h=\lambda^{1/2} |\omega|^{-1/2},\quad 
b=-h \cdot \eta(\tilde{y}-\tilde{y}'), \quad 
\alpha=\frac{(\xi_x-\xi'_x)\lambda^{-1}-\beta \lambda^{-1}\tilde{\lambda}^{-1}\eta (\tilde{y}-\tilde{y}')}{bh^{-1}}
\end{equation}
provided that $b>b_0$.
For the remaining part on the right-hand side of \eqref{eq:int_xdd}, observe that
\begin{itemize}
\item the factor on the third line is almost constant\footnote{We may suppose that  $|\tilde{x}|$ and $|\tilde{x}'|$ are bounded by $\konst^2\pomega^{-1/2}$ (say) because otherwise the factors on the fourth and fifth lines are very small.} as a function of $x''$ in the scale $|\omega|^{-1/2}$ from  \eqref{eq:tv6} in Lemma \ref{lm:e_approx}.
\item the factor on the fourth line is also almost constant in $x''$ viewed in the scale $|\omega|^{-1/2}$, precisely, 
its derivative with respect to $x''$ is bounded by $\lambda^{-1}\langle \eta\rangle^{1/2}\le 2 \lambda^{-1}|\omega|^{1/2}$,
\item the factor on the fifth line does not depend on $x''$.
\end{itemize}
These observations motivate us to divide the domain of integration with respect to $x''$ ({\it i.e.} the real line $\real$)  into intervals with length $2h=2\lambda^{1/2}|\omega|^{-1/2}$ and apply  \eqref{eq:tv2} in Lemma~\ref{lm:e_approx} to the integral \eqref{eq:int_xdd} with respect to $x''$ on each of those intervals with the setting \eqref{eq:setting}.  We approximate the remaining parts ({\it i.e.} the factor $\mathcal{X}'_{\bj\to \bj'}(x'',y'')^2$ on the first line and those on the third to fifth lines) on the right-hand side of \eqref{eq:int_xdd} by their averages on the interval and, to ensure the assumption of  Lemma~\ref{lm:e_approx}, we suppose
\begin{equation}\label{eq:cond_b}
b_0\le |b|=|h\cdot \eta (\tilde{y}-\tilde{y'})|=\lambda^{1/2}  |\omega|^{-1/2}\cdot |\eta  (\tilde{y}-\tilde{y}')| \le \konst.
\end{equation}
Then we see that the integral \eqref{eq:int_xdd} with resect to  $x''$ on each of the intervals of length $2h=2\lambda^{1/2}|\omega|^{-1/2}$ is bounded by 
\[
C_*(b_0)\cdot 2h \cdot \left(\langle \lambda^{1/2}  |\omega|^{1/2} (\tilde{y}-\tilde{y}')\rangle^{-\rho/2}+\lambda^{-1/2}\right).
\]
Hence, evaluating the factors on the fourth and fifth lines of \eqref{eq:int_xdd}, we conclude
\begin{align}\label{eq:estimate_I}
|I(w,\xi&;w',\xi';\tilde{w},\tilde{w}';\eta)|\\
& \le 
C_*(\nu,b_0) \cdot |\omega|^{-1}
\cdot (\langle \lambda^{1/2}|\omega|^{1/2}|\tilde{y}-\tilde{y}'|\rangle^{-\rho/2}+\lambda^{-1/2})\notag\\
&\quad \cdot \langle |\omega|^{1/2} |\lambda^{-1}(\tilde{x}-\tilde{x}')- (x-x')| \rangle^{-\nu}\cdot \langle |\omega|^{1/2} |\tilde{\lambda}^{-1}(\tilde{y}-\tilde{y}')- (y-y')| \rangle^{-\nu}\notag
\end{align} 
for arbitrarily large $\nu$, under the condition \eqref{eq:cond_b}. 

We put the last estimate  into the expression \eqref{eq:Kexpression_by_I} of the kernel $\mathcal{K}(\cdot)$. And, for the integration with respect to $\zeta$, we apply a simple estimate about Fourier transform to see 
\begin{equation}\label{eq:zeta_int}
\left|\int \chi\left(\textcolor{\revisionColor}{4^{-1}}|\omega|^{-1/2}\cdot e^{-\delta_* t_\sharp}\cdot \|\textcolor{\revisionColor}{\bDelta_{\bj'}^{-1}}\zeta\|\right)^2 \exp(-i\zeta(\tilde{w}-\tilde{w'})) d\zeta\right|
\le  \frac{C_*(\nu)\cdot e^{2\delta_* t_\sharp}\cdot \textcolor{\revisionColor}{\Delta_{\bj'}}\cdot |\omega|}
{\langle e^{\delta_* t_\sharp}|\omega|^{1/2}\|\textcolor{\revisionColor}{\bDelta_{\bj'}}(\tilde{w}'-\tilde{w})\| \rangle^{\nu}}
\end{equation}
for arbitrarily large $\nu>0$. 
Therefore we see that \eqref{eq:Kexpression_by_I}  is bounded by 
\begin{align}\label{int:ww}
C_*(\nu,b_0) \cdot& q_{\omega'}(\eta)^2\cdot  e^{2\delta_* t_\sharp}\cdot \Delta_{\bj'}\cdot |\omega|^2\cdot  \int  d\tilde{w} d\tilde{w}' \;   \\
&\times  (\langle \lambda^{1/2}|\omega|^{1/2}|\tilde{y}-\tilde{y}'|\rangle^{-\rho/2}+\lambda^{-1/2}) \cdot 
 \langle |\omega|^{1/2} \|\Lambda^{-1}(\tilde{w}'-\tilde{w})-(w'-w)\| \rangle^{-\nu} \notag\\
 &
\times  \langle e^{\delta_* t_\sharp}|\omega|^{1/2}\|\textcolor{\revisionColor}{\bDelta_{\bj'}}(\tilde{w}'-\tilde{w})\| \rangle^{-\nu} \cdot \langle |\omega|^{1/2}\|\tilde{w}\|\rangle^{-\nu}  \langle |\omega|^{1/2}\|\tilde{w}'\|\rangle^{-\nu}\notag
\end{align}
in absolute value. Notice that, in the last claim, we actually  had to restrict the domains of integrations in \eqref{eq:Kexpression_by_I} and \eqref{int:ww} by the condition \eqref{eq:cond_b}. 
However, since the factor  $\exp(-\langle \eta\rangle(\|\tilde{w}\|^2+\|\tilde{w}'\|^2)/2)$ on the second line of \eqref{eq:Kexpression_by_I} is very small in the case where the right inequality of \eqref{eq:cond_b} fails, the claim remains valid without such restrictions. 

Inspecting the integral \eqref{int:ww} with respect to $\tilde{w}$ and $\tilde{w}'$ above, we conclude  
\begin{align}\label{eq:estK1}
&|\mathcal{K}(w',\xi';w,\xi;\eta)|
\le 
\frac{C_*(\nu,b_0) \cdot q_{\omega'}(\eta)^2 \cdot \lambda^{-1}\cdot    (\langle \lambda^{-1/2}|\omega|^{1/2} |y-y'| \rangle^{-\rho/2}+\lambda^{-1/2}) }
{\langle |\omega|^{1/2} \|(\Lambda^{-2}+1)^{-1/2}(w-w')\|\rangle^{\nu}}.
\end{align}
Finally note that  \eqref{eq:claim_on_A} is an operator on $L^2(\supp (\Psi_{\bj}\cdot \chi_{\bj\to \bj'}))$ and that the $2d$-dimensional Lebesgue measure of $\supp \Psi_{\bj}\cap (\{w\}\times \real^2\times \{\eta\})$ for $w\in \real^2$ and $\eta\in \supp q_{\omega}$ is bounded by $C_*e^{2\delta_* t_\sharp}\Delta_{\bj}\cdot |\omega|\le C_*e^{8\delta_* t_\sharp} |\omega|$. (Note that we have the last inequality since we are considering the case (I).) Hence, by simple estimate using  Schur test\footnote{See \cite[p.50]{Martinez} or the wikipedia page of ``Schur test''. We will use this lemma for a few times below.}, we conclude  that the operator norm of \eqref{eq:opAtA} is bounded by 
\begin{equation}\label{eq:last_est}
C_*(b_0)\cdot  e^{8\delta_* t_\sharp}\cdot \lambda^{-\rho/4}\le
C_*(b_0) \cdot e^{-\rho \chi_* t_\sharp/6} \le C_*(b_0) \cdot e^{-2\rho_* t_{\sharp}}
\end{equation}
provided that we let the constants $0<\delta_*<\rho_*$ be sufficiently small. This gives the required estimate \eqref{eq:claim_on_A} as we noted in the beginning. 

\begin{Remark}\label{rem:perturb}
In the argument above for the case (I), we actually used the estimate \eqref{eq:tv2} only for $b$ (which was the same as that in \eqref{eq:tv} in the non-integrability condition $(NI)_\rho$) in a bounded interval contained in
 $[b_0,\konst]$.  And we will not use the non-integrability condition $(NI)_\rho$ in the argument for the case (II) below.
\end{Remark}

\begin{Remark}
It is natural to think that we may be able to apply the argument above also to the case (II), one because the case (II) should be simpler and one because the estimate \eqref{eq:tv2} holds for any $\alpha$. But the author found a technical problem in such argument and had to argue about the case (II) separately. The problem will be presented in the argument for the case (II) below, especially, in Footnote \ref{fn:1}. 
\end{Remark}

\subsubsection*{{\bf Case (II)}} 
We now consider the case (II) where the approximate non-integrability $\Delta_{\bj}$ is large. Let us begin with a preliminary discussion. Observe that,  from  \eqref{eq:tv3} and \eqref{eq:tv6} in Lemma \ref{lm:e_approx}, the unstable subspace $E_u$ viewed in the local chart~$\kappa_{\bj}$ twists along segments parallel to the $y$-axis by the rate proportional to $\Delta_{\bj}\ge 
e^{3\delta_* t_\sharp}\gg 1$, while it is almost constant on segments parallel to the $x$-axis. Hence, recalling  the definition \eqref{eq:Psibj} of  ${\Psi}_{\bj}$, we see that 
\begin{equation}\label{eq:diffxi}
|\xi_x-\xi'_x|\ge C_*^{-1}\Delta_{\bj}\cdot |\omega|\cdot |y-y'|\gg 1
\end{equation}
for $(x,y,\xi_x,\xi_y,\eta), (x',y',\xi'_x,\xi'_y,\eta)\in \supp {\Psi}_{\bj}\cap U_{\bj\to \bj'}$, 
provided $|y'-y|\ge C_* e^{\delta_*t_\sharp}\pomega^{-1/2}$. 
This motivate us to regard the integral with respect to $x''$ in \eqref{eq:int_xdd} as an oscillatory integral with the oscillating factor $\exp(i(\xi_x-\xi'_x)\lambda^{-1}x'')$. 
To estimate that oscillatory integral, we need the following formula of \emph{regularized integration by parts} because the function $e_{\bj'}(w'')=(\theta^u(w''), \theta^s(w''))$ is not differentiable. (The proof is obtained by a simple computation.) 
\begin{Lemma}[{\cite[{p.137}]{MR2313087}}] \label{lm:reg_int_part} Let $\rho:\real\to \real$ be a $C^\infty$ function supported on $[-1,1]$ such that $\int \rho(s) ds =1$. 
Let $\vartheta\in C^{2}(\real)$ and $g\in C_c^{0}(\real)$.  If $\vartheta'(s)\neq 0$ on a neighborhood of $\supp g$, then we have, for sufficiently small $\varepsilon>0$, that 
\begin{equation}\label{eq:ibp}
\int e^{i \vartheta(s)} g(s) ds=i \int e^{i \vartheta(s)}\cdot (g_{\varepsilon}/\vartheta')'(s) ds+\int e^{i \vartheta(s)} (g(s)-g_{\varepsilon}(s)) ds
\end{equation}
where $g_{\varepsilon}=\rho_{\varepsilon}*g$ and $\rho_{\varepsilon}(s)=\varepsilon^{-1}\rho(\varepsilon^{-1}s)$. 
\end{Lemma}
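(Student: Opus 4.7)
The plan is to prove this by decomposing $g = g_\varepsilon + (g - g_\varepsilon)$ inside the integral and then performing a genuine integration by parts on the smooth piece $g_\varepsilon$. The nonsmooth remainder $g - g_\varepsilon$ is simply left as the second term on the right-hand side of \eqref{eq:ibp}, while the smooth piece carries the entire ``integration by parts" content.

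First, I would fix a bounded open neighborhood $U$ of $\supp g$ on which $\vartheta'$ is bounded away from zero (such a $U$ exists by continuity of $\vartheta'$ and compactness of $\supp g$). Since $\rho_\varepsilon$ is supported in $[-\varepsilon,\varepsilon]$, the support of $g_\varepsilon = \rho_\varepsilon * g$ is contained in the $\varepsilon$-neighborhood of $\supp g$; choosing $\varepsilon>0$ sufficiently small, we guarantee $\supp g_\varepsilon \subset U$, and in particular $g_\varepsilon$ has compact support contained in the open set where $\vartheta'\neq 0$. Note also that $g_\varepsilon$ is of class $C^\infty$ (it is the convolution of a continuous compactly supported function with a smooth function), and since $\vartheta\in C^2$ with $\vartheta'\neq 0$ on $\supp g_\varepsilon$, the quotient $g_\varepsilon/\vartheta'$ is of class $C^1$ with compact support.

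The key step is the algebraic identity
\[
e^{i\vartheta(s)} g_\varepsilon(s) = \frac{g_\varepsilon(s)}{i\vartheta'(s)}\cdot\frac{d}{ds}\left(e^{i\vartheta(s)}\right),
\]
valid on $\supp g_\varepsilon$. Integrating over $\real$ and applying the standard integration by parts formula (which is legitimate since all involved functions are $C^1$ with compact support, so boundary terms vanish), we obtain
\[
\int e^{i\vartheta(s)} g_\varepsilon(s)\,ds = -\int e^{i\vartheta(s)}\cdot \frac{d}{ds}\!\left(\frac{g_\varepsilon(s)}{i\vartheta'(s)}\right)ds = i\int e^{i\vartheta(s)}\cdot (g_\varepsilon/\vartheta')'(s)\,ds.
\]
Combining this with the trivial decomposition $\int e^{i\vartheta} g\,ds = \int e^{i\vartheta}(g - g_\varepsilon)\,ds + \int e^{i\vartheta} g_\varepsilon\,ds$ yields the desired formula \eqref{eq:ibp}.

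There is no substantive obstacle here: the only point to verify carefully is that $\varepsilon$ is small enough for $\supp g_\varepsilon$ to lie within the region where $\vartheta'\neq 0$, so that $g_\varepsilon/\vartheta'$ is a well-defined $C^1$ compactly supported function on which classical integration by parts applies. This regularization is precisely what allows us to bypass the fact that $g$ itself is only continuous.
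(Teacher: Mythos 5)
Your proof is correct and is the standard computation the paper alludes to (the paper itself only remarks that the proof is a simple computation and cites a reference rather than giving details): split $g=g_\varepsilon+(g-g_\varepsilon)$, note that $g_\varepsilon=\rho_\varepsilon*g$ is $C^\infty_c$ and, for $\varepsilon$ small, supported where $\vartheta'\neq 0$ so that $g_\varepsilon/\vartheta'$ is $C^1_c$, then integrate by parts via $e^{i\vartheta}g_\varepsilon=\frac{g_\varepsilon}{i\vartheta'}(e^{i\vartheta})'$. All hypotheses are used exactly where needed and the sign bookkeeping ($-1/i=i$) is right.
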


Now we start the proof in the case (II). 
As in the case (I), we estimate the integral \eqref{eq:int_xdd}. 
To fix ideas, we first proceed with the assumption that  
\begin{equation}\label{cond:yy}
|\omega|^{1/2} |y-y'|\ge  \lambda^{1/2}\quad \text{for $w=(x,y)$ and $w'=(x',y')$}
\end{equation}
and also   
\begin{equation}\label{cond:yy0}
\frac{1}{2}\le \frac{\tilde{\lambda}^{-1}|\tilde{y}-\tilde{y}'|}{|y-y'|}\le 2\quad \text{for $\tilde{w}=(\tilde{x},\tilde{y})$ and $\tilde{w}'=(\tilde{x}',\tilde{y}')$}.
\end{equation}
We are going to apply   
Lemma \ref{lm:reg_int_part} to the integral \eqref{eq:int_xdd} with respect to $x''$,  with setting 
\[
 \vartheta(x'')=(\xi_x-\xi'_x)\lambda^{-1} x'', \qquad \varepsilon=\frac{\lambda}{\Delta_{\bj} \cdot  |\omega|^{1/2} |{y}-{y}'|}\cdot |\omega|^{-1/2}.
\]
In order to evaluate the result of integration by parts, we prepare a few simple estimates. 
First we note that, from \eqref{eq:wxieta} and \eqref{cond:yy}, we can estimate $\varepsilon$ from above and below as\footnote{For the left-most inequality, recall that we have $|\omega|>\omega_\sharp$ and take large $\omega_\sharp$ depending on $t_\sharp$.
}
\[
|\omega|^{-1+\theta_*}\ll C_*^{-1}\lambda\cdot  \Delta_{\bj}^{-1} \cdot  \konst^{-1}|\omega|^{-1/2}\le \varepsilon\le \lambda^{1/2} \Delta_{\bj}^{-1} |\omega|^{-1/2}\ll \konst |\omega|^{-1/2}.
\] 
Hence we can apply  \eqref{eq:tv5} of Lemma \ref{lm:e_approx} with setting $h=\varepsilon$ and get   
\begin{align*}
|\theta^s(x''+\varepsilon \tau,y'')-\theta^s(x'',y'')|
\le C_* \varepsilon \log (\konst \Delta_{\bj}) \quad \text{for $\tau\in [-1,1]$.} 
\end{align*}
This together with \eqref{cond:yy0} and  the fact $C_*^{-1}<|\lambda \tilde{\lambda}|<C_*$ yield that, for $\tau\in [-1,1]$, 
\begin{align*}
&|\exp(i\eta (\tilde{y}-\tilde{y'}) \theta^s(x''+\varepsilon \tau,y''))-\exp(i\eta (\tilde{y}-\tilde{y'})\theta^s(x'',y''))|\\
&\qquad \le C_* \langle  \omega\rangle \cdot\tilde{\lambda} |y-y'|\cdot  \varepsilon  \log (\konst \Delta_{\bj})\le C_* \Delta_{\bj}^{-1}
\cdot \log (\konst \Delta_{\bj})<C_*\Delta_{\bj}^{-1/2}.
\end{align*}
Similarly we get, from \eqref{eq:tv6} in Lemma \ref{lm:e_approx}, that, for $\tau\in [-1,1]$,
\begin{align*}
&|\exp(i\eta (\tilde{x}-\tilde{x'}) \theta^u(x''+\varepsilon \tau,y''))-\exp(i\eta (\tilde{x}-\tilde{x'})\theta^u(x'',y''))|
\\
&\qquad\qquad < C_* \cdot \konst^3\pomega^{-1/2} \cdot \log |\omega| < C_*\Delta_{\bj}^{-1/2}.
\end{align*}
From \eqref{cond:yy0} and \eqref{eq:bB}, we have also
\[
|\beta \lambda^{-1}\tilde{\lambda}^{-1} \eta (\tilde{y}-\tilde{y}')\cdot \varepsilon|\le C_* t_\sharp\cdot \Delta_{\bj}^{-1}< C_*\Delta_{\bj}^{-1/2}.
\]
On the other hand, from \eqref{eq:wxieta}, \eqref{eq:diffxi} and \eqref{cond:yy}, we have
\[
\lambda^{-1}|\xi_x-\xi'_x|\ge C_*^{-1}\lambda^{-1} \Delta_{\bj}\cdot |\omega|\cdot |y-y'|\ge C_*^{-1} \lambda^{-1/2}\Delta_{\bj}\cdot |\omega|^{1/2}.
\]

Now we can apply Lemma \ref{lm:reg_int_part} to the integral in \eqref{eq:int_xdd} with respect to the variable $x''$. 
We write $g(x'')=g(x'';y''; w,\xi;w',\xi';\tilde{w}, \tilde{w}';\eta)$ for the part of the integrand of \eqref{eq:int_xdd} other than the factor $e^{i\vartheta(x'')}$. 
Note that  the factors on the last two lines of \eqref{eq:int_xdd} vary relatively slowly with respect to $x''$ as well as the factor $ \mathcal{X}'_{\bj\to \bj'}(x'',y'')^2$ on the first line. Hence, if we put
\[
h_\nu(w'')=h_\nu(w'';w,w',\tilde{w},\tilde{w}')=\langle |\omega|^{1/2}|\Lambda^{-1}(\tilde{w}+w'')-w|\rangle^{-\nu}\cdot  
\langle |\omega|^{1/2}|\Lambda^{-1}(\tilde{w}'+w'')-w'|\rangle^{-\nu}
\] 
for $\nu>0$ and let $t_\sharp$ and $\omega_\sharp$ be larger if necessary, the resulting terms are estimated as   
\[
\|g_{\varepsilon}-g\|_\infty \le  C_*(\nu)  \Delta_{\bj}^{-1/2}\cdot h_\nu(w'')
\]
and
\[
\left\|\left(\frac{g_{\varepsilon}}{\vartheta'}\right)'\right\|_{\infty}=\left\|\frac{g'_{\varepsilon}}{\vartheta'}\right\|_{\infty}=C_*(\nu)  \Delta_{\bj}^{-1/2}\cdot 
\frac{\varepsilon^{-1} }{\lambda^{-1}|\xi_x-\xi'_x|}\cdot  h_\nu(w'')
\le C_*(\nu)  \Delta_{\bj}^{-1/2}\cdot h_\nu(w'')
\]
for arbitrarily large $\nu>0$. Therefore, integrating the result with respect to $w''$,   we obtain 
\begin{equation}\label{eq:Iest0}
|I(w,\xi;w',\xi';\tilde{w},\tilde{w}';\eta)| \le 
\frac{C_*(\nu)\cdot  \Delta_{\bj}^{-1/2}\cdot |\omega|^{-1} }
{\langle |\omega|^{1/2} |\Lambda^{-1}(\tilde{w}-\tilde{w}')- (w-w')| \rangle^{\nu}}
\end{equation}
for arbitrarily large $\nu>0$, under the assumptions \eqref{cond:yy} and \eqref{cond:yy0}.

The last estimate \eqref{eq:Iest0} corresponds to \eqref{eq:estimate_I} in the case (I). 
But we can not conclude the required estimate from \eqref{eq:Iest0} in the case (II) because $\Delta_{\bj}$ may be large and hence we can not follow the argument in the  last part of the proof in the case (I) using the Schur test.
In order to resolve this problem\footnote{\label{fn:1}The precise point of the problem is that the non-integrability condition $(NI)_\rho$ does not tell how the quantity on the right hand side of \eqref{eq:tv} depends on $\alpha$. Below we make use of the fact that we can get \eqref{eq:tv} by (regularized) integration by parts when $|\alpha|$ is sufficiently large and then we can see how the result depend on $\alpha$. This argument is valid only in the case (II).  }, we actually need a little  more information on the result of (regularized) integration by parts. 
As the result of (regularized) integration by parts, the integral $I(w,\xi;w',\xi';\tilde{w},\tilde{w}';\eta)$  is written in the form
\begin{equation}\label{exp_I}
\int \exp(i(\xi-\xi')\Lambda^{-1} w'') \cdot I_0(w'';w,\xi_x;w',\xi'_x;\tilde{w},\tilde{w}';\eta) dw''
\end{equation}
where, correspondingly to the two terms on right-hand side of \eqref{eq:ibp}, $I_0(\cdot)$ is of the form 
\begin{align*}
I_0(w'';w,\xi_x;w',\xi'_x;\tilde{w},\tilde{w}';\eta)=
&\frac{I_1(w'';w,w';\tilde{w},\tilde{w}';\eta)}{\lambda^{-1}(\xi_x-\xi'_x)}+I_2(w'';w,w';\tilde{w},\tilde{w}';\eta).
\end{align*}
From the argument above, we already have  
\begin{align*}
&|I_0(w'';w,\xi_x;w',\xi'_x;\tilde{w},\tilde{w}';\eta)|\le C_*(\nu) \Delta_{\bj}^{-1/2}\cdot h_\nu(w''),\\
&|I_1(w'';w,w';\tilde{w},\tilde{w}';\eta)|\le C_*(\nu) \Delta_{\bj}^{-1/2}\cdot \varepsilon^{-1}\cdot h_\nu(w''),\quad\text{ and}\\
&|I_2(w'';w,w';\tilde{w},\tilde{w}';\eta)|\le C_*(\nu)\Delta_{\bj}^{-1/2}\cdot h_\nu(w'').
\end{align*}
Note that the dependence of $I_0(\cdot)$ on the variables $\xi$ and $\xi'$ is rather simple. Indeed, using \eqref{eq:diffxi} and  the assumption \eqref{cond:yy}, it is easy to see  
\begin{align}\label{eq:estimage_I2}
&|\partial^k_{\xi_x}\partial^{k'}_{\xi'_x}
I_0(w'';w,\xi_x;w',\xi'_x;\tilde{w},\tilde{w}';\eta)|
\le 
C_{*}(\nu,k,k')\cdot   (\lambda^{1/2}\Delta_{\bj} |\omega|^{1/2} )^{-k-k'} h_{\nu}(w'')
\end{align} 
for any $k,k'\ge 0$ with $(k,k')\neq (0,0)$. This together with the estimate on $I_0(\cdot)$ above gives 
\begin{align}\label{eq:estimage_I3}
|\partial^k_{\xi_x}\partial^{k'}_{\xi'_x} 
I_0(w'';w&,\xi_x;w',\xi'_x;\tilde{w},\tilde{w}';\eta)|
\le 
\frac{C_{*}(\nu,k,k') \cdot \max\{\lambda^{-1/4}, \Delta_{\bj}^{-1/2}\}\cdot h_{\nu}(w'') }
{ (e^{\delta_* t_\sharp}\Delta_{\bj} |\omega|^{1/2} )^{k+k'}} 
\end{align} 
for any $k,k'\ge 0$.

\begin{Remark}
\label{rem:except_case}
Notices that the estimates above are obtained under the assumptions \eqref{cond:yy} and \eqref{cond:yy0}. For the proof of the next lemma we note that, in the case where the condition \eqref{cond:yy0} fails while \eqref{cond:yy} remains valid, we have the estimate \eqref{eq:Iest0} with $\Delta_{\bj}^{-1/2}$ replaced by $\lambda^{-1}$, that is, 
\begin{equation}\label{eq:Iest0-bis}
|I(w,\xi;w',\xi';\tilde{w},\tilde{w}';\eta)| \le 
\frac{C_*(\nu)\cdot \lambda^{-1}\cdot |\omega|^{-1} }
{\langle |\omega|^{1/2} |\Lambda^{-1}(\tilde{w}-\tilde{w}')- (w-w')| \rangle^{\nu}}.
\end{equation}
Indeed it is easy to see that simple estimates \emph{without} using integration by parts yield this estimate \eqref{eq:Iest0-bis} without the factor $\lambda^{-1}$ (or the estimate \eqref{eq:Iest0} without the factor $\Delta_{\bj}^{-1/2}$). But, if the condition \eqref{cond:yy0} fails while \eqref{cond:yy} remains valid, we have
\[
|(\tilde{\lambda}^{-1} (\tilde{y}+y'')-y)-(\tilde{\lambda}^{-1} (\tilde{y}'+y'')-y')|\ge |\tilde{\lambda}^{-1}(\tilde{y}-\tilde{y}')-(y-y')|\ge |y-y'|/2\ge |\omega|^{-1/2} \lambda^{1/2}
\]
and therefore we can get the estimate \eqref{eq:Iest0-bis}, making use of arbitrariness of $\nu$. Let us note also that, in such argument without using integration by parts, we may express \eqref{eq:int_xdd} in the form \eqref{exp_I} with the function $I_0(\cdot)$ independent of the variables $\xi$ and $\xi'$. 
\end{Remark}

The next lemma is our remedy to the problem in the case (II) mentioned above. Recall the definition of the function $\mathcal{X}_{\bj\to \bj'}$ and consider another $C^\infty$ function $\widetilde{\mathcal{X}}_{\bj\to \bj'}:\real^2\to [0,1]$ that takes value $1$ on  the $4\konst^{1/2}\langle \omega(\bj)\rangle^{-1/2}$-neighborhood of the subset $\tilde{\pi}(\supp \rho_{\bj\to \bj'})$ and is supported in the  $8\konst^{1/2}\langle \omega(\bj)\rangle^{-1/2}$-neighborhood of that subset. Clearly the estimate parallel to \eqref{eq:Dalpha} holds for this function.  
Let $\widetilde{\Psi}_{\bj}:\real^{2+2+1}\to [0,1]$ be the function defined by 
\[
\widetilde{\Psi}^0_{\bj}(w,\xi,\eta)=  \chi\bigg(8^{-1}e^{-\delta_* t_\sharp} |\omega|^{-1/2} \|\textcolor{\revisionColor}{\bDelta_{\bj}^{-1}}(\xi-\eta\cdot  e_{\bj}(w))\|\bigg)\cdot \widetilde{\mathcal{X}}_{\bj\to \bj'}(w).
\]
Notice that this function differs from ${\Psi}_{\bj}$ in \eqref{eq:Psibj} only in the factor $8^{-1}$ and the multiplication by the function $\widetilde{\mathcal{X}}_{\bj\to \bj'}(w)$. 
\begin{Lemma}\label{lm:opAtA} We have 
\[
\|\pBargmann^*\circ \widetilde{\Psi}^0_{\bj}\circ \mathbb{A}^*\circ \mathcal{M}(\Psi_{\bj'}\cdot \mathcal{X}'_{\bj\to \bj'} )^2\circ   \mathbb{A}\circ \widetilde{\Psi}^0_{\bj} \circ \pBargmann:L^2(\real^{2+1})\to L^2(\real^{2+1})\|\le C_*
\max\{\lambda^{-1/8},\Delta_{\bj}^{-1/4}\}.
\]
\end{Lemma}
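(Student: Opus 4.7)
The operator in the statement has the form $T^{*}T$ with
\[
T \;=\; \mathcal{M}(\Psi_{\bj'}\cdot \mathcal{X}'_{\bj\to\bj'})\circ \mathbb{A}\circ \widetilde{\Psi}^{0}_{\bj}\circ \pBargmann \;:\; L^{2}(\real^{2+1})\to L^{2}(\real^{2+2+1}),
\]
so its norm equals $\|T\|^{2}$. The plan is to apply Schur's test directly to the integral kernel of $T^{*}T$ on $L^{2}(\real^{2+1})$. Inserting the explicit Gaussian form of $\phi_{w,\xi,\eta}$ and the representation \eqref{eq:AtA}--\eqref{eq:Kexpression_by_I} of $\mathbb{A}^{*}M^{2}\mathbb{A}$, this kernel is the integral over $(w,\xi,w',\xi',\eta)$ of
\[
\phi_{w,\xi,\eta}(w_{0},z_{0})\,\widetilde{\Psi}^{0}_{\bj}(w,\xi,\eta)\,\mathcal{K}(w',\xi';w,\xi;\eta)\,\widetilde{\Psi}^{0}_{\bj}(w',\xi',\eta)\,\overline{\phi_{w',\xi',\eta}(w'_{0},z'_{0})}.
\]

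The central technical input is the representation \eqref{exp_I} of $I$ together with the derivative bounds \eqref{eq:estimage_I3} for $I_{0}$. The wave packets carry the oscillating factors $\exp(i\xi\cdot(w_{0}-w/2))$ and $\exp(-i\xi'\cdot(w'_{0}-w'/2))$, which allows repeated integration by parts in $\xi_{x}$ and $\xi'_{x}$ against the dependence of $I_{0}$ on these variables. Each such integration by parts costs a factor of order $(e^{\delta_{*}t_{\sharp}}\Delta_{\bj}|\omega|^{1/2})^{-1}$ per derivative (by \eqref{eq:estimage_I3}) and produces one power of decay in $x_{0}-x/2$ and $x'_{0}-x'/2$. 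Iterated to arbitrary order, this yields polynomial decay of the kernel in those directions at rate $e^{\delta_{*}t_{\sharp}}\Delta_{\bj}|\omega|^{1/2}$; the Gaussian factors in the wave packets supply the Gaussian decay in the remaining coordinates, ensuring convergence of the Schur sums. The smallness factor $\max\{\lambda^{-1/4},\Delta_{\bj}^{-1/2}\}$ built into $|I_{0}|$ (and hence into $|\mathcal{K}|$ via \eqref{eq:Kexpression_by_I} and \eqref{eq:zeta_int}) then propagates through Schur's test into the bound on $\|T^{*}T\|$. The regions where the hypotheses \eqref{cond:yy} or \eqref{cond:yy0} fail are handled by the alternative estimate \eqref{eq:Iest0-bis} of Remark \ref{rem:except_case} and by the elementary $|\omega|^{-1}$ bound on $I$ coming from the Gaussian $\exp(-\langle\eta\rangle(\|\tilde{w}\|^{2}+\|\tilde{w}'\|^{2})/2)$ together with the rapid smallness of the remaining factors.

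The step I expect to be the main obstacle is the careful bookkeeping in Schur's test. The support of $\widetilde{\Psi}^{0}_{\bj}$ at fixed $\eta$ has $(w,\xi)$-volume of order $e^{2\delta_{*}t_{\sharp}}\,\Delta_{\bj}\,|\omega|$, because the matrix $\bDelta_{\bj}$ stretches the $\xi$-envelope by the factor $\Delta_{\bj}$; this large volume must be absorbed against the smallness and decay coming from $\mathcal{K}$, while also preserving the Gaussian mass of the wave packets at scale $|\omega|^{-1/2}$. Balancing these competing contributions and using the small factor $e^{\delta_{*}t_{\sharp}}$ to dominate the finitely many $t_{\sharp}$-polynomial losses produces the stated bound $\max\{\lambda^{-1/8},\Delta_{\bj}^{-1/4}\}$, which is weaker than what one might formally hope for from the $|I_{0}|$-estimate alone but amply sufficient for the later use in \eqref{eq:claim_on_A}.
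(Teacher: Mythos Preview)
Your plan is correct and follows essentially the same route as the paper: write the kernel $\widetilde{\mathcal{K}}(w'_\dag,z';w_\dag,z)$ explicitly via \eqref{eq:wtK}--\eqref{eq:Kexpression_by_I2}, perform the $\zeta,\xi,\xi',\eta$ integrations as Fourier-type estimates using the derivative bounds \eqref{eq:estimage_I3} on $I_0$, treat the regions where \eqref{cond:yy} or \eqref{cond:yy0} fail exactly as you describe, and conclude by Schur's test on $L^2(\real^{2+1})$. One small correction to your phase bookkeeping: the total $\xi$-oscillation is not just the wave-packet phase $\exp(i\xi\cdot(w_\dag-w/2))$ but also picks up $\exp(i\xi\cdot\Lambda^{-1}(w''+\tilde{w}))$ from the inner kernel and the prefactor $e^{-i\xi w/2}$, so the net phase is $\xi\cdot(\Lambda^{-1}(w''+\tilde{w})-w_\dag)$ and the resulting decay is in $\bDelta_{\bj}(\Lambda^{-1}(w''+\tilde{w})-w_\dag)$ at scale $e^{\delta_*t_\sharp}|\omega|^{1/2}$; tracking this through the subsequent $w,w',w'',\tilde{w},\tilde{w}'$ integrations is precisely the bookkeeping you anticipate as the main obstacle.
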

Before proving this lemma, let us see that Lemma \ref{lem:main_est} follows from it. 
Letting $\mathbf{1}_{\bj\to \bj'}$ be the multiplication by the characteristic function of the support of $\Psi^0_{\bj}\cdot \mathcal{X}_{\bj\to \bj'}$ and writing,
\[
D=(1-\widetilde{\Psi}^0_{\bj}\circ \pBargmann\circ \pBargmann^*)\circ \mathbf{1}_{\bj\to \bj'}
\quad\text{and}\quad
D^*=\mathbf{1}_{\bj\to \bj'} \circ (1- \pBargmann\circ \pBargmann^*\circ \widetilde{\Psi}^0_{\bj}),
\] 
we see
\begin{align*}
&\mathbf{1}_{\bj\to \bj'}\circ \mathbb{A}^*\circ \mathcal{M}( \Psi_{\bj'}\cdot \mathcal{X}'_{\bj\to \bj'})^2\circ  \mathbb{A} \circ \mathbf{1}_{\bj\to \bj'}\\
&\quad\qquad-\mathbf{1}_{\bj\to \bj'}\circ
\pBargmann\circ \bigg[\pBargmann^*\circ \widetilde{\Psi}^0_{\bj}\circ \mathbb{A}^*\circ \mathcal{M}(\Psi_{\bj'}\cdot \mathcal{X}'_{\bj\to \bj'})^2\circ  \mathbb{A}\circ \widetilde{\Psi}^0_{\bj}\circ \pBargmann\bigg]\circ \pBargmann^*\circ \mathbf{1}_{\bj\to \bj'}\\
&= D^*\circ \mathbb{A}^*\circ \mathcal{M}(\Psi_{\bj'}\cdot \mathcal{X}'_{\bj\to \bj'})^2\circ \mathbb{A} \circ \mathbf{1}_{\bj\to \bj'}\\
&\qquad \qquad +\mathbf{1}_{\bj\to \bj'}\circ\pBargmann\circ \pBargmann^*\circ \widetilde{\Psi}^0_{\bj} \circ \mathbb{A}^*\circ \mathcal{M}(\Psi_{\bj'}\cdot \mathcal{X}'_{\bj\to \bj'} )^2\circ \mathbb{A}\circ D.
\end{align*} 
From the estimate on the kernel of $\pBargmannP=\pBargmann\circ \pBargmann^*$ in  Lemma \ref{lm:pBargmannL2}, it is clear that the operator norms of $D$ and $D^*$ with respect to the $L^2$ norm are  bounded by 
\[
C_* \exp (-e^{\delta_* t_\sharp})\ll \lambda^{-1/8}.
\] 
Since the operators $\mathbb{A}$, $\pBargmann$ and $\pBargmann^*$ do not increase the $L^2$ norm, the same estimate holds for the operator norms of the two operators on the right-hand side above. 
Therefore the required estimate \eqref{eq:claim_on_A} follows from Lemma \ref{lm:opAtA}.

\begin{proof}[Proof of Lemma \ref{lm:opAtA}]
The proof is easy once we have the estimate \eqref{eq:estimage_I3} and note Remark \ref{rem:except_case}. 
Let us write the operator under consideration as an integral operator
\[
(\pBargmann^*\circ \widetilde{\Psi}^0_{\bj}\circ \mathbb{A}^*\circ  \mathcal{M}( \Psi_{\bj'}\cdot \mathcal{X}_{\bj\to \bj'} )^2\circ \mathbb{A}\circ \widetilde{\Psi}^0_{\bj}\circ \pBargmann)\,
u(w'_\dag, z')
=\int \widetilde{\mathcal{K}}(w'_\dag, z';w_\dag,z) u(w_\dag,z)\, dw_\dag dz.
\]
From the description \eqref{eq:AtA} of   $\mathbb{A}^*\circ  (\Psi_{\bj'}\cdot \mathcal{X}_{\bj\to \bj'} )^2\circ \mathbb{A}$, we can write the kernel as
\begin{align}\label{eq:wtK}
\widetilde{\mathcal{K}}(w'_\dag, z';w_\dag,z) 
=&\int dwd\xi dw'd\xi' d\eta \cdot e^{-i\xi w/2+i\xi' w'/2}\cdot \mathcal{K}(w',\xi';w,\xi;\eta)\\
&\quad \cdot  \widetilde{\Psi}^0_{\bj}(w,\xi,\eta)\cdot 
\widetilde{\Psi}^0_{\bj}(w',\xi',\eta)\cdot  \overline{\phi_{w,\xi,\eta}(w_\dag,z)}\cdot \phi_{w',\xi',\eta}(w'_\dag,z')\notag
\end{align}
where 
\begin{align}\label{eq:Kexpression_by_I2}
\mathcal{K}(w',&\xi';w,\xi;\eta)={q}_{\omega'}(\eta)^2 \cdot \langle \eta\rangle^{2}\cdot\int d\zeta d\tilde{w} d\tilde{w}' dw''\cdot \exp(-\langle \eta\rangle(\|\tilde{w}\|^2+\|\tilde{w}'\|^2)/2) \\
&\cdot \exp(i\zeta (\tilde{w}-\tilde{w}'))\cdot \chi\left(\textcolor{\revisionColor}{8^{-1}}e^{-\delta_* t_\sharp}|\omega|^{-1/2}\cdot  \|\textcolor{\revisionColor}{\bDelta_{\bj'}^{-1}}\zeta\|\right)^2\cdot \mathcal{X}_{\bj\to \bj'}(w'')^2\notag\\
&\cdot \exp(i(\xi \cdot \Lambda^{-1}(w''+\tilde{w})-\xi' \cdot \Lambda^{-1}(w''+\tilde{w}'))) \cdot  I_0(w'';w,\xi;w',\xi';\tilde{w},\tilde{w}';\eta)\notag\\&
\cdot \exp(-i\eta  \varpi \Lambda^{-1}(\tilde{w}-\tilde{w}')-i\eta(\sigma(\tilde{w})-\sigma(\tilde{w}'))).\notag
\end{align}
For a while we restrict the domain of integration \eqref{eq:wtK}  to the region where the condition \eqref{cond:yy} holds. 
We are going to estimate the integration with respect to the variables $\zeta$, $\xi$, $\xi'$ and $\eta$. 
For the integration with respect to $\zeta$, we apply the plane estimate \eqref{eq:zeta_int} on Fourier transform.
For the integration with respect to the other variables $\xi$, $\xi'$ and $\eta$, we apply similar estimates, making use of the estimates \eqref{eq:Iest0} and \eqref{eq:estimage_I2} if the condition \eqref{cond:yy0} holds and recalling Remark \ref{rem:except_case} otherwise.
Then we find
\begin{align*}
|\widetilde{\mathcal{K}}&(w'_\dag, z';w_\dag,z)|\le C_*(\nu) \max\{ \lambda^{-1/4}, \Delta_{\bj}^{-1/2}\} \cdot \langle z-z'\rangle^{-\nu} \cdot |\omega|^3 \cdot  \int dw dw' dw'' d\tilde{w} d\tilde{w}'  \\
&\quad  \cdot  \langle |\omega|^{1/2}
 \|\tilde{w}\|\rangle^{-\nu}
\cdot  \langle |\omega|^{1/2} \|\tilde{w}'\|\rangle^{-\nu}
\cdot (e^{\delta_* t_\sharp}\Delta_{\bj'}^{1/2}|\omega|^{1/2})^2
\cdot 
\langle e^{\delta_*t_\sharp} |\omega|^{1/2}\|\bDelta_{\bj'}(\tilde{w}-\tilde{w}')\|\rangle^{-\nu}\\
& \quad\cdot \langle e^{\delta_* t_\sharp}|\omega|^{1/2}\|\bDelta_{\bj}(\Lambda^{-1}(w''+\tilde{w})-w_\dag)\|\rangle^{-\nu}
\cdot \langle e^{\delta_* t_\sharp}|\omega|^{1/2}\|\bDelta_{\bj}^{-1}(\Lambda^{-1}(w''+\tilde{w}')-w'_\dag)\|\rangle^{-\nu} \\
&\quad  \cdot (e^{\delta_* t_\sharp}\Delta_{\bj}^{1/2}|\omega|^{1/2})^4 \cdot \langle |\omega|^{1/2}\|w-w_\dag\|\rangle^{-\nu}
\cdot \langle |\omega|^{1/2}\|w'-w'_\dag\|\rangle^{-\nu}
.
\end{align*}
By estimating the integral above with respect to $w$, $w'$, $w''$, we continue
\begin{align*}
|\widetilde{\mathcal{K}}(w'_\dag, z';w_\dag,z)|&\le C_*(\nu) \max\{ \lambda^{-1/4}, \Delta_{\bj}^{-1/2}\} \cdot  \langle z-z'\rangle^{-\nu} \cdot  |\omega|  \int d\tilde{w} d\tilde{w}'  \\
&\quad\cdot  \langle |\omega|^{1/2}
 \|\tilde{w}\|\rangle^{-\nu}
  \langle |\omega|^{1/2} \|\tilde{w}'\|\rangle^{-\nu}
 (e^{\delta_* t_\sharp}\Delta_{\bj'}^{1/2}|\omega|^{1/2})^2  
\langle e^{\delta_*t_\sharp} |\omega|^{1/2}\|\textcolor{\revisionColor}{\bDelta_{\bj'}}(\tilde{w}-\tilde{w}')\|\rangle^{-\nu}\\
& \quad \cdot (e^{\delta_* t_\sharp}\Delta_{\bj}^{1/2}|\omega|^{1/2})^2 \cdot \langle e^{\delta_* t_\sharp}|\omega|^{1/2}\|\Delta_{\bj}(\Lambda^{-1}(\tilde{w}-\tilde{w}')-(w_\dag-w'_\dag)))\|\rangle^{-\nu}.
\end{align*}
Further, changing variables  $(\tilde{w},\tilde{w}')$ to $(\tilde{w}, \tilde{w}'':=\tilde{w}'-\tilde{w})$ and computing the integral above with respect to $\tilde{w}$ and then to $\tilde{w}''$, we reach the estimate
\begin{align}\label{eq:tildeKalphabeta3}
|\widetilde{\mathcal{K}}(w'_\dag, z';w_\dag,z)|&\le C_*(\nu) \max\{ \lambda^{-1/4}, \Delta_{\bj}^{-1/2}\} \cdot  \langle z-z'\rangle^{-\nu}\int  d\tilde{w}''  \\
&\quad \cdot (e^{\delta_* t_\sharp}\Delta_{\bj'}^{1/2}|\omega|^{1/2})^2  \cdot  \langle e^{\delta_*t_\sharp} |\omega|^{1/2}\|\textcolor{\revisionColor}{\bDelta_{\bj'}}\tilde{w}''\|\rangle^{-\nu}\notag\\
&\quad  \cdot (e^{\delta_* t_\sharp}\Delta_{\bj}^{1/2}|\omega|^{1/2})^2 \cdot \langle e^{\delta_* t_\sharp}|\omega|^{1/2}
\|\textcolor{\revisionColor}{\bDelta_{\bj}}(\Lambda^{-1}\tilde{w}''-(w_\dag-w'_\dag)))\|\rangle^{-\nu}\notag\\
&\le C_*(\nu) \cdot t_\sharp^{\nu+2}\cdot \max\{ \lambda^{-1/4}, \Delta_{\bj}^{-1/2}\} \cdot \langle z-z'\rangle^{-\nu} 
\notag
\\
&\quad \cdot \lambda^{-1} (e^{\delta_* t_\sharp}\Delta_{\bj}^{1/2}|\omega|^{1/2})^2 \cdot  \langle e^{\delta_* t_\sharp}|\omega|^{1/2}
\|\textcolor{\revisionColor}{\bDelta_{\bj}}(\Lambda^2+1)^{-1/2}(w_\dag-w'_\dag)\|\rangle^{-\nu}.\notag
\end{align}
In the last inequality, we have used the fact that the ratio between $\Delta_{\bj}$ and $\Delta_{\bj'}$ is bounded by $C_*t_\sharp$, as we noted in the proof of Lemma \ref{lm:hook}.

Now recall that we have restricted the domain of the integration by the condition \eqref{cond:yy} in the argument above. For the integral on the remaining region where \eqref{cond:yy} does not hold, we can argue in parallel without using integration by parts and get 
 the corresponding estimates without the term $\max\{ \lambda^{-1/4}, \Delta_{\bj}^{-1/2}\}$. 
But, since we have the restriction  $|\omega|^{1/2} |y-y'|< \lambda^{1/2}$ in this case,
we restore the additional factor 
$C_* \lambda^{-1/2}$ when we estimate the integral with respect to $w$ and $w'$. Therefore we obtain the estimate \eqref{eq:tildeKalphabeta3} without the restriction of the domain.
Finally we conclude the required estimate  from \eqref{eq:tildeKalphabeta3} by using Schur test, provided that $t_\sharp$ is sufficiently large.  
\end{proof}

We have finished the proof of Lemma \ref{lem:main_est}. 
To complete the proof of Proposition \ref{prop:central_components}, we recall the expression \eqref{eq:bbLexp} of $\hat{\bbL}^t_{\bj\to\bj'}$ and consider the effect of the pre-composition of  $\mathbb{G}$. 
\begin{Remark}\label{rem:smalldiff}
As we will see, the estimates given below are rather crude  and can be obtained in many different ways. But one have to be attentive to the fact that   $f^t_{\bj\to \bj'}$ and $G^t_{\bj\to \bj'}$ are assumed to be only $C^r$ with some $r\ge 3$. 
It makes the argument below a little technical. 
This remark applies also to the proof of Lemma \ref{lm:hyp_component} in the next subsection.
\end{Remark}
Let $\hat{\rho}^t_{\bj\to \bj'}:\real^{2+1}\to \complex$ be the Fourier transform of ${\rho}^t_{\bj\to \bj'}$ solely in the variable $z$: 
\[
\hat{\rho}^t_{\bj\to \bj'}(w,\eta)= 
\int e^{-i\eta z}\cdot  {\rho}^t_{\bj\to \bj'}(w,z) dz.  
\]
In the next lemma, we compare $\mathbb{G}$ with  
the operator
\[
\mathbb{P}:L^2(\supp \psi_{\bj})\to L^2(\real^{2+2+1}), \quad 
\mathbb{P} u(w,\xi,\eta)=\int \hat{\rho}^t_{\bj\to \bj'}(w,\eta-\eta') \cdot 
\pBargmannP (\textcolor{\revisionColor}{\mathcal{X}_{\bj\to \bj'}}\cdot u)(w,\xi,\eta') d\eta'
\]
where $\pBargmannP=\pBargmann\circ \pBargmann^*$ is the Bargmann projection operator in \eqref{eq:pBargmannP}. 
\begin{Lemma}\label{lm:Gneg}
$
\|\mathbb{G}-\mathbb{P}:L^2(\supp \psi_{\bj})\to L^2(\real^{2+2+1})\|\le \konst^{-1/2}$. 
\end{Lemma}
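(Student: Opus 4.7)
The plan is to split $\mathbb{G}-\mathbb{P}$ through the intermediate operator
\[
\mathbb{G}_0 u := \pBargmann\bigl(\rho^t_{\bj\to\bj'}\cdot \pBargmann^*(\mathcal{X}_{\bj\to\bj'}\cdot u)\bigr),
\]
obtained from $\mathbb{G}$ by replacing the composition with $(G^t_{\bj\to\bj'})^{-1}$ by the identity, and to bound each of $\|\mathbb{G}-\mathbb{G}_0\|$ and $\|\mathbb{G}_0-\mathbb{P}\|$ by $\konst^{-1/2}/2$, choosing $\omega_\sharp$ sufficiently large in terms of $\konst$ (equivalently $t_\sharp$).

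For $\mathbb{G}-\mathbb{G}_0$, I would rewrite $\pBargmann\circ\mathcal{L}_G$ with $\mathcal{L}_G v=v\circ(G^t_{\bj\to\bj'})^{-1}$ by changing variable, so that it is integration against a modified wave packet $\tilde\phi^G_{w,\xi,\eta}(x',y',z')$ obtained by evaluating $\phi_{w,\xi,\eta}$ at $(\hat{G}(x',y'),z'+\check{G}(x',y'))$ and multiplying by the Jacobian. Using the $C^3$-estimates of Lemma~\ref{lm:abc} and the fact that the effective $w'$-variables lie in a set of diameter $\lesssim\konst\pomega^{-1/2}$, the displacement satisfies $|\hat{G}(w')-w'|\le\konst\pomega^{-1+\theta_*}$ and the phase satisfies $|\eta\check{G}(w')|\le\konst\pomega^{-1/2+\theta_*}$. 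Since $\phi_{w,\xi,\eta}$ has Gaussian width $\langle\eta\rangle^{-1/2}\sim\pomega^{-1/2}$ in $w$ and oscillation $e^{i\eta z'}$ in $z'$, a direct estimate shows $\|\tilde\phi^G_{w,\xi,\eta}-\phi_{w,\xi,\eta}\|_{L^2}\le C_*\konst\pomega^{-1/2+\theta_*}\|\phi_{w,\xi,\eta}\|_{L^2}$, and a Schur-type argument (in the spirit of Lemma~\ref{lm:pBargmannL2}) then delivers $\|\mathbb{G}-\mathbb{G}_0\|\le\konst^{-1/2}/2$ provided $\omega_\sharp$ is chosen large enough that $\konst\pomega^{-1/2+\theta_*}\le\konst^{-1/2}$ for all $|\omega|\ge\omega_\sharp/2$.

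For $\mathbb{G}_0-\mathbb{P}$, I would insert a further intermediate operator
\[
\widetilde{\mathbb{G}}_0 u(w,\xi,\eta):=\int \overline{\phi_{w,\xi,\eta}(w',z')}\,\rho^t_{\bj\to\bj'}(w,z')\,\pBargmann^*(\mathcal{X}_{\bj\to\bj'} u)(w',z')\,dw'dz',
\]
in which the multiplier is ``frozen'' at the outer center $w$. The difference $\mathbb{G}_0-\widetilde{\mathbb{G}}_0$ involves $\rho^t_{\bj\to\bj'}(w',z')-\rho^t_{\bj\to\bj'}(w,z')$; a Taylor expansion combined with the Gaussian localization $|w'-w|\lesssim\pomega^{-1/2}$ and the derivative bound $\|\nabla_w\rho^t_{\bj\to\bj'}\|_\infty\le C_* e^{C_* t_\sharp}\konst^{-1}\pomega^{1/2}$ from Lemma~\ref{lm:rhot} yields an operator-norm bound of order $C_* e^{C_* t_\sharp}\konst^{-1}\ll\konst^{-1/2}$. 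For $\widetilde{\mathbb{G}}_0-\mathbb{P}$, Fourier transforming in $z'$ turns multiplication by $\rho^t_{\bj\to\bj'}(w,\cdot)$ into convolution against $\hat\rho^t_{\bj\to\bj'}(w,\cdot)$ in $\eta$, and one checks that $\widetilde{\mathbb{G}}_0$ differs from $\mathbb{P}$ only in that its Bargmann prefactor $\langle\eta'\rangle^{1/2}$ and Gaussian width $e^{-\langle\eta'\rangle\|w''-w\|^2/2}$ carry $\eta$ instead of $\eta'$; since $\hat\rho^t_{\bj\to\bj'}(w,\cdot)$ decays rapidly ($\rho^t_{\bj\to\bj'}$ is smooth in~$z$), the effective range $|\eta-\eta'|$ is $O(1)$, and with $\langle\eta\rangle\ge\omega_\sharp/2$ the relative error is $O(|\omega|^{-1})\ll\konst^{-1/2}$.

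The main obstacle is the first step: although Remark~\ref{Re:aboutG} says that $G^t_{\bj\to\bj'}$ rescales to the identity as $|\omega|\to\infty$, the $z$-shift $\check{G}$ is amplified by $\eta\sim\omega$, producing a phase of size $\konst\pomega^{-1/2+\theta_*}$ rather than the naive $\pomega^{-1+\theta_*}$; ensuring this is smaller than $\konst^{-1/2}$ forces $\omega_\sharp$ to be chosen very large in terms of $t_\sharp$, which is permissible but must be tracked carefully against the earlier choices governing $\omega_\sharp$.
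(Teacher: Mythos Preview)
Your proposal is correct and follows essentially the same route as the paper: your intermediate operator $\mathbb{G}_0$ is exactly the paper's $\widetilde{\mathbb{P}}=\pBargmann\circ\mathcal{M}(\rho^t_{\bj\to\bj'})\circ\pBargmann^*\circ\mathcal{M}(\mathcal{X}_{\bj\to\bj'})$, and both pieces $\mathbb{G}-\mathbb{G}_0$ and $\mathbb{G}_0-\mathbb{P}$ are bounded by the same mechanisms (the $C^3$ closeness of $G$ to the identity from Lemma~\ref{lm:abc}, and the derivative bounds on $\rho^t_{\bj\to\bj'}$ from Lemma~\ref{lm:rhot}).

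One detail to tighten in your first step: the paper passes to $\pBargmann^*\circ(\mathbb{G}-\widetilde{\mathbb{P}})$ (using that $\pBargmann$ is an isometric injection) and runs Schur's test on that kernel; the integration over $(w,\xi,\eta)\in\supp\psi_{\bj}$ at fixed $(w',z')$ picks up a factor $e^{2|m|}\Delta_{\bj}|\omega|$ from the $\xi$-volume of $\supp\psi_{\bj}$, yielding the bound $C_* e^{|m|}\Delta_{\bj}^{1/2}|\omega|^{-1/2+2\theta_*}$. Your ``Schur-type argument'' glosses over this factor; it is harmless here because the ambient hypotheses of Proposition~\ref{prop:central_components} give $|m|\le\delta_* t_\sharp$ and $\Delta_{\bj}\le C_*\log\langle\omega\rangle$, but you should make it explicit. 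Your further splitting of $\mathbb{G}_0-\mathbb{P}$ via $\widetilde{\mathbb{G}}_0$ is a clean refinement of what the paper does in one stroke by writing out the combined kernel.
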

Before proving this lemma, we finish the proof of Proposition \ref{prop:central_components} using it.  Since the operator $\mathbb{A}$ does not increase the $L^2$ norm of functions, the lemma above implies 
\[
\|\mathcal{M}( \psi_{\bj'}\cdot \mathcal{X}'_{\bj\to \bj'}) \circ\mathbb{A}\circ (\mathbb{G}- \mathbb{P}):L^2(\supp \psi_{\bj})\to L^2(\supp \psi_{\bj'})\|\le \konst^{-1/2}\ll e^{-\rho_* t_\sharp}.
\]
Let $\mathbf{1}_{\bj\to \bj'}:\real^{2+2+1}\to [0,1]$ be the characteristic function that we have introduced in (the last part of) the proof of Lemma \ref{lem:main_est} and write
\[
\mathcal{M}( \psi_{\bj'}\cdot \mathcal{X}'_{\bj\to \bj'}) \circ\mathbb{A}\circ \mathbb{P}=\mathcal{M}( \psi_{\bj'}\cdot \mathcal{X}'_{\bj\to \bj'}) \circ\mathbb{A}\circ \mathbf{1}_{\bj\to \bj'}\circ \mathbb{P}+\mathcal{M}( \psi_{\bj'}\cdot \mathcal{X}'_{\bj\to \bj'}) \circ\mathbb{A}\circ (1-\mathbf{1}_{\bj\to \bj'})\circ \mathbb{P}.
\]
From Lemma \ref{lem:main_est} on $\mathbb{A}$, the operator norm of the former part on the right-hand side is bounded by $C_*e^{-\rho_* t_\sharp}$. From the localized property of the kernel of $\mathbb{P}$ as a consequence of Lemma \ref{lm:rhot} and Lemma \ref{lm:pBargmannL2}, we also see that the operator norm of the latter is (much) smaller than $e^{-\rho_* t_\sharp}$. 
We therefore obtain Proposition \ref{prop:central_components}.
\begin{proof}[Proof of Lemma \ref{lm:Gneg}] As an intermediate approximation, we consider the operator
\[
\widetilde{\mathbb{P}}:=\pBargmann\circ \mathcal{M}({\rho}^t_{\bj\to \bj'})\circ \pBargmann^* \textcolor{\revisionColor}{\circ\mathcal{M}(\mathcal{X}_{\bj\to \bj'})}:L^2(\supp \psi_{\bj})\to L^2(\real^{2+2+1})
\]
which is obtained by letting $G^t_{\bj\to \bj'}$ be the identity map in the definition of $\mathbb{G}$. 
The operator norm of $\mathbb{G}-\widetilde{\mathbb{P}}:L^2(\supp \psi_{\bj})\to L^2(\real^{2+2+1})$ coincides with that of
\begin{equation}\label{eq:bgp}
\pBargmann^* \circ (\mathbb{G}-\widetilde{\mathbb{P}}): L^2(\supp \psi_{\bj})\to L^2(\real^{2+1})
\end{equation}
because $\pBargmann^*\circ \pBargmann=\mathrm{Id}$ and $\pBargmann$ is an isometric embedding with respect to the $L^2$ norms. 
We may write this operator as an integral operator
\begin{equation}\label{eq:ker:BGP}
\pBargmann^* \circ (\mathbb{G}-\widetilde{\mathbb{P}}) u(w',z')
=\int K(w',z';w,\xi,\eta) \, u(w,\xi,\eta)\, dw d\xi d\eta.
\end{equation}
From Lemma \ref{lm:abc} and Lemma \ref{lm:rhot}  (see also Remark \ref{Re:aboutG}), the kernel satisfies
\begin{align*}
|K(w',z';w,\xi,\eta)|&\le \bigg|(\rho^t_{\bj\to \bj'}\cdot \phi_{w,\xi,\eta})((G^t_{\bj\to \bj'})^{-1}(w',z'))-(\rho^t_{\bj\to \bj'}\cdot\phi_{w,\xi,\eta})(w',z')\bigg|\\
&\le C_*(\nu) \cdot |\omega|^{-1/2+2\theta_*} \cdot \left(|\omega|^{1/2} \cdot \big\langle|\omega|^{1/2} (w-w')\big\rangle^{-\nu}\right)\cdot \langle z'\rangle^{-\nu}
\end{align*}
and vanishes unless  $(w',z')\in \supp\rho^t_{\bj\to \bj'}$ 
and $(w,\xi,\eta)\in \supp \psi_{\bj}$. Hence we have
\[
\sup_{w,\xi,\eta}\int |K(w',z';w,\xi,\eta)| dw'dz'<C_* |\omega|^{-1+2\theta_*}
\]
and 
\[
\sup_{w',z'}\int_{\supp \psi_{\bj}} |K(w',z';w,\xi,\eta)| dwd\xi d\eta<C_*e^{2|m|} \Delta_{\bj} |\omega|^{2\theta_*}.
\] 
By Schur test, the operator norm  of \eqref{eq:bgp} (and hence that of $\mathbb{G}-\widetilde{\mathbb{P}}$ )  is bounded by 
\begin{equation}\label{eq:boundPG}
C_*e^{|m|}\Delta_{\bj}^{1/2}|\omega|^{-1/2+2\theta_*} \le C_* e^{\delta_* t_\sharp}\cdot \omega_\sharp^{-1/2+3\theta_*}\ll \konst^{-1/2},
\end{equation}
where we used the estimate $\Delta_{\bj}\le C_* \log \langle\omega\rangle$, which follows from Lemma \ref{lm:Tor}, in the left inequality. (For the right inequality, recall that we choose large $\omega_\sharp$ depending on $t_\sharp$.)

Next we consider the difference $\mathbb{P}-\widetilde{\mathbb{P}}$. Its kernel $K'(w,\xi,\eta;w',\xi',\eta')$ is written 
\begin{align*}
&e^{i(\xi w-\xi' w')/2}\langle \eta\rangle^{1/2} \int dw'' \cdot e^{i(\xi-\xi')w''
-\langle \eta\rangle |w'-w''|^2/2}\\
& 
\cdot  \left(\langle \eta'\rangle^{1/2} e^{-\langle \eta'\rangle\|w-w''\|/2}\hat{\rho}^t_{\bj\to \bj'}(w',\eta'-\eta) -\langle \eta\rangle^{1/2} e^{-\langle \eta\rangle\|w-w''\|/2}\hat{\rho}^t_{\bj\to \bj'}(w'',\eta'-\eta)\right)
.
\end{align*}
By integration by parts using the estimate in Lemma \ref{lm:rhot}, we obtain that 
\[
|K'(w,\xi,\eta;w',\xi',\eta')|\le C_*(\nu)\cdot  (e^{C_* t_\sharp}\cdot \konst^{-1})\cdot
\langle |\omega|^{1/2} |w-w'|\rangle^{-\nu}\cdot \langle |\omega|^{-1/2} |\xi-\xi'|\rangle^{-\nu}\cdot 
\langle \eta'-\eta\rangle^{-\nu}
\]
for arbitrarily large $\nu>0$. 
Therefore the operator norm of $\mathbb{P}-\widetilde{\mathbb{P}}:L^2(\supp \psi_{\bj})\to L^2(\real^{2+2+1})$ is bounded by $C_* (e^{C_* t_\sharp}\cdot \konst^{-1})\ll \konst^{-1/2}$. 

From the estimates  on the differences $\mathbb{G}-\widetilde{\mathbb{P}}$ and $\mathbb{P}-\widetilde{\mathbb{P}}$ above, we obtain the conclusion of the lemma, provided that we take sufficiently large $t_\sharp$ and then take sufficiently large $\omega_\sharp$ according to the choice of~$t_\sharp$. 
\end{proof}

\subsection{Proof of Lemma \ref{lm:basic_components} and Lemma \ref{lm:hyp_component}} \label{ss:pf_lemmas}
The proofs of Lemma \ref{lm:basic_components} and Lemma \ref{lm:hyp_component} presented below are based on straightforward estimates on the kernels of the operators using integration by parts. (But recall Remark \ref{rem:smalldiff}.)
Below we will prove the claims for $\bbL^t_{\bj\to \bj'}$ in the conclusions, and those for the difference 
$\bbL^t_{\bj\to \bj'}-\hat{\bbL}^t_{\bj\to \bj'}$ will follow immediately from that proof if we note that the kernel of the operator $\bbL^t_{\bj\to \bj'}$ is localized in the space.

\begin{proof}[Proof of Lemma \ref{lm:basic_components}]
The operator norm of $\bbL^t_{\bj\to \bj'}$ is bounded by that of 
\[
(\pBargmann^*\circ \mathcal{M}(\tilde{q}_{\omega(\bj')})\circ \pBargmann)\circ \cL^t_{\bj\to \bj'}\circ (\pBargmann^*\circ \mathcal{M}(\tilde{q}_{\omega(\bj)})\circ \pBargmann).
\]
because we have $
(\pBargmann^*\circ \mathcal{M}(\tilde{q}_{\omega(\bj')})\circ \pBargmann)\circ \pBargmann^* = \pBargmann^*$ on $L^2(\supp \psi_{\bj})\subset L^2(\supp q_{\omega(\bj)})$. 
As we noted in the proof of Lemma \ref{lm:ii}, the operators on the both sides of $\cL^t_{\bj\to \bj'}$ above are convolution operators that involve solely the variable $z$. 
Thus it is easy to prove the claims
 using the estimate on  $\rho^t_{\bj\to \bj'}$ in Lemma~\ref{lm:rhot} (with the note that followed it) and the fact that the map $f^t_{\bj\to \bj'}$ is just a translation on each of the lines parallel to the $z$-axis. We omit the details of the proof since the argument is simple and will be clear from the next proof where we consider a parallel but more involved situations.
\end{proof}

\begin{proof}[Proof of Lemma \ref{lm:hyp_component}]
Let us set $\omega=\omega(\bj)$, $\omega'=\omega(\bj')$, $m=m(\bj)$ and $m'=m(\bj')$ for brevity. We proceed with the assumption 
\begin{equation}\label{eq:omegaclose}
|\omega'-\omega|\le e^{\max\{|m|,|m'|\}/10}
\end{equation}
because the claims follow from Lemma \ref{lm:basic_components} otherwise. 
Below we consider the operator
\[
\check{\bbL}^t_{\bj\to \bj'}:= (\pBargmann^*\circ \mathcal{M}(\tilde{\psi}_{\bj'})\circ \pBargmann) \circ \cL^t_{\bj\to \bj'}\circ 
(\pBargmann^*\circ \mathcal{M}(\tilde{\psi}_{\bj})\circ \pBargmann): L^2(\real^{2+1})\to L^2(\real^{2+1})
\]
and show that 
\begin{equation}\label{eq:checkL}
\|\check{\bbL}^t_{\bj\to \bj'}: L^2(\real^{2+1})\to L^2(\real^{2+1})\|\le C_*(\nu) \cdot e^{-\max\{m,m'\}/2}\langle \omega'-\omega\rangle^{-\nu}. 
\end{equation}
Let us write the operator $\check{\bbL}^t_{\bj\to \bj'}$ above as an integral operator:
\[
\check{\bbL}^t_{\bj\to \bj'} u(w',z')=\int K(w',z';w,z) u(w,z) dw dz
\]
where the kernel is written explicitly as an integral
\begin{align}\label{eq:Kwz}
K(w',z';w,z)=&\int dw''  dz'' \;   d\tilde{w} d\xi d\eta\;   d\tilde{w}' d\xi' d\eta'  \cdot \rho^t_{\bj\to \bj'}(w'',z'')\\
&\cdot \phi_{\tilde{w}',\xi',\eta'}(w',z') \cdot \tilde{\psi}_{\bj'}(\tilde{w}',\xi',\eta') \cdot\overline{\phi_{\tilde{w}',\xi',\eta'}(f^t_{\bj\to \bj'}(w'',z''))}\notag\\
&\cdot \phi_{\tilde{w},\xi,\eta}(w'',z'') \cdot \tilde{\psi}_{\bj}(\tilde{w},\xi,\eta) \cdot \overline{\phi_{\tilde{w},\xi,\eta}(w,z)}.\notag
\end{align}
We estimate this integral, regarding it as an oscillatory integral with the oscillatory term
\[
\exp\left(i\left((\xi',\eta')\cdot {f}^t_{\bj\to \bj'}(w'',z'')-(\xi,\eta) \cdot (w'',z'')\right)\right).
\] 
More concretely, we apply  the formula of integration by parts to the integral above, once by using the differential operator $\mathcal{D}_1$  below and several times  using $\mathcal{D}_2$ in addition:
\begin{align*}
\mathcal{D}_1=
&\frac{1-i\left((Df^t_{\bj\to \bj'})_{w''}^*(\xi',\eta')-(\xi,\eta)\right)\cdot \partial_{w''}}
{1+\|(Df^t_{\bj\to \bj'})_{w''}^*(\xi',\eta')-(\xi,\eta)\|^2},\quad \mathcal{D}_2=
\frac{1-i(\eta'-\eta)\cdot \partial_{z''}}
{1+\|\eta'-\eta)\|^2}.
\end{align*}
Further we apply integration by parts several times regarding the terms
\[
\exp(i \xi'(w'-\hat{f}^t_{\bj\to \bj'}(w'')))\quad \text{and}\quad 
\exp(i \xi(w''-w))
\]
as the oscillatory term and using 
the differential operators
\[
\mathcal{D}_3=
\frac{1-i \cdot e^{|m'|} \langle \omega'\rangle^{1/2} \textcolor{\revisionColor}{\bDelta_{\bj'}}(w'-\hat{f}^t_{\bj\to \bj'}(w'')) \cdot e^{|m'|} \langle \omega'\rangle^{1/2}\textcolor{\revisionColor}{\bDelta_{\bj'}}\partial_{\xi'}}
{1+\|e^{|m'|} \langle \omega'\rangle^{1/2}\bDelta_{\bj'}(w'-\hat{f}^t_{\bj\to \bj'}(w''))\|^2}
\]
and
\[
\mathcal{D}_4=\frac{1-i \cdot e^{|m|}\langle\omega\rangle^{1/2} \textcolor{\revisionColor}{\bDelta_{\bj}}(w''-w) \cdot e^{|m|} \langle\omega\rangle^{1/2} \bDelta_{\bj}\partial_{\xi}}
{1+\|e^{|m|} \langle\omega\rangle^{1/2} \textcolor{\revisionColor}{\bDelta_{\bj}}(w''-w)\|^2}.
\]
To evaluate the result, we use  the basic estimates \eqref{eq:crude_estimate_df} and \eqref{eq:est_on_hatf} for $f^t_{\bj\to \bj'}$,  Lemma \ref{lm:rhot} for $\rho^t_{\bj\to \bj'}$ and also \eqref{eq:hook} in Lemma \ref{lm:hook}. 
Then we can deduce 
\begin{align}\label{eq:estK}
|K(w,z;w',z')|\le 
&\bigg(C_*(\nu) +C_*(\nu,t_{\sharp})\cdot e^{-|m|}\Delta_{\bj}^{-1}\cdot \langle e^{|m'|} \Delta_{\bj'}\cdot \langle \omega'\rangle^{-1/2}\rangle\bigg) \cdot  e^{-|m|}\cdot  \langle z'-z\rangle^{-\nu} \\
&\cdot \int dw''\cdot 
 e^{2|m'|} \Delta_{\bj'}\cdot  \langle \omega'\rangle \cdot\langle e^{|m'|}\langle \omega'\rangle^{1/2}\|\textcolor{\revisionColor}{\bDelta_{\bj'}}(w'-\hat{f}_{\bj\to \bj'}(w''))\|\rangle^{-\nu}
 \notag\\
 & \qquad \quad\cdot
e^{2|m|} \Delta_{\bj}\cdot  \langle \omega\rangle \cdot\langle e^{|m|} \langle \omega\rangle^{1/2}\cdot \|\textcolor{\revisionColor}{\bDelta_{\bj}}(w''-w)\|\rangle^{-\nu}\notag
\end{align}
for arbitrarily large $\nu$, 
provided that we let the constant  $t_\sharp$ and $\omega_{\sharp}$ be large enough. 
\begin{Remark}
The estimate to get \eqref{eq:estK} is demanding but straightforward and not too difficult. Note that we get the factor $D_{w''}((D_{w''}f^t_{\bj\to \bj'})^*_{w''}(\xi',\eta'))$  in the (first) integration by parts using $\mathcal{D}_1$, which is bounded  as 
\[
\|D_{w''}((D_{w''}f^t_{\bj\to \bj'})^*_{w''}(\xi',\eta'))\|\le C_*(\nu,t_{\sharp})\cdot  \|(\xi',\eta')\|\le C_*(\nu,t_{\sharp})\cdot
\langle e^{m'} \Delta_{\bj'}\cdot \langle \omega'\rangle^{-1/2}\rangle \cdot \langle \omega'\rangle.
\]
The term $C_*(\nu,t_{\sharp})\cdot \langle e^{m'} \Delta_{\bj}\cdot \langle \omega'\rangle^{-1/2}\rangle$ in \eqref{eq:estK} is put in order to  bound the terms related to this factor. The estimates on the other terms are simple. 
\end{Remark}

If we consider the change of variables $w''$ to $w'''=f^t_{\bj\to \bj'}(w'')$ in \eqref{eq:Kwz} and proceed in parallel to  the argument above, replacing $f^t_{\bj\to \bj'}$ by its inverse in some places and  using \eqref{eq:hookrev} in the place of \eqref{eq:hook},  then we reach the following estimate similar to \eqref{eq:estK},
\begin{align}\label{eq:estK2}
|K(w,z;w',z')|\le 
&\bigg(C_*(\nu) +C_*(\nu,t_{\sharp})\cdot e^{-|m'|}\Delta_{\bj'}^{-1}\cdot \langle e^{|m|} \Delta_{\bj}\cdot \langle \omega\rangle^{-1/2}\rangle\bigg) \cdot  e^{-|m'|}\cdot  \langle z'-z\rangle^{-\nu} \\
&\cdot \int dw''\cdot 
 e^{2|m'|} \Delta_{\bj'}\cdot  \langle \omega'\rangle \cdot\langle e^{|m'|}\langle \omega'\rangle^{1/2}\|\textcolor{\revisionColor}{\bDelta_{\bj'}}(w'-\hat{f}_{\bj\to \bj'}(w''))\|\rangle^{-\nu}
 \notag\\
 & \qquad \quad\cdot
e^{2|m|} \Delta_{\bj}\cdot  \langle \omega\rangle \cdot\langle e^{|m|} \langle \omega\rangle^{1/2}\cdot \|\textcolor{\revisionColor}{\bDelta_{\bj}}(w''-w)\|\rangle^{-\nu}.\notag
\end{align}

If we assume the conditions 
\begin{equation}
\label{eq:ass1}
\min\{\omega,\omega'\}\ge \omega_\sharp/2\quad\text{and}\quad 
e^{\max\{|m|,|m'|\}}\le  \langle \omega\rangle^{1/10},
\end{equation}
the ratio between $|\omega|$ and $|\omega'|$ are close to $1$, since we are assuming \eqref{eq:omegaclose},  and the second term $C_*(\nu,t_{\sharp})\cdot e^{-|m|}\Delta_{\bj}^{-1}\cdot \langle e^{|m'|} \Delta_{\bj'}\cdot \langle \omega'\rangle^{-1/2}\rangle$ in the parentheses on the right-hand side 
of (\ref{eq:estK}) and (\ref{eq:estK2}) is bounded by $C_*(\nu)$. Hence, applying either (\ref{eq:estK}) or (\ref{eq:estK2}) according to whether $|m|>|m'|$ or not, the  claim \eqref{eq:checkL} on $\check{\bbL}^t_{\bj\to \bj'}$ follows immediately from Schur test. 

Suppose on the other hand that the condition \eqref{eq:ass1} does not hold. Then we have
\begin{equation}\label{eq:casetwo}
e^{\max\{|m|,|m'|\}}\ge \min\{e^{m_\sharp}, \omega_{\sharp}^{1/10}\}
\end{equation} 
because, if the former condition in \eqref{eq:ass1} fails,  we have $\max\{|m|,|m'|\}\ge m_\sharp$ as we are assuming \eqref{eq:omegaclose} and disregarding  the case where \eqref{ass:momega} holds. Hence, letting the constant $m_{\sharp}$ be large according to $\omega_\sharp$ and $t_\sharp$, we again see that (\ref{eq:estK}) and (\ref{eq:estK2}) yield  \eqref{eq:checkL}.

Finally we deduce the required estimate \eqref{claim:hyp_component1} from \eqref{eq:checkL}. Note that \eqref{eq:checkL} implies   
\[
\|\pBargmannP\circ \mathcal{M}(\tilde{\psi}_{\bj'})\circ 
\bbL^t_{\bj\to \bj'}\circ  \mathcal{M}(\tilde{\psi}_{\bj})\circ \pBargmannP\|_{L^2(\real^{2+2+1})}\le C_*(\nu)\cdot  e^{-\max\{|m|, |m'|\}/2}\cdot \langle \omega'-\omega\rangle^{-\nu}
\]
where $\pBargmannP=\pBargmann\circ \pBargmann^*$ is the (partial) Bargmann projector. 
Our task is to eliminate the terms $\pBargmannP\circ \mathcal{M}(\tilde{\psi}_{\bj'})$ and $ \mathcal{M}(\tilde{\psi}_{\bj})\circ \pBargmannP$ on the both sides of $
\bbL^t_{\bj\to \bj'}$. 
From the description of the kernel of $\pBargmannP$ in Lemma \ref{lm:pBargmannL2}, it is easy to see that  
\[
\|(1-\mathcal{M}(\tilde{\psi}_{\bj}))\circ \pBargmannP:L^2(\supp \psi_{\bj})\to L^2(\real^{2+2+1})\|< \exp(-e^{|m|/2})
\]
and the same estimate with $\bj$ and $m$ replaced by $\bj'$ and $m'$ respectively. Thus obtain the required estimate \eqref{claim:hyp_component1} provided that  $\max\{e^{|m|/2}, e^{|m'|/2}\} \ge \max\{ |m|, |m'|\}$. 
Unfortunately, the last condition fails if the ratio between $|m|$ and $|m'|$ is extremely large. However the problem in such case is  superficial and it is easy to provide a simple remedy for it. Indeed,  the argument above remains valid as far as $\tilde{\psi}_{\bj}$ and $\tilde{\psi}_{\bj'}$ fulfill the conditions that they are sufficiently smooth functions taking constant value $1$ on a (scaled) neighborhood of the supports of $\tilde{\psi}_{\bj}$ and $\tilde{\psi}_{\bj'}$ respectively, and that the conclusion of Lemma \ref{lm:hook} holds for them. Thus, modifying the definitions of  $\tilde{\psi}_{\bj}$ and $\tilde{\psi}_{\bj'}$ appropriately, we can get the conclusion  \eqref{claim:hyp_component1}. (For instance, when $m>0$ and $m'>0$, the problem happens when $m$ is much smaller than $m'$ and the remedy is to enlarge the support of $\tilde{\psi}_{\bj}=\tilde{\psi}_{a(\bj),n(\bj),\omega(\bj), m(\bj)}$ so that its size  is comparable to that of $\tilde{\psi}_{\bj'}$.)
\end{proof}

\subsection{Local uniformity of exponential mixing}\label{ss:localuniformity}
Finally we prove Theorem \ref{th:exp}. 
Let us write $f^t_0$ for the flow $f^t$ that we have considered in  the previous subsections. 
We first show that, if we take a sufficiently small $C^3$ neighborhood $\mathcal{V}$ of $f^t_0$ in $\fF^3_A$, each of the flows in $\mathcal{V}$ is exponentially mixing. 
To this end, we recall the arguments in the previous subsections and check dependence of objects on the flow. 
We can construct the local charts $\kappa_{a,\omega,n}$ and the functions $\rho_{a,\omega,n}$ so that \emph{each} of them depend on the flow continuously in $C^3$ sense. Then we can define the Hilbert space $\mathbb{H}$ and $\mathcal{H}$ and also the operator $\bbL^t:\mathbb{H}\to \mathbb{H}$ in a parallel manner so that each of the components $\bbL^t_{\bj\to \bj'}$ depend on the flow continuously. Recall from Remark \ref{rem:perturb} that, to go through the arguments in the previous subsections, we actually needed the estimate \eqref{eq:tv} only for $b$ in a bounded interval $[b_0, \konst]$. And, letting the neighborhood $\mathcal{V}$ be smaller if necessary, we may assume that this is true for \emph{all} $f^t\in \mathcal{V}$. (Recall Remark \ref{Rem:largealpha} for the case $|\alpha|>b$.) 
Therefore one can check that all the arguments in the previous subsections remains valid for each $f^t\in \mathcal{V}$ and the constants denoted by the symbols with the subscript $*$ and also $t_\sharp$, $\omega_\sharp$, $m_\sharp$ can be taken uniformly. 
That is, \emph{each} of the flows in $\mathcal{V}$ is exponentially mixing. 
 
We next consider uniformity of the constants $c_\alpha$ and $C_\alpha$ in the decay estimate \eqref{eq:cor}. This is not very simple to see because  continuity in the dependence of the local charts $\kappa_{\bj}$ and the operators $\bbL^t_{\bj\to \bj'}$ on the flow in $\mathcal{V}$ is \emph{not} uniform (especially in the limit $|\omega(\bj)|\to \infty$). 
To do with this problem, we use an indirect argument by contradiction. 
Let $\mathcal{H}(\mathbf{f})$ be the anisotropic Sobolev space $\mathcal{H}$ defined for a flow $\mathbf{f}=\{f^t\}\in \mathcal{V}$ and consider its  subspace  $\mathcal{H}_0(\mathbf{f})=\{u\in \mathcal{H}(\mathbf{f})\mid \int u dm=0\}$. Let $\cL^t_{\mathbf{f}}$ be the transfer operator $\cL^t$ defined for $\mathbf{f}\in \mathcal{V}$. 
To obtain the conclusion, it is enough to show, for some $T>0$ and $\delta>0$, that 
\[
\|\cL^T_{\mathbf{f}}:\mathcal{H}_0(\mathbf{f})\to \mathcal{H}_0(\mathbf{f})\|<1-\delta\quad\mbox{for all $\mathbf{f}\in \mathcal{V}$}
\]
provided that $\mathcal{V}$ is a sufficiently small neighborhood  of $\mathbf{f}_0=\{f^t_0\}$.
Suppose that this assertion is not true, that is,  for an arbitrarily large  $T>0$,  we can find a sequence of flows $\mathbf{f}_{k}$ which converges to $\mathbf{f}_0$ in $C^3$ sense and a sequence of distributions $u_{k}\in \mathcal{H}_0(\mathbf{f}_k)$ such that 
\[
\|u_{k}\|_{\mathcal{H}(\mathbf{f}_k)}=1\quad\text{and}\quad
\|\cL^T_{\mathbf{f}_k}u_k\|_{\mathcal{H}(\mathbf{f}_k)}\ge 1-(1/k).
\]
Notice that the conclusion of Proposition \ref{pp:main_est} is valid uniformly for $\mathbf{f}\in \mathcal{V}$, so that the operators $\cL^T_{\mathbf{f}_k}$ contract the high frequency parts of functions ({\it i.e.} the components $u_{\bj}$ with $|\omega(\bj)|\ge \omega_\sharp$ or $|m|\ge m_{\sharp}$)  by a uniform rate $<1$.  Hence\footnote{We actually need additional arguments to provide precise statements and their proofs for the claims in this sentence and the next. But we  leave it to the readers one because the claims should be intuitively clear and one because qualitative estimates that follow from the arguments in the previous sections are enough for the proofs.}, for the assumption above on $u_k$ to be true, the high frequency part of $u_k$ must be relatively small uniformly in $k$. 
 This implies that there exists a subsequence $u_{k(\ell)}$ of $u_k$ which converges to some $u_{\infty}\in \mathcal{H}_0(\mathbf{f}_0)$ satisfying
\[
\|u_{\infty}\|_{\mathcal{H}(\mathbf{f}_0)}=\lim_{\ell\to \infty}\|u_{k(\ell)}\|_{\mathcal{H}(\mathbf{f}_k)}=1, \qquad \|\cL^T_{\mathbf{f}_0}u_\infty\|_{\mathcal{H}(\mathbf{f}_0)}=\lim_{\ell\to \infty}\|\cL^T_{\mathbf{f}_k}u_{k(\ell)}\|_{\mathcal{H}(\mathbf{f}_k)}\ge 1.
\]
Clearly this conclusion for arbitrarily large $T>0$ contradicts what we have proved for $\mathbf{f}_0$. 

\begin{Remark}
The argument in the proof of local uniformity of the constants $c_\alpha$ and $C_\alpha$ in the decay estimate \eqref{eq:cor} is indirect and not very satisfactory. 
It would be much better if we could apply the perturbation theory of transfer operator developed by Keller and Liverani\cite{MR1679080}  (see also \cite{MR2201945}). 
But, for the moment, it seems that the theory in \cite{MR1679080} is not applicable (at least, directly) to our setting because the anisotropic Sobolev space $\mathcal{H}(\mathbf{f})$ depends sensitively on the flow $\mathbf{f}$ through the choice of infinitely many local charts $\kappa_{\bj}$.   
\end{Remark}
\bibliographystyle{plain}
\bibliography{ExpDecay}

\end{document}